\documentclass{article}
\usepackage[T1]{fontenc}
\usepackage[utf8]{inputenc}
\usepackage[american]{babel}
\usepackage{csquotes}
\usepackage{amsthm,amsmath,mathtools,mathrsfs,latexsym}
\usepackage{amssymb}
\usepackage{dsfont}
\usepackage{comment}
\usepackage{color}
\usepackage{enumitem}
\usepackage[colorlinks=true,linkcolor=blue,citecolor=blue]{hyperref}
\usepackage{titletoc}

\usepackage[a4paper, top=2.5cm, bottom=2.5cm, left=2.5cm, right=2.5cm]{geometry}

\usepackage[toc,page]{appendix}

\numberwithin{equation}{section}
\mathtoolsset{showonlyrefs}

\usepackage[backend=biber,style=alphabetic,giveninits=true,sorting=nyt,natbib=true,url=false]{biblatex}
\DeclareNameAlias{default}{family-given}
\DeclareNolabel{\nolabel{\regexp{[\p{P}\p{S}\p{C}\p{Z}]+}}}
\bibliography{Biblio, Biblio_Fluido}

\newcommand{\N}{\mathbb{N}}
\newcommand{\R}{\mathbb{R}}
\newcommand{\E}{\mathbb{E}}
\newcommand{\F}{\mathcal{F}}
\newcommand{\C}{\mathbb{C}}

\newcommand{\id}{\mathbb{I}}
\newcommand{\Tr}{\operatorname{Tr}}
\newcommand{\supp}{\operatorname{supp}}

\newcommand{\1}{\mathds{1}}
\newcommand{\lan}{\langle}
\newcommand{\ran}{\rangle}
\newcommand{\doublehookrightarrow}{\lhook\joinrel\relbar\mspace{-10mu}\hookrightarrow}
\newcommand{\ds}{\displaystyle}
\newcommand{\itref}[1]{$\textit{\ref{#1}}$}

\newcommand\dela[1]{}

\renewcommand{\P}{\mathbb{P}}
\renewcommand{\d}{\,\mathrm{d}}
\renewcommand{\div}{\operatorname{div}}
\renewcommand{\ss}{\scriptscriptstyle}
\DeclareMathOperator*{\esssup}{ess\,sup}

\def\a{{\alpha}}
\def\b{{\beta}}
\def\de{{\delta}}
\def\la{{\lambda}}

\def\s{{\sigma}}

\def\g{{\gamma}}
\def\e{{\varepsilon}}

\newtheorem{theorem}{Theorem}[section]
\newtheorem{proposition}[theorem]{Proposition}
\newtheorem{lemma}[theorem]{Lemma}
\newtheorem{corollary}[theorem]{Corollary}

\theoremstyle{definition}
\newtheorem{definition}[theorem]{Definition}

\newtheorem{notation}[theorem]{Notation}

\theoremstyle{remark}
\newtheorem{remark}[theorem]{Remark}


\title{\vskip -1cm  Inviscid Limit of the Stochastic Hyperviscous Navier-Stokes Equations and Invariant Measures for the Euler Equations in $\R^2$} 
\author{
    Zdzis{\l}aw Brze\'{z}niak\footnote{Corresponding author.} \\
    {\footnotesize Dept.~of Mathematics, University of York}\\
    {\footnotesize \texttt{zdzislaw.brzezniak@york.ac.uk}}\\
    {\footnotesize \url{https://orcid.org/0000-0001-8731-6523}}
\and 
    Matteo Ferrari \\
    {\footnotesize Dept.~of Mathematics $``$Felice Casorati$"$, University of Pavia}\\
    {\footnotesize Dept.~of Mathematics and Applications, University of Milano Bicocca}\\
    {\footnotesize \texttt{matteo.ferrari@unimib.it}}\\
    {\footnotesize \url{https://orcid.org/0009-0000-6585-2210}}}
\date{June 25, 2026}

\titlecontents{section}
  [0em] 
  {} 
  {\thecontentslabel.\quad} 
  {} 
  {\titlerule*[.5pc]{.}\contentspage} 


\begin{document}

\maketitle

\begin{abstract}
    We prove the existence and some moment estimates for an invariant measure $\mu$ for the two-dimensional ($2$D) deterministic Euler equations on the unbounded domain $\R^2$ and with highly regular initial data. 
    The result is achieved by first showing the existence of Markov (marginally) stationary processes which solve the hyperviscous $2$D Navier-Stokes equations with kinematic viscosity $\nu>0$ and an additive stochastic noise scaling as $\sqrt \nu$. 
    We then study the inviscid limit and prove that, as $\nu$ tends to $0$, these processes converge, in an appropriate trajectory space, to a pathwise solution to the Euler equations. 
    Its law is the sought invariant measure $\mu$.
\\[3mm]
\noindent \textbf{Keywords:} deterministic $2$D Euler equations, invariant measure, stochastic hyperviscous $2$D Navier-Stokes equations, unbounded domain, inviscid limit.
\\[3mm]

\end{abstract}


\tableofcontents
\newpage

\section{Introduction}
The most famous equations in fluid dynamics, at least from a mathematical point of view, are the Navier-Stokes equations for homogeneous incompressible fluids.
\begin{equation}
\label{EQ:proto_NSE}
    \begin{cases}
        \partial^{}_tu-\nu\Delta u+(u\cdot\nabla)u =f-\frac{1}{\rho}\nabla p,\\
        \div u=0.\\
    \end{cases}
\end{equation}
They describe time and space evolution of the flow-velocity vector field $u$ and the pressure scalar field $p$ of any fluid, given constant parameters $\nu,\rho>0$ and a vector field $f$. 
The parameter $\nu$ is the kinematic viscosity, $\rho$ is the density of the fluid, while $f$ is interpreted as an external force per unit of mass acting on the fluid and can be either deterministic or stochastic. 

By formally setting the kinematic viscosity $\nu$ to zero, one obtains the Euler equations for homogeneous incompressible fluids.
\begin{equation}
\label{EQ:proto_EE}
    \begin{cases}
        \partial^{}_tu+(u\cdot\nabla)u =f-\frac{1}{\rho}\nabla p,\\
        \div u=0.\\
    \end{cases}
\end{equation}
The main physical difference between the two systems is that the Navier–Stokes equations describe the motion of viscous Newtonian fluids, whereas the Euler equations model the behaviour of ideal (inviscid) fluids, in which the effects of viscosity are neglected.
Although truly inviscid flows do not exist in nature, the Euler equations provide a fundamental model for high–Reynolds-number regimes in which viscous effects are negligible compared to inertial and pressure forces. They play a central role in both theoretical fluid mechanics and numerical simulations, for instance in the modelling of atmospheric and oceanic flows, or in gas dynamics of stellar interiors where viscosity is extremely small (see, \textit{e.g.}, \cite{Edelmann+Horst+Berberich+Andrassy+Higl+Leidi+Klingenberg+Ropke_2021_Well-balanced_treatment,Landau+Lifshitz_1987_Fluid_Mechanics, Toro_2009_Riemann_Solvers, LeVeque_2002_Finite_volume,Majda+Bertozzi_2002_Vorticity_incompressible_flow}).

The problems of the well-posedness, the existence and the uniqueness of solutions for both equations have been widely investigated in literature and are completely solved in the two-dimensional ($2$D) case, see, for instance, \cite{Wolibner_1933_Theoreme_existence_mouvement_plan_fluide_parfait_homogene_incompressible_pendant_temps_infiniment_long, Ladyzhenskaya_1969_Mathematical_Theory, Temam_2001_Navier-Stokes_equations, Judovic_1963_Non-stationary_flows_ideal_incompressible_fluid, Majda+Bertozzi_2002_Vorticity_incompressible_flow}.
Further interesting properties, both from a mathematical viewpoint and for the practical implications in understanding the long-term behaviour and statistical properties of turbulent flows, concern the invariant measures for the two equations.
Formally speaking, invariant measures are spatial distributions of the fluid, which remain stationary as time evolves. 
Rigorous definitions will be given in Sections~\ref{SEC:Notations} and \ref{SEC:markov_invariant_NS}.

A classical method for proving the existence of invariant measures for partial differential equations, both in the deterministic and stochastic setting, is the Krylov-Bogoliubov method; see, for instance, \cite{DaPrato+Zabczyk_2014_stochastic_equations_infinite, DaPrato+Zabczyk_1996_ergodicity_infinite, Brzezniak+Gatarek_1999_Martingale_solutions_invariant_measures_stochastic_evolution_equations_Banach_spaces}. 
It has been successfully applied to the stochastic Navier-Stokes equations, starting from the celebrated paper \cite{flandoli_1994_dissipativity}.
Instead, concerning the deterministic Euler equations, earlier works have identified invariant measures, in the case of bounded domains and periodic boundary conditions,  often employing the Gibbs measure \cite{Albeverio+DeFaria+Hoegh-Krohn_1979_stationary, Albeverio+Hoegh-Krohn_1989_Stochastic_flows_stationary_distribution_two-dimensional_inviscid_fluids, Albeverio+Cruzeiro_1990_Global_flows_invariant, Cipriano_1999_two-dimensional_Euler_equation_statisticalstudy, Biryuk_2006_invariantmeasures_2D_Euler_equation}. 
Kuksin's influential studies have demonstrated that stationary solutions for the Navier-Stokes equations with stochastic forcing converge to non-trivial stationary solutions of the deterministic Euler equations as viscosity vanishes and the noise vanishes as well \cite{Kuksin_2004_Eulerian_limit_2D_statisticalhydrodynamics, Kuksin_2006_Randomly_forced_nonlinear_PDEs_statistical_hydrodynamics_2_space_dimensions, Kuksin_2007_Rigorous_results_conjectures_stationary_space-periodic_2D_turbulence, Kuksin_2008_distribution_energy_vorticity_solutions_2D_Navier-Stokes_equation_small_viscosity}.

The approach in this paper builds upon foundational works, including \cite{Ferrario_2023_Eulerian_limits_Kuksin} and \cite{latocca_2023_construction_high_regularity}, which investigate the Eulerian limits and their connection to the stationary solutions of the $2$D hypoviscous and hyperviscous Navier-Stokes equations as viscosity vanishes, in the case of bounded domains with periodic boundary conditions. 
Instead, we focus on the case of the unbounded domain $\mathbb{R}^2$, inspired by recent advances in stochastic partial differential equations (SPDEs) on unbounded domains, which involve the continuity of the solution flow with respect to weak topologies, a crucial aspect for handling the lack of compact embeddings in unbounded domains.

Specifically, we start from the stochastic hyperviscous $2$D Navier-Stokes equations on the space of diverge-free, square integrable vector fields over $\R^2$:
\begin{equation}
\label{EQ:stoch_NS_proto}
    \d X_t + \big[\nu A^\a X_t + B(X_t)\big]\d t =\b_\nu \d W_t, \qquad \textit{for  } t> 0,
\end{equation}
where $\a>1$, $W$ is a cylindrical Wiener noise, $\b_\nu>0$ is a coefficient to be appropriately chosen, $A$ denotes the Stokes operator, and $B$ the Stokes nonlinearity. 
The higher-order dissipation terms given by the power $\a>1$ of the Stokes operator regularise the solutions, and allow us to derive powerful \textit{a priori} estimates. 
The scaling factor $\b_\nu$ in front of the external forcing is of paramount importance. 
The following theorem gathers known results from the literature which clarify the role this factor plays in the deterministic setting on bounded domains.
\begin{theorem}[\text{\cite[Theorem $10.1$]{Temam_1995_Navier-Stokes_equations_nonlinear_functional_analysis}}]
\label{TH:Temam}
    Assume that $\mathcal D\subset \R^2$ is a bounded domain with periodic or Dirichlet boundary conditions and let $V$ denote the space of divergence-free vector fields in $H^1(\mathcal D;\R^2)$. 
    If $\nu>0$ and $f\in V'$, then there exists $u\in V$ such that 
    \begin{equation}
    \label{EQ:weak_stationary_NS}
        \nu\lan Au,v\ran^{}_{\ss{\!V'\!\times\!V}}+b(u,u,v)={\lan}f,v\ran^{}_{\ss{\!V'\!\times\!V}}, \qquad \forall\, v\in V.
    \end{equation}
    Moreover, if there exists $c>0$ depending only on $\mathcal D$, such that 
    \[
        \|f\|^{}_{V'}<c\nu^2,
    \]
    then the $u\in V$ from above is unique.
\end{theorem}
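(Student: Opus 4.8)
The plan is to establish existence by a Galerkin approximation and uniqueness by an energy estimate that exploits the smallness of $\|f\|_{V'}$ relative to $\nu^2$.

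For existence, I would fix an orthonormal basis $\{e_j\}_{j\ge 1}$ of the space $H$ of divergence-free $L^2$ vector fields consisting of eigenfunctions of the Stokes operator $A$, set $V_m:=\operatorname{span}\{e_1,\dots,e_m\}$, and seek $u_m\in V_m$ with $\nu\langle Au_m,e_j\rangle+b(u_m,u_m,e_j)=\langle f,e_j\rangle$ for $j=1,\dots,m$. Existence of each $u_m$ follows from the usual corollary of Brouwer's fixed point theorem applied to the continuous map $P\colon V_m\to V_m$ defined, with respect to the inner product of $V$, by $(P(v),w)_V=\nu\langle Av,w\rangle+b(v,v,w)-\langle f,w\rangle$: since $b(v,v,v)=0$ for divergence-free $v$, one has $(P(v),v)_V=\nu\|v\|_V^2-\langle f,v\rangle$, which is strictly positive on the sphere $\{\|v\|_V=R\}$ as soon as $R>\|f\|_{V'}/\nu$, so $P$ must vanish inside that ball. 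Testing the resulting finite-dimensional identity against $u_m$ and using $b(u_m,u_m,u_m)=0$ once more yields the bound $\|u_m\|_V\le\|f\|_{V'}/\nu$, uniform in $m$.

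Next I would pass to the limit $m\to\infty$. The uniform bound gives a subsequence with $u_m\rightharpoonup u$ weakly in $V$; and --- this is the step where the boundedness of $\mathcal D$ enters --- the embedding $V\hookrightarrow L^4(\mathcal D;\R^2)$ is compact by Rellich--Kondrachov in dimension two, so along a further subsequence $u_m\to u$ strongly in $L^4$. The linear term converges by weak convergence in $V$; for the nonlinear term I would write $b(u_m,u_m,e_j)=-b(u_m,e_j,u_m)$ and combine the estimate $|b(a,e_j,c)|\le\|e_j\|_V\|a\|_{L^4}\|c\|_{L^4}$ with the strong $L^4$-convergence to pass to the limit in the equation tested against each $e_j$, and then against every $v\in V$ by density. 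This produces $u\in V$ solving \eqref{EQ:weak_stationary_NS}, with $\|u\|_V\le\|f\|_{V'}/\nu$ inherited from the Galerkin bound.

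For uniqueness, given two solutions $u_1,u_2\in V$ I would set $w:=u_1-u_2$, subtract the two weak identities, use $b(u_1,u_1,v)-b(u_2,u_2,v)=b(w,u_1,v)+b(u_2,w,v)$, test with $v=w$, and cancel $b(u_2,w,w)=0$ to get $\nu\|w\|_V^2=-b(w,u_1,w)$. Ladyzhenskaya's inequality together with the Poincaré inequality on the bounded domain $\mathcal D$ bound $|b(w,u_1,w)|\le c_1\|w\|_{L^4}^2\|u_1\|_V\le c_2\|w\|_V^2\|u_1\|_V$ with $c_2$ depending only on $\mathcal D$; inserting $\|u_1\|_V\le\|f\|_{V'}/\nu$ gives $\nu\|w\|_V^2\le c_2\nu^{-1}\|f\|_{V'}\|w\|_V^2$, hence $w=0$ whenever $\|f\|_{V'}<c\nu^2$ with $c:=1/c_2$. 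The hard part is the compactness used for the nonlinear term in the Galerkin limit: on a bounded domain it comes for free from the compact embedding $V\hookrightarrow L^4$, but this mechanism fails on $\R^2$, and bridging exactly this gap --- through the hyperviscosity $\nu A^\a$ with $\a>1$ and the weak-topology continuity arguments --- is the purpose of the rest of the paper; the remaining steps are the routine energy computations, with only the book-keeping that $c$ depends on $\mathcal D$ alone requiring care.
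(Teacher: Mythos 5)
Your proof is correct and is precisely the classical argument (Galerkin approximation plus Brouwer's fixed point theorem for existence, the compact embedding $V\hookrightarrow L^4(\mathcal D;\R^2)$ to pass to the limit in the nonlinearity, and the energy estimate with the Ladyzhenskaya and Poincar\'e inequalities for uniqueness under the smallness condition). The paper does not prove this theorem but only cites it from Temam's book, and your argument is essentially the one given there, so there is nothing to reconcile.
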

\begin{remark}
\label{REM:temam}
    The last theorem motivates the following claim: if the coefficient $\b_\nu$ in equation~\eqref{EQ:stoch_NS_proto} converges sufficiently rapidly to $0$, as $\nu\to 0$, then the solutions of the Navier-Stokes equations vanish in the limit $\nu\to 0$ and converge to zero solution to the Euler equations. 
    We will make this claim more precise.
    
    Let us consider the setting of Theorem~\ref{TH:Temam}.
    Assume that $\tilde f\in V'$.
    For all $\nu>0$, let $\b_\nu>0$, and let $u^\nu\in V$ satisfy equation~\eqref{EQ:weak_stationary_NS} with $f=\b_\nu\tilde f$.
    Then, in the limit as $\nu\to 0$, the following statements hold:
    \begin{itemize}
        \item 
            If $\b_\nu=o(1)$, then $\nu u^\nu\longrightarrow 0$ in $V$.
            In particular $B(u^\nu)\longrightarrow 0$ in $V'$.
        \item 
            If $\b_\nu=o(\nu)$, then $u^\nu\longrightarrow 0$ in $V$.
    \end{itemize}

    Indeed, by testing the equation~\eqref{EQ:weak_stationary_NS} with $f=\b_\nu \tilde f$ against $u^\nu\in V$, and recalling that $b(u^\nu,u^\nu,u^\nu)=0$, we get:
    \[
        \nu \|u^\nu\|^2_{V}=\b_\nu{\lan}\tilde f,u^\nu\ran^{}_{\ss{\!V'\!\times \!V}}\leq \b_\nu \|\tilde f\|^{}_{V'}\|u^\nu\|^{}_V.
    \]
    This implies
    \[
        \nu \|u^\nu\|^{}_{V}\leq \b_\nu \|\tilde f\|^{}_{V'},
    \]
    from which the assertions follow.
\end{remark}
Therefore, if we want to construct an invariant measure for the Euler equations by studying the inviscid limit of the Navier-Stokes equations, then we need $\b_\nu$ to converge to $0$ less rapidly than $\nu$. 

\begin{remark}
Let us now consider the setting of our equation~\eqref{EQ:stoch_NS_proto} with $\a>1$, and hint at the reason for our choice of the constant $\b_\nu$, see Theorem~\ref{TH:moment_estimates_mu_nu} for the details.
Assume that $(H, \|\,\cdot\,\|)$ is the space of square-integrable divergence-free vector fields on $\R^2$. 
If we suppose that the $H$-valued process $X^\nu$ is a solution to the equation~\eqref{EQ:stoch_NS_proto}, then the It\^o formula applied to the process $\|X^\nu\|^2$ gives:
\begin{equation}
    \E\|X^\nu_t\|^2
    +2\nu \E\int_0^t\|A^{\frac\a2}X^\nu_s\|^2\d s=\E\|X^\nu_0\|^2
    + {\b_\nu^2}Ct, \qquad \forall\, t\geq 0,
\end{equation}
for a constant $C>0$ that depends on the Wiener process.
If, in addition, $X^\nu$ is marginally stationary, \textit{i.e.} its law is an invariant measure,  hence constant in time, then the first terms in both sides of the equality cancel out, and the integral in time is easily computed. 
We are left with:
\[
    \nu \, t\bigg(2\E\|A^{\frac\a2}X^\nu_0\|^2-\frac{\b_\nu^2}{\nu}C\bigg)=0, \qquad \forall\, t\geq 0.
\]
The last equality, together with the marginal stationarity of the law, implies:
\[
    \E\big\|A^{\frac\a2}X^\nu_s\big\|^2=\E\big\|A^{\frac\a2}X^\nu_0\big\|^2=\frac{\b_\nu^2}{\nu}C,\qquad \forall\, s\geq 0,
\]
which becomes a uniform estimate with respect to the kinematic viscosity $\nu$ if $\b_\nu=\sqrt \nu$, and thus gives a powerful property to study the inviscid limit.
Moreover, it is shown in \cite{Kuksin_2004_Eulerian_limit_2D_statisticalhydrodynamics} that this scaling is the only one which leads to non-trivial limiting measures which are invariant for the 2D Euler equations. 
This scaling limit was also considered in the important paper \cite{Glatt-Holtz+Sverak+Vicol_2015_On_inviscid_limits}, and was
also used outside of the context of invariant measures for a Kato-type boundary layer problem in \cite{Goodair+Crisan_2025_The_zero_viscosity_limit_stochastic, Luongo_2024_Inviscid_limit_stochastic}.
In fact in \cite{Goodair+Crisan_2025_The_zero_viscosity_limit_stochastic} separate arguments are used to justify an ‘optimality’ of the scaling by $\sqrt\nu$.
\end{remark}
The last remark, inspired by \cite[Proposition $4.2$]{latocca_2023_construction_high_regularity}, motivates the choice of the factor $\b_\nu=\sqrt \nu$ in our equation~\eqref{EQ:stoch_NS_proto}.

Once the well-posedness of the equation~\eqref{EQ:stoch_NS_proto} with $\b_\nu:=\sqrt \nu$ has been investigated, we establish the existence of invariant measures, by employing a version of the Krylov–Bogoliubov method tailored to weak topologies.
This technique was introduced in \cite{Maslowski+Seidler_1999_On_sequentially_weakly_Feller}, and successfully applied to the stochastic nonlinear beam and wave equations \cite{Brzezniak+Ondrejat+Seidler_2016_Invariant_measures_stochastic_nonlinear_beam_wave_equations}, the Navier-Stokes equations in unbounded domains \cite{Brzezniak+Motyl+Ondrejat_2017_Invariant_Measure_Stochastic_N-S, Brzezniak+Ferrario_2019_stationary_solutions}, the stochastic Landau-Lifshitz-Bloch equation \cite{Brzezniak+Goldys+Le_2020_Existence_unique_solution_invariant_measures_stochastic_Landau-Lifshitz-Bloch_equation}, the stochastic damped Euler equation \cite{Bessaih+Ferrario_2020_Invariant_measures_stochastic_damped_2D_Euler_equations}, and the stochastic nonlinear and damped Schr\"odinger equation \cite{Brzezniak+Ferrario+Zanella_2024_Invariant_measures_stochastic_nonlinear_damped_2D_Schrodinger_equation}.
The lack of compactness has been addressed also in \cite{Brzezniak+Li_2006_Asymptotic_Compactness_Absorbing_Sets, Brzezniak+Li_2004_Asymptotic_behaviour_solutions_2D_stochastic_Navier-Stokes_equations_unbounded_domains_new_developments}, where the first named author and Li established the existence of a compact absorbing set for the $2$D stochastic Navier-Stokes equations with additive noise in a certain class of unbounded domains. 

Later, we use the found invariant measure to construct (\textit{marginally}) stationary solutions to the stochastic hyperviscous $2$D Navier-Stokes equations, and finally perform the inviscid limit by means of the Jakubowski's version \cite[Theorem $2$]{Jakubowski_1997_almost_sure_skorokhod} of the Skorokhod Theorem.
To this end, we find a sufficiently large space in which the laws of the viscous sequence are tight. 
This space has also to be small enough so that, after using the Jakubowski theorem, the convergence in that space is strong enough for the sequences of viscous solutions, as well as some auxiliary processes, to be convergent.
A similar technique has been applied for finite-dimensional approximations to SPDEs in \cite{Brzezniak+Ondrejat_2013_stochastic_geometric_wave, Brzezniak+Motyl_2013_Existence_Martingale_Solution, Brzezniak+Motyl+Ondrejat_2017_Invariant_Measure_Stochastic_N-S}. 

Finally, the process resulting from the inviscid limit is proved to be \textit{marginally} stationary and its law is the sought invariant measure for the Euler equations, which inherits the moment estimates valid for the approximating sequence.

The structure of the paper is as follows.
In Section~\ref{SEC:Notations}, we present some notations and preliminaries on frequently used functional spaces, operators and stochastic processes. 
Section~\ref{SEC:stoch_hyp_2D_NSE} formulates and solves the stochastic hyperviscous $2$D  Navier-Stokes equations, whose Markov property and invariant measure are later studied in Section~\ref{SEC:markov_invariant_NS}.
In Section~\ref{SEC:stationary_solution_EE}, we rigorously pass the stochastic Navier-Stokes equations to the inviscid limit. 
Hence, we derive an invariant measure for the deterministic Euler equations.
Finally, the appendices gather some auxiliary results and technical lemmas used throughout the paper.

\section{Notations and preliminaries}
\label{SEC:Notations}
\subsection{General notations}
\begin{notation}
\label{NOT:sequential_spaces}
    Let $\mathcal X,\mathcal Y$ be topological spaces. 
    We denote by $C(\mathcal X;\mathcal Y)$ the vector space of continuous functions $f:\mathcal X\to\mathcal Y$.
    If $\mathcal Y=\R$, we simply write $C(\mathcal X)$ instead of $C(\mathcal X;\R)$. 
    The subspace of $C(\mathcal X)$ consisting of bounded functions is denoted by $C_b(\mathcal X)$.
    A function $f:\mathcal X\to \mathcal Y$ is sequentially continuous, and we write $f\in SC(\mathcal X;\mathcal Y)$,  if for any converging sequence $\{x_n\}_{n\in\N}$ in $\mathcal X$, $x\in\mathcal X$ being its limit, the sequence $\{f(x_n)\}_{n\in\N}$ converges to $f(x)$  in $\mathcal Y$.
    If $\mathcal Y=\R$ we write $SC(\mathcal X)$ instead of $SC(\mathcal X;\R)$.
    
    If $\mathcal X$ is compact and $(\mathcal Y,\|\,\cdot\,\|)$ is a normed vector space, then $C(\mathcal X;\mathcal Y)$ is endowed with the usual supremum norm, \textit{i.e.} $\|f\|^{}_{C(\mathcal X;\mathcal Y)}:=\sup_{x\in \mathcal X}\|f(x)\|$, for $f\in C(\mathcal X;\mathcal Y)$.
\end{notation}

\begin{notation}
\label{NOT:Borel_probabilities}
    Assume that $\mathcal X$ is a topological space.
    We denote by $\mathscr B_{\mathcal X}$ its Borel $\sigma$-algebra and by $\mathscr P(\mathcal X)$ the family of probability measures on $(\mathcal X,\mathscr B_{\mathcal X})$.
    Moreover, $\mathcal B_b(\mathcal X)$ will denote the Banach space of measurable and bounded functions $\varphi:(\mathcal X,\mathscr B_{\mathcal X})\to(\R,\mathscr B_\R)$, endowed with the sup norm, \textit{i.e.}, $\sup_{x\in\mathcal X}|\varphi(x)|$, for $\varphi\in\mathcal B_b(\mathcal X)$.
    
    Let $\mu, \mu_n\in \mathscr P(\mathcal X)$, $n\in\N$.
    We say that $\mu_n\longrightarrow \mu$ in the weak sense in $\mathscr P(\mathcal X)$, or simply in $\mathscr P(\mathcal X)$, if $\lim_{n\to\infty}\int_\mathcal Xf\d\mu_n=\int_\mathcal Xf\d\mu$ for all $f\in C_b(\mathcal X)$.
    Assume that $\mu\in\mathscr P(\mathcal X)$ and that $\mathcal Y$ is another topological space. 
    If $\Phi:\mathcal X\to\mathcal Y$ is a Borel measurable function, we denote the pushforward measure of $\mu$ via $\Phi$ by:
    \[
        \Phi_\ast\mu:\mathscr B_{\mathcal Y}\ni E\mapsto \mu\big(\Phi^{-1}(E)\big)\in[0,1],
    \]
    which is a probability measure on $(\mathcal Y,\mathscr B_{\mathcal Y})$.
\end{notation}

\begin{notation}
    A filtration $\{\F_t\}_{t\geq 0}$ on a probability space $(\Omega,\F,\P)$ is said to be augmented or to satisfy the usual conditions if it is right-continuous and complete.
\end{notation}

\begin{notation}
\label{NOT:embeddings}
    Assume that $\mathcal X$, $\mathcal Y$ are topological spaces.
    We say that $\mathcal X$ is continuously embedded into $\mathcal Y$, and we write $\mathcal X\hookrightarrow \mathcal Y$, if $\mathcal X\subset \mathcal Y$ and the map $\iota:\mathcal X\ni x\mapsto x\in\mathcal Y$ is continuous.
    The map $\iota$ will be referred to as the natural embedding. 

    Assume now that $\mathcal X$ and $\mathcal Y$ are Banach spaces.
    We say that $\mathcal X$ is compactly embedded into $\mathcal Y$, and we write $\mathcal X\doublehookrightarrow \mathcal Y$, if $\mathcal X\hookrightarrow \mathcal Y$ and the natural embedding is a compact linear operator. 
\end{notation}

\begin{notation}
\label{REM:linear_operators}
    Assume that $(\mathcal X,\|\,\cdot\,\|)$ is a Banach space.
    We denote by $\mathcal L(\mathcal X)$ the Banach space of linear and bounded operators $T:\mathcal X\to\mathcal X$, endowed with the usual operator norm  $\|T\|^{}_{\mathcal L(\mathcal X)}:=\sup_{x\in\mathcal X\setminus\{0\}}\frac{\|Tx\|}{\|x\|}$, for $T\in\mathcal L(\mathcal X)$.
    
    Assume that $\big(\mathcal H,\lan\,\cdot\,,\,\cdot\,\ran\big)$ is a separable Hilbert space, with induced norm $\|\,\cdot\,\|$.
    The symbol $\mathcal L_1(\mathcal H)$ will denote the subspace of $\mathcal L(\mathcal H)$ consisting of trace-class operators, see, \textit{e.g.}, \cite[Section $4.4$]{Moretti_2018_spectral_theory_quantum_mechanics}.
    If $T\in\mathcal L_1(\mathcal H)$, then its trace is denoted by $\Tr[T]\in\R$.
\end{notation}

\begin{notation}
\label{NOT:weak_topologies}
    Assume that $\mathcal X$ is a Banach space. 
    We denote by $\mathcal X_w$ the topological space $(\mathcal X, \tau_{\mathcal X}^w)$, where the weak topology $\tau_{\mathcal X}^w$ is the smallest topology on $\mathcal X$ with respect to which every linear $f:\mathcal X\to \R$ is continuous. 
    If $\tau_{\mathcal X}^s$ denotes the strong topology on $\mathcal X$ (\textit{i.e.}, the natural topology induced by the norm), we will, with a slight abuse of notation, simply denote the topological space $(\mathcal{X}, \tau_{\mathcal X}^s)$ by $\mathcal{X}$.
    We denote by $\mathcal X_{bw}$ the topological space $(\mathcal X,\tau_{\mathcal X}^{bw})$, where the bounded weak topology $\tau_{\mathcal X}^{bw}$ is the largest topology on $\mathcal X$ that coincides with the weak topology on norm-bounded sets, \textit{i.e.} $C\subset \mathcal X_{bw}$ is closed if and only if $C\cap \bar B^\mathcal X$ is closed in $\mathcal X_w$ for every closed ball $\bar B^\mathcal X\subset \mathcal X$. 
    We refer to \cite[Section II.$5$]{Day_1973_Normed_linear_spaces_3rd_edition} for details about the bounded weak topology.
\end{notation}

\begin{notation}
\label{NOT:Fourier_trasnform}
    Assume that $\big(\mathcal H, \lan\,\cdot\,,\,\cdot\,\ran^{}_{\mathcal H}\big)$ is a separable real Hilbert space. 
    If $d\in\N$, we denote the Fourier transform of a Bochner-integrable function $f:\R^d\to \mathcal H$ by:
    \[
        \mathscr F[f]:\R^d\ni \xi\mapsto \mathscr F [f](\xi):=
        (2\pi)^{-d/2}
        \int_{\R} e^{-i\xi\cdot x}f(x)\d x\in\mathcal H.
    \]  
\end{notation}

We recall the definition of the classical Hilbert-valued fractional Sobolev spaces. 
See \cite[Chapter $3$, Section $2.3$]{Temam_2001_Navier-Stokes_equations}.

\begin{notation}
\label{NOT:ordinary_Sobolev_spaces}
    For $s\in\R$, we set:
    \[
        H^s(\R^d;\mathcal H):=\Big\{f\in \mathcal S'(\R^d;\mathcal H) \ : \ (1+|x|^{2})^{s/2}\mathscr F[f](x)\in L^2(\R^d;\mathcal H)\Big\},
    \]
    where $|x|$, for $x\in\R^d$, denotes the standard Euclidean norm in $\R^d$, while $(1+|x|^{2})^{s/2}\mathscr{F}[u](x)$ is the $\mathcal H$-valued distribution given by the product between the smooth function $\R^d\ni x \mapsto (1+|x|^{2})^{s/2}\in\R$ and the distribution $\mathscr F[u]\in\mathcal S'(\R^d;\mathcal H)$.
    $H^s(\R^d;\mathcal H)$ is a Hilbert space if endowed with the inner product:
    \begin{gather*}
        \int_{\R^d}(1+|x|^{2})^{s}\big\lan \mathscr F[f](x), \mathscr F[g](x)\big\ran^{}_{\mathcal H}\d x,
        \qquad \forall\, f,g\in H^s(\R^d;\mathcal H),
    \end{gather*}
    and with the induced norm $\|\, \cdot \, \|^{}_{H^s(\R^d;{\mathcal H})}$.
\end{notation}  

\subsection{Functional setting for fluid dynamics equations}

A variant of the classical Sobolev spaces, tailored to deal with the equations of incompressible fluid dynamics, is given in the following definition.

\begin{definition}
\label{DEF:divergencefree_Sobolev_spaces}
    For $s\in \R$, we define the Sobolev space of divergence-free vector fields as follows:
    \[
        H^s:=\big\{u\in H^s(\R^2;\R^2)\ : \ \div u=0\big\}.
    \]
    We recall that, for $u\in \mathcal S'(\R^2;\R^2)$, the distribution $\div u\in\mathcal S'(\R^2;\R^2)$ is defined as:
    \[
        \div u:= i \mathscr F^{-1}\big[x \cdot  \mathscr{F}[u](x) \big], 
        \qquad 
        \textit{for }x\in\R^2,
    \]
    where $\cdot$ is the euclidean inner product in $\R^2$ and $x\cdot \mathscr{F}[u](x)$ is the real-valued distribution given by the euclidean product in $\R^2$ between the smooth function $\R^2\ni x \mapsto x\in\R^2$ and the distribution $\mathscr F[u]\in\mathcal S'(\R^2;\R^2)$.
    The linear space $H^s$ is endowed with the inner product:
    \begin{gather*}
        \lan u,v\ran^{}_{\!H^s}:=\int_{\R^2}(1+|x|^2)^{s}\mathscr F[u](x)\cdot\mathscr F[v](x)\d x,
        \qquad \forall\, u,v\in H^s,
    \end{gather*}
    which makes $H^s$ a separable Hilbert space.
    The induced norm is denoted as $\|\,\cdot\,\|^{}_{H^s}$. 
\end{definition}

\begin{notation}
    If $s>0$, the topological  dual space of $H^s$ is $H^{-s}$. 
    The duality product is denoted by:
    \[
        {\big\lan}u,v\big\ran^{}_{\ss{\!H^{-s}\!\times\!H^s}}
        =\int_{\R^2}\mathscr F[u](x)\cdot \mathscr F[v](x)\d x, \qquad \forall\, u\in H^{-s}, \ v\in H^s.
    \]
    If $s=0$ we use the following notation:
    \begin{gather}
        H:=H^0,\\
        \|u\|:=\|u\|^{}_{H^0}=\bigg(\int_{\R^2}|u(x)|^2\d x\bigg)^{1/2}, \qquad \forall\, u\in H,\\
        \lan u,v\ran :=\lan u,v\ran^{}_{\!H^0}= \int_{\R^2}u(x)\cdot v(x)\d x, \qquad \forall\, u,v\in H,
    \end{gather}
     where the equalities hold thanks to the properties \eqref{EQ:unitary_fourier_1} and \eqref{EQ:unitary_fourier_2}.
\end{notation}

\begin{remark}
\label{REM:duality=inner_product}
    If $u\in H$ and $v\in H^s$ for some $s>0$, then, by direct inspection, since $H^{s}\hookrightarrow H\hookrightarrow H^{-s}$,
    \[
        \lan u,v\ran={\big\lan}u,v\big\ran^{}_{\ss{\!H^{-s}\!\times\!H^s}}.
    \]
\end{remark}

\begin{definition}
\label{DEF:leray_projection}
    The Hilbert space $H$ is a closed subspace of $H^0(\R^2;\R^2)=L^2(\R^2;\R^2)$, thus an orthogonal projection
    \[
        \Pi:L^2(\R^2;\R^2)\to H,
    \]
    called Leray projector, is well-defined, see, \textit{e.g.}, \cite[Proposition $5.3$, Corollary $5.4$]{brezis_2010_functional_analysis_Sobolev}.
    It is known that the operator $\Pi$ can be extended in such a way that, for any $s\in\R$, $\Pi\big(H^s(\R^2;\R^2)\big)=H^s$.
\end{definition}

We derive the following trivial lemma from the definition of the divergence-free Sobolev spaces and the H\"older inequality.

\begin{lemma}[Interpolation inequality]
\label{LEM:sobolev_interpolation_inequality} 
    Assume that $p,q\in\R$ with $p<q$ and $\la\in(0,1)$.
    If $u\in H^q$, then 
    \begin{equation}
        \|u\|^{}_{H^r}\leq \|u\|^{\la}_{H^p}\|u\|^{1-\la}_{H^q}, \qquad   r:=\la p + (1-\la)q.
     \end{equation}
\end{lemma}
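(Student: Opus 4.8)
The plan is to prove the interpolation inequality by working on the Fourier side, where all three norms become weighted $L^2$-norms, and then applying Hölder's inequality with a suitably chosen pair of conjugate exponents. First I would write, for $u\in H^q$ and $r=\lambda p+(1-\lambda)q$,
\[
\|u\|^2_{H^r}=\int_{\R^2}(1+|x|^2)^{r}\,|\mathscr F[u](x)|^2\d x
=\int_{\R^2}(1+|x|^2)^{\lambda p}\,(1+|x|^2)^{(1-\lambda)q}\,|\mathscr F[u](x)|^2\d x.
\]
I would then split the integrand as a product
\[
\Big[(1+|x|^2)^{p}\,|\mathscr F[u](x)|^2\Big]^{\lambda}\cdot\Big[(1+|x|^2)^{q}\,|\mathscr F[u](x)|^2\Big]^{1-\lambda},
\]
using that $|\mathscr F[u](x)|^2=\big(|\mathscr F[u](x)|^2\big)^{\lambda}\big(|\mathscr F[u](x)|^2\big)^{1-\lambda}$, which is valid since $\lambda+(1-\lambda)=1$.

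Next I would apply Hölder's inequality with exponents $\tfrac1\lambda$ and $\tfrac1{1-\lambda}$, which are indeed conjugate and both strictly greater than $1$ because $\lambda\in(0,1)$. This gives
\[
\|u\|^2_{H^r}\le\bigg(\int_{\R^2}(1+|x|^2)^{p}\,|\mathscr F[u](x)|^2\d x\bigg)^{\lambda}\bigg(\int_{\R^2}(1+|x|^2)^{q}\,|\mathscr F[u](x)|^2\d x\bigg)^{1-\lambda}=\|u\|^{2\lambda}_{H^p}\,\|u\|^{2(1-\lambda)}_{H^q}.
\]
Taking square roots yields the claimed bound $\|u\|_{H^r}\le\|u\|^{\lambda}_{H^p}\|u\|^{1-\lambda}_{H^q}$. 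One should also note that $u\in H^q$ together with $p<q$ (so that $H^q\hookrightarrow H^p$, since $(1+|x|^2)^p\le(1+|x|^2)^q$ pointwise) guarantees $u\in H^p$, so the right-hand side is finite and everything is well-defined; in particular $r\in(p,q)$ so $u\in H^r$ as well.

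There is essentially no serious obstacle here — this is the standard Fourier-space interpolation argument, and the only points requiring a line of justification are that the two Hölder exponents are conjugate and admissible, and that all the integrals in question are finite (which follows from $u\in H^q$ and the monotonicity of $(1+|x|^2)^s$ in $s$). The $\R^2$ case is identical to the general $\R^d$ case, and the divergence-free constraint plays no role in the estimate since it only restricts which $u$ lie in $H^s$, not how the norms compare.
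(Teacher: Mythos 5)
Your proof is correct and is exactly the argument the paper has in mind: the lemma is stated as following ``from the definition of the divergence-free Sobolev spaces and the H\"older inequality,'' which is precisely your Fourier-side splitting with conjugate exponents $1/\la$ and $1/(1-\la)$. No issues.
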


The Laplacian on the Sobolev spaces of divergence-free vector fields is a paramount operator in fluid dynamics that has been extensively studied in the literature. 
We give here the definition of its powers, that will allow us to study the hyperviscous Navier-Stokes equations.

\begin{definition}
\label{DEF:stokes_operator}
    For $\a\geq 1$, we define:
    \[
        A^\a u:=\mathscr F^{-1}\big[(1+|x|^2)^\a\mathscr F[u](x)\big], \qquad \forall\, u\in \mathcal S'(\R^2;\R^2),
    \]
    where $(1+|x|^2)^\a\mathscr F[u](x)$ is the $\R^2$-valued distribution given by the product between the smooth function $\R^2\ni x\mapsto (1+|x|^2)^\a\in\R$ and the distribution $\mathscr F[u]\in\mathcal S'(\R^2;\R^2)$.
    If $s\in\R$, the operator ${A:=A^1:H^{2+s}\to H^s}$ is commonly called the Stokes operator.
    Whereas, for $\a>1$, the operator $A^\a:H^{2\a+s}\to H^s$, for $s\in\R$, will be referred to as the hyperviscous Stokes operator.
\end{definition}

In the next lemma, we present the main properties of the (hyperviscous) Stokes operator.
These are mainly classical, so we postpone the proof to Appendix~\ref{APP:aux}.

\begin{lemma}
\label{LEM:stokes_operator_semigroup}
    Let $s\in\R$ and $\a\ge 1$. 
    The linear operator $A^\a:H^{2\a+s}\to H^s$ is well-defined, bijective and bounded.
    Moreover, for every $u,v\in H^{2\a+s}$,
    \begin{equation}
    \label{EQ:LEM:stokes_operator_semigroup:graph_norm_Aa}
        \|A^\a u\|_{H^s}=\|u\|_{H^{2\a+s}},\qquad 
        \lan u,v\ran_{H^{2\a+s}}
        =
        \lan A^\a u,A^\a v\ran_{H^s}.
    \end{equation}
    If $\a=1$, then for every $u\in H^{2+s}$,
    \begin{equation}
    \label{EQ:LEM:stokes_operator_semigroup:A_Delta}
        Au=u-\Delta u
        \qquad\text{in }H^s.
    \end{equation}
    Letting $D_{H^s}(A^\a)$ denote the dense subspace $\big(H^{2\a+s},\|\,\cdot\,\|^{}_{H^s}\big)$ of $H^s$, then, for $\nu>0$, the operator $-\nu A^\a:D_{H^s}(A^\a)\subset H^s\to H^s$ is negative definite, self-adjoint, has spectrum $(-\infty,-\nu]$, and generates a strongly continuous contraction analytic semigroup $\{e^{-\nu tA^\a}\}_{t\ge0}$ on $H^s$.
    In particular, for every $\theta\ge0$, there exists a constant
    $C=C(\nu,\a,\theta)>0$ such that
    \begin{equation}
    \label{EQ:LEM:stokes_operator_semigroup:smoothing_estimate}
        \|A^\theta e^{-\nu tA^\a}x\|_{H^s}
        \le C\, t^{-\theta/\a}\|x\|_{H^s},
        \qquad \forall\,x\in H^s,\ \forall\,t>0.
    \end{equation}
\end{lemma}

Eventually, one needs to formally introduce the nonlinear term that appears in both the Navier-Stokes and the Euler equations.
The proof can be found in Appendix~\ref{APP:aux}.

\begin{lemma}
\label{LEM:b}
    Assume that $p, q, r\geq 0$ satisfy the assumption:
    \begin{equation}
        \begin{cases}
            p+q+r\geq 1, \quad &\textit{if }p, q, r\neq 1,\\
            p+q+r>1, &\textit{otherwise},
        \end{cases}
\end{equation}
    then the trilinear form
    \begin{equation}
        b:H^p\times H^{q+1}\times H^{r}\ni (u,v,w)\mapsto
        \int_{\R^2}
        [(u\cdot \nabla)v]
        \cdot w\, \d \mathscr{L}^2
        =\sum_{i,j=1}^2\int_{\R^2}w_j\,u_i\,\partial_iv_j\, \d \mathscr L^2\in\R,
    \end{equation}
    is well-defined and continuous, where we denoted by $\mathscr L^2$ the $2$-dimensional Lebesgue measure on $\R^2$.
    Moreover, whenever the expressions make sense, we have:
    \begin{align}
    \label{EQ:b_antysimmetry_1}
        &b(u,v,w)=-b(u,w,v),\\
    \label{EQ:b_antysimmetry_2}
        &b(u,v,v)=0, \\ 
    \label{EQ:b(x,x,Ay)=0}
        &b(u,u,Au)=0.
    \end{align}
\end{lemma}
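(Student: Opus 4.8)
The plan is to split the statement into two essentially independent parts: the analytic estimate that makes $b$ well-defined and continuous, and the three algebraic identities \eqref{EQ:b_antysimmetry_1}--\eqref{EQ:b(x,x,Ay)=0}.

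\emph{Continuity.} By trilinearity it suffices to produce a constant $C=C(p,q,r)>0$ with $|b(u,v,w)|\le C\|u\|_{H^p}\|v\|_{H^{q+1}}\|w\|_{H^r}$ on the relevant product of spaces; well-definedness (i.e.\ absolute convergence of the defining integral) is then automatic. Componentwise one has $\|u_i\|_{H^p(\R^2;\R)}\le\|u\|_{H^p}$, $\|w_j\|_{H^r(\R^2;\R)}\le\|w\|_{H^r}$, and, since $\mathscr F[\partial_iv_j](\xi)=i\xi_i\mathscr F[v_j](\xi)$ with $|\xi_i|\le(1+|\xi|^2)^{1/2}$, also $\|\partial_iv_j\|_{H^q(\R^2;\R)}\le\|v\|_{H^{q+1}}$. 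Applying H\"older's inequality with exponents $\tfrac1a+\tfrac1b+\tfrac1c=1$ gives $|b(u,v,w)|\le\sum_{i,j=1}^2\|u_i\|_{L^a}\|\partial_iv_j\|_{L^b}\|w_j\|_{L^c}$, so it remains to choose $a,b,c\in[2,\infty]$ for which the scalar Sobolev embeddings $H^p(\R^2)\hookrightarrow L^a$, $H^q(\R^2)\hookrightarrow L^b$, $H^r(\R^2)\hookrightarrow L^c$ all hold, and then to bound each $L^a,L^b,L^c$ norm by the corresponding Sobolev norm. On $\R^2$ these embeddings hold precisely when $\tfrac1a\in[\tfrac{1-p}2,\tfrac12]$ for $0\le p<1$, respectively $\tfrac1a\in(0,\tfrac12]$ for $p=1$ and $\tfrac1a\in[0,\tfrac12]$ for $p>1$, and analogously for $b,c$. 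Hence the set of attainable values of $\tfrac1a+\tfrac1b+\tfrac1c$ is an interval whose supremum is $\tfrac32$ and whose infimum equals $\sum_{s\in\{p,q,r\}:\,s<1}\tfrac{1-s}2$, this infimum being attained if and only if none of $p,q,r$ equals $1$. A short case check then shows that $1$ lies in this interval exactly under the stated hypothesis: if all of $p,q,r$ differ from $1$ one needs $\sum_{s<1}\tfrac{1-s}2\le1$, i.e.\ $p+q+r\ge1$; if some index equals $1$ the infimum is not attained, so one needs $\sum_{s<1}\tfrac{1-s}2<1$, i.e.\ $p+q+r>1$. This exponent bookkeeping is the only mildly technical point of the proof.

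\emph{Algebraic identities.} I would first take $u,v,w\in C_c^\infty(\R^2;\R^2)$ with $\div u=0$, so that all integrations by parts below are classical. Then $b(u,v,w)+b(u,w,v)=\sum_{i,j}\int u_i\,\partial_i(v_jw_j)\d\mathscr L^2=\int(u\cdot\nabla)(v\cdot w)\d\mathscr L^2=-\int(\div u)(v\cdot w)\d\mathscr L^2=0$, which gives $b(u,v,w)=-b(u,w,v)$, i.e.\ \eqref{EQ:b_antysimmetry_1}, and \eqref{EQ:b_antysimmetry_2} is its specialisation $v=w$. For \eqref{EQ:b(x,x,Ay)=0}, using $Au=u-\Delta u$ (Remark \ref{REM:stokes_operator}) gives $b(u,u,Au)=b(u,u,u)-b(u,u,\Delta u)$, where the first summand vanishes by \eqref{EQ:b_antysimmetry_2}. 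Integrating by parts in $x_k$ and discarding $\tfrac12\int(u\cdot\nabla)|\nabla u|^2\d\mathscr L^2=-\tfrac12\int(\div u)|\nabla u|^2\d\mathscr L^2=0$, one obtains $b(u,u,\Delta u)=-\int_{\R^2}\sum_{i,j,k}(\partial_ku_i)(\partial_iu_j)(\partial_ku_j)\d\mathscr L^2$; the integrand equals $\Tr\!\big(a(a^\top)^2\big)$ with $a=\nabla u$, and this is identically $0$ because $a$ is a $2\times2$ matrix with $\Tr a=\div u=0$: Cayley--Hamilton yields $(a^\top)^2=-(\det a)\,\id$, hence $\Tr\!\big(a(a^\top)^2\big)=-(\det a)\Tr a=0$. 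Thus \eqref{EQ:b_antysimmetry_1}--\eqref{EQ:b(x,x,Ay)=0} hold for smooth compactly supported divergence-free fields. To pass to general arguments, I would invoke the density of $C_c^\infty(\R^2;\R^2)\cap\{\div=0\}$ in every $H^s$ with $s\ge0$ together with the continuity from the first part: each side of \eqref{EQ:b_antysimmetry_1}--\eqref{EQ:b(x,x,Ay)=0} is, on the appropriate product of Sobolev spaces, a continuous function of the arguments (for \eqref{EQ:b(x,x,Ay)=0} one also uses that $u\mapsto Au$ is continuous from $H^{s+2}$ to $H^s$), so the identities pass to the limit.

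The principal obstacle is the exponent analysis in the continuity part --- reconciling the two-dimensional Sobolev embedding ranges with the threshold $p+q+r\ge1$ and its endpoint refinement when an index equals $1$; everything else is elementary integration by parts (together with the $2\times2$ trace identity) followed by a routine density argument.
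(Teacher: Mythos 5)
Your proof is correct, and it is more self-contained than the paper's on the analytic side: the paper simply cites Temam (Lemma $2.1$ and equations $(2.33)$--$(2.34)$ of the cited reference) for the well-definedness and continuity of $b$ and for the first two identities, whereas you reprove the continuity estimate from scratch via componentwise H\"older plus the two-dimensional Sobolev embedding ranges, with the exponent bookkeeping correctly explaining why the threshold is $p+q+r\ge1$, strict exactly when one of the indices equals $1$ (the endpoint failure $H^1(\R^2)\not\hookrightarrow L^\infty$). For the identity $b(u,u,Au)=0$ your computation is structurally the same as the paper's (write $Au=u-\Delta u$, integrate by parts twice, use $\div u=0$, conclude by density), but your treatment of the cubic gradient term via Cayley--Hamilton, $\Tr\big(a(a^\top)^2\big)=-(\det a)\Tr a=0$ for the trace-free $2\times2$ matrix $a=\nabla u$, is cleaner than the paper's claimed factorization $\sum_{i,j,k}(\partial_ku_j)(\partial_ku_i)(\partial_iu_j)=(\div u)\sum_{i,j}(\partial_iu_j)^2$, which is not a pointwise identity for general $u$ (the two sides differ by $(\div u)\det(\nabla u)$ up to sign), although both sides do vanish when $\div u=0$, so the paper's conclusion stands. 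One remark: what you prove is the antisymmetry $b(u,v,w)=-b(u,w,v)$, which is the correct and intended statement --- as printed, \eqref{EQ:b_antysimmetry_1} is missing a minus sign, since symmetry in the last two arguments together with \eqref{EQ:b_antysimmetry_2} would force $b\equiv0$ by polarization. Finally, both you and the paper rely on the density of compactly supported smooth divergence-free fields in $H^s$; this is standard but deserves a word of justification (e.g.\ via truncation of stream functions or the Leray projection), since density of $C_c^\infty$ in $H^s(\R^2;\R^2)$ does not by itself yield density within the divergence-free subspace.
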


\begin{definition}
\label{DEF:B}
    Let $p,q,r$ satisfy the assumption of Lemma~\ref{LEM:b}, then we define the bilinear continuous operator
    \[
        B:H^p\times H^{q+1}\ni(u,v)\mapsto \Pi
        [(u\cdot\nabla)v]
        =\Pi\div(u\otimes v)\in H^{-r}.
    \]
    With a slight abuse of notation, by $B$ we will also denote the corresponding quadratic map
    \[
        B:H^{p\vee (q+1)}\ni u\mapsto B(u):=B(u,u)\in H^{-r},
    \]
    called Navier-Stokes nonlinearity.
\end{definition}
In the following lemma we summarise some frequently used estimates on the operators $b$ and $B$.
The proof is in Appendix~\ref{APP:aux}.
\begin{lemma}
\label{LEM:estimates_B}
    Assume that $\s>2$ and $\e>0$. 
    There exists a finite constant $c>0$ such that:
    \begin{align}
    \label{EQ:estim_B_1}
        &\|B(u,v)\|^{}_{H^{\s-1}}\leq c\|u\|^{}_{H^{\s-1}}\|v\|^{}_{H^{\s}}, && \forall\, u\in H^{\s-1}, \ v\in H^{\s},\\
    \label{EQ:estim_B_2}
        &\|B(u,v)\|\leq c\|u\|^{}_{H^{1}}\|v\|^{}_{H^{1+\e}}, && \forall\, u\in H^{1}, \ v\in H^{1+\e},\\
    \label{EQ:estim_B_3}
         &\|B(u,v)\|^{}_{H^{-\e}}\leq c\|u\|^{}_{H^{1}}\|v\|^{}_{H^{1}}, && \forall\, u\in H^{1}, \ v\in H^{1},\\
    \label{EQ:estim_B_4}
         &|b(u,v,w)|\leq c\|u\|^{}_{H^{1}}\|v\|^{}_{H^{1}}\|w\|^{}_{H^\e}, && \forall\, u\in H^{1}, \ v\in H^{1}, \ w\in H^{\e}.
    \end{align}
In particular, the quadratic maps $B:H^{\s}\to H^{\s-1}$, $B:H^{\s-1}\to H^{2-\s}$ are continuous.
\end{lemma}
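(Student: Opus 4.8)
The plan is to prove the four inequalities one at a time, in the order \eqref{EQ:estim_B_1}, \eqref{EQ:estim_B_2}, \eqref{EQ:estim_B_4}, \eqref{EQ:estim_B_3}, and then to deduce $B\in C(H^\s;H^{\s-1})$ from \eqref{EQ:estim_B_1}. Everything rests on the two equivalent expressions $B(u,v)=\Pi\div(u\otimes v)=\Pi\big((u\cdot\nabla)v\big)$ of Definition~\ref{DEF:B} (equal because the first argument is divergence-free), on the boundedness of the Leray projector $\Pi$ on every $H^s(\R^2;\R^2)$ (Definition~\ref{DEF:leray_projection}), and on the Sobolev embedding theorem in dimension two. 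For \eqref{EQ:estim_B_1}, the hypothesis $\s>2$ gives $\s-1>1$, so $H^{\s-1}(\R^2)$ is a Banach algebra; therefore each scalar product satisfies $\|u_i\,\partial_iv_j\|^{}_{H^{\s-1}}\le c\,\|u_i\|^{}_{H^{\s-1}}\|\partial_iv_j\|^{}_{H^{\s-1}}\le c\,\|u\|^{}_{H^{\s-1}}\|v\|^{}_{H^{\s}}$, and summing over $i,j\in\{1,2\}$ and using that $\Pi$ maps $H^{\s-1}(\R^2;\R^2)$ boundedly into $H^{\s-1}$ yields \eqref{EQ:estim_B_1}.

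For \eqref{EQ:estim_B_2} and \eqref{EQ:estim_B_4} it is enough to take $\e\in(0,1)$, the case $\e\ge1$ following from the case $\e=\tfrac12$ together with the embedding $H^{1+\e}\hookrightarrow H^{3/2}$, respectively $H^\e\hookrightarrow H^{1/2}$. For \eqref{EQ:estim_B_2} I would bound $\|B(u,v)\|\le\|(u\cdot\nabla)v\|^{}_{L^2(\R^2;\R^2)}$ and apply H\"older's inequality with the conjugate exponents $\tfrac2\e$ and $\tfrac2{1-\e}$, together with the Sobolev embeddings $H^1(\R^2)\hookrightarrow L^{2/\e}(\R^2)$ and $H^\e(\R^2)\hookrightarrow L^{2/(1-\e)}(\R^2)$, the latter applied to $\nabla v\in H^\e(\R^2;\R^2)$. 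Estimate \eqref{EQ:estim_B_4}, in which no projector appears, follows in the same fashion from H\"older's inequality with exponents $\tfrac2{1-\e}$, $2$, $\tfrac2\e$ applied respectively to $w_j$, $\partial_iv_j$, $u_i$ and from the same two embeddings.

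For \eqref{EQ:estim_B_3} I would argue by duality. For $u,v\in H^1$ one has $B(u,v)\in H^{-\e}$ (Definition~\ref{DEF:B} with $p=1$, $q=0$, $r=\e$), and since $H^{-\e}=(H^\e)'$,
\[
    \|B(u,v)\|^{}_{H^{-\e}}=\sup\Big\{\,\big|\prescript{}{H^{-\e}\!}{\lan}B(u,v),w\ran^{}_{H^\e}\big|\ :\ w\in H^\e,\ \|w\|^{}_{H^\e}\le1\,\Big\}.
\]
The crucial point is that $\prescript{}{H^{-\e}\!}{\lan}B(u,v),w\ran^{}_{H^\e}=b(u,v,w)$ for all $u,v\in H^1$ and all $w\in H^\e$. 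When $u,v,w$ are smooth, compactly supported and divergence-free this is immediate: then $B(u,v)\in H$, so the pairing reduces to the inner product $\lan B(u,v),w\ran$ by Remark~\ref{REM:duality=inner_product}, and $\lan\Pi\big((u\cdot\nabla)v\big),w\ran=\lan(u\cdot\nabla)v,w\ran=b(u,v,w)$ because $\Pi w=w$. Both sides are continuous in $(u,v,w)\in H^1\times H^1\times H^\e$ (the left one by bilinearity and boundedness of $B$ together with continuity of the duality pairing, the right one by Lemma~\ref{LEM:b}), and smooth compactly supported divergence-free fields are dense in $H^1$ and in $H^\e$, so the identity extends. Combining it with \eqref{EQ:estim_B_4} gives \eqref{EQ:estim_B_3}. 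Finally, for $u,u'\in H^\s$ bilinearity yields $B(u)-B(u')=B(u-u',u)+B(u',u-u')$, and \eqref{EQ:estim_B_1} gives $\|B(u)-B(u')\|^{}_{H^{\s-1}}\le c\,\|u-u'\|^{}_{H^{\s}}\big(\|u\|^{}_{H^{\s}}+\|u'\|^{}_{H^{\s}}\big)$, so $B$ is locally Lipschitz, hence continuous, from $H^\s$ into $H^{\s-1}$.

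The one step requiring genuine attention is the identification of the duality pairing $\prescript{}{H^{-\e}\!}{\lan}B(u,v),w\ran^{}_{H^\e}$ with the trilinear form $b(u,v,w)$ used for \eqref{EQ:estim_B_3}; the remaining arguments are routine manipulations with H\"older's inequality, with the algebra property of $H^{\s-1}(\R^2)$, and with the two-dimensional Sobolev embeddings, which must be used with target Lebesgue exponents $\ge2$, since on the unbounded domain $\R^2$ there is no gain of integrability below $L^2$.
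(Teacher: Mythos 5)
Your proof is correct, but it takes a more self-contained route than the paper. The paper disposes of the lemma almost entirely by citation: \eqref{EQ:estim_B_1} is taken directly from Kato--Ponce, and \eqref{EQ:estim_B_2}--\eqref{EQ:estim_B_4} are read off as instances of the continuity of the trilinear form $b$ established in Lemma \ref{LEM:b} (itself resting on Temam's Lemma $2.1$) with the parameter choices $(p,q,r)=(1,\e,0)$ and $(1,0,\e)$; the only argument actually written out is the bilinear decomposition giving local Lipschitz continuity of $B:H^{\s}\to H^{\s-1}$, which you reproduce verbatim. You instead prove everything from scratch: the Banach-algebra property of $H^{\s-1}(\R^2)$ for \eqref{EQ:estim_B_1} (which does suffice for the stated form of the estimate, even though Kato--Ponce is sharper), H\"older plus the two-dimensional subcritical Sobolev embeddings for \eqref{EQ:estim_B_2} and \eqref{EQ:estim_B_4} (with a correct reduction to $\e\in(0,1)$ and correctly matched exponents $\e/2+(1-\e)/2=1/2$), and a duality argument for \eqref{EQ:estim_B_3} whose key step --- the identification $\prescript{}{H^{-\e}\!}{\lan}B(u,v),w\ran^{}_{H^\e}=b(u,v,w)$ via orthogonality of $\Pi$ on divergence-free smooth fields and density --- you rightly single out and justify. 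What your approach buys is independence from the cited references and explicit constants traceable to Sobolev embedding norms; what the paper's approach buys is brevity and the reuse of the already-established well-posedness of $b$ in Lemma \ref{LEM:b}, which is where the real analytic content was placed. Both are sound.
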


\subsection{The 2D incompressible Euler Equation}

We project the Euler equations \eqref{EQ:proto_EE} on the space $H$ of divergence-free square-integrable vector fields, by means of the Leray operator.
This allows us to incorporate the incompressibility condition in the functional setting and to cancel out the gradient of the pressure. 
The equations thus reduce to the following ($2$D incompressible) Euler Equation:
\begin{equation}
    \label{EQ:euler}
            u'+B(u)=0, \qquad \textit{on }(0,+\infty).
\end{equation}  
We give the definition of a solution to this equation and recall an existence and uniqueness result from the literature.
\begin{definition}
\label{DEF:euler_solution}
    Assume that $\s>2$.
    We say that a function $u\in C(\R_+;H^\s)$ is a solution to the Euler Equation~\eqref{EQ:euler} if in $H^{\s}$:
    \begin{equation}
    \label{EQ:integral_euler}
        u(t)+\int_0^tB\big(u(s)\big)\d s =u(0), \qquad \forall\, t\geq 0.
    \end{equation}
\end{definition}
\begin{remark}
    If $t>0$ and $u\in C(\R_+;H^\s)$, then the integral in equation~\eqref{EQ:integral_euler} converges in $H^{\s-1}$ by Lemma~\ref{LEM:estimates_B}, hence the equation~\eqref{EQ:integral_euler} holds in $H^{\s-1}$. 
    However, since both $u(t)$ and $u(0)$ belong to $H^\s$, the equation defines the integral in $H^\s$ and the equality is thus true in $H^{\s}$.
\end{remark}

\begin{theorem}[\text{\cite[Theorem III]{Kato+Ponce_1986_well_posedness_Euler_N-S}, \cite[Theorem II]{Kato+Ponce_1987_nonstationary_flows_viscous_ideal_fluids}}]
\label{TH:solution_euler}
    Assume that $\s>2$. 
    If $x\in H^{\s}$, then there exists a unique solution $u^x$ to the Euler Equation~\eqref{EQ:euler} such that $u^x(0)=x$.
    Moreover, the Euler flow map
    \[
        \Phi:\R_+\times H^\s\ni (t,x)\mapsto \Phi(t,x):=u^x(t)\in H^\s
    \]
    is continuous.
\end{theorem}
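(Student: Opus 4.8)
The statement combines a well-posedness result (existence and uniqueness of a solution $u^x$ with prescribed highly regular initial datum $x \in H^\s$, $\s>2$) with the continuity of the associated flow map $\Phi$ on $\R_+ \times H^\s$. Since the theorem is explicitly attributed to Kato--Ponce, the "proof" here should really be a citation-driven translation of their results into the functional framework set up in this paper (the divergence-free Sobolev spaces $H^s$ of Definition \ref{DEF:divergencefree_Sobolev_spaces}, the Leray-projected nonlinearity $B$ of Definition \ref{DEF:B}, and the integral formulation of Definition \ref{DEF:euler_solution}). Let me sketch the route I would follow.

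First, for existence and uniqueness: Kato--Ponce work with the usual $L^2$-based Sobolev spaces $H^s(\R^2;\R^2)$ (or $W^{s,p}$) for the full velocity field, and they establish local-in-time existence, uniqueness, and persistence of regularity for the Euler equations in $\R^d$ with $s > d/2 + 1$; in dimension $d=2$ this is exactly the condition $\s > 2$. To import this, I would (i) note that our space $H^\s$ is precisely the closed subspace of divergence-free fields inside $H^\s(\R^2;\R^2)$, and that for divergence-free data the Kato--Ponce solution remains divergence-free for all time (the transport structure preserves $\div u = 0$, or equivalently the Leray projector commutes with the flow), so the Kato--Ponce solution lives in $H^\s$; (ii) check that their PDE formulation and our integral formulation \eqref{EQ:integral_euler} agree — applying $\Pi$ to $\partial_t u + (u\cdot\nabla)u = -\nabla p$ kills the pressure gradient (since $\Pi \nabla p = 0$) and yields $u' + B(u) = 0$, and integrating in time over $[0,t]$ in $H^{\s-1}$ (where $B(u(\cdot))$ is continuous by Lemma \ref{LEM:estimates_B}) gives \eqref{EQ:integral_euler}; (iii) upgrade from local to global in time, which in $d=2$ is classical: the Beale--Kato--Majda criterion combined with the conservation of $\|\omega\|_{L^\infty}$ (the vorticity $\omega = \operatorname{curl} u$ is transported by the flow in 2D, hence its $L^p$ norms, $1\le p\le\infty$, are constant) rules out finite-time blow-up of $\|u(t)\|_{H^\s}$, so the maximal solution is global and $u^x \in C(\R_+; H^\s)$. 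Uniqueness in this class is immediate from the Kato--Ponce uniqueness statement, or can be re-derived via a Gronwall estimate on $\|u-v\|_{H^{\s-1}}$ using the estimates of Lemma \ref{LEM:estimates_B} (the logarithmic Sobolev inequality is not even needed since $\s-1 > 1 = d/2$).

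Second, for continuity of $\Phi$ on $\R_+ \times H^\s$: Kato--Ponce's second cited paper (``nonstationary flows of viscous and ideal fluids'') establishes continuous dependence on initial data in the strong $H^\s$ topology — indeed the flow map $x \mapsto u^x$ is continuous from $H^\s$ into $C([0,T];H^\s)$ for each $T$. Joint continuity in $(t,x)$ then follows by a standard $\e/2$ argument: write $\|\Phi(t,x) - \Phi(t_0,x_0)\|_{H^\s} \le \|u^x(t) - u^{x_0}(t)\|_{H^\s} + \|u^{x_0}(t) - u^{x_0}(t_0)\|_{H^\s}$; the first term is controlled uniformly for $t$ in a compact interval by the continuous dependence, and the second goes to zero as $t\to t_0$ by $u^{x_0} \in C(\R_+;H^\s)$. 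I would also record the semigroup/cocycle property $\Phi(t+s,x) = \Phi(t,\Phi(s,x))$, which follows from uniqueness and is what makes $\Phi$ genuinely a flow; it is used later in the paper.

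The main obstacle — or rather the only genuinely non-mechanical point — is the transfer step: verifying carefully that the divergence-free constraint is preserved by the Kato--Ponce evolution and that their full-space PDE solution, once projected, coincides with the integral solution of Definition \ref{DEF:euler_solution} in the space $H^{\s-1}$, taking into account the slightly non-standard convention here ($H^s$ built from $(1+|x|^2)^{s/2}$ weights, so $A = I - \Delta$ rather than $-\Delta$, which affects the norms but not the PDE since $B$ does not see the zeroth-order part). Everything else is either a direct citation or a routine Gronwall/$\e$-$\delta$ argument. I would keep the written proof short, citing Kato--Ponce for the hard analytic core and spelling out only the divergence-free persistence, the global-in-time extension via vorticity control, and the passage to joint continuity.
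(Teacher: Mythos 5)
The paper gives no proof of this theorem at all: it is stated as a direct citation of Kato--Ponce (1986, Theorem III; 1987, Theorem II), so your citation-driven route — importing their $H^s$ well-posedness for $s>d/2+1$, checking divergence-free persistence and the equivalence of the projected integral formulation, globalizing via 2D vorticity conservation, and deducing joint continuity of $\Phi$ from continuous dependence plus time-continuity — is exactly the intended justification, only spelled out. Your sketch is correct and identifies the right (routine but worth-recording) transfer steps.
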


We aim to prove the existence of an invariant measure for the Euler Equation, whose definition is given below.

\begin{definition}
\label{DEF:inv_meas_euler}
    Assume that $\s>2$. 
    We say that a probability measure $\mu\in\mathscr P(H^\s)$ is an invariant measure for the Euler Equation~\eqref{EQ:euler} if:
    \begin{equation}
    \label{EQ:def_invariant_measure_euler}
        (\Phi_t)_\ast\mu=\mu\in\mathscr P(H^\s), \qquad \forall\, t\geq 0,
    \end{equation}
    where, for $t\geq 0$, $\Phi_t: H^\s\ni x\mapsto \Phi(t,x)\in H^\s$, and $\Phi$ has been introduced in Theorem~\ref{TH:solution_euler}.
\end{definition}

\subsection{The Ornstein-Uhlenbeck process}

As explained in the introduction, the invariant measure for the Euler Equation~\eqref{EQ:euler} will be obtained as the law of marginally stationary solutions, which are constructed as the limit of marginally stationary solutions for appropriate Navier-Stokes equations, as the kinematic viscosity tends to $0$.
In order to obtain good estimates for the invariant measures of the Navier-Stokes equation, \textit{i.e.}, uniform with respect to the kinematic viscosity, we add a white noise to the equation that vanishes in the inviscid limit together with the diffusion term.

A usual tool to study the solution to stochastic partial differential equations with additive noise exploits the Ornstein-Uhlenbeck process, as briefly explained in Remark~\ref{REM:additive_noise}, see \cite[Chapter $5$]{DaPrato+Zabczyk_2014_stochastic_equations_infinite} for a complete dissertation.
Therefore, we now consider the equation that defines this stochastic process, adapted to our setting, and recall its main properties. 
All the assertions follow from the general theory for linear equations with additive noise from \cite[Chapter $5$]{DaPrato+Zabczyk_2014_stochastic_equations_infinite}, hence the proof is postponed to Appendix~\ref{APP:aux}.

\begin{theorem}
\label{TH:z}
    Assume that $\b\geq 0$ and let $(\Omega, \F, \{\F_t\}_{t\geq 0}, \P)$ be an augmented filtered probability space with an adapted {$H^\b$-valued Wiener process} $W$. 
    Assume also that $\a>1$ and $\nu>0$. 
    Then, there exists a pathwise unique $H^\b$-valued predictable process $Z$ with regularity 
    \begin{align}
    \label{EQ:regularity_Z}
        &Z\in L^2\big(\Omega\times[0,T];H^{\b+\a}\big) \cap L^2\big(\Omega;C\big([0,T];H^{\b}\big)\big),
        \qquad \forall\, T>0,
    \end{align}
    that satisfies the weak formulation for the equation 
    \begin{equation}
    \label{EQ:SPDE_Z}
        \begin{cases}
            \d Z_t +\nu A^{\a}Z_t\d t =\sqrt\nu  \d W_t, \qquad \textit{for }t>0,\\
             Z_0=0,
        \end{cases}
    \end{equation}
    \textit{i.e.} if $\phi\in H^{\b+2\a}$, then $\P-a.s.$
    \begin{equation}
    \label{EQ:weakz}
        \lan Z_t, \phi\ran^{}_{\! H^\b} +\nu\int_0^t\lan Z_s, A^{\a}\phi\ran^{}_{\! H^\b}\d s =\sqrt\nu\lan W_t,\phi\ran^{}_{\! H^\b}, \qquad \forall\, t\geq 0.
    \end{equation}
    Furthermore, $Z$ satisfies the mild formulation for equation~\eqref{EQ:SPDE_Z}, \textit{i.e.} $\P-a.s.$ in $H^{\b}$
    \begin{equation}
    \label{EQ:mild_Z}
        Z_t=\sqrt \nu\int_0^te^{-\nu (t-s)A^{\a}}\d W_s,\qquad \forall\,t\geq0,
    \end{equation}
    and the strong formulation, \textit{i.e.} $\P-a.s.$ in $H^{\b-\a}$
    \begin{equation}
    \label{EQ:strong_Z}
        Z_t+\nu\int_0^tA^{\a}Z_s \d s = \sqrt \nu \, W_t,\qquad \forall \, t\geq 0,
    \end{equation}
    where a continuous modification for both integral processes in equations \eqref{EQ:mild_Z}, \eqref{EQ:strong_Z} is employed.
\end{theorem}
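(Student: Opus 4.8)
The plan is to construct $Z$ as the stochastic convolution $Z_t := \sqrt{\nu}\int_0^t e^{-\nu(t-s)A^\a}\d W_s$ and then verify all the claimed regularity and the three formulations. First I would check that this stochastic integral is well-defined in $H^\b$: since $-\nu A^\a$ generates a contraction analytic semigroup on $H^\b$ (Remark \ref{REM:stokes_operator}), and $W$ is an $H^\b$-valued Wiener process with some covariance operator $Q\in\mathcal L_1(H^\b)$, the Itô isometry gives
\[
    \E\|Z_t\|^2_{H^\b} = \nu\int_0^t \Tr\!\big[e^{-\nu sA^\a}Q e^{-\nu sA^\a}\big]\d s \leq \nu\, t\,\Tr[Q]<\infty,
\]
using that $\|e^{-\nu sA^\a}\|_{\mathcal L(H^\b)}\leq 1$ and the trace inequality \eqref{EQ:Tr_property}. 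This is standard stochastic-convolution theory as in \cite[Chapter 5]{DaPrato+Zabczyk_2014_stochastic_equations_infinite}; the process is predictable because it is adapted and (after choosing a continuous modification) path-continuous.

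Next I would establish the regularity \eqref{EQ:regularity_Z}. For the $L^2(\Omega\times[0,T];H^{\b+\a})$ bound, the key is the smoothing estimate $\|A^{\a/2}e^{-\nu sA^\a}\|_{\mathcal L(H^\b)}\leq C(\nu s)^{-1/2}$ from analyticity, so that
\[
    \E\int_0^T\|Z_t\|^2_{H^{\b+\a}}\d t = \nu\int_0^T\!\!\int_0^t \Tr\!\big[A^{\a/2}e^{-\nu sA^\a}Q e^{-\nu sA^\a}A^{\a/2}\big]\d s\,\d t \leq C\,\Tr[Q]\int_0^T\!\!\int_0^t (\nu s)^{-1}\nu\,\d s\,\d t,
\]
which is finite since $\int_0^t s^{-1}\d s$ diverges — so in fact one must be slightly more careful and use the genuine Da Prato–Zabczyk bound, i.e. $\E\int_0^T\|A^{\a/2}Z_t\|^2_{H^\b}\d t<\infty$ follows from the factorization method or directly from the identity $\nu\int_0^T\E\|A^{\a/2}Z_t\|^2_{H^\b}\d t \leq \tfrac12\E\|W_T\|^2_{H^\b} = \tfrac{T}{2}\Tr[Q]$, which is exactly the "energy balance" obtained by applying Itô's formula to $\|Z_t\|^2_{H^\b}$ (this is the rigorous version of the heuristic in the introduction). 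The continuity $Z\in L^2(\Omega;C([0,T];H^\b))$ then follows from the factorization method (Da Prato–Kwapień–Zabczyk), exploiting analyticity of the semigroup.

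For the three formulations: the mild formulation \eqref{EQ:mild_Z} is the definition of $Z$, so nothing is to prove beyond the existence of a continuous modification of the convolution integral, which the factorization argument supplies. The weak formulation \eqref{EQ:weakz} follows by testing the mild formula against $\phi\in H^{\b+2\a}$, using the self-adjointness of $A^\a$ and the semigroup identity $e^{-\nu(t-s)A^\a}$ together with a stochastic Fubini theorem to interchange the time integral and the stochastic integral; alternatively one verifies it directly from the stochastic convolution by an integration by parts in the semigroup variable. The strong formulation \eqref{EQ:strong_Z} then follows from \eqref{EQ:weakz} by a density argument: since $Z\in L^2(\Omega\times[0,T];H^{\b+\a})$ one has $A^\a Z\in L^2(\Omega\times[0,T];H^{\b-\a})$, and the map $\phi\mapsto \lan A^\a Z_s,\phi\ran_{H^\b}$ extends from $\phi\in H^{\b+2\a}$ to the duality pairing with $\phi\in H^{\b-\a}$; choosing $\phi$ appropriately (or passing to the limit over a dense family) upgrades \eqref{EQ:weakz} to the $H^{\b-\a}$-valued identity \eqref{EQ:strong_Z}, again after selecting a continuous modification of $\int_0^t A^\a Z_s\,\d s$. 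Finally, pathwise uniqueness follows because the difference of two solutions to \eqref{EQ:SPDE_Z} solves the deterministic linear equation $\partial_t v + \nu A^\a v = 0$ with $v_0=0$, whose only solution in the relevant class is $v\equiv 0$, by testing against $v$ and using positivity of $A^\a$.

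The main obstacle I expect is the careful handling of the regularity statement \eqref{EQ:regularity_Z}: one must correctly extract the spatial gain of $\a$ derivatives — which comes not from a naive pointwise-in-time smoothing estimate (that would diverge logarithmically) but from the Itô energy identity integrated in time — and simultaneously obtain the path-continuity in $H^\b$ via the factorization method, both of which require that the noise has trace-class covariance on $H^\b$ and that the semigroup is analytic. The interplay of the $\sqrt{\nu}$ scaling with these bounds is exactly what makes the estimates uniform-friendly later, but at this stage it is just bookkeeping.
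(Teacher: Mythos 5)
Your proposal is correct and takes essentially the same route as the paper: the paper also realises $Z$ as the stochastic convolution, invokes the Da Prato--Zabczyk theory (their Theorems $5.4$ and $5.38$) for the mild, weak and strong formulations, pathwise uniqueness and path continuity, and obtains the gain of $\a$ spatial derivatives in \eqref{EQ:regularity_Z} precisely from the It\^o energy identity for $\|Z_t\|^2_{H^\b}$ combined with a Burkholder--Davis--Gundy estimate, exactly as you indicate after correctly discarding the divergent pointwise smoothing bound. One cosmetic remark: with the $\sqrt{\nu}$ noise scaling the energy identity reads $\E\|Z_T\|^2_{H^\b}+2\nu\,\E\int_0^T\|Z_s\|^2_{H^{\b+\a}}\d s=\nu\Tr[Q_\b]\,T$, so the right-hand side of your display should carry an extra factor of $\nu$, which is immaterial for the finiteness asserted in \eqref{EQ:regularity_Z}.
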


\section{The stochastic hyperviscous  Navier-Stokes Equation}
\label{SEC:stoch_hyp_2D_NSE}

This section is devoted to the study of the $2$D hyperviscous Navier-Stokes equations, projected onto the space of square-integrable divergence-free vector fields, with an additive white noise, and coupled with an initial condition that can be either deterministic or random.
First, we give the definition of solution for the equation and specify its uniqueness property.
Next, we construct a solution that satisfies these properties and obtain some further regularities.

\begin{definition}
\label{DEF:solution_NSnu}
    Assume that $\nu>0$ and $\a>1$.
    We say that the Stochastic Hyperviscous Navier-Stokes Equation with kinematic viscosity $\nu$ and hyperviscous power $\a$ (henceforth contracted SHNS$_{\nu,\a}$ Equation)
    \begin{equation}
    \label{EQ:NSnu}
            \d X_t+\nu A^{\a}X_t\d t +B(X_t)\d t=\sqrt \nu \d W_t, \qquad \textit{for }t>0,
    \end{equation}
    has a solution if there exists an augmented filtered probability space $(\Omega,\F,\{\F_t\}_{t\geq 0},\P)$ with an adapted $H^{2\a}$-valued Wiener process $W$, and a predictable process $X:\R_+\times\Omega\to H^{2\a}$ such that:
    \begin{itemize}
        \item $X$ has $\P-a.s.$ trajectories in $C(\R_+;H^{2\a})\cap L^2_{loc}(\R_+;H^{3\a})$
        \item The following identity holds $\P-a.s.$ in $H^{\a}$
        \begin{equation}
        \label{EQ:def_solution_NS}
            X_t+\nu\int_0^tA^{\a}X_s\d s +\int_0^tB(X_s)\d s =X_0+\sqrt \nu \, W_t, \qquad \forall\, t\geq 0.
        \end{equation}
    \end{itemize}
    In this case, we say that $(\Omega,\F,\{\F_t\}_{t\geq 0},\P;W;X)$ is a solution to the SHNS$_{\nu,\a}$ Equation~\eqref{EQ:NSnu}.
\end{definition}

The definition of solution for the stochastic hyperviscous Navier-Stokes equation is given in a weak probabilistic sense, \textit{i.e.} with a non-fixed filtered probability space or Wiener process, in anticipation of Sections~\ref{SEC:markov_property} and \ref{SEC:stationary_solutions}, where we will need to work with different probability spaces.

\begin{definition}
\label{DEF:pathwise_uniqueness_NS}
    Assume that $\nu>0$ and $\a>1$.
    We say that the solutions to the SHNS$_{\nu,\a}$ Equation~\eqref{EQ:NSnu} are pathwise unique if, given two solutions $(\Omega,\F,\{\F_t\}_{t\geq 0},\P;W;X^i)$, $i=1,2$, such that $\P(X^1_0=X^2_0)=1$, then 
    \[
        \P\Bigg(\bigcap_{t\geq 0}\big\{X^1_t= X^2_t\big\}\Bigg)=1.
    \]
\end{definition}
\begin{definition}
\label{DEF:uniqueness_law_NS}
    Assume that $\nu>0$ and $\a>1$.
    We say that the solutions to the SHNS$_{\nu,\a}$ Equation~\eqref{EQ:NSnu} are unique in law if, given two solutions $(\Omega^i,\F^i,\{\F^i_t\}_{t\geq 0},\P^i;W^i;X^i)$, $i=1,2$, such that ${(X^1_0)_\ast\P^1=(X^2_0)_\ast\P^2}$ on $(H^{2\a},\mathscr B_{H^{2\a}})$, then for all $T>0$:
    \[
        (X^1)_\ast\P^1=(X^2)_\ast\P^2\in\mathscr P\big(C([0,T];H^{2\a})\big).
    \]
\end{definition}

\begin{remark}
\label{REM:additive_noise}
    Fix $\nu>0$ and $\a>1$.
    In order to study the solvability of the SHNS$_{\nu,\a}$ Equation~\eqref{EQ:NSnu}, we formally differentiate in time the process $V:=X-Z$, where $Z$ is defined in Theorem~\ref{TH:z} as the solution of the stochastic equation~\eqref{EQ:SPDE_Z}.
    We have, without any claim of rigour:
    \begin{align}
        \d V_t
        &=\d X_t-\d Z_t\\
        &=-\nu A^{\a}X_t\d t -B(X_t)\d t +\sqrt \nu \d W_t-\big(\nu A^{\a}Z_t\d t +\sqrt \nu \d W_t\big)\\
        &=-\nu A^{\a}V_t\d t -B(V_t+Z_t)\d t,\\
        V_0&=X_0-Z_0=X_0.
    \end{align}
    In this new equation, the stochastic term is reduced to the term $Z$ inside the nonlinearity and does not explicitly appear in the dynamics for $V$.
    Therefore, we are led to consider the following deterministic equation, for some $x\in H^{2\a}$:
    \begin{equation}
    \label{EQ:v_nu_weak}
        \begin{cases}
            v'+\nu A^{\a}v+B(v+z)=0, \qquad \textit{in }(0,+\infty),\\
            v(0)=x,
        \end{cases}
    \end{equation}
    where $z$ is a deterministic function which plays the role of a trajectory of the Ornstein-Uhlenbeck process, thus having its regularity. 
    Once the existence and uniqueness for the solution to this deterministic equation will have been proved, a map $\mathcal V:z\mapsto v$ will be properly defined, and the process $X$ will be recovered as $X:=\mathcal V(Z)+Z$.
\end{remark}

We now state and prove the main theorem that concerns the existence, uniqueness and dependence on parameters, for the deterministic problem~\eqref{EQ:v_nu_weak}.

\begin{theorem}
\label{TH:exisuniqv}
    Assume that $\nu>0$, $\a>1$, $x\in H^{2\a}$, and $z\in C(\R_+;H^{2\a})$.
    Then, there exists a unique function 
    \[
        v\in C^1(\R_+;H)\cap H^1_{loc}(\R_+;H^\a)\cap C(\R_+;H^{2\a})\cap L^2_{loc}(\R_+;H^{3\a}),
    \]
    that satisfies the problem~\eqref{EQ:v_nu_weak} in the mild sense in $H^{2\a}$,
    or strong sense in $H^{\a}$, see Lemma~\ref{LEM:v_mild_to_strong}. 
    Furthermore, the map
\begin{equation}
    \mathcal V_{\nu,\a}=\mathcal V:H^{2\a} \times  C(\R_+;H^{2\a}) \ni (x, z) \mapsto v \in C(\R_+;H^{2\a})\cap L^2_{loc}(\R_+;H^{3\a})
\end{equation}
is such that $\mathcal V(0,0)=0$ and satisfies the following estimate. 
For every $T>0$ and every $R>0$ there exists $C=C(T,R,\nu,\a)>0$ such that if $\|x_i \|^{}_{H^{2\a}} \leq R$ and $\sup_{t\in[0,T]}\|z_i(t)\|^{}_{H^{2\a}} \leq R$, $i=1,2$, then 
\begin{equation}
\label{EQ2:boundedness_continuity_v(x,z)}
\begin{gathered}
    \sup_{t\in[0,T]}\|\mathcal V(x_1,z_1)(t)-\mathcal V(x_2,z_2)(t)\|_{H^{2\a}}^2+\int_0^T\|\mathcal V(x_1,z_1)(t)-\mathcal V(x_2,z_2)(t)\|_{H^{3\a}}^2\d t \\
    \leq C\Big(\|x_1-x_2\|^2_{H^{2\a}}+\sup_{t\in[0,T]}\|z_1(t)-z_2(t)\|_{H^{2\a}}^2\Big).
\end{gathered}
\end{equation}
\end{theorem}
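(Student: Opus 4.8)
The plan is to treat the deterministic equation \eqref{EQ:v_nu_weak} by a standard fixed-point argument for local existence, followed by an \emph{a priori} energy estimate that upgrades the local solution to a global one and simultaneously delivers the Lipschitz bound \eqref{EQ2:boundedness_continuity_v(x,z)}. Since $z$ enters only through the non-linearity and $B$ is locally Lipschitz from $H^{2\a}$ to $H^{2\a-1}$ (Lemma \ref{LEM:estimates_B}, using $\s=2\a+1>2$), while the semigroup $\{e^{-\nu tA^\a}\}_{t\geq0}$ is analytic and contractive on every $H^s$ (Remark \ref{REM:stokes_operator}), the map
\[
    \Gamma(v)(t):=e^{-\nu tA^\a}x-\int_0^t e^{-\nu(t-s)A^\a}B\big(v(s)+z(s)\big)\d s
\]
is well-defined on $C([0,T];H^{2\a})$, and the smoothing estimate $\|e^{-\nu r A^\a}\|_{\mathcal L(H^{2\a-1},H^{2\a})}\lesssim (\nu r)^{-1/(2\a)}$ together with the integrability of $r\mapsto r^{-1/(2\a)}$ near $0$ makes $\Gamma$ a contraction on a small time interval on a ball of $C([0,T];H^{2\a})$; this gives a unique local mild solution in $C([0,T_0];H^{2\a})$, and Lemma \ref{LEM:v_mild_to_strong} identifies it with the strong solution. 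The $L^2_{loc}(\R_+;H^{3\a})$ regularity will come from the energy identity below, not from the contraction.

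The heart of the argument is the \emph{a priori} estimate. Testing the strong form \eqref{EQ:vstrong} (differentiated, i.e.\ \eqref{EQ:v_classical} with $v$ replaced by the $H^{2\a}$-valued solution) against $A^{2\a}v$ in $H$ and using $\lan A^\a v, A^{2\a}v\ran = \|v\|^2_{H^{3\a}}$, I obtain
\begin{equation}
    \frac12\frac{\d}{\d t}\|v\|^2_{H^{2\a}}+\nu\|v\|^2_{H^{3\a}}=-\big\lan B(v+z),A^{2\a}v\big\ran.
\end{equation}
For the right-hand side I bound $|\lan B(v+z),A^{2\a}v\ran|\leq \|B(v+z)\|_{H^{\a}}\|v\|_{H^{3\a}}$ and then use the bilinearity of $B$ together with estimate \eqref{EQ:estim_B_1} (with $\s-1=\a$, so $\s=\a+1>2$) to get $\|B(v+z)\|_{H^\a}\lesssim (\|v\|_{H^{2\a}}+\|z\|_{H^{2\a}})(\|v\|_{H^{2\a+1}}+\|z\|_{H^{2\a+1}})$; since $2\a+1\leq 3\a$ for $\a\geq1$, interpolation (Lemma \ref{LEM:sobolev_interpolation_inequality}) controls the $H^{2\a+1}$-norm of $v$ by $\|v\|_{H^{2\a}}^{\theta}\|v\|_{H^{3\a}}^{1-\theta}$, and Young's inequality absorbs the $\|v\|_{H^{3\a}}$ factors into $\tfrac{\nu}{2}\|v\|^2_{H^{3\a}}$ at the cost of a term of the form $C(\nu,\a)(1+\|z\|^{k}_{C([0,T];H^{2\a})})\|v\|^2_{H^{2\a}}$ plus a forcing term depending only on $z$. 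Grönwall's lemma then yields the global bound $\sup_{[0,T]}\|v\|^2_{H^{2\a}}+\nu\int_0^T\|v\|^2_{H^{3\a}}\d t\leq C(T,\|x\|_{H^{2\a}},\|z\|_{C([0,T];H^{2\a})},\nu,\a)$, which both prevents blow-up (hence gives global existence and the stated regularity) and shows $\mathcal V$ is well-defined; taking $x=0$, $z=0$ forces $v\equiv0$ by uniqueness, so $\mathcal V(0,0)=0$.

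For the Lipschitz estimate \eqref{EQ2:boundedness_continuity_v(x,z)}, set $v_i=\mathcal V(x_i,z_i)$ and $w=v_1-v_2$, $\zeta=z_1-z_2$. Then $w$ solves $w'+\nu A^\a w+B(v_1+z_1)-B(v_2+z_2)=0$ with $w(0)=x_1-x_2$, and by bilinearity $B(v_1+z_1)-B(v_2+z_2)=B(w+\zeta,v_1+z_1)+B(v_2+z_2,w+\zeta)$. Testing against $A^{2\a}w$ and repeating the estimate above — now using that $\|v_i\|_{C([0,T];H^{2\a})}$ and $\|z_i\|_{C([0,T];H^{2\a})}$ are all $\leq$ a constant depending on $R,T,\nu,\a$ from the a priori bound, since $\|x_i\|_{H^{2\a}}\leq R$ and $\sup_{[0,T]}\|z_i(t)\|_{H^{2\a}}\leq R$ — I arrive at a differential inequality of the form
\[
    \frac{\d}{\d t}\|w\|^2_{H^{2\a}}+\nu\|w\|^2_{H^{3\a}}\leq C(R,T,\nu,\a)\Big(\|w\|^2_{H^{2\a}}+\sup_{[0,T]}\|\zeta(t)\|^2_{H^{2\a}}\Big),
\]
and Grönwall (keeping the dissipative term to recover the $H^{3\a}$ space-time integral) gives \eqref{EQ2:boundedness_continuity_v(x,z)}. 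The main obstacle is the bookkeeping in the nonlinear estimate: making sure that every occurrence of the highest norm $\|v\|_{H^{3\a}}$ (resp.\ $\|w\|_{H^{3\a}}$) is genuinely absorbable — i.e.\ that after interpolation its exponent is strictly less than $2$ so Young's inequality applies — which is where the strict inequality $\s>2$ in Lemma \ref{LEM:estimates_B} and the gap $2\a+1<3\a$ (for $\a>1$) are used; the borderline behaviour as $\a\downarrow1$ should be checked but does not affect the stated range $\a>1$.
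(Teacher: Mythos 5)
Your local existence and uniqueness argument is fine (the paper runs the contraction in $\mathcal Y_T=C([0,T];H^{2\a})\cap L^2(0,T;H^{3\a})$ via a Fourier-side maximal-regularity estimate for the Duhamel operator rather than via the analytic-smoothing bound $\|e^{-\nu rA^\a}\|_{\mathcal L(H^{2\a-1},H^{2\a})}\lesssim(\nu r)^{-1/(2\a)}$, but both close), and the structure of your Lipschitz estimate for $w=v_1-v_2$ matches the paper's. The genuine gap is in the global \emph{a priori} estimate. After pairing the equation with $A^{2\a}v$ you have $\frac12\frac{\d}{\d t}\|v\|^2_{H^{2\a}}+\nu\|v\|^2_{H^{3\a}}\leq\|B(v+z)\|_{H^\a}\|v\|_{H^{3\a}}$ with $\|B(v+z)\|_{H^\a}\lesssim\|v+z\|_{H^\a}\|v+z\|_{H^{\a+1}}$, which is \emph{quadratic} in $v$. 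Whatever interpolation you use, after absorbing the admissible power of $\|v\|_{H^{3\a}}$ into $\frac\nu2\|v\|^2_{H^{3\a}}$ by Young, the remainder is superquadratic in $\|v\|_{H^{2\a}}$: e.g.\ with your choice the worst term is $\|v\|_{H^{2\a}}^{2-1/\a}\|v\|_{H^{3\a}}^{1+1/\a}\leq\frac\nu2\|v\|^2_{H^{3\a}}+C_\nu\|v\|_{H^{2\a}}^{2(2\a-1)/(\a-1)}$, and $2(2\a-1)/(\a-1)>4$ for every $\a>1$. So the differential inequality you obtain is of the type $y'\leq Cy^{p}+\dots$ with $p>1$ for $y=\|v\|^2_{H^{2\a}}$; Gr\"onwall does not apply, and this only reproduces the local bound — it does not exclude finite-time blow-up. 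Your claimed form $C(\nu,\a)(1+\|z\|^k)\|v\|^2_{H^{2\a}}+(\text{forcing in }z)$ is not what these estimates deliver: the issue you flag (absorbability of the $H^{3\a}$ factors) is not the obstruction; the loss of linearity in the lower norm is.

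The missing ingredient is a preliminary \emph{lower-order} energy estimate that exploits the cancellation $b(u,u,Au)=0$ (Lemma \ref{LEM:b}, equation \eqref{EQ:b(x,x,Ay)=0}). Testing the equation with $Av$ kills the worst part of the nonlinearity, leaving only cross terms with $z$, and yields a genuinely global bound on $\sup_{[0,T]}\|v\|^2_{H^1}+\nu\int_0^T\|v\|^2_{H^{\a+1}}\d t$ in terms of $\|x\|_{H^1}$ and $\|z\|_{C([0,T];H^1)}$ alone. With $\int_0^T\|v\|^2_{H^{\a+1}}\d t$ under control, the $H^{2\a}$-level estimate can then be closed by writing the nonlinear contribution as $\frac{c}{\nu}\|v\|^2_{H^{\a+1}}\cdot\|v\|^2_{H^{2\a}}$ (plus $z$-terms), i.e.\ with a Gr\"onwall coefficient that is merely $L^1$ in time rather than a power of the unknown; this is exactly the paper's two-tier argument in its Step~4, and it is what makes the solution global and the constant in \eqref{EQ2:boundedness_continuity_v(x,z)} depend only on $R,T,\nu,\a$.
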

\begin{proof}
    \textbf{\textit{Step $\mathbf{1}$.}} 
    Let us fix $\nu>0$ and $\a>1$. 
    We introduce
    \begin{equation}
    \label{EQ2:defGamma}
    \Gamma:H^{2\a}\times L^2(\R_+;H^{\a})\to L^2(\R_+;H^{3\a})\cap C_b(\R_+;H^{2\a}),
    \end{equation}
    such that, for $x\in H^{2\a}$ and $f\in L^2(\R_+;H^\a)$,
    \[
        \Gamma(x,f)(t):=e^{-\nu t A^{\a}}x-\int_0^{t} e^{-\nu (t-s)A^{\a}}f(s) \d s, \qquad \forall\, t\geq 0.
    \]
    We prove that $\Gamma$ is a well-defined bilinear operator and that it satisfies, for a constant $C_\nu>0$ and  for all $(x,f)\in H^{2\a}\times L^2(\R_+;H^{\a})$:
    \begin{equation}
    \label{EQ:estim_Gamma}
        \int_{0}^{+\infty}\|\Gamma (x,f)(t)\|^{2}_{H^{3\a}}\d t +\sup_{t\geq 0}\|\Gamma (x,f)(t)\|^{2}_{H^{2\a}}\leq 
        C_\nu\|x\|^{2}_{H^{2\a}}+
        C_\nu\int_{0}^{+\infty}\|f(t)\|^{2}_{H^{\a}}\d t .
        \end{equation}
    This directly implies that, for any $x\in H^{2\a}$ and $T>0$, the restriction
    \begin{equation}
    \label{EQ2:def_Gamma_xT}
        \Gamma_T^x:L^2(0,T;H^{\a})\ni f  \mapsto \Gamma^x_Tf:=\big(\Gamma(x,\tilde f\,)\big)\Big|_{[0,T]}\in C\big([0,T];H^{2\a}\big)\cap L^2(0,T;H^{3\a})
    \end{equation}
    is linear and bounded, where $\tilde f\in L^2(\R_+;H^\a)$ is the trivial extension of $f\in L^2(0,T;H^\a)$, \textit{i.e.} $\tilde f(t)=f(t)$ for $t\in[0,T]$ and  $\tilde f(t)=0$ for $t>T$.

    To this end, let us fix $x\in H^{2\a}$ and $f\in L^2(\R_+;H^\a)$. 
    For $a.e.\ t\geq 0$, we calculate the Fourier transform of $\Gamma (x,f)(t)$,
    denoting it by $\hat u(t,\,\cdot\,)$,  then for all $\xi\in\R^2$ we have:
    \begin{align}
        \hat u (t,\xi)&=e^{-\nu t (1+|\xi|^2)^{\a}}\mathscr F[x](\xi)-\int_0^te^{-\nu(t-s)(1+|\xi|^2)^{\a}}\mathscr F[f(s)](\xi)
        \d s \\
        &=:K_{\xi}(t)\mathscr F[x](\xi)-\Big( (\1_{\R_+}K_\xi)\ast \big(\1_{\R_+}
        \mathscr F[f(\,\cdot\,)](\xi)
        \big)\Big)(t).
    \end{align}
    In the last line, we defined $K_\xi:\R_+\ni t\mapsto e^{-\nu t (1+|\xi|^2)^{\a}}\in\R$ and rewrote the second integral as a convolution.
    We now use a straightforward generalisation of the Young convolution inequality, see \cite[Theorem $3.9.4$]{Bogachev_2007_measure_theory_1_2}: for measurable functions $g:\R\to\R$ and $h:\R\to \R^2$,
    \[
        \|g\ast h\|^{}_{L^p(\R;\R^2)}\leq \|g\|^{}_{L^q(\R)}\|h\|^{}_{L^r(\R;\R^2)},\qquad \forall \, p,q,r\in[1,+\infty] \ \textit{ s.t. } \ 1+\frac1p=\frac1q+\frac1r.
    \]
    We employ this estimate with the choice of parameters $p=r=2$, $q=1$, or $p=\infty$, $q=r=2$. 
    We obtain for all $\xi\in\R^2$:
    \begin{equation}
    \label{EQ:fourier_Gamma_1}
    \begin{aligned}
        \|
        \hat u (\,\cdot\,,\xi)\|^{2}_{L^2(\R_+;\R^2)}
        &\leq 2\int_0^{+\infty}e^{-2\nu t(1+|\xi|^2)^{\a}}\big|\mathscr F[x](\xi)\big|^2\d t +2\big\|\1_{\R_+}K_\xi\big\|^2_{L^1(\R)}\big\|\1_{\R_+}\mathscr F[f(\,\cdot\,)](\xi)\big\|^2_{L^2(\R;\R^2)}\\
        &= \frac{1}{\nu}(1+|\xi|^2)^{-\a}\big|\mathscr F[x](\xi)\big|^2+2\|K_\xi\|^{2}_{L^1(\R_+)}\big\|
        \mathscr F[f(\,\cdot\,)](\xi)
        \big\|^{2}_{L^2(\R_+;\R^2)},
    \end{aligned}
    \end{equation}
    and similarly:
    \begin{align}
    \label{EQ:fourier_Gamma_2}
        \|\hat u (\,\cdot\,,\xi)\|^{2}_{L^\infty(\R_+;\R^2)}
        \leq 2\big|\mathscr F[x](\xi)\big|^2+2\|K_{\xi}\|^{2}_{L^2(\R_+)}\big\|
        \mathscr F[f(\,\cdot\,)](\xi)\big\|^{2}_{L^2(\R_+;\R^2)}.
    \end{align}
    In particular, we have:
    \begin{align}
        \|K_\xi\|^{2}_{L^1(\R_+)}
        &=\bigg(\int_0^{+\infty}e^{-\nu r(1+|\xi|^2)^{\a}}\d r\bigg)^2 =\frac{1}{\nu^2}{(1+|\xi|^2)^{-2\a}},\\
        \|K_\xi\|^{2}_{L^2(\R_+)}
        &=\int_0^{+\infty}e^{-2\nu r(1+|\xi|^2)^{\a}}\d r =\frac{1}{2\nu}{(1+|\xi|^2)^{-\a}}.
    \end{align}
    By the Fubini Theorem and the last calculations, we infer:
    \begin{align}
        &\quad \ \|\Gamma (x,f)\|^{2}_{L^2(\R_+;H^{3\a})}+\|\Gamma(x,f)\|^{2}_{L^\infty(\R_+;H^{2\a})}\\
        &=\int_0^{+\infty} \int_{\R^2}(1+|\xi|^2)^{3\a}|\hat u (t,\xi)|^2\d \xi\d t
        +\esssup_{t\geq 0}\int_{\R^2}(1+|\xi|^2)^{2\a}|\hat u (t,\xi)|^2\d \xi\\
        &= \int_{\R^2}(1+|\xi|^2)^{3\a}\|\hat u (\,\cdot\,,\xi)\|^2_{L^2(\R_+;\R^2)}\d \xi
        +\int_{\R^2}(1+|\xi|^2)^{2\a}\|\hat u (\,\cdot\,,\xi)\|^2_{L^\infty(\R_+;\R^2)}\d \xi\\
        &\leq \bigg(2+\frac{1}{\nu}\bigg)\int_{\R^2}(1+|\xi|^2)^{2\a}\big|\mathscr F[x](\xi)\big|^2\d \xi
        +\bigg(\frac1\nu+\frac{2}{\nu^2}\bigg)\int_{\R^2}(1+|\xi|^2)^\a\big\|
        \mathscr F[f(\,\cdot\,)](\xi)
        \big\|^2_{L^2(\R_+;\R^2)}\d \xi\\
        &= \bigg({2+\frac{1}{\nu}}\bigg)\|x\|^{2}_{H^{2\a}}+\frac1\nu\bigg(1+{\frac2\nu}\bigg)\|f\|^{2}_{L^2(\R_+;H^{\a})}.
    \end{align}
    This is the sought estimate~\eqref{EQ:estim_Gamma}, once proved that $\Gamma (x,f)$ is continuous in time.
    
    As for the continuity of $\Gamma (x,f)$, first observe that $\hat u (\,\cdot\,,\xi)$ is continuous in time for $a.e.\ \xi\in\R^2$, then take a sequence of times $\{t_n\}_{n\in\N}$ convergent to a fixed $t\geq 0$. The above estimate allows to apply the Dominated Convergence Theorem and conclude: 
    \[
        \lim_{n\to\infty}\|\Gamma (x,f)(t_n)-\Gamma (x,f)(t)\|^2_{H^{2\a}} =\lim_{n\to\infty}\int_{\R^2}(1+|\xi|^2)^{2\a}\big|\hat u (t_n,\xi)-\hat u (t,\xi)\big|^2\d \xi=0.
    \]

\textbf{\textit{Step $\mathbf{2}$.}}
    For fixed $T>0$, we introduce the Banach space $\big(\mathcal Y_T,\|\,\cdot\,\|_{\mathcal Y_T}^{}\big)$ given by:
    \begin{equation}
    \label{EQ:def_Y_T}
    \begin{aligned}
        \mathcal Y_T&:= L^2(0,T;H^{3\a}) \cap  C\big([0,T];H^{2\a}\big),\\ 
        \|v\|^2_{\mathcal Y_T}&:= \|v\|^2_{L^2(0,T;H^{3\a})} + \|v\|^2_{C([0,T];H^{2\a})}, \qquad \forall\, v\in \mathcal Y_T.
    \end{aligned}
    \end{equation}
    Fix $z\in C(\R_+;H^{2\a})$. 
    We prove that the function 
    \[
        \Lambda_T:\mathcal Y_T\ni v\mapsto \Lambda_T(v):=\Gamma^x_T \big(B(v+z)\big)\in \mathcal Y_T
    \]
    is a well-defined contraction on bounded sets of $\mathcal Y_T$. 

    First of all, we recall that  for sufficiently regular $x_1,x_2,y\in H$:
    \[
        B(x_1+y)-B(x_2+y)=B(x_1-x_2,x_1+y)+B(x_2+y,x_1-x_2).
    \]
    Therefore, thanks to the estimate~\eqref{EQ:estim_B_1} in Lemma~\ref{LEM:estimates_B} (with $\s=\a+1>2$), and to the Sobolev embeddings, we have for $x_1,x_2,y\in H^{2\a}$:
    \begin{align}
        \|B(x_1+y)-B(x_2+y)\|^{}_{H^\a}
        &\leq \|B(x_1-x_2,x_1+y)\|^{}_{H^{\a}}+\|B(x_2+y,x_1-x_2)\|^{}_{H^{\a}}\\
        &\leq c\|x_1-x_2\|^{}_{H^{\a}}\|x_1+y\|^{}_{H^{\a+1}}+c\|x_2+y\|^{}_{H^{\a}}\|x_1-x_2\|^{}_{H^{\a+1}}\\
        &\leq c\|x_1-x_2\|^{}_{H^{2\a}}\Big[\|x_1\|^{}_{H^{2\a}}+\|x_2\|^{}_{H^{2\a}}+2\|y\|^{}_{H^{2\a}}\Big].
    \end{align}
    If we use this estimate together with the estimate~\eqref{EQ:estim_Gamma} from the previous step, we obtain for all ${v_1,v_2\in \mathcal Y_T}$:
    \begin{align}
        \|\Lambda_T(v_1)-\Lambda_T(v_2)\|^2_{\mathcal Y_T}&\leq 
        C_\nu\|B(v_1+z)-B(v_2+z)\|^2_{L^2(0,T;H^{\a})}\\
        &\leq 
        2c^2C_\nu\int_0^T\|v_1(t)-v_2(t)\|^2_{H^{2\a}}\Big[\|v_1(t)\|^{}_{H^{2\a}}+\|v_2(t)\|^{}_{H^{2\a}}+2\|z(t)\|^{}_{H^{2\a}}\Big]^2\d t \\
        &\leq  2c^2C_\nu T \Big[ \|v_1\|^{}_{\mathcal Y_T}+\|v_2\|^{}_{\mathcal Y_T}+2\|z\|^{}_{C([0,T];H^{2\a})} \Big]^2 \|v_1-v_2\|^2_{\mathcal Y_T}.
    \end{align}
    Similarly, we have for all $v\in \mathcal Y_T$:
    \begin{align}
        \|\Lambda_T(v)\|^{2}_{\mathcal Y_T}
        &\leq          C_\nu\|x\|^{2}_{H^{2\a}} + C_\nu\|B(v+z)\|^2_{L^2(0,T;H^{\a})} \\
        &\leq          C_\nu\|x\|^{2}_{H^{2\a}} + 2c^2C_\nu         T\Big[\|v\|^{}_{\mathcal Y_T}+\|z\|^{}_{C([0,T];H^{2\a})}\Big]^4.
    \end{align}
    Therefore, given $\e\in(0,1)$, we can choose $R>0$ and $T>0$ such that 
    \[
    \begin{cases}
        \ds 2c^2C_\nu T\Big[2R+2\sup_{t\in[0,T]}\|z(t)\|^{}_{H^{2\a}}\Big]^2\leq \e^2\\
        \ds
        C_\nu \|x\|^{2}_{H^{2\a}} + 2c^2C_\nu T\Big[R+\|z\|^{}_{C([0,T];H^{2\a})}\Big]^4 \leq R^2
    \end{cases},
    \]
    which in turn means that $\Lambda_T$ is an $\e$-contraction on the closed ball $\{v\in \mathcal Y_T \ \colon \ \|v\|^{}_{\mathcal Y_T}\leq R\}$. 
    Therefore, for a sufficiently small $T>0$, there exists a unique fixed point $v\in \mathcal Y_T$ for $\Lambda_T$, which is the unique mild solution to equation~\eqref{EQ:v_nu_weak} on the interval $[0,T]$. 
     
\textbf{\textit{Step $\mathbf{3}$.}} We prove a local uniqueness result for the solution. 

    Let us take $x_1,x_2\in H^{2\a}$ and $z_1,z_2\in C\big(\R_+;H^{2\a})$. 
    In the previous step we proved the existence of times $T_i>0$, for $i=1,2$, and mild  solutions $v_i\in C([0,T_i];H^{2\a})\cap L^2(0,T_i;H^{3\a})$, to the equation~\eqref{EQ:v_nu_weak} with parameters $x_i,z_i$, respectively. We restrict the functions to the time interval $[0,T]$, where $T:=T_1\wedge T_2$.
    
    On the interval $[0,T]$, we denote $w=v_1-v_2$, $\zeta=z_1-z_2$ and $u_i=v_i+z_i$, for $i=1,2$. 
    Then $w\in C([0,T];H^{2\a})\cap L^2(0,T;H^{3\a})$, $u_i\in C([0,T];H^{2\a})$, for $i=1,2$, and $w$ satisfies in $H^{2\a}$ the equation
    \[
        w(t)+\nu\int_0^tA^{\a}w(s)\d s +\int_0^tB(u_1(s))-B(u_2(s))\d s =x_1-x_2, \qquad \forall\, t\in[0,T].
    \]
    If we compute the {scalar distributional derivative in time}, we obtain:
    \[
        w'+\nu A^{\a}w+B(u_1(s))-B(u_2(s))=0,
    \]
    thus we also have $w'\in L^2(0,T;H^{\a})$. 
    We now take the duality product of this last equation with $A^{2\a}w(s)\in H^{-\a}$, for \textit{a.e.} $s\in[0,T]$, and integrate over the time interval $[0,t]\subseteq [0,T]$.
    \begin{equation}
    \label{EQ:w_duality}
    \begin{gathered}
        \int_0^t{\big\lan}A^{2\a}w(s),w'(s){\big\ran}_{\ss{\!H^{-\a}\!\times\!H^\a}}\d s+\nu \int_0^t{\big\lan} A^{2\a}w(s), A^{\a}w(s){\big\ran}_{\ss{\!H^{-\a}\!\times\!H^\a}}\d s \\
        =\int_0^t{\big\lan} A^{2\a}w(s),B(u_2(s))-B(u_1(s)){\big\ran}_{\ss{\!H^{-\a}\!\times\!H^\a}}\d s.
    \end{gathered}
    \end{equation}
    We recall from \cite[Chapter $3$, Lemma $1.2$]{Temam_2001_Navier-Stokes_equations} that the first integral equals $\big(\|w(t)\|_{H^{2\a}}^2-\|w(0)\|_{H^{2\a}}^2\big)/2$, while the  integrand of the second term is simply $\|w(s)\|^2_{H^{3\a}}$ for almost any $s\in[0,t]$.
    Moreover, almost everywhere in time:
    \begin{align}
        {\big\lan} A^{2\a}w,B(u_2)-B(u_1){\big\ran}_{\ss{\!H^{-\a}\!\times\!H^\a}}
        &=
        {\big\lan} A^{2\a}w,B(u_2,u_2-u_1)+B(u_2-u_1,u_1){\big\ran}_{\ss{\!H^{-\a}\!\times\!H^\a}}
        \\
        &=-{\big\lan} A^{2\a}w,B(u_1,w+\zeta){\big\ran}_{\ss{\!H^{-\a}\!\times\!H^\a}}
        -{\big\lan} A^{2\a}w,B(w+\zeta,u_2){\big\ran}_{\ss{\!H^{-\a}\!\times\!H^\a}}.
    \end{align}
    These last two terms can be controlled with the estimate~\eqref{EQ:estim_B_1} in Lemma~\ref{LEM:estimates_B} (with $\s=\a+1>2$), the Sobolev embeddings and the Young inequality.
    \begin{align}
        \Big|{\big\lan} A^{2\a}w,B(u_1,w+\zeta){\big\ran}_{\ss{\!H^{-\a}\!\times\!H^\a}}\Big|
        &\leq c\|u_1\|^{}_{H^{\a}}\|w+\zeta\|_{H^{\a+1}}^{}\|A^{2\a}w\|^{}_{H^{-\a}}\\
        &\leq \frac{\nu}{4}\|w\|^{2}_{H^{3\a}}+\frac{c}    {\nu}\|u_1\|^{2}_{H^{2\a}}\|w+\zeta\|_{H^{2\a}}^{2}.
    \end{align}
    Analogously, we have:
    \[                  
        \Big|{\big\lan} A^{2\a}w,B(w+\zeta,u_2){\big\ran}_{\ss{\!H^{-\a}\!\times\!H^\a}}\Big| 
        \leq \frac\nu4\|w\|^2_{H^{3\a}}+\frac{c}{\nu}\|u_2\|^2_{H^{2\a}}\|w+\zeta\|_{H^{2\a}}^{2}.
    \]
    Therefore, rewriting equation~\eqref{EQ:w_duality} as discussed and implementing these estimates, we obtain for all $t\in[0,T]$:
    \begin{equation}
    \begin{aligned}
    \label{EQ:esti_w_zeta}
        &\frac12\|w(t)\|^2_{H^{2\a}}+\frac\nu2\int_0^t\|w(s)\|^2_{H^{3\a}}\d s  \\
        \leq &\;\|x_1-x_2\|^2_{H^{2\a}}
        +\frac{c}\nu T^2\Big[\|u_1\|^2_{C([0,T];H^{2\a})} + \|u_2\|^2_{C([0,T];H^{2\a})}\Big] \|\zeta\|^2_{C([0,T];H^{2\a})}\\
        &\hspace{2.2cm}+\frac{c}\nu T\Big[\|u_1\|^2_{C([0,T];H^{2\a})}+\|u_2\|^2_{C([0,T];H^{2\a})}\Big] \int_0^t\|w(s)\|^2_{H^{2\a}}\d s.
    \end{aligned}
    \end{equation}
    Gr\"onwall's lemma yields for some finite constant $C=C(T,\nu,u_1,u_2)>0$:
    \begin{equation}\label{EQ2:estim_V_infty}
        \|v_1(t)-v_2(t)\|^2_{H^{2\a}}\leq  C\big(\|x_1-x_2\|^2_{H^{2\a}}+\|z_1-z_2\|^2_{C([0,T];H^{2\a})}\big), \qquad \forall\, t\in[0,T].
    \end{equation}
    If we insert this estimate back into equation~\eqref{EQ:esti_w_zeta}, we also obtain:
    \begin{equation}\label{EQ2:estim_V_L2}
        \int_0^T\|v_1(t)-v_2(t)\|^2_{H^{3\a}}\d t  \leq  C\big(\|x_1-x_2\|^2_{H^{2\a}}+\|z_1-z_2\|^2_{C([0,T];H^{2\a})}\big).
    \end{equation}

    The sought local uniqueness result can be directly inferred from the estimates~\eqref{EQ2:estim_V_infty} and \eqref{EQ2:estim_V_L2}.
    Specifically, if $x_1=x_2$ and $z_1=z_2$, then $v_1=v_2$ in $Y_{T}$.
 
\textbf{\textit{Step $\mathbf{4}$.}} 
    We prove \textit{a priori} estimates. 
    
    Assume that $T>0$ and that $v\in \mathcal Y_T$ satisfies equation~\eqref{EQ:vmild} on $[0,T]$, then we can differentiate in the distributional sense and obtain the differential equation in \eqref{EQ:v_nu_weak}. 
    We observe that $A^{\a}v$ and $B(v+z)$ belong to $L^2(0,T;H^{\a})$, thus also $v'\in L^2(0,T;H^{\a})$. 
    In particular, the differential  equation in \eqref{EQ:v_nu_weak} is to be interpreted in $H^{\a}$ almost everywhere in time.  

    For almost every time $t\in[0,T]$, we take the product in $H$ between the differential equation in \eqref{EQ:v_nu_weak} and $Av(t)\in H^{3\a-2}$. 
    We suppress for simplicity the dependence on $t$. Since $b(x,x,Ax)=0$ for all $x\in H^2$, 
    \begin{equation}
    \begin{aligned}  
        \frac12\frac{\d}{\d t}\|v\|^2_{H^1}+\nu\|v\|^2_{H^{\a+1}}
        &\leq |b(v+z,v+z,Av)|\\
        &= |b(v,v,Av)+b(z,v,Av)+b(v+z,z,Av)|\\
        &= |b(z,v,Av)+b(v+z,z,Av)|\\
        &\leq c\|z\|^{}_{H^1}\big(2\|v\|_{H^1}{}+\|z\|_{H^1}{}\big)\|Av\|^{}_{H^{\a-1}}\\
        &\leq \frac\nu2\|v\|^2_{H^{\a+1}}+\frac{4c^2}{\nu}\|z\|^2_{H^1}\|v\|^2_{H^1}+\frac{c^2}{\nu}\|z\|^4_{H^1}.
    \end{aligned}
    \end{equation}
    We used the estimate~\eqref{EQ:estim_B_4} in Lemma~\ref{LEM:estimates_B} (with $\e=\a-1>0$) and the Young inequality.
    We multiply by $2$ the previous estimate and rename the constant $c>0$. 
    By integrating over the time interval $[0,t]\subseteq[0,T]$, we get for all $t\in[0,T]$:
    \begin{equation}  
    \label{EQ2:estimatev}
        \|v(t)\|^2_{H^1}+\nu \int_0^t\|v(s)\|^2_{H^{\a+1}}\d s 
        \leq \|x\|^2_{H^1}+\frac{c}{\nu}\|z\|^2_{C([0,T];H^1)}\int_0^t\|v(s)\|^2_{H^1}\d s +\frac{c}{\nu}T\|z\|^4_{C([0,T];H^1)}.
    \end{equation}
    By Gr\"onwall's lemma we obtain for all $t\in[0,T]$:
    \begin{equation}
    \label{EQ2:estim_v_C0}
        \|v(t)\|^2_{H^1}
        \leq \Big[\|x\|^2_{H^1}+\frac{c}{\nu}T\|z\|^4_{C([0,T];H^1)}\Big]\exp\Big[\frac{c}{\nu}T\|z\|^2_{C([0,T];H^1)}\Big]=:f(T),
    \end{equation}
    where $f:[0,+\infty)\to[0,+\infty)$ is a continuous function depending only on $x,z,\nu$. 
    We insert this estimate in the right-hand side of equation~\eqref{EQ2:estimatev} and divide by $\nu$.
    \begin{equation}
    \label{EQ2:estim_v_L2}
        \int_0^t\|v(s)\|^2_{H^{\a+1}}\d s
        \leq \frac1\nu\|x\|^2_{H^1}+\frac{c}{\nu^2}T\|z\|^2_{C([0,T]H^1)}\big[f(T)+\|z\|^2_{C([0,T];H^1)}\big].
    \end{equation}
    
    For almost every time $t\in[0,T]$, we take the duality product between the differential equation in \eqref{EQ:v_nu_weak} and $A^{2\a}v(t)\in H^{-\a}$. We suppress for simplicity the dependence on $t$.
    \begin{equation}
    \begin{aligned}
        \frac12\frac{\d}{\d t}\|v\|^2_{H^{2\a}}+\nu\|v\|^2_{H^{3\a}}
        &\leq \Big|{\big\lan}A^{2\a}v,B(v+z)\big\ran^{}_{\ss{\!H^{-\a}\!\times\!H^\a}}\Big|\\
        &= \Big|{\big\lan}A^{2\a}v,B(v)+B(z,v)+B(v,z)+B(z)\big\ran^{}_{\ss{\!H^{-\a}\!\times\!H^\a}}\Big|\\ &\leq c\big(\|v\|_{H^{\a+1}}^{2}+2\|z\|^{}_{H^{\a+1}}\|v\|_{H^{\a+1}}^{}+\|z\|^{2}_{H^{\a+1}}\big)\|A^{2\a}v\|^{}_{H^{-\a}}\\
        &\leq 2c\big(\|v\|_{H^{\a+1}}^{2}+\|z\|^{2}_{H^{\a+1}}\big)\|v\|^{}_{H^{3\a}}\\
        &\leq \frac\nu2\|v\|^2_{H^{3\a}}+\frac{4c^2}{\nu}\|z\|^4_{H^{2\a}}
        +\frac{4c^2}{\nu}\|v\|^2_{H^{2\a}}\|v\|^2_{H^{\a+1}}.
    \end{aligned}
    \end{equation}
    We used the estimate~\eqref{EQ:estim_B_1} in Lemma~\ref{LEM:estimates_B} (with $\s=\a+1>2$), the Sobolev embeddings and the Young inequality.
    We multiply by $2$ and rename the constant $c>0$. 
    Integrating over the time interval $[0,t]\subseteq[0,T]$ yields, for all $t\in[0,T]$:
    \begin{equation}  
    \label{EQ2:estimate_v_H2}
        \|v(t)\|^2_{H^{2\a}}+\nu \int_0^t\|v(s)\|^2_{H^{3\a}}\d s 
        \leq \|x\|^2_{H^{2\a}}+\frac{c}{\nu}T\|z\|^4_{C([0,T];H^{2\a})}+\frac{c}{\nu}\int_0^t\|v(s)\|^2_{H^{2\a}}\|v(s)\|^2_{H^{\a+1}}\d s.
    \end{equation}
    Thanks to Gr\"onwall's lemma, we obtain:
    \begin{align}
    \label{EQ2:estim_v_C0_H2}
        \|v(t)\|^2_{H^{2\a}}
        &\leq \Big[\|x\|^2_{H^{2\a}}+\frac{c}{\nu}T\|z\|^4_{C([0,T];H^{2\a})}\Big]\exp\bigg[\frac{c}{\nu}\int_0^T\|v(s)\|^2_{H^{\a+1}}\d s\bigg]
        =:g(T),
    \end{align}
    where $g:[0,+\infty)\to[0,+\infty)$ is a continuous well-defined function, thanks to estimate~\eqref{EQ2:estim_v_L2}, and it depends only on $x,z,\nu$. 
    If we insert this estimate in the right-hand side of equation~\eqref{EQ2:estimate_v_H2} and later divide by $\nu$, we have:
    \begin{equation}
    \label{EQ2:estim_v_L2_3}
        \int_0^t \|v(s)\|^2_{H^{3\a}} \d s
        \leq \frac1\nu\|x\|^2_{H^{2\a}}+\frac{c}{\nu^2}T\|z\|^2_{C([0,T]H^{2\a})}\big[\|z\|^2_{C([0,T];H^{2\a})}+g^2(T)\big]=:h(T),
    \end{equation}
    where also $h:[0,+\infty)\to[0,+\infty)$ is a continuous function depending on $x,z,\nu$.

    \textbf{\textit{Step $\mathbf{5}$.}} 
    We prove that the solution is global. 
    Let us define
    \begin{equation}
    \label{EQ2:defT*}
        T_\ast :=\sup\big\{T>0 \ | \ \exists\, v\in \mathcal Y_T \ : \ v=\Lambda_T (v)\big\}.
    \end{equation}
    By \textit{Step $1$}, $T_\ast$ is well-defined and $T_\ast>0$. Moreover, by the local uniqueness result proved in \textit{Step $3$}, we can concatenate the functions $v=\Lambda_T(v)\in \mathcal Y_T$, for different values of $T\in (0,T_\ast)$, to define a unique function  $v_\ast \in C\big([0,T_\ast );H^{2\a}\big)\cap L^2_{loc}\big([0,T_\ast) ;H^{3\a}\big)$ such that 
    $v_\ast (t)=\Lambda_T \Big(v_\ast \big|_{[0,T]}\Big) (t)$ for all $0\leq t\leq T<T_\ast $.

    We will prove by contradiction that $T_\ast =+\infty$.

    If $T_\ast <+\infty$, then take a sequence of times $T_n\geq0$, $n\in\N$, convergent to $T_\ast $ from below. 
    Applying the \textit{a priori} estimates~\eqref{EQ2:estim_v_C0_H2}, \eqref{EQ2:estim_v_L2_3} from the previous step to $v_\ast $ on $[0,T_n]$, yields:
    \[
        \int_0^{T_\ast }\|v_\ast (t)\|^2_{H^{3\a}}\d t +\sup_{t\in[0,T_\ast )}\|v_\ast (t)\|^2_{H^{2\a}}=\lim_{n\to\infty}\Big\|v_\ast\big|_{[0,T_n]} \Big\|^2_{Y_{T_n}}
        \!\!\!\leq \lim_{n\to\infty}\big[g(T_n)+h(T_n)\big]=g(T_\ast )+h(T_\ast ).
    \]
    This proves that $v_\ast \in L^\infty(0,T_\ast;H^{2\a})\cap L^2(0,T_\ast;H^{3\a})$. 
    Additionally, by \textit{Step $1$}, $\Gamma^x_{T_\ast}\big(B(v_\ast+z)\big)\in Y_{T_\ast}$, hence we can define $v_\ast(T_\ast):=\Gamma^x_{T_\ast}\big( B(v_\ast+z)\big)(T_\ast)$ and obtain $v_\ast=\Lambda_{T_\ast}(v_\ast)\in Y_{T_\ast}$. In particular, $T_\ast $, as defined in equation~\eqref{EQ2:defT*}, is a maximum.

    However, if $T_\ast$ is a maximum, we can restart the equation with with $v_\ast(T_\ast)$, provided we shift in time the forcing.
    Define
    \[
        (\tau_{T_\ast}z)(t):=z(T_\ast+t), \qquad t\geq 0.
    \]
    Since $z\in C(\R_+;H^{2\a})$, we have $\tau_{T_\ast}z\in C(\R_+;H^{2\a})$.
    Hence, by \textit{Step $2$} applied to the initial datum $v_\ast(T_\ast)\in H^{2\a}$
    and to the forcing $\tau_{T_\ast}z$, there exist $T'>0$ and $\bar v\in \mathcal Y_{T'}$ such that, for all $t\in[0,T']$,
    \begin{equation}
    \label{EQ:barv_shifted_mild}
        \bar v(t)
        =e^{-\nu tA^\a}v_\ast(T_\ast)
        -\int_0^t e^{-\nu (t-s)A^\a}
        B\big(\bar v(s)+z(T_\ast+s)\big)\d s.
    \end{equation}
    We now define
    \[
        v(t):=
        \begin{cases}
            v_\ast(t), & t\in[0,T_\ast],\\[1mm]
            \bar v(t-T_\ast), & t\in[T_\ast,T_\ast+T'].
        \end{cases}
    \]
    Since $v(0)=v_\ast(T_\ast)$ by \eqref{EQ:barv_shifted_mild}, the function $v$ is well defined and $v\in \mathcal Y_{T_\ast+T'}$.
    It remains to prove that $v$ is a mild solution to~\eqref{EQ:v_nu_weak} on $[0,T_\ast+T']$. 
    For $t\in[0,T_\ast]$ this is immediate, since $v(t)=v_\ast(t)$.
    Let now $t\in[T_\ast,T_\ast+T']$. 
    By \eqref{EQ:barv_shifted_mild}, the definition of $v$, and the change of variable $r=T_\ast+s$, we get
    \begin{align}
        v(t)
        &=e^{-\nu (t-T_\ast)A^\a}v_\ast(T_\ast)
        -\int_0^{t-T_\ast} e^{-\nu (t-T_\ast-s)A^\a}
        B\big(\bar v(s)+z(T_\ast+s)\big)\d s \notag\\
        &=e^{-\nu (t-T_\ast)A^\a}v_\ast(T_\ast)
        -\int_{T_\ast}^{t} e^{-\nu (t-r)A^\a}
        B\big(v(r)+z(r)\big)\d r.
        \label{EQ:tildev_after_Tstar}
    \end{align}
    On the other hand, since $v_\ast$ is a mild solution on $[0,T_\ast]$, we have
    \[
        v_\ast(T_\ast)
        =e^{-\nu T_\ast A^\a}x
        -\int_0^{T_\ast} e^{-\nu (T_\ast-r)A^\a}
        B\big(v_\ast(r)+z(r)\big)\d r.
    \]
    Applying the semigroup $e^{-\nu (t-T_\ast)A^\a}$ and using the semigroup property,
    we infer
    \begin{align}
        e^{-\nu (t-T_\ast)A^\a}v_\ast(T_\ast)
        &=e^{-\nu tA^\a}x
        -\int_0^{T_\ast} e^{-\nu (t-r)A^\a}
        B\big(v_\ast(r)+z(r)\big)\d r \notag\\
        &=e^{-\nu tA^\a}x
        -\int_0^{T_\ast} e^{-\nu (t-r)A^\a}
        B\big(v(r)+z(r)\big)\d r.
        \label{EQ:semigroup_restart}
    \end{align}
    Combining \eqref{EQ:tildev_after_Tstar} and \eqref{EQ:semigroup_restart}, we obtain
    \[
        v(t)
        =e^{-\nu tA^\a}x
        -\int_0^{t} e^{-\nu (t-r)A^\a}
        B\big(v(r)+z(r)\big)\d r,
        \qquad \forall\, t\in[T_\ast,T_\ast+T'].
    \]
    Therefore $v=\Lambda_{T_\ast + T'}(v)$, contradicting the definition of $T_\ast$ in \eqref{EQ2:defT*}. 
    We conclude by contradiction that $T_\ast =+\infty$, namely the unique local solution $v_\ast $ is global.

\textbf{\textit{Step $\mathbf{6}$.}} 
    In the previous step we proved the existence of a unique solution $v$ to the problem~\eqref{EQ:v_nu_weak} with regularity ${C(\R_+;H^{2\a})\cap L^2_{loc}(\R_+;H^{3\a})}$, thus ensuring the well-posedness of the function $\mathcal V$ as in the statement of the theorem.
    It only remains to prove the additional estimate~\eqref{EQ2:boundedness_continuity_v(x,z)}. 
    
    Assume that $T>0$ and $(x_i,z_i)\in H^{2\a}\times C\big([0,T];H^{2\a}\big)$, $i=1,2$, are such that $\|x_i\|^{}_{H^{2\a}}\leq R$ and $\sup_{t\in[0,T]}\|z_i(t)\|^{}_{H^{2\a}}\leq R$ for $i=1,2$. Let us denote $v_i:=\mathcal V(x_i,z_i)$, then estimates~\eqref{EQ2:estim_v_C0_H2}, \eqref{EQ2:estim_v_L2} and \eqref{EQ2:estim_v_C0} imply the existence of a constant $K>0$ depending on $R,T,\nu>0$ and $\a>1$ such that 
    \[
        \|v_i+z_i\|^{}_{C([0,T];H^{2\a})}\leq \|v_i\|^{}_{C([0,T];H^{2\a})}+\|z_i\|^{}_{C([0,T];H^{2\a})}\leq K.
    \]
    We use this estimate in inequality~\eqref{EQ:esti_w_zeta}.
    For all $t\in[0,T]$,
    \begin{gather}\label{EQ:esti_w_zeta_R}
            \frac12\|v_1(t)-v_2(t)\|^2_{H^{2\a}}+\frac\nu2\int_0^t\|v_1(s)-v_2(s)\|^2_{H^{3\a}}\d s  \\
            \leq \|x_1-x_2\|^2_{H^{2\a}}+\frac{2c}\nu T^2K^2 \|z_1-z_2\|^2_{C([0,T];H^{2\a})}
            +\frac{2c}\nu TK^2 \int_0^t\|v_1(s)-v_2(s)\|^2_{H^{2\a}}\d s.
    \end{gather}
    Gr\"onwall's Lemma gives an estimate for $\sup_{t\in[0,T]}\|v_1(t)-v_2(t)\|^2_{H^{2\a}}$. We insert this estimate back in \eqref{EQ:esti_w_zeta_R} and find the sought inequality~\eqref{EQ2:boundedness_continuity_v(x,z)} for a new constant $C>0$ depending on $R,T,\nu>0$ and $\a>1$.
 \end{proof}

 Let us now adapt the deterministic results of the last theorem to the stochastic hyperviscous Navier-Stokes Equation, as discussed in Remark~\ref{REM:additive_noise}.

\begin{theorem}
\label{TH:exi!_solutions_NS_X}
    Assume that $\a>1$.
    \begin{enumerate}[label=(\roman*)]
    \item 
    \label{IT:solutions_NS_uniqueness}
        For any $\nu>0$, the solutions to the SHNS$_{\nu,\a}$ Equation~\eqref{EQ:NSnu} are both pathwise unique and unique in law, see Definitions~\ref{DEF:pathwise_uniqueness_NS} and \ref{DEF:uniqueness_law_NS}.
    \item 
        Assume that $\nu>0$ and that $(\Omega,\F,\{\F_t\}_{t\geq 0},\P)$ is an augmented filtered probability space with an adapted $H^{2\a}$-valued Wiener process $W$.
        Then we have the following properties:
        \begin{enumerate}[label=(ii.\alph*)]
        \item
        \label{IT:solutions_NS_random}
            If $\xi:\Omega\to H^{2\a}$ is an $\F_0$-measurable random variable, and if $X^\xi$ is defined $\P-a.s.$ by
            \[
                X^\xi_t:=\mathcal V(\xi,Z)(t)+Z_t, \qquad \forall \,t\geq0,
            \]
            where $Z$, $\mathcal V$ were introduced in Theorems  \ref{TH:z} and \ref{TH:exisuniqv}, then $(\Omega,\F,\{\F_t\}_{t\geq 0},\P;W;X^\xi)$ is a solution to the SHNS$_{\nu,\a}$ Equation~\eqref{EQ:NSnu} such that $\P(X^\xi_0=\xi)=1$.
        \item
        \label{IT:solutions_NS_determin}
            For any $x\in H^{2\a}$, let $(\Omega,\F,\{\F_t\}_{t\geq 0},\P;W;X^x)$ be the pathwise unique solution to the SHNS$_{\nu,\a}$ Equation~\eqref{EQ:NSnu} such that $\P(X^x_0=x)=1$.
            Then, for any $T>0$, the function
            \[
                H^{2\a}\ni x\mapsto X^x\in C([0,T];H^{2\a})\cap L^2(0,T;H^{3\a}),
            \]
            is $\P-a.s.$ continuous.  
        \end{enumerate}
    \item
    \label{IT:solutions_NS_regularities}
        If $\nu>0$ and if $(\Omega,\F,\{\F_t\}_{t\geq 0},\P;W;X)$ is a solution to the SHNS$_{\nu,\a}$ Equation~\eqref{EQ:NSnu} such that  
        \[
            X_0\in \Bigg(\bigcap_{p\geq 1}L^p(\Omega;H^1)\Bigg)\cap L^4(\Omega;H^\a)\cap L^2(\Omega;H^{2\a}),
        \]
        then for all $T>0$
        \begin{equation}
        \begin{gathered}
        \label{EQ:solution_NS_regularities}
            X\in \Bigg(\bigcap_{p\geq 1}L^p\big(\Omega;C\big([0,T];H^{1}\big)\big)\Bigg)
            \cap L^4\big(\Omega;C\big([0,T];H^{\a}\big)\big)\\
            \cap \; L^2\big(\Omega;C\big([0,T];H^{2\a}\big)\big)
            \cap L^2\big(\Omega\times[0,T];H^{3\a}\big).    
        \end{gathered}
        \end{equation}
    \item 
    \label{IT:solutions_NS_estimates}
        
        For any $p\geq 2$, there exists $C_p>0$, such that, if $\nu>0$, and if $(\Omega,\F,\{\F_t\}_{t\geq 0},\P;W;X)$ is a solution to the SHNS$_{\nu,\a}$ Equation~\eqref{EQ:NSnu}, then the following estimates hold true for any $T>0$:
        \begin{align}
        \label{EQ:estim_E_sup_X_p_1}
            &\E\Bigg[\sup_{t\in[0,T]}\|X_t\|^p_{H^1}
            +\nu\int_0^T\|X_t\|^2_{H^{\a+1}}\|X_t\|^{p-2}_{H^1}\d t\Bigg]
            \leq C_p\big(\E\|X_0\|^p_{H^1}+(T \nu)^{p/2}\big),\\
        \label{EQ:estim_E_XT_p_1}
            &{\E\big\|X_T\big\|^p_{H^1}
            \leq e^{-(p-1)\nu\,T}\E\|X_0\|^p_{H^1}}+C_p.
        \end{align}
        Moreover, there exists a finite constant $C>0$ such that, if $\nu>0$, and if $(\Omega,\F,\{\F_t\}_{t\geq 0},\P;W;X)$ is a solution to the SHNS$_{\nu,\a}$ Equation~\eqref{EQ:NSnu}, then the following estimates hold true for any $T>0$:
        \begin{align}
        \label{EQ:estim_E_sup_X_2_a}
            &\E\Bigg[\sup_{t\in[0,T]}\|X_t\|^2_{H^\a}
            +\nu\int_0^T\|X_t\|^2_{H^{2\a}}\d t\Bigg]
            \leq 
            C\bigg(\E\|X_0\|^2_{H^\a}+\frac{1}{\nu^2}\E\|X_0\|^2_{H^1}+\frac {T}{\nu}+T \nu\bigg),\\
        \label{EQ:estim_E_sup_X_4_a}
            &\begin{aligned}
                &\E\Bigg[\sup_{t\in[0,T]}\|X_t\|^4_{H^\a}
                +\nu\int_0^T\|X_t\|^2_{H^{2\a}}\|X_t\|^{2}_{H^\a}\d t\Bigg]
                \leq 
                C\bigg(\E\|X_0\|^4_{H^\a}
                +\frac{T}{\nu^3}\E\|X_0\|^{8}_{H^1}\\
                &\hspace{9.8cm}
                +T^2\nu(T^3+\nu)\bigg)
            \end{aligned}\\
        \label{EQ:estim_E_sup_X_2_2a}
            &\begin{aligned}
                &\E\Bigg[
                \sup_{t\in[0,T]}\|X_t\|^2_{H^{2\a}} +\nu\int_0^T\|X_t\|^2_{H^{3\a}}\d t
                \Bigg]\leq 
                C\bigg(\E \|X_0\|^2_{H^{2\a}}
                +\frac{1}{\nu^2}\E\|X_0\|^4_{H^\a}+\frac {T}{\nu^5}\E\|X_0\|^{8}_{H^1}\\
                &\hspace{10.5cm}
                +\frac{T^5}{\nu}+T^2+T\nu
                \bigg)
            \end{aligned}\\
        \label{EQ:estim_E_XT_4_a}
            &\E\big\|X_T\big\|^4_{H^{\a}}
            \leq e^{-\nu T}\E\|X_0\|^4_{H^{\a}}+ 
            C\bigg(\frac{1}{\nu^3}\E\|X_0\|^8_{H^1}+T^4+1\bigg),\\
        \label{EQ:estim_E_XT_2_2a}
            &\E\big\|X_T\big\|^2_{H^{2\a}}
            \leq e^{-\nu T}\E\|X_0\|^2_{H^{2\a}}
            + C\bigg(\frac{1}{\nu^2}\E\|X_0\|^4_{H^\a}+\frac {T}{\nu^5}\E\|X_0\|^{8}_{H^1}+\frac{T^5}{\nu}+T^2+1\bigg).
        \end{align}
    \end{enumerate}
\end{theorem}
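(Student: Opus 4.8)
The plan is to base everything on the decomposition $X=V+Z$, where $Z$ is the Ornstein--Uhlenbeck process of Theorem~\ref{TH:z} with $\b=2\a$ and $V=\mathcal V(X_0,Z)$ solves the random deterministic problem \eqref{EQ:v_nu_weak} of Theorem~\ref{TH:exisuniqv}. Theorem~\ref{TH:z} provides $Z\in L^2(\Omega;C([0,T];H^{2\a}))\cap L^2(\Omega\times[0,T];H^{3\a})$ for every $T$, so $\P$-a.s.\ the trajectory $Z(\omega)$ lies in $C(\R_+;H^{2\a})\cap L^2_{loc}(\R_+;H^{3\a})$ and $\mathcal V(\xi(\omega),Z(\omega))$ is defined pathwise. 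For part~\ref{IT:solutions_NS_random} I would set $X^\xi:=\mathcal V(\xi,Z)+Z$: predictability follows from that of $Z$, the $\F_0$-measurability of $\xi$ and the continuity (hence Borel-measurability) of $\mathcal V$, while summing the strong formulations \eqref{EQ:vstrong} (with $x=\xi$, $z=Z$) and \eqref{EQ:strong_Z} yields \eqref{EQ:def_solution_NS} in $H^\a$ with $X^\xi_0=\mathcal V(\xi,Z)(0)+Z_0=\xi$. For part~\ref{IT:solutions_NS_determin}, estimate \eqref{EQ2:boundedness_continuity_v(x,z)} with $z_1=z_2=Z(\omega)$ shows that $x\mapsto\mathcal V(x,Z(\omega))+Z(\omega)$ is continuous into $C([0,T];H^{2\a})\cap L^2(0,T;H^{3\a})$ for $\P$-a.e.\ $\omega$.

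For the uniqueness in part~\ref{IT:solutions_NS_uniqueness}, given two solutions on the same stochastic basis driven by the same $W$ with $X^1_0=X^2_0$ a.s., the processes $X^i-Z$ solve $\P$-a.s.\ the deterministic problem \eqref{EQ:v_nu_weak} with the same datum and the same $z=Z(\omega)$, so Theorem~\ref{TH:exisuniqv} forces $X^1-Z=X^2-Z$, i.e.\ pathwise uniqueness. Uniqueness in law then follows from the Yamada--Watanabe principle, or directly: an $\{\F_t\}$-Wiener process is independent of $\F_0$, hence of $\xi$, and $X$ is a fixed Borel image of $(\xi,W)$ obtained by composing the solution map of \eqref{EQ:SPDE_Z} with $\mathcal V$, so its law is determined by the law of $\xi$.

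Parts~\ref{IT:solutions_NS_estimates} and~\ref{IT:solutions_NS_regularities} rest on the It\^o formula of Lemma~\ref{LEM:Ito_formula_h(|x|^2_b)} applied to $h\big(\|X_t\|^2_{H^\g}\big)$ for $\g\in\{1,\a,2\a\}$ --- admissible since $\g\le\b=2\a$ and a solution satisfies the equation in $H^{2\a}$, because $X\in L^2_{loc}(\R_+;H^{3\a})$ forces $B(X)\in L^2_{loc}(\R_+;H^{2\a})$ through \eqref{EQ:estim_B_1} --- together with a localization along the stopping times $\t_N:=\inf\{t\ge0:\|X_t\|_{H^{2\a}}\ge N\}$ and monotone convergence. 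At $\g=1$ the trilinear term vanishes because $\lan B(u),u\ran_{H^1}=b(u,u,Au)=0$ by \eqref{EQ:b(x,x,Ay)=0}; with $h(r)=r^{p/2}$, the Burkholder--Davis--Gundy and Young inequalities control the martingale supremum and give \eqref{EQ:estim_E_sup_X_p_1}, while the plain expectation, using $\|\cdot\|_{H^{1+\a}}\ge\|\cdot\|_{H^1}$ and Young's inequality to absorb the trace terms, gives a linear differential inequality for $t\mapsto\E\|X_t\|^p_{H^1}$ with negative linear coefficient proportional to $\nu$, hence \eqref{EQ:estim_E_XT_p_1}. At $\g=\a$ (with $h(r)=r$ and $h(r)=r^2$) and $\g=2\a$ (with $h(r)=r$), the trilinear term no longer cancels, and I would bound $\big|\lan B(X_s),X_s\ran_{H^\g}\big|$ by means of \eqref{EQ:estim_B_1} with $\s=\a+1$, the interpolation inequality of Lemma~\ref{LEM:sobolev_interpolation_inequality} between $H^1$, $H^\g$ and $H^{\g+\a}$, and Young's inequality, absorbing a small multiple of the dissipation $\nu\|X_s\|^2_{H^{\g+\a}}$ into the left-hand side and leaving only powers of $\|X_s\|_{H^\a}$ and $\|X_s\|_{H^1}$ already controlled at the lower levels. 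This yields \eqref{EQ:estim_E_sup_X_2_a}--\eqref{EQ:estim_E_XT_2_2a}, and inserting the hypothesised integrability of $X_0$ into these bounds gives at once the regularity \eqref{EQ:solution_NS_regularities}.

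The main obstacle is the closing of the higher-order estimates: the hyperdissipation must dominate the trilinear nonlinearity after interpolating across the whole scale $H^1\subset H^\a\subset H^{2\a}\subset H^{3\a}$, and one must track the exact polynomial dependence on $\nu^{-1}$ and $T$ --- responsible for factors such as the $\nu^{-5}$ and $T^5$ in \eqref{EQ:estim_E_sup_X_2_2a} --- as well as the precise order of the $H^1$-, $H^\a$- and $H^{2\a}$-moments of $X_0$ consumed at each level. This bookkeeping is exactly what dictates the chain of hypotheses in part~\ref{IT:solutions_NS_regularities} and the constants in part~\ref{IT:solutions_NS_estimates}.
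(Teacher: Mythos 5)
Your proposal follows essentially the same route as the paper: the decomposition $X=\mathcal V(X_0,Z)+Z$ via the Ornstein--Uhlenbeck process for existence, pathwise uniqueness and the Feller-type continuity, a Yamada--Watanabe-type argument for uniqueness in law, and repeated applications of the It\^o formula of Lemma \ref{LEM:Ito_formula_h(|x|^2_b)} at the levels $\g\in\{1,\a,2\a\}$ combined with \eqref{EQ:b(x,x,Ay)=0}, Burkholder--Davis--Gundy, interpolation and the Gr\"onwall lemmas for the moment estimates. The plan is correct; the remaining work is exactly the bookkeeping you identify, which is what the paper's proof of part \itref{IT:solutions_NS_estimates} carries out.
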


\begin{proof}[Proof of \textbf{\itref{IT:solutions_NS_uniqueness}}]
    Let $\nu>0$ and $(\Omega,\F,\{\F_t\}_{t\geq 0},\P;W;X^i)$, $i=1,2$ be two solutions to the SHNS$_{\nu,\a}$ Equation~\eqref{EQ:NSnu} such that the event $\Omega_2:=\{X^1_0=X^2_0\}$ has probability $\P(\Omega_2)=1$.
    Let us also choose $\Omega_1\in\F$ such that $\P(\Omega_1)=1$ and for every $\omega\in\Omega_1$ the corresponding trajectory $Z(\omega)$ of the Ornstein-Uhlenbeck process $Z$ from Theorem~\ref{TH:z} (with $\b:=2\a$) has the regularity $L^2_{loc}(\R_+;H^{3\a})\cap C(\R_+;H^{2\a})$  and satisfies the equality \eqref{EQ:strong_Z} in $H^{2\a}$.
    If $\omega\in \Omega_1\cap \Omega_2$, we denote $x:=X^1_0(\omega)=X^2_0(\omega)$ and $v_i:=X^i(\omega)-Z(\omega)$, for $i=1,2$, then, recalling Definition~\ref{DEF:solution_NSnu}, we have the following identity in $H^{\a}$:
    \[
        v_i(t)+\nu\int_0^tA^{\a}v_i(s)\d s +\int_0^tB\big(v_i(s)+Z_s(\omega)\big)\d s =x, \qquad \forall\, t\geq 0.
    \]
    The uniqueness result stated in Theorem~\ref{TH:exisuniqv} implies that $v_1=v_2=\mathcal V\big(x,Z(\omega)\big)$ in $C(\R_+;H^{2\a})$.
    In particular,
    \[
        1=\P(\Omega_1\cap\Omega_2)\leq \P\Bigg(\bigcap_{t\geq 0}\{v_1(t)=v_2(t)\}\Bigg)=\P\Bigg(\bigcap_{t\geq 0}\{X^1_t=X^2_t\}\Bigg).
    \]
    
    The uniqueness in law property descends from \cite[Corollary $7.7$]{Brzezniak+Motyl_2013_Existence_Martingale_Solution}, see also \cite[Theorem $2$]{Ondrejat_2004_uniqueness_stochastic_evolution_equations}.
\end{proof}
    
\begin{proof}[Proof of \textbf{\itref{IT:solutions_NS_random}}]
    Let us fix $\nu>0$ and an augmented filtered probability space $(\Omega,\F,\{\F_t\}_{t\geq 0},\P)$ with an adapted $H^{2\a}$-valued Wiener process and an $\F_0$-measurable, $H^{2\a}$-valued random variable $\xi$.
    Let $\Omega_1\in\F$ be as above, \textit{i.e.}, such that $\P(\Omega_1)=1$ and $Z(\omega)\in C(\R_+;H^{2\a})\cap L^2_{loc}(\R_+;H^{3\a})$ for all $\omega\in\Omega_1$.
    
    We define the process $X^\xi$ as follows:
    \[
        X^\xi:\R_+\times \Omega\ni(t,\omega)\mapsto X^\xi_t(\omega):=
        \1_{\Omega_1}(\omega)\big[\mathcal V\big(\xi(\omega),Z(\omega)\big)(t)+Z_t(\omega)\big]\in H^{2\a},
    \]  
    where $\mathcal V$ and $Z$ were defined in Theorems~\ref{TH:exisuniqv} and \ref{TH:z}, respectively. 
    By direct inspection, if $\omega\in\Omega_1$, then $X_0^\xi(\omega)=\xi(\omega)$, hence $\P(X^\xi_0=\xi)\geq \P(\Omega_1)=1$.
    We now show that $(\Omega,\F,\{\F_t\}_{t\geq 0},\P;W;X^\xi)$ is a solution to the SHNS$_{\nu,\a}$ Equation~\eqref{EQ:NSnu}, according to Definition~\ref{DEF:solution_NSnu}.

    First of all, the $\F_0$-measurability of $\xi$, the predictability of $Z$ from  Theorem~\ref{TH:z}, the continuity of the function $\mathcal V$ in Theorem~\ref{TH:exisuniqv} and the continuity of the evaluation map ${\iota_{t}^{}:C(\R_+;H^{2\a})\ni u \to u(t)\in H^{2\a}}$, for $t\geq 0$, imply the predictability for the $H^{2\a}$-valued process $X^\xi$. 

    If $\omega\in \Omega_1$, then $Z_t(\omega)$ and $\mathcal V\big(\xi(\omega),Z(\omega)\big)$ belong to $C(\R_+;H^{2\a})\cap L^2_{loc}(\R_+;H^{3\a})$ by definition of $\Omega_1$ and by Theorem~\ref{TH:exisuniqv}, respectively. 
    Hence 
    \[
        \P\big(X^\xi \in C(\R_+;H^{2\a})\cap L^2_{loc}(\R_+;H^{3\a})\big)\geq \P(\Omega_1)=1.
    \]
    
    Finally, if $\omega\in\Omega_1$, we denote $z:=Z(\omega)$ and $v:=\mathcal V\big(\xi(\omega),Z(\omega)\big)$. 
    Then $X^\xi(\omega)$ satisfies the following equation in $H^{\a}$ for all $t\geq0$:
    \begin{align}
        X^\xi_t(\omega)&=v(t)+z(t)\\
        &=-\nu\int_0^tA^{\a}v(s)\d s -\int_0^tB\big(v(s)+z(s)\big)\d s +\xi(\omega)
        -\nu\int_0^tA^{\a}z(s)\d s +\sqrt\nu\,W_t(\omega)\\
        &=-\nu\int_0^tA^{\a}X^\xi_s(\omega)\d s -\int_0^tB\big(X^\xi_s(\omega)\big)\d s +\xi(\omega) +\sqrt\nu\,W_t(\omega).
    \end{align}
    In particular, equation~\eqref{EQ:def_solution_NS} for $X^\xi$ is satisfied $\P-a.s.$ in $H^{\a}$.
\end{proof}
    
\begin{proof}[Proof of \textbf{\itref{IT:solutions_NS_determin}.}]
    We recall from \itref{IT:solutions_NS_random} the following fact. 
    If $\nu>0$, if $(\Omega, \F, \{\F_t\}_{t\geq 0},\P)$ is a filtered probability space satisfying the usual assumptions, with an adapted $H^{2\a}$-valued Wiener process $W$, and if $x\in H^{2\a}$, then the solution $(\Omega, \F, \{\F_t\}_{t\geq 0},\P;W;X^x)$ is given by:
    \[
        X^x:\R_+\times\Omega\ni(t,\omega)\mapsto X^x_t(\omega):=\1_{\Omega_1}(\omega)\big[\mathcal V\big(x,Z(\omega)\big)(t)+Z_t(\omega)\big]\in H^{2\a},
    \]
    where $\Omega_1\in\F$ is such that $\P(\Omega_1)=1$ and $Z(\omega)\in C(\R_+;H^{2\a})\cap L^2_{loc}(\R_+; H^{3\a})$ for any $\omega\in\Omega_1$.
    Moreover, the continuity of the map $\mathcal V$ in Theorem~\ref{TH:exisuniqv} gives the additional continuity of the map 
    \[
        H^{2\a}\ni x\mapsto X^x(\omega)\in C\big([0,T];H^{2\a}\big)\cap L^2(0,T;H^{3\a}),
    \]
    for any $T>0$ and any $\omega\in\Omega_1$.

    The solution $(\Omega, \F, \{\F_t\}_{t\geq 0},\P;W;X^x)$ is pathwise unique by \itref{IT:solutions_NS_uniqueness}. 
    Therefore,  $X^x$ is indistinguishable from any other process $X$ such that $(\Omega,\F,\{\F_t\}_{t\geq 0},\P;W;X)$ is a solution to the SHNS$_{\nu,\a}$ Equation~\eqref{EQ:NSnu} with $\P(X_0=x)=1$.
\end{proof}

\begin{proof}[Proof of \textbf{\itref{IT:solutions_NS_regularities}.}]
    The regularities stated in equation~\eqref{EQ:solution_NS_regularities} trivially derive from the estimates~\eqref{EQ:estim_E_sup_X_p_1}, \eqref{EQ:estim_E_sup_X_4_a}, \eqref{EQ:estim_E_sup_X_2_2a}, and from the path regularities for solutions to the SHNS$_{\nu,\a}$ Equation~\eqref{EQ:NSnu}, see Definition~\ref{DEF:solution_NSnu}. 
\end{proof}

\begin{proof}[Proof of \textbf{\itref{IT:solutions_NS_estimates}.}]
    Let us fix $\nu>0$ and $T>0$.
    Assume that $p\geq 2$.
    We apply seven times the It\^o formula in Lemma~\ref{LEM:Ito_formula_h(|x|^2_b)} to the process $X$, twice to the function $h:\R_+\ni r\mapsto r^{p/2}\in\R_+$ and with $\g=1$, once to the function $h=\id_{\R_+}:\R_+\ni r\mapsto r\in\R_+$ and with $\g=\a$, twice to the function $h:\R_+\ni r\mapsto r^{2}\in\R_+$ and with $\g=\a$, and twice to $h=\id_{\R_+}:\R_+\ni r\mapsto r\in\R_+$ and $\g=2\a$.

\textbf{\textit{Estimate} (\ref{EQ:estim_E_sup_X_p_1}).}
    For $\g=1$, the nonlinearity in equation~\eqref{EQ:ito_X_xi_h} vanishes because of the well-known property $\lan B(x),x\ran^{}_{H^1}=\lan B(x), Ax\ran=0$, for all $x\in H^2$.
    Moreover, by simple properties of bounded operators and traces:
    \begin{equation}
    \label{EQ:control_Tr_Ito}
        \big\|Q_1^{1/2}x\big\|^2_{H^1}\leq \big\|Q_1^{1/2}\big\|^{2}_{\mathcal L(H^1)}\|x\|^{2}_{H^1}=\|Q_1\|^{}_{\mathcal L(H^1)}\|x\|^{2}_{H^1}\leq \Tr[Q_1]\|x\|^{2}_{H^1}, \qquad \forall\, x\in H^1.
    \end{equation}
    In addition, for $h(r)=r^{p/2}$, $r\geq 0$, we have {$h'(r)=pr^{(p-2)/2}/2$ and $h''(r)=p(p-2)r^{(p-4)/2}/4$}.
    We compute the supremum in time over the interval $[0, T]$ on both sides of equation~\eqref{EQ:ito_X_xi_h}, then take the expectation and finally use the estimate~\eqref{EQ:control_Tr_Ito}. 
    We obtain:
    \begin{equation}
    \label{EQ:expected_Ito_X_xi_p}
    \begin{gathered}
        \E\sup_{t\in[0,T]}\|X_t\|^p_{H^1}
        +p\nu \E\int_0^T\|X_s\|^2_{H^{\a+1}}\|X_s\|^{p-2}_{H^1}\d s \\
        \leq \E\|X_0\|^p_{H^1}
        +p\sqrt \nu \,\E\sup_{t\in[0,T]}\int_0^t\|X_s\|^{p-2}_{H^1}\lan X_s, \d W_s\ran ^{}_{H^1}
        + \frac{p(p-1)\nu}{2} \operatorname{Tr}[Q_1]\, \E\int_0^T\|X_s\|^{p-2}_{H^1}\d s.
    \end{gathered}
    \end{equation}
    We control the second term on the right-hand side, thanks to the Burkholder-Davis-Gundy inequality, see \cite[Section $48$, Chapter IV]{Protter_2005_Stochastic_integration_differential_equations}, and by the Schwarz inequality. 
    For an absolute constant $c>0$:
    \begin{align}
        p\sqrt \nu\, \E\sup_{t\in[0,T]} \int_0^t\|X_s\|^{p-2}_{H^1}\lan X_s, \d W_s\ran ^{}_{H^1} 
        &\leq  cp\sqrt\nu\,\E\bigg(\int_0^T\|X_s\|^{2p-2}_{H^1}\d s\bigg)^{1/2} \\
        &\leq cp\sqrt\nu \,\E\Bigg[\sup_{t\in[0,T]}\|X_t\|^{p/2}_{H^1}\bigg(\int_0^T\|X_t\|^{p-2}_{H^1}\d t \bigg)^{1/2}\Bigg]\\
        &\leq \frac12 \E\sup_{t\in[0,T]}\|X_t\|^p_{H^1}
        +\frac{c^2p^2\nu}{2}\E\int_0^T\|X_t\|^{p-2}_{H^1}\d t.
    \end{align}
    By plugging this last estimate into equation~\eqref{EQ:expected_Ito_X_xi_p}, we obtain:
    \begin{equation}
    \label{EQ:expected_ito_X_p_second}
    \begin{split}
        \frac12\E\sup_{t\in[0,T]}\|X_t\|^p_{H^1}
        +p\nu \E\int_0^T\|X_t\|^2_{H^{\a+1}}\|X_t\|^{p-2}_{H^1}\d t \qquad \\
         \leq \E\|X_0\|^p_{H^1}
        +\frac{p\nu }{2}\big(c^2p+
        (p-1) \Tr[Q_1]\big)\E\int_0^T\|X_t\|^{p-2}_{H^1}\d t.
    \end{split}
    \end{equation}
    
    If $p>2$, the last term in the right-hand side of equation~\eqref{EQ:expected_ito_X_p_second} can be controlled by the Young inequality with conjugate exponents $p/2$ and $p/(p-2)$. 
    For a finite constant $C>0$, depending on $p,W$, that may vary from line to line:
    \begin{align}
        \frac{p\nu}{2}\big(c^2p+
        (p-1)\Tr[Q_1]\big)\E\int_0^T\|X_t\|^{p-2}_{H^1}\d t
        &\leq CT\nu\E\sup_{t\in[0,T]}\|X_t\|^{p-2}_{H^1}\\
        &\leq C(T\nu)^{p/2}+\frac12\E\sup_{t\in[0,T]}\|X_t\|^p_{H^1}.
    \end{align}
    This inequality remains trivially true for $p=2$.
    We then insert this last estimate into equation~\eqref{EQ:expected_ito_X_p_second}, and rearrange the terms. 
    We find a new constant $C_p>0$, depending on $p$ and $W$, such that
    \begin{equation}
        \E\sup_{t\in[0,T]}\|X_t\|^p_{H^1} +\nu\E\int_0^T\|X_t\|^2_{H^{\a+1}}\|X_t\|^{p-2}_{H^1}\d t
        \leq C_p\big(\E\|X_0\|^p_{H^1}+(T \nu)^{p/2}\big),
    \end{equation}
    which gives the sought estimate~\eqref{EQ:estim_E_sup_X_p_1}.

\textbf{\textit{Estimate} (\ref{EQ:estim_E_sup_X_2_a}).}
    The last but one term in equation~\eqref{EQ:ito_X_xi_h}, with $\g=\a$ and $h=\1_{\R_+}$, is equal to $0$, and the last term is deterministic. 
    We compute the supremum in time over the interval $[0, T]$ on both sides of equation~\eqref{EQ:ito_X_xi_h}, then take the expectation. 
    We obtain:
    \begin{equation}
    \label{EQ:expected_Ito_X_2_a}
    \begin{aligned}
        \E\sup_{t\in[0,T]}\|X_t\|^2_{H^\a}
        +2\nu \E\int_0^T\|X_s\|^2_{H^{2\a}}\d s
        \leq \E\|X_0\|^2_{H^\a}
        &-2\,\E\sup_{t\in[0,T]}\int_0^t\big\lan B(X_s),X_s\big\ran^{}_{\!H^\a}\d s \\
        &+2\sqrt \nu \,\E\sup_{t\in[0,T]}\int_0^t\lan X_s, \d W_s\ran ^{}_{\!H^\a}\\
        &+ \nu \operatorname{Tr}[Q_\a]T.
    \end{aligned}
    \end{equation}
    The nonlinear term no longer vanishes and has to be controlled by estimate~\eqref{EQ:estim_B_2} in Lemma~\ref{LEM:estimates_B} (with $\e=\a-1>0$), the Young inequality, and by the estimate~\eqref{EQ:estim_E_sup_X_p_1} proved above (with $p=2$).
    \begin{equation}
    \label{EQ:nonlinearity_Ito_2_a}
    \begin{aligned}
        \bigg|2\,\E\sup_{t\in[0,T]}\int_0^t\big\lan B(X_s),X_s\big\ran^{}_{\!H^\a}\d s \bigg|
        &\leq  2\, \E\int_0^T\big|\lan B(X_s),A^\a X_s\big\ran\big|\d s\\
        &\leq  2\, \E\int_0^T\|X_s\|^{}_{H^1}\|X_s\|^{}_{H^{\a}}\|A^\a X_s\|\d s\\
        &\leq  \frac {1}{\nu}\E\int_0^T\| X_s\|^{2}_{H^1}\|X_s\|^{2}_{H^{\a+1}}\d s+\nu\, \E\int_0^T\|X_s\|^{2}_{H^{2\a}}\d s\\
        &\leq  \frac {C_2}{\nu^2}\big(\E\|X_0\|^2_{H^1}+T\nu\big)+\nu\, \E\int_0^T\|X_s\|^{2}_{H^{2\a}}\d s.
    \end{aligned}
    \end{equation}
    
    We control the third term on the right-hand side of the inequality~\eqref{EQ:expected_Ito_X_2_a}, thanks to the Burkholder-Davis-Gundy inequality, see \cite[Section $48$, Chapter IV]{Protter_2005_Stochastic_integration_differential_equations}, and the Schwarz inequality. 
    For a constant $c>0$:
    \begin{equation}
    \begin{aligned}
    \label{EQ:ito_integral_ito_2_a}
        2\sqrt \nu\, \E\sup_{t\in[0,T]} \int_0^t\lan X_s, \d W_s\ran ^{}_{H^\a} 
        &\leq  2c\sqrt\nu\,\E\Bigg[\bigg(\int_0^T\|X_s\|^{2}_{H^\a}\d s\bigg)^{1/2}\Bigg] \\
        &\leq 2c\sqrt{\nu T} \,\E\sup_{t\in[0,T]}\|X_t\|^{}_{H^\a}\\
        &\leq {2c^2\nu}T+\frac12 \E\sup_{t\in[0,T]}\|X_t\|^2_{H^\a}.
    \end{aligned}
    \end{equation}
    
    We plug the estimates~\eqref{EQ:nonlinearity_Ito_2_a} and \eqref{EQ:ito_integral_ito_2_a} into equation~\eqref{EQ:expected_Ito_X_2_a}, and rearrange the terms.
    \begin{equation}
    \begin{gathered}
        \frac12\E\sup_{t\in[0,T]}\|X_t\|^2_{H^\a}
        +\nu \E\int_0^T\|X_t\|^2_{H^{2\a}}\d t  
         \leq \E\|X_0\|^2_{H^\a}
         +\frac {C}{\nu^2}\E\|X_0\|^2_{H^1}+\frac{T}{\nu}
        +2c^2\nu T,
    \end{gathered}
    \end{equation}
    which is the sought estimate~\eqref{EQ:estim_E_sup_X_2_a}, after renaming the constants.

\textbf{\textit{Estimate} (\ref{EQ:estim_E_sup_X_4_a}).}
    By simple properties of bounded operators and traces:
    \begin{equation}
        \big\|Q_\a^{1/2}x\big\|^2_{H^\a}\leq \big\|Q_\a^{1/2}\big\|^{2}_{\mathcal L(H^\a)}\|x\|^{2}_{H^\a}=\|Q_\a\|^{}_{\mathcal L(H^\a)}\|x\|^{2}_{H^\a}\leq \Tr[Q_\a]\|x\|^{2}_{H^\a}, \qquad \forall\, x\in H^\a.
    \end{equation}
    In addition, if $h(r)=r^2$, $r\geq 0$, we have {$h'(r)=2r$ and $h''(r)=2$}.
    We employ these considerations in equation~\eqref{EQ:ito_X_xi_h} with $\g=\a$.
    We compute the supremum in time over the interval $[0, T]$ on both sides of equation~\eqref{EQ:ito_X_xi_h}, then take the expectation and finally use the estimate~\eqref{EQ:control_Tr_Ito}. We obtain:
    \begin{equation}
    \label{EQ:expected_Ito_X_4_a}
    \begin{aligned}
        \E\sup_{t\in[0,T]}\|&X_t\|^4_{H^\a}
        +4\nu \E\int_0^T\|X_s\|^2_{H^{2\a}}\|X_s\|^{2}_{H^\a}\d s\\
        \leq \E\|X_0\|^4_{H^\a}
        &-4\,\E\sup_{t\in[0,T]}\int_0^t\big\lan B(X_s),X_s\big\ran^{}_{\!H^\a}\|X_s\|^{2}_{H^\a}\d s \\
        &+4\sqrt \nu \,\E\sup_{t\in[0,T]}\int_0^t\|X_s\|^{2}_{H^\a}\lan X_s, \d W_s\ran ^{}_{\!H^\a}\\
        &+ {6\nu}\operatorname{Tr}[Q_\a]\, \E\int_0^T\|X_s\|^{2}_{H^\a}\d s.
    \end{aligned}
    \end{equation}
    The nonlinear term no longer vanishes and has to be controlled by estimate~\eqref{EQ:estim_B_2} in Lemma~\ref{LEM:estimates_B} for an appropriate $\e>0$ to be determined, and the Young inequality.
    \begin{equation}
    \label{EQ:nonlinearity_Ito_4_a}
    \begin{aligned}
        &\bigg|4\,\E\sup_{t\in[0,T]}\int_0^t\big\lan B(X_s),X_s\big\ran^{}_{\!H^\a}\|X_s\|^{2}_{H^\a}\d s \bigg|\\
        \leq&\,  4\, \E\int_0^T\big|\lan B(X_s),A^\a X_s\big\ran\big|\|X_s\|^{2}_{H^\a}\d s\\
        \leq&\,  4\, \E\int_0^T\|X_s\|^{}_{H^1}\|X_s\|^{}_{H^{1+\e}}\|A^\a X_s\|\|X_s\|^{2}_{H^\a}\d s\\
        =&\,  4\, \E\int_0^T\| X_s\|^{}_{H^1}\|X_s\|^{}_{H^{1+\e}}\|X_s\|^{}_{H^\a}\|X_s\|^{}_{H^{2\a}}\|X_s\|^{}_{H^\a}\d s\\
        \leq&\,  \frac {2}{\nu}\E\int_0^T\| X_s\|^{2}_{H^1}\|X_s\|^{2}_{H^{1+\e}}\|X_s\|^{2}_{H^\a}\d s+ {2\nu }\, \E\int_0^T\|X_s\|^{2}_{H^{2\a}}\|X_s\|^{2}_{H^\a}\d s.
    \end{aligned}
    \end{equation}
    We now employ the interpolation inequality in Lemma~\ref{LEM:sobolev_interpolation_inequality} with the choices $r=1+\e$, $q=\a$, $\la=1/2$, and the Sobolev embeddings
    \begin{equation}
    \label{EQ:interpolation_a}
        \|X_s\|^{ 2}_{H^{1+\e}}
        \leq \|X_s\|^{}_{H^{2(1+\e)-\a}}\|X_s\|^{}_{H^{\a+1}}
        \leq \|X_s\|^{}_{H^{1}}\|X_s\|^{}_{H^{\a+1}},
    \end{equation}
    where the last inequality holds as soon as
    \begin{equation}
    \label{EQ:ass_p_a}
        2(1+\e)-\a\leq 1\quad \Longleftrightarrow\quad \e\leq \frac{\a-1}{2}.
    \end{equation}
    We insert the estimate~\eqref{EQ:interpolation_a} into the first integral in the last side of the chain of inequalities~\eqref{EQ:nonlinearity_Ito_4_a}, and conclude by the Young and the H\"older inequality. 
    We get, for a finite constant $C>0$, independent of $\nu$ or $T$:
    \begin{equation}
    \begin{aligned}
        &\frac {2}{\nu}\E\int_0^T\| X_s\|^{2}_{H^1}\|X_s\|^{2}_{H^{1+\e}}\|X_s\|^{2}_{H^\a}\d s\\
        \leq\, &\,  \frac {2}{\nu}\E\int_0^T\|X_s\|^{3}_{H^{1}}\|X_s\|^{}_{H^{\a+1}}\|X_s\|^{2}_{H^\a}\d s\\
        \leq\, &\,  \E\bigg[\sup_{t\in[0,T]}\|X_t\|^{2}_{H^\a}\frac{2}{\nu}\int_0^T\|X_s\|^3_{H^1}\|X_s\|^{}_{H^{\a+1}}\d s \bigg]\\
        \leq\, &\,  \frac14\E\sup_{t\in[0,T]}\|X_t\|^{4}_{H^\a} +\frac{C}{\nu^2}\E\bigg(\int_0^T\|X_s\|^3_{H^1}\|X_s\|^{}_{H^{\a+1}}\d s \bigg)^2\\
        \leq\, &\,  \frac14\E\sup_{t\in[0,T]}\|X_t\|^{4}_{H^\a} +\frac{C}{\nu^2}T\, \E\int_0^T\|X_s\|^6_{H^1}\|X_s\|^{2}_{H^{\a+1}}\d s\\
        \leq\, &\,  \frac14\E\sup_{t\in[0,T]}\|X_t\|^{4}_{H^\a} +\frac{C}{\nu^3}TC_8\big(\E\|X_0\|^{8}_{H^1}+T^4 \nu^{4}\big),
    \end{aligned}
    \end{equation}
    where in the last line we used the estimate~\eqref{EQ:estim_E_sup_X_p_1} proved above, with $p=8$.
    If we insert this last estimate into the last line of equation~\eqref{EQ:nonlinearity_Ito_4_a}, and rename the constant $C>0$, we obtain:
    \begin{equation}
    \label{EQ:nonlinearity_Ito_4_a_end}
    \begin{gathered}
        \bigg|4\,\E\sup_{t\in[0,T]}\int_0^t\big\lan B(X_s),X_s\big\ran^{}_{\!H^\a}\|X_s\|^{2}_{H^\a}\d s \bigg|\\
        \leq \frac14\E\sup_{t\in[0,T]}\|X_t\|^{4}_{H^\a} +C\bigg(\frac{T}{\nu^3}\E\|X_0\|^{8}_{H^1}+T^5 \nu\bigg)+{2\nu }\, \E\int_0^T\|X_s\|^{2}_{H^{2\a}}\|X_s\|^{2}_{H^\a}\d s
    \end{gathered}
    \end{equation}
    
    We control the third term on the right-hand side of the inequality~\eqref{EQ:expected_Ito_X_4_a}, thanks to the Burkholder-Davis-Gundy inequality, see \cite[Section $48$, Chapter IV]{Protter_2005_Stochastic_integration_differential_equations}, and the Schwarz inequality. 
    For a constant $c>0$:
    \begin{equation}
    \begin{aligned}
    \label{EQ:ito_integral_ito_p_a}
        4\sqrt \nu\, \E\sup_{t\in[0,T]} \int_0^t\|X_s\|^{2}_{H^\a}\lan X_s, \d W_s\ran ^{}_{H^\a} 
        &\leq  4c\sqrt\nu\,\E\Bigg[\bigg(\int_0^T\|X_s\|^{6}_{H^\a}\d s\bigg)^{1/2}\Bigg] \\
        &\leq 4c\sqrt\nu \,\E\Bigg[\sup_{t\in[0,T]}\|X_t\|^{2}_{H^\a}\bigg(\int_0^T\|X_t\|^{2}_{H^\a}\d t \bigg)^{1/2}\Bigg]\\
        &\leq \frac14 \E\sup_{t\in[0,T]}\|X_t\|^4_{H^\a}
        +{16c^2\nu}\E\int_0^T\|X_t\|^{2}_{H^\a}\d t.
    \end{aligned}
    \end{equation}
    
    By plugging the estimates~\eqref{EQ:nonlinearity_Ito_4_a_end} and \eqref{EQ:ito_integral_ito_p_a} into equation~\eqref{EQ:expected_Ito_X_4_a}, and rearranging the terms, we obtain:
    \begin{equation}
    \begin{aligned}
        &\frac34\E\sup_{t\in[0,T]}\|X_t\|^4_{H^\a}
        +{2\nu} \E\int_0^T\|X_t\|^2_{H^{2\a}}\|X_t\|^{2}_{H^\a}\d t \qquad \\
         \leq\,&\, \E\|X_0\|^4_{H^\a}
         +C\bigg(\frac{T}{\nu^3}\E\|X_0\|^{8}_{H^1}+T^5 \nu\bigg)
        +2\nu\big(3\Tr[Q_\a]+8c^2\big)\E\int_0^T\|X_t\|^{2}_{H^\a}\d t\\
        \leq\,&\, \E\|X_0\|^4_{H^\a}
         +C\bigg(\frac{T}{\nu^3}\E\|X_0\|^{8}_{H^1}+T^5 \nu\bigg)
        +2\nu\big(3\Tr[Q_\a]+8c^2\big)T\, \E\sup_{t\in[0,T]}\|X_t\|^{2}_{H^\a},\\
        \leq\,&\, \E\|X_0\|^4_{H^\a}
         +C\bigg(\frac{T}{\nu^3}\E\|X_0\|^{8}_{H^1}+T^5 \nu\bigg)
        +CT^2\nu^2+\frac14\E\sup_{t\in[0,T]}\|X_t\|^4_{H^\a},
    \end{aligned}
    \end{equation}
    where in the last line we applied the H\"older inequality followed by the Young inequality.
    
    After rearranging the terms and renaming the constants, we reach the sought estimate~\eqref{EQ:estim_E_sup_X_4_a}.

\textbf{\textit{Estimate} (\ref{EQ:estim_E_sup_X_2_2a}).}
    We apply again the It\^o formula in Lemma~\ref{LEM:Ito_formula_h(|x|^2_b)}, with $h=\id_{\R_+}$ and $\g=2\a$.
    The last but one term in equation~\eqref{EQ:ito_X_xi_h} is equal to $0$ and the last term is deterministic. 
    We take the supremum in time and the expectation to both sides.
    \begin{equation}
    \label{EQ:expected_Ito_2_2a}
    \begin{aligned}
            \E\sup_{t\in[0,T]}\|X_t\|^2_{H^{2\a}}
            +2\nu \E\int_0^T\|X_s\|^2_{H^{3\a}}\d s
            = \E\|X_0\|^2_{H^{2\a}}
            &-2\,\E\sup_{t\in[0,T]}\int_0^t
            {\big\lan} A^{2\a}X_s,B\big(X_s\big)\big\ran^{}_{\ss{\!H^{-\a}\!\times\!H^\a}} 
            \d s\\
            &+2\sqrt \nu \,\E\sup_{t\in[0,T]}\int_0^t\lan X_s, \d W_s\ran ^{}_{H^{2\a}}\\
            &+ \nu \operatorname{Tr}[Q_{2\a}]T.
    \end{aligned}
    \end{equation}
    
    The nonlinear term can be controlled by estimate~\eqref{EQ:estim_B_1} (with $\s=\a+1>2$), by the Young inequality, the Sobolev embeddings, and by the estimate~\eqref{EQ:estim_E_sup_X_4_a}.
    \begin{equation}
    \label{EQ:estimate_nonlinear_2_2a}
    \begin{aligned}    
        &\bigg|2\,\E\sup_{t\in[0,T]}\int_0^t
        {\big\lan} A^{2\a}X_s,B\big(X_s\big)\big\ran^{}_{\ss{\!H^{-\a}\!\times\!H^\a}}\d s\bigg|\\
        \leq &\, 2c\,\E\int_0^T\|X_s\|^{}_{H^\a}\|X_s\|^{}_{H^{\a+1}}\|A^{2\a}X_s\|^{}_{H^{-\a}}\d s\\
        \leq &\, \nu\E\int_0^T\|X_s\|^2_{H^{3\a}}\d s +
        \frac{c^2}{\nu}\E\int_0^T\|X_s\|^{2}_{H^{\a+1}}\|X_s\|^2_{H^{\a}}\d s\\
        \leq &\, \nu\E\int_0^T\|X_s\|^2_{H^{3\a}}\d s +
        \frac{c^2}{\nu}\E\int_0^T\|X_s\|^{2}_{H^{2\a}}\|X_s\|^2_{H^{\a}}\d s\\
        \leq &\, \nu\E\int_0^T\|X_s\|^2_{H^{3\a}}\d s +C\bigg(\frac{1}{\nu^2}\E\|X_0\|^4_{H^\a}+\frac {T}{\nu^5}\E\|X_0\|^{8}_{H^1}+\frac{T^5}{\nu}+T^2\bigg).
    \end{aligned}
    \end{equation}
    
    We control the third term on the right-hand side of equation~\eqref{EQ:expected_Ito_2_2a}, thanks to Burkholder-Davis-Gundy inequality, see \cite[Section $48$, Chapter IV]{Protter_2005_Stochastic_integration_differential_equations}, and the Young inequality.
    For a constant $c>0$:
    \begin{align}
    \label{EQ:estimate_Ito_integral_2_2a}
        2\sqrt \nu\, \E\sup_{t\in[0,T]} \int_0^t\lan X_s, \d W_s\ran ^{}_{H^{2\a}}
        &\leq  2c\sqrt\nu\E\bigg(\int_0^T\|X_s\|^{2}_{H^{2\a}}\d s\bigg)^{1/2} \\
        &\leq 2c\sqrt{\nu T} \,\E\sup_{t\in[0,T]}\|X_t\|^{}_{H^{2\a}}\\
        &\leq 2{c^2\nu T}+\frac12\E\sup_{t\in[0,T]}\|X_t\|^2_{H^{2\a}}.
    \end{align}
    
    We use estimates~\eqref{EQ:estimate_nonlinear_2_2a} and \eqref{EQ:estimate_Ito_integral_2_2a} to bound from above the right-hand side of equation~\eqref{EQ:expected_Ito_2_2a}. 
    After rearranging the terms, we find a finite constant $C>0$, such that
    \begin{align}
    \E\Bigg[
        \sup_{t\in[0,T]}\|X_t\|^2_{H^{2\a}} +\nu\int_0^T\|X_t\|^2_{3\a}\d t
        \Bigg]
        \leq &\, C\Big(\E \|X_0\|^2_{H^{2\a}}
        +\frac{1}{\nu^2}\E\|X_0\|^4_{H^\a}+\frac {T}{\nu^5}\E\|X_0\|^{8}_{H^1}+\frac{T^5}{\nu}+T^2+T\nu\bigg).
    \end{align}
    Hence, we proved the sought estimate~\eqref{EQ:estim_E_sup_X_2_2a}.

\textbf{\textit{Estimate} (\ref{EQ:estim_E_XT_p_1}).}
    As far as the fourth estimate is concerned, we fix $s\in[0,T]$ and apply the It\^o formula in Lemma~\ref{LEM:Ito_formula_h(|x|^2_b)} with $\g=1$, $h(r)=r^{q/2}$, for $r\geq 0$, and to the process $\{X_{t+s}\}_{t\geq 0}$, which is an It\^o process adapted to the filtration $\{\F_{t+s}\}_{t\geq 0}$ and with $H^{2\a}$-valued Wiener process $\{W_{t+s}\}_{t\geq 0}$. We reach, $\P-a.s.$ for all $t\geq 0$:
    \begin{equation}
    \begin{aligned}
        \|X_{t+s}\|^q_{H^1}
        = \|X_s\|^q_{H^1}
        &-q\nu\int_0^t\big\lan  A^{\a}X_{u+s}, X_{u+s}\big\ran^{}_{\! H^1}\|X_{u+s}\|^{q-2}_{H^1}\d u \\
        &-q\int_0^t\big\lan B(X_{u+s}), X_{u+s}\big\ran^{}_{\! H^1}\|X_{u+s}\|^{q-2}_{H^1}\d u \\
        &+ q\sqrt \nu \int_0^t\|X_{u+s}\|^{q-2}_{H^1}\lan X_{u+s}, \d W_{u+s}\ran^{}_{\! H^1} \\
        &+ \frac{q(q-2)\nu}{2} \int_0^t
        \big\|Q_1^{1/2}X_{u+s}\big\|^2_{H^1}\|X_{u+s}\|^{q-4}_{H^1}
        \d u\\
        &+ \frac{q\nu}{2} \Tr[Q_1]\int_0^t\|X_{u+s}\|^{q-2}_{H^1}\d u.
    \end{aligned}
    \end{equation}
    The nonlinear term vanishes because of the known property $\lan B(x),x\ran^{}_{\! H^1}=\lan B(x),Ax\ran=0$ for $x\in H^2$. 
    In addition, we evaluate the above expression at the time $t=T-s$ and perform the change of variable $u\mapsto u+s$ in the integrals. Next, we take the expectation to both sides, which cancels the It\^o integral. We are left with:
    \begin{equation}
    \begin{aligned}
        \E\|X_{T}\|^q_{H^1}
        = \E\|X_s\|^q_{H^1}
        &-q\nu\, \E\int_s^T\|X_u\|^2_{H^{\a+1}}\|X_{u}\|^{q-2}_{H^1}\d u \\
        &+ \frac{q(q-2)\nu}{2} \E\int_s^T\big\|Q_1^{1/2}X_u\big\|^2_{H^1}\|X_{u}\|^{q-4}_{H^1}\d u\\
        &+ \frac{q\nu}{2} \Tr[Q_1]\int_s^T\E\|X_{u}\|^{q-2}_{H^1}\d u.
    \end{aligned}
    \end{equation}
    We estimate the right-hand side from above thanks to  $\|x\|^{}_{H^{\a+1}}\geq \|x\|^{}_{H^1}$ and to
    $\big\|Q_1^{1/2}x\big\|^{}_{H^1}\leq \Tr[Q_1]\|x\|^{}_{H^1}$ for $x\in H^{1}$, see equation~\eqref{EQ:control_Tr_Ito}.
    We reach:
    \begin{equation}
        \E\|X_{T}\|^q_{H^1}
        \leq \E\|X_s\|^q_{H^1}
        -q\nu\, \E\int_s^T\|X_{u}\|^{q}_{H^1}\d u 
        + \frac{q(q-1)\nu}{2} \Tr[Q_1]\int_s^T\E\|X_{u}\|^{q-2}_{H^1}\d u.
    \end{equation}
    When $q>2$, the last integral can be controlled with the H\"older inequality with conjugate exponents $q/2$ and $q/(q-2)$ as follows:
    \[
        \frac{q(q-1)\nu}{2} \Tr[Q_1]\int_s^T\E\|X_{u}\|^{q-2}_{H^1}\d u\leq \nu\, C_q(T-s)+\nu\int_s^T\E\|X_u\|^q_{H^1}\d u,
    \]
    where we introduced a constant $C_q>0$ that depends only on $q$ and $\Tr[Q_1]$.
    The inequality remains true also for $p=2$. 
    Therefore,
    \begin{equation}
        \E\|X_{T}\|^q_{H^1}
        \leq \E\|X_s\|^q_{H^1}
        -(q-1)\nu\, \int_s^T\E\|X_{u}\|^{q}_{H^1}\d u 
        + \nu\, C_q(T-s).
    \end{equation}
    By resorting to the version of Gr\"onwall's Lemma~\ref{LEM:Gronwall_lemma_negative}, we obtain:
    \[
        \E\big\|X_T\big\|^q_{H^1}\leq \frac{C_q}{q-1}+e^{-(q-1)\nu T}\E\|X_0\|^q_{H^1},
    \]
    which gives the sought estimate~\eqref{EQ:estim_E_XT_p_1}, after appropriately renaming the constant $C_q>0$.

\textbf{\textit{Estimate} (\ref{EQ:estim_E_XT_4_a}).}
    As for the fifth estimate, we start from the It\^o formula in Lemma~\ref{LEM:Ito_formula_h(|x|^2_b)} with $h:\R_+\ni r\mapsto r^{2}\in\R_+$  and $\g=\a$. 
    We take the expectation to both members of equation~\eqref{EQ:ito_X_xi_h}, which cancels out the It\^o integral, and use Fubini's Theorem.
    We reach for all $t\in[0,T]$
    \begin{equation}
    \begin{aligned}
        \E\|X_{t}\|^4_{H^\a}
        = \E\|X_0\|^4_{H^\a}
        &-4\nu\int_0^t\E\Big[\|X_s\|^2_{H^{2\a}}\|X_{s}\|^{2}_{H^\a}\Big]\d s \\
        &-4\int_0^t\E\Big[\big\lan B(X_{s}), X_{s}\big\ran^{}_{\! H^\a}\|X_{s}\|^{2}_{H^\a}\Big]\d s \\
        &+ {4\nu} \int_0^t\E\Big[
        \big\|Q_\a^{1/2}X_{s}\big\|^2_{H^\a}
        \Big]\d s\\
        &+ {2\nu} \Tr[Q_\a]\int_0^t\E\|X_{s}\|^{2}_{H^\a}\d s.
    \end{aligned}
    \end{equation}
    We can compute the distributional derivative with respect to time and later estimate the right-hand side by
    \begin{equation}
        \big\|Q_\a^{1/2}x\big\|^2_{H^\a}\leq \big\|Q_\a^{1/2}\big\|^{2}_{\mathcal L(H^\a)}\|x\|^{2}_{H^\a}=\|Q_\a\|^{}_{\mathcal L(H^\a)}\|x\|^{2}_{H^\a}\leq \Tr[Q_\a]\|x\|^{2}_{H^\a}, \qquad \forall\, x\in H^\a.
    \end{equation}
    and by 
    \[
        |\lan B(x), x\ran^{}_{\! H^\a}|
        \leq \| B(x)\|\|A^\a x\|
        \leq \|x\|^{}_{H^1}\|x\|^{}_{H^{1+\e}}\|x\|^{}_{H^{2\a}}, \qquad \forall\, x\in H^{2\a}.
    \]
    We reach, after rearranging the terms, and by means of the Young inequality:
    \begin{equation}
    \begin{aligned}
        &\frac{\d}{\d t}\E\|X_t\|^4_{H^{\a}}
        +4\nu\E\Big[\|X_t\|^2_{H^{2\a}} \|X_t\|^{2}_{H^{\a}} \Big]\\
        \leq
        &\;4\,\E\Big[\|X_s\|^{}_{H^1}\|X_s\|^{}_{H^{1+\e}}\|X_s\|^{}_{H^\a}\|X_s\|^{}_{H^{2\a}}\|X_s\|^{}_{H^{\a}}\Big]
        +{6\nu}\Tr[Q_\a]\E\|X_t\|^{2}_{H^{\a}}\\
        \leq &\;\frac{2}{\nu}\E\Big[\|X_s\|^{2}_{H^1}\|X_s\|^{2}_{H^{1+\e}}\|X_s\|^{2}_{H^\a}\Big]+2\nu\E\Big[\|X_s\|^{2}_{H^{2\a}}\|X_s\|^{2}_{H^{\a}}\Big]
        +\frac{1}{2\nu}\big({6\nu}\Tr[Q_\a]\big)^2+\frac\nu2\E\|X_t\|^{4}_{H^{\a}}.
    \end{aligned}
    \end{equation} 
    We rearrange the terms once again, and use the estimate in equation~\eqref{EQ:interpolation_a}, followed by Young inequality
    \begin{equation}
    \begin{aligned}
        \frac{\d}{\d t}\E\|X_t\|^4_{H^{\a}}+2\nu\E\Big[\|X_s\|^{2}_{H^{2\a}}\|X_s\|^{2}_{H^{\a}}\Big]
        &\leq\frac{2}{\nu}\E\Big[\|X_s\|^{3}_{H^1}\|X_s\|^{}_{H^{\a+1}}\|X_s\|^{2}_{H^\a}\Big]
        +\frac{1}{2\nu}\big({6\nu}\Tr[Q_\a]\big)^2+\frac\nu2\E\|X_t\|^{4}_{H^{\a}}\\
        &\leq\frac{2}{\nu^3}\E\Big[\|X_s\|^{6}_{H^1}\|X_s\|^{2}_{H^{\a+1}}\Big]
        +C\nu+\nu\E\|X_t\|^{4}_{H^{\a}}.
    \end{aligned}
    \end{equation} 
    This finally leads, by the Sobolev embedding $\|x\|^4_{H^\a}\leq \|x\|^2_{H^\a}\|x\|^2_{H^{\a+1}}$, for $x\in H^{\a+1}$, to
    \[
        \frac{\d}{\d t}\E\|X_t\|^4_{H^{\a}}+\nu\E\|X_t\|^4_{H^\a}
        \leq \frac{2}
        {\nu^3}\E\Big[\|X_s\|^{6}_{H^1}\|X_s\|^{2}_{H^{\a+1}}\Big]
        +C\nu
    \]
    The last side is locally integrable in time thanks to \eqref{EQ:estim_E_sup_X_p_1}.
    We apply the differential form of Gr\"onwall's Lemma~\ref{LEM:Gronwall_lemma_diff} and obtain for all $t\in[0,T]$
    \begin{align}
        \E\|X_t\|^4_{H^{\a}}
        &\leq e^{-\nu t}\bigg(\E\|X_0\|^4_{H^{\a}}+ \int_0^te^{\nu s}\bigg(\frac{2}{\nu^3}\E\Big[\|X_s\|^6_{H^{1}}\|X_s\|^2_{H^{\a+1}}\Big]
        + C\nu\bigg)\d s \bigg)\\
        &\leq e^{-\nu t}\E\|X_0\|^4_{H^{\a}}+ \frac{2}{\nu^3}\E\int_0^t\|X_s\|^6_{H^{1}}\|X_s\|^2_{H^{\a+1}}\d s +  C\\
        &\leq e^{-\nu t}\E\|X_0\|^4_{H^{\a}}+ \frac{2}{\nu^4}C_8\Big(\E\|X_0\|^8_{H^1}+(t\nu)^4\Big)+  C.
    \end{align}
    We used the linearity of the integral for the second inequality, computing the second part and estimating $e^{\nu s}\leq e^{\nu t}$ for the first part. We used estimate~\eqref{EQ:estim_E_sup_X_p_1}, with $p=8$, for the last inequality.
    The wanted estimate~\eqref{EQ:estim_E_XT_4_a} follows after renaming the constants.
    
\textbf{\textit{Estimate} (\ref{EQ:estim_E_XT_2_2a}).}
    As for the last estimate, we start from the It\^o formula in Lemma~\ref{LEM:Ito_formula_h(|x|^2_b)} with $h=\id_{\R_+}$ and $\g=2\a$. 
    The last but one term in equation~\eqref{EQ:ito_X_xi_h} vanishes, while the last is deterministic. 
    Next, we take the expectation to both members, which cancels out the It\^o integral. 
    We reach for all $t\in[0,T]$
    \begin{equation}
        \E\|X_t\|^2_{H^{2\a}}
        = \E\|X_0\|^2_{H^{2\a}}
        -2\nu\int_0^t\E\|X_s\|^2_{H^{3\a}}\d s 
        +2\int_0^t\E\, 
        {\big\lan} A^{2\a}X_s,B\big(X_s\big)\big\ran^{}_{\ss{\!H^{-\a}\!\times\!H^\a}}
        \d s 
        + \nu  \Tr[Q_{2\a}]\,t.
    \end{equation}
    We can compute the distributional derivative with respect to time and later estimate the right-hand by \eqref{EQ:estim_B_1} (with $\s=\a+1>2$)
    \begin{equation}
    \begin{aligned}
        \frac{\d}{\d t}\E\|X_t\|^2_{H^{2\a}}
        &=
        -2\nu\E\|X_t\|^2_{H^{3\a}} 
        +2\E\, {\big\lan} A^{2\a}X_s,B\big(X_s\big)\big\ran^{}_{\ss{\!H^{-\a}\!\times\!H^\a}}
        + \nu  \Tr[Q_{2\a}]\\
        &\leq -2\nu\E\|X_t\|^2_{H^{3\a}}  
        +\nu\E\, \|A^{2\a} X_t\|^2_{H^{-\a}}
        +\frac1\nu\E\big[ \|X_t\|^2_{H^{\a}}\|X_t\|^2_{H^{\a+1}}\big]
        + \nu  \Tr[Q_{2\a}]\\
        &\leq -\nu\E\|X_t\|^2_{H^{2\a}}  
        +\frac1\nu\E\big[ \|X_t\|^2_{H^{\a}}\|X_t\|^2_{H^{\a+1}}\big]
        + \nu \Tr[Q_{\a}].
    \end{aligned}
    \end{equation}
    We used the Sobolev embedding $\|x\|^{}_{H^{3\a}}\geq \|x\|^{}_{H^{2\a}}$, for $x\in H^{3\a}$, in the last inequality. 
    The right-hand side is locally integrable in time thanks to \eqref{EQ:estim_E_sup_X_4_a}.
    We apply the differential form of Gr\"onwall's Lemma~\ref{LEM:Gronwall_lemma_diff} and obtain for all $t\in[0,T]$
    \begin{align}
        \E\|X_t\|^2_{H^{2\a}}
        &\leq e^{-\nu t}\bigg(\E\|X_0\|^2_{H^{2\a}}+ \int_0^te^{\nu s}\Big(\frac1\nu\E\big[\|X_s\|^2_{H^{\a}}\|X_s\|^2_{H^{\a+1}}\big]
        + \nu \Tr[Q_{2\a}]\Big)\d s \bigg)\\
        &\leq e^{-\nu t}\E\|X_0\|^2_{H^{2\a}}+ \frac1\nu\E\int_0^t\|X_s\|^2_{H^{\a}}\|X_s\|^2_{H^{2\a}}\d s +  \Tr[Q_{2\a}]\\
        &\leq e^{-\nu t}\E\|X_0\|^2_{H^{2\a}}
        + \frac{1}{\nu^2}C\bigg(\E\|X_0\|^4_{H^\a}+\frac {T}{\nu^3}\E\|X_0\|^{8}_{H^1}+T^2\nu(T^3+\nu)\bigg)
        +  \Tr[Q_{2\a}].
    \end{align}
    We used the linearity of the integral for the second inequality, computing the second part and estimating $e^{\nu s}\leq e^{\nu t}$ for the first part. We used estimate~\eqref{EQ:estim_E_sup_X_4_a}, for the last inequality.
    The wanted estimate~\eqref{EQ:estim_E_XT_2_2a} follows after renaming the constants.
\end{proof}

\section{Marginally stationary solutions for the stochastic hyperviscous Navier-Stokes Equation }
\label{SEC:markov_invariant_NS}

In this section, we first prove that the solutions to the stochastic hyperviscous Navier-Stokes Equation, see Definition~\ref{DEF:solution_NSnu} and Theorem~\ref{TH:exi!_solutions_NS_X}, enjoy the Markov property. 
Hence, we can define a Markov semigroup $P$ associated to the equation and prove the existence of an invariant measure for the semigroup $P$, together with some moment estimates.
Next, given a fixed invariant measure $\mu_\nu$, we are able to construct a solution to the SHNS$_{\nu,\a}$ Equation~\eqref{EQ:NSnu} distributed as $\mu_\nu$ for all times.
We call this process a marginally stationary solution to the SHNS$_{\nu,\a}$ Equation~\eqref{EQ:NSnu}.
The marginal stationarity allows us to obtain further moment estimates for the invariant measure, that are also uniform with respect to the kinematic viscosity $\nu$.

\subsection{Markov property}
\label{SEC:markov_property}

We start by recalling the definition of the usual semigroup associated with a solution to a stochastic equation.
\begin{definition}
\label{DEF:semigroup_NS}
    Assume that $\nu>0$ and $\a>1$.
    We define $P:\R_+\ni t\mapsto P_t\in\mathcal L\big(\mathcal B_b(H^{2\a})\big)$ as follows.
    If $x\in H^{2\a}$ and if $(\Omega,\F,\{\F_t\}_{t\geq 0},\P;W;X^x)$ is a solution to the SHNS$_{\nu,\a}$ Equation~\eqref{EQ:NSnu} such that $\P(X^x_0=x)=1$, see Theorem~\ref{TH:exi!_solutions_NS_X} \itref{IT:solutions_NS_determin}, then for all $\varphi\in\mathcal B_b(H^{2\a})$ and $t\geq 0$:
    \begin{equation}
    \label{EQ:def_markov_semigroup}
        P_t\varphi:H^{2\a}\ni x\mapsto \E\big[\varphi(X^x_t)\big]\in\R.
    \end{equation}
\end{definition}
\begin{remark}
    Assume that $t\geq 0$.
    For any $\varphi\in\mathcal B_b(H^{2\a})$, the definition of $P_t\varphi:H^{2\a}\to \R$ is well-posed by Theorem~\ref{TH:exi!_solutions_NS_X} \itref{IT:solutions_NS_determin}.
    Moreover, $P_t\varphi\in\mathcal B(H^{2\a})$ by \cite[Corollary $23$]{Ondrejat_2005_Brownian_representations_cylindrical_local_martingales_strong_Markov},
    and it is bounded by simple calculations.
    Moreover, by direct inspection, the function $P_t:\mathcal B_b(H^{2\a})\ni \varphi\mapsto P_t\varphi\in \mathcal B_b(H^{2\a})$ is linear and bounded.
    
    Finally, the definition depends only on the law of the solution, \textit{i.e.} if $(\tilde \Omega,\tilde \F,\{\tilde \F_t\}_{t\geq 0},\tilde \P;\tilde W;\tilde X^x)$ is another solution to the SHNS$_{\nu,\a}$ Equation~\eqref{EQ:NSnu} such that $\tilde\P(\tilde X^x_0=x)=1$, then by the uniqueness in law property, see Theorem~\ref{TH:exi!_solutions_NS_X} \itref{IT:solutions_NS_uniqueness}:
    \begin{align}
        \E\big[\varphi(X^x_t)\big]
        &=\int_\Omega\varphi(X^x_t(\omega))\d\P(\omega)\\
        &=\int_{H^{2\a}}\varphi(x)\d\big((X^x_t)_\ast\P\big)(x)\\
        &=\int_{H^{2\a}}\varphi(x)\d\big((\tilde X^x_t)_\ast\tilde\P\big)(x)\\
        &=\int_{\tilde\Omega}\varphi(\tilde X^x_t(\omega))\d\tilde \P(\omega)\\
        &=\tilde \E\big[\varphi(\tilde X^x_t)\big].
    \end{align}
\end{remark}

We will now prove that the semigroup $P$ is the Markov semigroup associated to the solution of the stochastic hyperviscous Navier-Stokes equation.
We need a preliminary lemma.

\begin{lemma}
\label{LEM:joint_measurable_solution_map}
    Assume that $\nu>0$, $\a>1$ and let $T>0$.
    Let $(\Omega,\F,\{\F_t\}_{t\geq 0},\P)$ be an augmented filtered probability space
    with an adapted $H^{2\a}$-valued Wiener process $W$.
    For every $x\in H^{2\a}$, let
    $(\Omega,\F,\{\F_t\}_{t\geq 0},\P;W;X^x)$ be the pathwise unique solution to the
    SHNS$_{\nu,\a}$ Equation~\eqref{EQ:NSnu} such that $\P(X^x_0=x)=1$.
    Then there exist a set $\Omega_T\in\F$ with $\P(\Omega_T)=1$ and a map
    \[
        \Psi_T:H^{2\a}\times\Omega\to C\big([0,T];H^{2\a}\big)
    \]
    such that:
    \begin{enumerate}[label=(\roman*)]
        \item for every $x\in H^{2\a}$ and $\omega\in\Omega_T$,  $\Psi_T(x,\omega)=X^x(\omega)\big|_{[0,T]}$,
        \item $\Psi_T$ is
        $\mathscr B_{H^{2\a}}\otimes\F/\mathscr B_{C([0,T];H^{2\a})}$-measurable,
        \item for every $\omega\in\Omega_T$, the map $H^{2\a}\ni x\mapsto \Psi_T(x,\omega)\in C\big([0,T];H^{2\a}\big)$ is continuous.
    \end{enumerate}
    Consequently, if $(\Omega',\mathcal G)$ is a measurable space and
    $\eta:\Omega'\to H^{2\a}$ is $\mathcal G$-measurable, then the map
    \[
        [0,T]\times\Omega'\times\Omega
        \ni (t,\omega',\omega)
        \mapsto \Psi_T(\eta(\omega'),\omega)(t)\in H^{2\a}
    \]
    is
    $\mathscr B_{[0,T]}\otimes\mathcal G\otimes\F/\mathscr B_{H^{2\a}}$-measurable.
    Moreover, if $(\Omega',\mathcal G,\{\mathcal G_t\}_{t\geq 0})$ is filtered and
    $\eta$ is $\mathcal G_0$-measurable, then the process
    \[
        [0,T]\times\Omega'\times\Omega\ni (t,\omega',\omega)\mapsto \Psi_T(\eta(\omega'),\omega)(t)\in H^{2\a},
    \]
    is $\{\mathcal G_t\otimes\F_t\}_{t\in[0,T]}$-adapted and has continuous trajectories;  in particular, it is predictable.
\end{lemma}

\begin{proof}
    By Theorem~\ref{TH:exi!_solutions_NS_X} \itref{IT:solutions_NS_determin},
    there exists a set $\Omega_T\in\F$ with $\P(\Omega_T)=1$ such that, for every
    $\omega\in\Omega_T$, the map $H^{2\a}\ni x\mapsto X^x(\omega)\big|_{[0,T]}\in C\big([0,T];H^{2\a}\big)$ is continuous.
    For every fixed $x\in H^{2\a}$, the random variable $\Omega\ni\omega\mapsto X^x(\omega)\big|_{[0,T]}\in C\big([0,T];H^{2\a}\big)$ is $\F_T$-measurable, because $X^x$ is predictable and has trajectories in $C\big([0,T];H^{2\a}\big)$ $\P$-a.s.
    We define
    \[
        \Psi_T(x,\omega):=\1_{\Omega_T}(\omega)X^x(\omega)\big|_{[0,T]}.
    \]
    Then, for every fixed $x$, the map $\omega\mapsto \Psi_T(x,\omega)$ is $\F_T$-measurable,
    while for every fixed $\omega\in\Omega_T$, the map
    $x\mapsto \Psi_T(x,\omega)$ is continuous.
    Since $H^{2\a}$ and $C\big([0,T];H^{2\a}\big)$ are metric spaces, a standard
    Carath\'eodory measurability theorem (see, e.g., \cite[Lemma~4.51]{Aliprantis+Border_2006_InfiniteDimensionalAnalysis_HitchhikerGuide}) implies that $\Psi_T$ is $\mathscr B_{H^{2\a}}\otimes\F_T/\mathscr B_{C([0,T];H^{2\a})}$-measurable.

    Let now $\eta:\Omega'\to H^{2\a}$ be $\mathcal G$-measurable.
    Then the map
    \[
        \Omega'\times\Omega\ni(\omega',\omega)\mapsto\Psi_T(\eta(\omega'),\omega) \in C([0,T];H^{2\a}),
    \]
    is $\mathcal G\otimes\F_T/\mathscr B_{C([0,T];H^{2\a})}$-measurable.
    Now consider the evaluation map
    \[
        \mathrm{Ev}:[0,T]\times C\big([0,T];H^{2\a}\big)\ni(t,u)\to H^{2\a},
        \qquad
        \mathrm{Ev}(t,u):=u(t).
    \]
    Since $C([0,T];H^{2\a})$ is endowed with the supremum norm and all paths are continuous,
    $\mathrm{Ev}$ is continuous.
    Therefore the map
    \[
        [0,T]\times\Omega'\times\Omega\ni (t,\omega',\omega)
        \mapsto \mathrm{Ev}\bigl(t,\Psi_T(\eta(\omega'),\omega)\bigr)
        =\Psi_T(\eta(\omega'),\omega)(t)
    \]
    is
    $\mathscr B_{[0,T]}\otimes\mathcal G\otimes\F_T/\mathscr B_{H^{2\a}}$-measurable.
    
    Assume in addition that $(\Omega',\mathcal G,\{\mathcal G_t\}_{t\ge0})$ is filtered and that $\eta$ is $\mathcal G_0$-measurable.
    Fix $t\in[0,T]$.
    Applying the previous argument on the interval $[0,t]$, we obtain a map $\Psi_t$ that is $\mathscr B_{H^{2\a}}\otimes\F_t/\mathscr B_{C([0,t];H^{2\a})}$-measurable.
    Then
    \[
        \Omega'\times\Omega\ni(\omega',\omega)\mapsto \Psi_t(\eta(\omega'),\omega)\in C([0,t];H^{2\a})
    \]
    is $\mathcal G_0\otimes\mathcal F_t/\mathscr B_{C([0,t];H^{2\a})}$-measurable.
    Evaluating at time $t$ and using continuity of the fixed-time evaluation map $\mathrm{ev}_t:C([0,t];H^{2\a})\ni u \mapsto u(t)\in H^{2\a}$, we infer that
    \[
        \Omega'\times\Omega\ni(\omega',\omega)\mapsto \Psi_t(\eta(\omega'),\omega)(t)\in H^{2\a}
    \]
    is $\mathcal G_0\otimes\mathcal F_t$-measurable, hence also
    $\mathcal G_t\otimes\mathcal F_t$-measurable.
    Since this holds for every $t\in[0,T]$, the corresponding process is adapted.
    Moreover, by construction its trajectories are continuous; therefore it is predictable.
\end{proof}

\begin{theorem}
\label{TH:markov_X^x}
    Assume that $\nu>0$ and $\a>1$.
    The solutions to the SHNS$_{\nu,\a}$ Equation~\eqref{EQ:NSnu} enjoy the Markov property with semigroup $P$.
    Namely, if $x\in H^{2\a}$ and if $(\Omega,\F,\{\F_t\}_{t\geq 0},\P;W;X^x)$ is a solution to the SHNS$_{\nu,\a}$ Equation~\eqref{EQ:NSnu} such that $\P(X^x_0=x)=1$,  then for all $t\geq 0$, $h\geq 0$ and $\varphi\in\mathcal B_b(H^{2\a})$
    \[
        \E\big[\varphi(X^x_{t+h})\big|\s\big(X^x_s \ : \ s\in[0,t]\big)\big]=P_h\varphi(X^x_t), \qquad \P-a.s.
    \]
\end{theorem}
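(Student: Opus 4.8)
\emph{Proof proposal.} The plan is to build on the pathwise representation $X^x=\mathcal V(x,Z)+Z$ furnished by Theorem \ref{TH:exi!_solutions_NS_X} \itref{IT:solutions_NS_random} together with pathwise uniqueness, and to reduce the Markov property to the standard ``freezing of the initial condition'' argument against the time–shifted noise. First I would fix $t\ge 0$ and show that the shifted process $\{X^x_{t+h}\}_{h\ge 0}$ solves the equation \eqref{EQ:NSnu} restarted at time $t$. Writing identity \eqref{EQ:def_solution_NS} at times $t+h$ and $t$, subtracting, and changing variable $s=t+r$ gives, $\P$–a.s. in $H^\a$,
\[
    X^x_{t+h}+\nu\int_0^h A^\a X^x_{t+r}\,\d r+\int_0^h B\big(X^x_{t+r}\big)\,\d r=X^x_t+\sqrt\nu\,\widetilde W_h,\qquad \forall\, h\ge 0,
\]
where $\widetilde W_h:=W_{t+h}-W_t$. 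On the augmented filtered space $(\Omega,\F,\{\F_{t+h}\}_{h\ge 0},\P)$ the process $\widetilde W$ is an adapted $H^{2\a}$–valued Wiener process, it is independent of $\F_t$ and has the same law as $W$, while $X^x_t$ is $\F_{t+0}=\F_t$–measurable. Hence $(\Omega,\F,\{\F_{t+h}\}_{h\ge 0},\P;\widetilde W;\{X^x_{t+h}\}_{h\ge 0})$ is a solution to \eqref{EQ:NSnu} with initial datum $X^x_t$.

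Next I would apply the construction of Theorem \ref{TH:exi!_solutions_NS_X} \itref{IT:solutions_NS_random} on this shifted space with $\xi=X^x_t$: letting $\widetilde Z$ be the Ornstein–Uhlenbeck process of Theorem \ref{TH:z} driven by $\widetilde W$, the process $\widetilde X_h:=\mathcal V\big(X^x_t,\widetilde Z\big)(h)+\widetilde Z_h$ is also a solution to \eqref{EQ:NSnu} with the same initial datum and the same driving noise $\widetilde W$. By the pathwise uniqueness statement Theorem \ref{TH:exi!_solutions_NS_X} \itref{IT:solutions_NS_uniqueness}, the processes $\{X^x_{t+h}\}_{h\ge 0}$ and $\{\widetilde X_h\}_{h\ge 0}$ are indistinguishable. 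Now $\widetilde Z$ (a stochastic convolution) admits a version that is a Borel–measurable functional $\mathcal Z$ of the path of $\widetilde W$, and $\mathcal V$ is continuous, hence Borel, by Theorem \ref{TH:exisuniqv}; therefore the map $F_h:H^{2\a}\times C(\R_+;H^{2\a})\to H^{2\a}$, $F_h(y,w):=\mathcal V\big(y,\mathcal Z(w)\big)(h)+\big(\mathcal Z(w)\big)_h$, is jointly measurable and satisfies $X^x_{t+h}=F_h\big(X^x_t,\widetilde W\big)$ $\P$–a.s. Moreover, for each fixed $y\in H^{2\a}$, the random variable $F_h(y,\widetilde W)$ has the law of $X^y_h$, since it arises from the identical recipe driven by a noise with the same law.

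Finally, because $X^x_t$ is $\F_t$–measurable and $\widetilde W$ generates a $\sigma$–algebra independent of $\F_t$, the freezing lemma for conditional expectations yields, for every $\varphi\in\mathcal B_b(H^{2\a})$,
\[
    \E\big[\varphi(X^x_{t+h})\,\big|\,\F_t\big]=\Big(\E\big[\varphi\big(F_h(y,\widetilde W)\big)\big]\Big)\Big|_{y=X^x_t}=\E\big[\varphi(X^y_h)\big]\big|_{y=X^x_t}=P_h\varphi(X^x_t),\qquad \P-a.s.,
\]
the last equality being Definition \ref{DEF:semigroup_NS}. Since $X^x$ is adapted, $\s\big(X^x_s:s\in[0,t]\big)\subseteq\F_t$, and $P_h\varphi(X^x_t)$ is $\s(X^x_t)$–measurable; the tower property then gives $\E\big[\varphi(X^x_{t+h})\,\big|\,\s(X^x_s:s\in[0,t])\big]=P_h\varphi(X^x_t)$, which is the assertion. (Invariance under the choice of the stochastic basis is guaranteed by uniqueness in law, Theorem \ref{TH:exi!_solutions_NS_X} \itref{IT:solutions_NS_uniqueness}, exactly as in the remark following Definition \ref{DEF:semigroup_NS}.)

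The step I expect to be most delicate is the measurability bookkeeping underlying the identity $X^x_{t+h}=F_h(X^x_t,\widetilde W)$ and the ensuing application of the freezing lemma: one must select a version of the stochastic convolution $\widetilde Z$ that is genuinely a measurable function of the Wiener path, arrange that the $\P$–null exceptional sets appearing in the pathwise construction of Theorem \ref{TH:exi!_solutions_NS_X} \itref{IT:solutions_NS_random} do not spoil joint measurability in $(y,\omega)$, and use the independence $\widetilde W\perp\F_t$ in the disintegration of the conditional expectation. These points are handled by the measurable–dependence and Markov–property machinery of \cite{Brzezniak+Motyl_2013_Existence_Martingale_Solution} and \cite{Ondrejat_2005_Brownian_representations_cylindrical_local_martingales_strong_Markov}; everything else is the routine subtraction, uniqueness, and tower–property argument sketched above.
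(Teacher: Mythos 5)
Your argument is correct, but it follows a genuinely different route from the paper's. The paper avoids the freezing lemma and any pathwise functional representation altogether: it reduces the claim, via a $\pi$-system argument, to the identity $\E\big[P_h\varphi(X^x_t)\prod_i\1_{\Gamma_i}(X^x_{s_i})\big]=\E\big[\varphi(X^x_{t+h})\prod_i\1_{\Gamma_i}(X^x_{s_i})\big]$, unfolds the left-hand side as a double integral over $\Omega\times\Omega$, and then glues $X^x$ on $[0,t]$ with the restarted solution $X^{X^x_t(\omega)}(\omega')$ into a single process $\tilde X$ on the product space, driven by the concatenated Wiener process $\tilde W_s(\omega,\omega')=W_{s-t}(\omega')+W_t(\omega)$. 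It verifies that $(\tilde\Omega,\dots;\tilde W;\tilde X)$ solves \eqref{EQ:NSnu} with initial datum $x$ and invokes \emph{uniqueness in law} (Theorem \ref{TH:exi!_solutions_NS_X} \itref{IT:solutions_NS_uniqueness}) plus Fubini to identify the finite-dimensional distributions of $\tilde X$ with those of $X^x$. Your route instead relies on \emph{pathwise} uniqueness to identify $X^x_{t+\cdot}$ with $\mathcal V(X^x_t,\widetilde Z)+\widetilde Z$, and then on the freezing lemma applied to the jointly measurable map $F_h(y,w)=\mathcal V(y,\mathcal Z(w))(h)+\mathcal Z(w)_h$. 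What each approach buys: yours delivers the Markov property directly with respect to the larger filtration $\F_t$ (the paper's statement for $\sigma(X^x_s:s\le t)$ then follows by the tower property, as you note), and it makes the flow structure $X^x_{t+h}=F_h(X^x_t,W_{t+\cdot}-W_t)$ explicit; but it hinges on exactly the point you flag — exhibiting a single Borel version $\mathcal Z$ of the stochastic convolution as a functional of the Wiener path and controlling the $y$-independence of the exceptional null sets, which is standard but not free (one way out is to note that, by the strong formulation \eqref{EQ:strong_Z}, $Z-\sqrt\nu\,W$ solves a deterministic linear equation driven by the path of $W$, so $\mathcal Z$ can be realised by a Bochner, rather than stochastic, integral). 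The paper's product-space construction sidesteps all of this measurable-selection bookkeeping at the price of only ever using uniqueness in law and proving the statement against the natural filtration. Both are complete, standard proofs of the same fact.
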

\begin{proof}
    Assume the hypotheses and let  $t\geq 0$, $h\geq 0$, $x\in H^{2\a}$ and $\varphi\in\mathcal B_b(H^{2\a})$ be fixed. 
    For any $y\in H^{2\a}$, we consider a solution $(\Omega,\F,\{\F_t\}_{t\geq 0},\P;W;X^y)$ to the SHNS$_{\nu,\a}$ Equation~\eqref{EQ:NSnu} such that $\P(X^y_0=y)=1$.
    
    First of all, since we have $P_h\varphi\in \mathcal B(H^{2\a})$ by Definition~\ref{DEF:semigroup_NS}, then the function ${\Omega\ni \omega\mapsto P_h\varphi\big(X^x_t(\omega)\big)\in\R}$ is {$\sigma \big(X^x_t\big)$-measurable}, hence also $\s\big(X^x_s \ : \ s\in[0,t]\big)$-measurable.
    We now need to show that 
    \[
        \E\big[P_h\varphi(X^x_t)\1_E\big]=\E\big[\varphi(X^x_{t+h})\1_E\big], \qquad \forall\, E\in\sigma \big(X^x_s \ : \ s\in[0,t]\big).
    \]
    According to \cite[Theorem $9.2$]{williams_1991_probability}, since the family
    \[
        \Bigg\{\bigcap_{i=1}^n\big\{X^x_{s_i}\in\Gamma_i\big\} \ : \ n\in\N, \ \Gamma_1,\dots,\Gamma_n\in \mathscr B_{H^{2\a}}, \ s_1,\dots,s_n\in [0,t]\Bigg\}
    \]
    is a $\pi$-system that generates the $\s$-algebra $\s\big(X^x_s \ : \ s\in[0,t]\big)$, it is sufficient to prove that, for arbitrarily fixed $n\in\N$, $\Gamma_1,\dots,\Gamma_n\in \mathscr B_{H^{2\a}}$ and $s_1,\dots,s_n\in [0,t]$, we have 
    \begin{equation}
    \label{EQ:markov_property}
        \E\bigg[P_h\varphi(X^x_t)\prod_{i=1}^n\1_{\Gamma_i}(X^x_{s_i})
        \bigg]= \E\bigg[\varphi(X^x_{t+h})
        \prod_{i=1}^n\1_{\Gamma_i}(X^x_{s_i})\bigg].
    \end{equation}
    
    We start from the left-hand side and use the definition of semigroup in equation~\eqref{EQ:def_markov_semigroup}:
    \begin{equation}
    \label{EQ:markov_property_1}
    \begin{aligned}
        \E\bigg[
        P_h\varphi(X^x_t)\prod_{i=1}^n\1_{\Gamma_i}(X^x_{s_i})
        \bigg]=&
        \int_\Omega P_h\varphi(X^x_t(\omega))\prod_{i=1}^n\1_{\Gamma_i}(X^x_{s_i}(\omega))\d \P(\omega)\\
        =&
        \int_\Omega \int_\Omega \varphi\Big(X_h^{X^x_t(\omega)}(\omega')\Big)\prod_{i=1}^n\1_{\Gamma_i}(X^x_{s_i}(\omega))\d \P(\omega')\d \P(\omega).
    \end{aligned}
    \end{equation}
    Let $\Psi_h$ be the map furnished by
    Lemma~\ref{LEM:joint_measurable_solution_map} on the interval $[0,h]$.
    We define, for $s\in[0,t+h]$,
    \begin{align}
        \tilde X_s: \Omega\times \Omega \ni(\omega,\omega')\mapsto \tilde X_s(\omega,\omega')
        &:=
        \begin{cases}
            X_s^x(\omega), \qquad &\textit{if }s\in[0,t],\\[1mm]
            \Psi_h\big(X^x_t(\omega),\omega'\big)(s-t), \qquad &\textit{if } s\in(t,t+h],
        \end{cases}\\
        \tilde W_s: \Omega\times \Omega \ni(\omega,\omega')\mapsto \tilde W_s(\omega,\omega')
        &:=
        \begin{cases}
            W_s(\omega), \qquad &\textit{if } s\in[0,t],\\
            W_{s-t}(\omega')+W_t(\omega), &\textit{if } s\in(t,t+h].
        \end{cases}
    \end{align}
    and consider the augmentation of the filtered probability space $(\Omega\times\Omega, \F\otimes\F, \{\F_s\otimes\F_s\}_{s\geq0},\P\otimes\P)$, that we denote by $(\Omega\times\Omega, \tilde\F, \{\tilde\F_s\}_{s\geq0},\tilde\P)$, where $\tilde \Omega:=\Omega\times \Omega$.
    In particular, by the Lévy Martingale Characterization Theorem, see \cite[Theorem $4.6$]{DaPrato+Zabczyk_2014_stochastic_equations_infinite}, $\tilde W$ is an $\{\tilde\F_s\}_{s\geq 0}$-adapted $H^{2\a}$-valued Wiener process. See \cite[Lemma $4.8$]{Debussche+Hofmanova+Vovelle_2016_Degenerate_parabolic_stochastic_partial_dfferential_equations_Quasilinear_case}.
    
    We first justify that $\tilde X$ is a predictable $H^{2\a}$-valued process on
    $[0,t+h]$.
    Since $X^x_t$ is $\F_t$-measurable,  Lemma~\ref{LEM:joint_measurable_solution_map} yields that
    $\Psi_h\big(X^x_t(\omega),\omega'\big)(s-t)$ is
    $\F_t\otimes\F_{s-t}$-measurable, hence also
    $\F_s\otimes\F_s$-measurable; therefore $\tilde X$ is adapted to
    $\{\tilde\F_s\}_{s\in[0,t+h]}$.
    Since both branches have continuous trajectories, $\tilde X$ is predictable.

    We will now show that
    $(\tilde\Omega,\tilde\F,\{\tilde\F_s\}_{s\in[0,t+h]},\tilde\P;\tilde W;\tilde X)$
    is a solution to the SHNS$_{\nu,\a}$ Equation~\eqref{EQ:NSnu} on $[0,t+h]$
    such that $\tilde\P(\tilde X_0=x)=1$.
    The path regularities and the initial condition are immediate from the definition.
    We only need to show the validity of the equality \eqref{EQ:def_solution_NS} for $\tilde W$ and $\tilde X$, $\tilde\P-a.s.$ in $H^{\a}$.
    If $s\in[0,t]$, then $\tilde X_s(\omega,\omega')= X^x_s(\omega)$ and $\tilde W_s(\omega,\omega')=W_s(\omega)$ for all $(\omega,\omega')\in\tilde\Omega$, hence there is nothing to prove.
    If $s>t$, for $\tilde\P-a.e.$ $(\omega,\omega')\in\tilde\Omega$, by a change of variable in time and by the definition of $X^x$ in equation~\eqref{EQ:def_solution_NS}
    \begin{align}
        \tilde X_s(\omega,\omega')
        &=\Psi_h\big(X^x_t(\omega),\omega'\big)(s-t)\\
        &=-\nu\int_0^{s-t}\!\!\!\!A^{\a}\Psi_h\big(X^x_t(\omega),\omega'\big)(r)\d r-\int_0^{s-t}\!\!\!\!B\Big(\Psi_h\big(X^x_t(\omega),\omega'\big)(r)\Big)\d r+\sqrt \nu \,W_{s-t}(\omega')+X^x_t(\omega)\\
        &=-\nu\int_t^{s}A^{\a}\Psi_h\big(X^x_t(\omega),\omega'\big)(r-t)\d r-\int_t^{s}B\Big(\Psi_h\big(X^x_t(\omega),\omega'\big)(r-t)\Big)\d r+\sqrt \nu \,W_{s-t}(\omega')\\
        &\quad -\nu\int_0^{t}A^{\a}X_{r}^x(\omega)\d r-\int_0^{t}B(X_{r}^x(\omega))\d r+\sqrt \nu \,W_{t}(\omega)+x\\
        &=-\nu\int_0^{s}A^{\a}\tilde X_{r}(\omega,\omega')\d r-\int_0^{s}B(\tilde X_{r}(\omega,\omega'))\d r+\sqrt \nu \,\tilde W_{s}(\omega,\omega')+x.
    \end{align}
    
    The uniqueness in law from Theorem~\ref{TH:exi!_solutions_NS_X} \itref{IT:solutions_NS_uniqueness} yields $(\tilde X)_\ast \tilde\P=(X^x)_\ast\P$ on $\mathscr P\big(C([0,T];H^{2\a})\big)$ for any $T>0$. 
    Therefore, we can conclude the chain of equalities \eqref{EQ:markov_property_1} by means of Fubini's Theorem and thanks to the fact that $\tilde X$ and $X^x$ share the same finite-dimensional laws on $\mathscr B_{H^{2\a}}$
    \begin{align}
        \E\bigg[
        P_h\varphi(X^x_t)\prod_{i=1}^n\1_{\Gamma_i}(X^x_{s_i})
        \bigg]=&
        \int_{\Omega\times\Omega}\varphi\Big(\Psi_h\big(X^x_t(\omega),\omega'\big)(t)\Big)\prod_{i=1}^n\1_{\Gamma_i}(X^x_{s_i}(\omega))\d (\P\otimes\P)(\omega,\omega')\\
        =&
        \int_{\tilde\Omega}\varphi\big(\tilde X_{t+h}(\omega,\omega')\big)\prod_{i=1}^n\1_{\Gamma_i}\big(\tilde X_{s_i}(\omega,\omega')\big)\d \tilde\P(\omega,\omega')\\
        =&
        \int_{\Omega}\varphi\big(X^x_{t+h}(\omega)\big)\prod_{i=1}^n\1_{\Gamma_i}\big(X^x_{s_i}(\omega)\big)\d \P(\omega)\\
        =&\, \E\bigg[\varphi(X^x_{t+h})
        \prod_{i=1}^n\1_{\Gamma_i}(X^x_{s_i})
        \bigg].
    \end{align}
    We obtained the sought equality \eqref{EQ:markov_property} and the thesis follows.
\end{proof}

\subsection{Invariant measure}

We now introduce the adjoint $P^\ast$ to the Markov semigroup $P$, which will be later used to define the invariant measure for the stochastic hyperviscous  Navier-Stokes equations.

\begin{definition}
\label{DEF:adjoint_semigroup}
    Assume that $\nu>0$ and $\a>1$.
    We define the adjoint semigroup $P^\ast:=\{P_t^\ast\}_{t\geq 0}$ of $P$, see Definition~\ref{DEF:semigroup_NS}, as follows:
    \[
        P_t^\ast:\mathscr P(H^{2\a})\ni\mu\mapsto P_t^\ast\mu\in\mathscr P(H^{2\a}), \qquad \forall\, t\geq 0,
    \]
    where, for all $t\geq 0$:
    \[
        \big(P_t^\ast\mu\big)(U):=\int_{H^{2\a}}P_t\1_U\d\mu, \qquad \forall\, U\in\mathscr B_{H^{2\a}}.
    \]
\end{definition}

\begin{remark}
    The definition of $P^\ast$ is well-posed, for any choice of parameters $\nu>0$ and $\a>1$.
    Namely, for any $\mu\in\mathscr P(H^{2\a})$ and any $t\geq 0$, one can easily verify that $P_t^\ast\mu$ is again a probability measure on $(H^{2\a},\mathscr B_{H^{2\a}})$, thanks to the definition of $P_t$, see equation~\eqref{EQ:def_markov_semigroup}, to Theorem~\ref{TH:exi!_solutions_NS_X} \itref{IT:solutions_NS_determin}, and to the continuity of $P_t$.
\end{remark}
\begin{remark}
    One can prove, by a standard machine from measure theory, that for any $t\geq0$ and any probability measure $\mu\in\mathscr P(H^{2\a})$
    \begin{equation}
    \label{EQ:int_Pt*}
        \int_{H^{2\a}}\varphi\d\big(P_t^\ast\mu\big)=\int_{H^{2\a}}P_t\varphi\d\mu, \qquad \forall\, \varphi\in \mathcal B_b(H^{2\a}).
    \end{equation}
    
    Indeed, if $\varphi=\1_{U}$ for some $U\in\mathscr B_{H^{2\a}}$, then the property \eqref{EQ:int_Pt*} is trivial.     
    If $\varphi$ is simple, \textit{i.e.} there exist $N\in\N$ and $\{U_i\}_{i=1}^N\subset \mathscr B_{H^{2\a}}$ such that $\varphi=\sum_{i=1}^N\1_{U_i}$, then the property \eqref{EQ:int_Pt*} follows by linearity of the integral and the previous step.     
    If $\varphi\in\mathcal B_b(H^{2\a})$, then there exists a sequence of simple functions $\{\varphi_n\}_{n\in\N}\subset \mathcal B_b(H^{2\a})$ that converges $P_t^\ast\mu-a.s.$ to $\varphi$.
    In this general case, the property \eqref{EQ:int_Pt*} follows by the Dominated Convergence Theorem and the previous step.
\end{remark}

\begin{definition}
\label{DEF:invariant_measure}
    Assume that $\nu>0$ and $\a>1$.
    A probability measure $\mu\in\mathscr P(H^{2\a})$ is an invariant measure for the semigroup $P$, see Definition~\ref{DEF:semigroup_NS}, if
    \[
        P_t^\ast\mu=\mu, \qquad \forall\, t\geq 0.
    \]
\end{definition}

A standard tool to study the existence of invariant measure for stochastic partial differential equations is the Krylov-Bogoliubov Theorem, see, for instance, \cite[Section $3.1$]{DaPrato+Zabczyk_1996_ergodicity_infinite}.
This theorem requires the Feller property for the semigroup $P$ and the tightness for the sequence of probability measures $\frac{1}{n}\int_0^nP_t^\ast\delta_x\d t$, $n\in\N$, where $x$ is any point in $H^{2\a}$.

In our scenario, the Feller property is satisfied thanks to Theorem~\ref{TH:exi!_solutions_NS_X} \itref{IT:solutions_NS_determin}. 
Were the equation set in a bounded domain $D\subset \R^2$, the tightness property could be easily proved by showing some uniform estimates in time for the $p-$moment of the solution, where $p>1$, with respect to the norm of some space $H^\s(D;\R^2)$, where $\s>2\a$.
Indeed, the Rellich-Kondrachov Theorem yields $H^\s(D;\R^2)\doublehookrightarrow H^{2\a}(D;\R^2)$ if $D$ is bounded and $\s>2\a$.
Hence closed balls in $H^\s(D;\R^2)$ are compact in $H^{2\a}(D;\R^2)$, and can be thus utilized to verify the tightness of the sequence.
However, in our study case, the equation is set in the unbounded domain $\R^2$. 
In particular, the Sobolev embeddings are still continuous, yet not compact. 

In order to address the problem, we resort to a different version of the Krylov-Bogoliubov Theorem, see \cite[Section $3$]{Maslowski+Seidler_1999_On_sequentially_weakly_Feller}, which weakens the requirement of tightness and strengthens the Feller property. 
Specifically, the compact set required by the tightness property can be replaced by a bounded set, provided the ambient space is endowed with the weak topology. Consequently, the Feller property has to be replaced by a sequentially weak version.

\begin{theorem}[\text{\cite[Proposition $3.1$]{Maslowski+Seidler_1999_On_sequentially_weakly_Feller}}]
\label{TH:krylov_bogoliubov_sequentially_weak_Feller}
    Assume that $\nu>0$ and $\a>1$.
    There exists an invariant measure for the semigroup $P$, see Definition~\ref{DEF:semigroup_NS}, if the following two hypotheses are satisfied:
    \begin{enumerate}[label=$(\roman*)$]
    \item (Sequentially weak Feller property)
        If $\varphi\in C_b(H^{2\a}_w)$, $x\in H^{2\a}$ and $\{x_n\}_{n\in\N}\subset H^{2\a}$ is a sequence weakly convergent to $x$ in $H^{2\a}$, then 
        \[
            \lim_{n\to\infty} P_t\varphi(x_n)=P_t\varphi(x), \qquad \forall\, t\geq0.
        \]
    \item 
        For any $\e>0$ there exists $R>0$ such that
        \[
            \limsup_{T\to\infty} \frac1T\int_0^T(P_t^\ast\delta_x)\Big(H^{2\a}\setminus \bar B^{H^{2\a}}_R\Big)\d t \leq \e,
        \]
        where $\bar B^{H^{2\a}}_R:=\big\{y\in H^{2\a} \ : \ \|y\|^{}_{H^{2\a}}\leq R\big\}$.
    \end{enumerate}
\end{theorem}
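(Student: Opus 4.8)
The plan is to run the Krylov--Bogoliubov scheme in the weak topology of $\mathcal X:=H^{2\a}$, following \cite[Proposition $3.1$]{Maslowski+Seidler_1999_On_sequentially_weakly_Feller}. First I would fix the point $x\in H^{2\a}$ from hypothesis $(ii)$, choose a solution $(\Omega,\F,\{\F_t\}_{t\geq0},\P;W;X^x)$ of \eqref{EQ:NSnu} with $\P(X^x_0=x)=1$ (Theorem \ref{TH:exi!_solutions_NS_X} \itref{IT:solutions_NS_determin}), and introduce the Ces\`aro averages
\[
    \mu_T(A):=\frac1T\int_0^T(P_t^\ast\delta_x)(A)\d t=\frac1T\int_0^T\P(X^x_t\in A)\d t, \qquad A\in\mathscr B_{\mathcal X}, \ T>0.
\]
Since $t\mapsto\P(X^x_t\in A)$ is measurable by \itref{IT:solutions_NS_determin}, Fubini's theorem and the monotone convergence theorem show that $\mu_T\in\mathscr P(\mathcal X)$ and that $\int_{\mathcal X}\psi\d\mu_T=\frac1T\int_0^T P_t\psi(x)\d t$ for every $\psi\in\mathcal B_b(\mathcal X)$. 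The invariant measure will be produced as a limit point of $\{\mu_T\}_{T>0}$ along a suitable sequence $T_n\toup\infty$.

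The key step is the weak tightness of $\{\mu_T\}$. Using hypothesis $(ii)$ I would pick, for each $k\in\N$, a radius $R_k>0$ with $\limsup_{T\to\infty}\mu_T(\mathcal X\setminus\bar B^{\mathcal X}_{R_k})\leq 2^{-k}$, and I may assume $R_k\toup\infty$. As $\mathcal X$ is a separable reflexive Banach space, each $\bar B^{\mathcal X}_{R_k}$ is compact in $\mathcal X_w$ and metrizable in the relative weak topology (Banach--Alaoglu together with reflexivity and separability, cf.\ Remark \ref{REM:SCw=Cbw}), and it is closed in $\mathcal X_{bw}$ by Remark \ref{REM:topologies_inclusions}. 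I would then apply the classical Prokhorov theorem to the restrictions $\mu_T|_{\bar B^{\mathcal X}_{R_k}}$ on the compact metrizable spaces $(\bar B^{\mathcal X}_{R_k})_w$, diagonalise over $k$, and assemble a Borel probability measure $\mu$ on $\mathcal X$ such that
\[
    \int_{\mathcal X}\psi\d\mu_{T_n}\longrightarrow\int_{\mathcal X}\psi\d\mu, \qquad \forall\,\psi\in C_b(\mathcal X_{bw}),
\]
with no loss of mass at infinity, since by construction $\mu$ gives mass at least $1-2^{-k}$ to $\bar B^{\mathcal X}_{R_k}$ for every $k$ and $R_k\toup\infty$. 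Here I would use Remark \ref{REM:SCw=Cbw} to know that every $\psi\in C_b(\mathcal X_{bw})$ restricts to a genuine continuous function on each metrizable ball, and Remark \ref{REM:weak_borel} to identify $\mathscr B_{\mathcal X_{bw}}$ with $\mathscr B_{\mathcal X}$.

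Finally I would prove invariance, i.e.\ $\int_{\mathcal X}P_s\varphi\d\mu=\int_{\mathcal X}\varphi\d\mu$ for all $s\geq0$. Since the cylindrical functions $x\mapsto f(\lan x,h_1\ran,\dots,\lan x,h_m\ran)$ with $f\in C_b(\R^m)$, $h_1,\dots,h_m\in\mathcal X$, belong to $C_b(\mathcal X_w)$ and separate Borel probability measures on the separable Hilbert space $\mathcal X$, it suffices to treat $\varphi\in C_b(\mathcal X_w)$. For such $\varphi$, hypothesis $(i)$ yields that $P_s\varphi$ is bounded and sequentially weakly continuous, i.e.\ $P_s\varphi\in SC(\mathcal X_w)=C_b(\mathcal X_{bw})$ by Remark \ref{REM:SCw=Cbw}, so it is an admissible test function for the convergence above; moreover the semigroup identity $P_tP_s=P_{t+s}$ (a consequence of the Markov property, Theorem \ref{TH:markov_X^x}) and the formula $\int\psi\d\mu_T=\frac1T\int_0^T P_t\psi(x)\d t$ give
\[
    \bigg|\int_{\mathcal X}P_s\varphi\d\mu_T-\int_{\mathcal X}\varphi\d\mu_T\bigg|
    =\frac1T\bigg|\int_T^{T+s}P_t\varphi(x)\d t-\int_0^s P_t\varphi(x)\d t\bigg|
    \leq\frac{2s}{T}\,\|\varphi\|_{\mathcal B_b(\mathcal X)},
\]
which tends to $0$ as $T\to\infty$. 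Letting $T=T_n\to\infty$ and using the convergence of $\mu_{T_n}$ tested against $\varphi$ and against $P_s\varphi$ gives $\int_{\mathcal X}P_s\varphi\d\mu=\int_{\mathcal X}\varphi\d\mu$, hence $P_s^\ast\mu=\mu$, and $\mu$ is the desired invariant measure. I expect the compactness step of the second paragraph to be the main obstacle: $\mathcal X_w$ is neither metrizable nor sequential (Remark \ref{REM:weak_topology_not_sequential}), so Prokhorov's theorem is unavailable globally and one has to work along an exhaustion by weakly compact metrizable balls, diagonalise, and carefully check both that the resulting object is a genuine probability measure and that the test class in use is measure-determining; the remaining steps are routine.
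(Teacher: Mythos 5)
The first thing to note is that the paper offers no proof of this statement at all: it is imported, with only notational adaptation, from \cite[Proposition $3.1$]{Maslowski+Seidler_1999_On_sequentially_weakly_Feller}, so there is no in-paper argument to compare against. Your reconstruction follows exactly the strategy of that reference — Ces\`aro averages $\mu_T=\frac1T\int_0^TP_t^\ast\delta_x\,\d t$, tightness relative to weakly compact balls, identification of the limit through the sequentially weak Feller property — and its skeleton is sound: the semigroup identity from Theorem \ref{TH:markov_X^x}, the $2s\|\varphi\|_\infty/T$ cancellation, the observation that hypothesis $(i)$ places $P_s\varphi$ in $SC(H^{2\a}_w)=C_b(H^{2\a}_{bw})$ via Remark \ref{REM:SCw=Cbw}, and the fact that cylindrical functions lie in $C_b(H^{2\a}_w)$ and determine Borel probability measures on the separable Hilbert space $H^{2\a}$ (Remark \ref{REM:weak_borel}) are all correct and deployed in the right order.

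The one step you flag as delicate is also the only one that is not yet a proof: the extraction of the limit measure by restricting $\mu_{T_n}$ to the nested balls $\bar B^{H^{2\a}}_{R_k}$ and diagonalising. Two points need attention there. First, hypothesis $(ii)$ is only a $\limsup$ in $T$, so the mass bounds hold only for $T$ large; this is harmless (discard finitely many terms, each individual $\mu_{T_n}$ being ball-tight by the Chebyshev inequality and estimate \eqref{EQ:estim_E_sup_X_2_2a}) but must be said. Second, the weak limits $\nu_k$ of the restricted measures on $(\bar B_{R_k})_w$ need not be exactly consistent as $k$ varies, since mass sitting just outside $\bar B_{R_k}$ may concentrate on the sphere $\partial\bar B_{R_k}$ in the limit; you only get consistency up to $O(2^{-k})$, and turning the resulting Cauchy family of functionals on $C_b(H^{2\a}_{bw})$ into a genuine Borel probability measure requires a Riesz--Daniell step that your sketch elides. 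A cleaner route, entirely inside the paper's toolbox, is to note that closed balls of $H^{2\a}$ are compact in $H^{2\a}_{bw}$ (Banach--Alaoglu together with Remark \ref{REM:weak_compacts_inclusions}), so that $(ii)$ makes the tail of $\{\mu_{T_n}\}_{n\in\N}$ tight on $H^{2\a}_{bw}$; Lemma \ref{LEM:Z_T} and Corollary \ref{COR:Jakubowski} then yield a subsequence converging in $\mathscr P(H^{2\a}_{bw})$, i.e.\ tested exactly against $C_b(H^{2\a}_{bw})$, which is the convergence your invariance argument consumes. With that substitution the argument closes.
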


An auxiliary result is required to prove the sequentially weak Feller property for the semigroup.
 
\begin{lemma}
\label{LEM:deterministic_bw_Feller}
    Assume that $\nu>0$, $\a>1$ and $z\in C(\R_+;H^{2\a})$. Then the function
    \[
        H^{2\a}_w\ni x\mapsto \mathcal V(x,z)\in C\big([0,T];H^{2\a}_w\big),
    \]
    see Theorem~\ref{TH:exisuniqv}, is sequentially continuous. 
    Namely, the following property is satisfied. 
    For any $x\in H^{2\a}$, any sequence $\{x_n\}_{n\in\N}$ in $H^{2\a}$ weakly convergent to $x\in H^{2\a}$, and any $T>0$:
    \[
        \lim_{n\to\infty}\sup_{t\in[0,T]}\Big|{\big\lan} y,\mathcal V(x_n,z)(t)-\mathcal V(x,z)(t){\big\ran}^{}_{\ss{\!H^{-2\a}\!\times\!H^{2\a}}}\Big|=0, \qquad \forall\, y\in H^{-2\a}.  
    \]
\end{lemma}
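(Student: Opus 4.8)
The plan is to exploit the quantitative stability estimate \eqref{EQ2:boundedness_continuity_v(x,z)} from Theorem \ref{TH:exisuniqv} to reduce weak-continuity of $x\mapsto \mathcal V(x,z)$ to a soft compactness-plus-uniqueness argument. First I would fix $T>0$, a point $x\in H^{2\a}$ and a sequence $x_n\rightharpoonup x$ in $H^{2\a}$. By the Banach--Steinhaus theorem the weakly convergent sequence is bounded, say $\sup_n\|x_n\|_{H^{2\a}}\leq R$, and also $\|x\|_{H^{2\a}}\leq R$; enlarging $R$ if necessary we may also assume $\sup_{t\in[0,T]}\|z(t)\|_{H^{2\a}}\leq R$ since $z$ is continuous on the compact $[0,T]$. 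Writing $v_n:=\mathcal V(x_n,z)$ and $v:=\mathcal V(x,z)$, the \emph{a priori} bounds \eqref{EQ2:estim_v_C0_H2} and \eqref{EQ2:estim_v_L2_3} (applied with initial data $x_n$ and forcing $z$) give a constant $C=C(R,T,\nu,\a)$ with
\[
    \sup_{t\in[0,T]}\|v_n(t)\|^2_{H^{2\a}}+\int_0^T\|v_n(t)\|^2_{H^{3\a}}\d t\leq C, \qquad \forall\, n\in\N.
\]
In particular $\{v_n\}$ is bounded in $L^2(0,T;H^{3\a})$ and in $L^\infty(0,T;H^{2\a})$, while from the equation $v_n'=-\nu A^\a v_n-B(v_n+z)$ together with \eqref{EQ:estim_B_1} the time derivatives $\{v_n'\}$ are bounded in $L^2(0,T;H^{\a})$.

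The core of the argument is then a standard Aubin--Lions--Simon compactness step combined with the uniqueness from Theorem \ref{TH:exisuniqv}. From the bounds above, along a subsequence (not relabelled) $v_n\rightharpoonup \bar v$ weakly in $L^2(0,T;H^{3\a})$, $v_n\overset{\ast}{\rightharpoonup}\bar v$ weakly-$\ast$ in $L^\infty(0,T;H^{2\a})$, $v_n'\rightharpoonup \bar v'$ weakly in $L^2(0,T;H^{\a})$, and — by Aubin--Lions, using $H^{3\a}\hookrightarrow H^{2\a}\hookrightarrow H^{\a}$ — strongly in $C([0,T];H^{2\a-\delta})$ for every small $\delta>0$, which is more than enough to pass to the limit in the nonlinear term $B(v_n+z)$ using the continuity estimate \eqref{EQ:estim_B_1}. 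Passing to the limit in the strong formulation \eqref{EQ:vstrong} with data $x_n\rightharpoonup x$ in $H^{2\a}$ shows that $\bar v\in C(\R_+;H^{2\a})\cap L^2_{loc}(\R_+;H^{3\a})$ solves \eqref{EQ:v_nu_weak} with initial datum $x$, hence by the uniqueness part of Theorem \ref{TH:exisuniqv} we get $\bar v=\mathcal V(x,z)=v$. Since the limit is independent of the subsequence, the full sequence $v_n$ converges to $v$ in $C([0,T];H^{2\a}_w)$; concretely, for any $y\in H^{-2\a}$ the real-valued functions $t\mapsto \prescript{}{H^{-2\a}\!}{\lan}y,v_n(t)\ran^{}_{H^{2\a}}$ are equibounded and equicontinuous (equicontinuity from the uniform $L^2(0,T;H^{\a})$ bound on $v_n'$ via $|\lan y,v_n(t)-v_n(s)\ran|\leq \|y\|_{H^{-2\a}}\int_s^t\|v_n'(r)\|_{H^{\a}}\d r\cdot(1+\|y\|)$, more simply since $H^\a\hookrightarrow H^{2\a}$ fails — better: use that $v_n'$ is bounded in $L^2(0,T;H^\a)$ and $y$ acts continuously on $H^\a$), so Arzelà--Ascoli upgrades pointwise convergence to uniform convergence on $[0,T]$, which is exactly the claimed limit.

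The main obstacle is the apparent gap between the spaces: the time derivative $v_n'$ lives only in $L^2(0,T;H^{\a})$ while the target topology is the weak topology of $H^{2\a}$, and $H^\a$ is a \emph{larger} space than $H^{2\a}$, so one cannot directly test the equicontinuity of $\lan y,v_n(\cdot)\ran_{H^{2\a}}$ against an arbitrary $y\in H^{-2\a}$ using the $H^\a$-bound on $v_n'$. The clean way around this is to first prove convergence only in $C([0,T];H^{2\a}_w)$ via the compact-embedding strong convergence in $C([0,T];H^{2\a-\delta})$ — which handles a dense set of test functionals $y\in H^{\delta-2\a}\subset H^{-2\a}$ — and then use the uniform bound $\sup_{n,t}\|v_n(t)\|_{H^{2\a}}\leq C$ together with a density argument: for general $y\in H^{-2\a}$, approximate it in $H^{-2\a}$-norm by $y_\varepsilon$ in the dense subspace, split $\lan y, v_n(t)-v(t)\ran = \lan y-y_\varepsilon, v_n(t)-v(t)\ran + \lan y_\varepsilon, v_n(t)-v(t)\ran$, bound the first term by $\|y-y_\varepsilon\|_{H^{-2\a}}\cdot 2C$ and the second by the already-established uniform convergence. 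This two-step density trick, rather than a single Aubin--Lions invocation, is the one place where some care is genuinely needed; everything else is routine.
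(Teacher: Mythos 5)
Your overall architecture (a priori bounds, compactness, identification of the limit via the uniqueness in Theorem \ref{TH:exisuniqv}, the subsequence principle, and a final density upgrade from a dense class of test functionals to all of $H^{-2\a}$) matches the paper's, and the concluding $\varepsilon$-splitting is sound. The fatal step is the compactness one: you invoke Aubin--Lions with the chain $H^{3\a}\hookrightarrow H^{2\a}\hookrightarrow H^{\a}$ to obtain strong convergence in $C([0,T];H^{2\a-\delta})$. On the unbounded domain $\R^2$ none of these embeddings is compact (translates of a fixed bump are bounded in $H^{3\a}$ but have no subsequence converging in any $H^{s}$), so Aubin--Lions does not apply and the claimed strong convergence is simply unavailable. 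This is not a technicality: the failure of compact Sobolev embeddings on $\R^2$ is precisely the difficulty the whole paper is organized around. The gap then propagates: without strong convergence in some space you cannot pass to the limit in the quadratic term $B(v_n+z)$, since the Lipschitz estimate \eqref{EQ:estim_B_1} controls differences by norms of differences, which do not vanish under merely weak convergence.

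The paper's proof repairs exactly this point by localizing. It works with the Fr\'echet spaces $H^{s}_{loc}$, for which the embedding $H^{\g}(\R;H^{2\a}_{loc})\cap L^2(\R;H^{3\a}_{loc})\doublehookrightarrow L^2(\R;H^{2\a}_{loc})$ does hold; the required fractional-in-time regularity (a uniform bound in $H^\g(\R;H^{2\a})$ for $\g<1/4$) is extracted by taking the Fourier transform in time of the equation, \`a la Lions--Temam, rather than from the $L^2(0,T;H^\a)$ bound on $v_n'$. The resulting strong convergence is only in $L^2(0,T;H^{2\a}_{loc})$, and the nonlinear term is handled by the dedicated Lemma \ref{LEM:nonlinear_loc_convergence_a+1}, which needs only this local strong convergence together with global boundedness in $L^2(0,T;H^{\a+1})$: it tests against compactly supported $y$ and then uses density in $H^{-\b}$ --- the local analogue of the splitting you propose at the end. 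If you replace your Aubin--Lions step by a local compactness statement of this kind and route the limit of $B(v_n+z)$ through such a lemma (compactly supported test vectors plus density), the remainder of your argument goes through.
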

\begin{proof}
\textit{\textbf{Step }$\mathbf{1}$.} 
    Let us fix $T>0$, $x\in H^{2\a}$ and a sequence $\{x_n\}_{n\in\N}$ weakly convergent to $x$ in $H^{2\a}$. 
    To ease the notation we denote for all $n\in\N$
    \begin{align}
        &v_n:=\mathcal V(x_n,z)\big|^{}_{[0,T]}:[0,T]\to H^{2\a},\\
        &\tilde v_n:=\mathcal V(x_n,z)\1_{[0,T]}:\R\to H^{2\a},\\
        &\tilde f_n:=-\nu A^{\a}\tilde v_n-B(\tilde v_n+z)\1_{[0,T]}:\R\to H^{2\a}.
    \end{align}
    We first prove that there exists $\gamma>0$ such that the sequence $\{\tilde v_n\}_{n\in\N}$ is bounded in $H^\g(\R;H^{2\a})$.
    
    The sequence 
    \begin{equation}
    \label{EQ:bounded_xn}
        \{x_n\}_{n\in\N} \quad \text{ is bounded in }\quad  H^{2\a},
    \end{equation}
    because it is weakly convergent in $H^{2\a}$.
    Hence the estimate~\eqref{EQ2:boundedness_continuity_v(x,z)} in Theorem~\ref{TH:exisuniqv} implies that $\{v_n\}_{n\in\N}$ is bounded in $C\big([0,T];H^{2\a}\big)\cap L^2(0,T;H^{3\a})$.
    In particular, the sequence 
    \begin{equation}
    \label{EQ:bounded_vn}
        \{\tilde v_n\}_{n\in\N} \quad \text{ is bounded in }\quad  L^\infty(\R;H^{2\a})\cap L^2(\R;H^{3\a}).
    \end{equation}
    Moreover, for all $n\in\N$, $\tilde f_n$ has compact support $[0,T]$, and 
    \begin{align}
        \bigg(\int_\R\|\tilde f_n(t)\|_{H^\a}\d t\bigg)^2
        &\leq T\int_0^T\|\tilde f_n(t)\|^2_{H^\a}\d t
        \label{EQ:bound_f_n_L1_1}\\
        &\leq 2T\nu^2\int_0^T\|A^{\a}v_n(t)\|^2_{H^\a}\d t
        +2T\int_0^T\|B\big(v_n(t)+z(t)\big)\|^2_{H^\a}\d t
        \notag\\
        &\leq 2T\nu^2\int_0^T\|v_n(t)\|^2_{H^{3\a}}\d t
        +2c^2T\int_0^T
        \|v_n(t)+z(t)\|^2_{H^\a}\,
        \|v_n(t)+z(t)\|^2_{H^{\a+1}}
        \d t
        \notag\\
        &\leq 2T\nu^2\int_0^T\|v_n(t)\|^2_{H^{3\a}}\d t
        +2c^2T\,
        \!\!\!\sup_{t\in[0,T]}\|v_n(t)+z(t)\|^2_{H^\a}
        \int_0^T\|v_n(t)+z(t)\|^2_{H^{\a+1}}\d t
        \notag\\
        &\leq C_{T,z}
        \Big(
            1+\sup_{t\in[0,T]}\|v_n(t)\|^2_{H^{2\a}}
        \Big)
        \Big(
            1+\int_0^T\|v_n(t)\|^2_{H^{3\a}}\d t
        \Big),
        \notag
    \end{align}
    We used Jensen's inequality, the estimate~\eqref{EQ:estim_B_1} from Lemma~\ref{LEM:estimates_B} with $\s=\a+1>2$, and the continuous embeddings $H^{2\a}\hookrightarrow H^{\a}$, $H^{2\a}\hookrightarrow H^{\a+1}$, $H^{3\a}\hookrightarrow H^{\a+1}$, which hold because $\a>1$.
    The right-hand side is uniformly bounded in $n$ thanks to~\eqref{EQ2:boundedness_continuity_v(x,z)} and to the boundedness of $\{x_n\}_{n\in\N}$ in $H^{2\a}$.
    Therefore $\{\tilde f_n\}_{n\in\N}$ is bounded in $L^1(\R;H^\a)$.
    In particular \begin{equation}
    \label{EQ:bounded_Ffn}
        \big\{\mathscr F[\tilde f_n]\big\}_{n\in\N}\in  \big(C(\R;H^\a)\big)^\N \quad \text{ is bounded in }\quad  L^\infty(\R;H^\a).
    \end{equation}
    
    Let us now fix $n\in\N$. By direct inspection, the function $\tilde v_n$ satisfies in $H^\a$
    \begin{equation}
    \begin{aligned}
    \label{EQ2:tilde_v_n}
        \tilde v_n(t)&=
        \begin{cases}
            \ds \int_0^t\tilde f_n(s)\d s +x_n, \qquad &\forall\, t\in[0,T],\\
            0,&\forall\, t\in\R\setminus[0,T],
        \end{cases}\\
        &=\1_{[0,T]}(t)\bigg(\int_0^t\tilde f_n(s)\d s +x_n\bigg).
    \end{aligned}
    \end{equation}
    Let $\{e_k\}_{k\in\N}$ be an orthonormal complete system for $H^{3\a}$, in particular $\{A^{2\a}e_k\}_{k\in\N}$ becomes an orthonormal complete system for $H^{-\a}$. 
    Since $\tilde v_n\in L^2(\R;H^{3\a})$, there exists $\{\psi_k\}_{k\in\N}\subset L^2(\R)$ such that 
    \begin{align}
        &\tilde v_n(t)=\sum_{k=1}^\infty \psi_k(t)e_k,\qquad \textit{a.e.}\ t\in\R,\\
        &\mathscr F[\tilde v_n](t)=\sum_{k=1}^\infty \mathscr F[\psi_k](t)e_k,\qquad\forall\, t\in\R,
    \end{align}
    where the series converge in $H^{3\a}$. 
    We take the duality product in $H^{-\a}\slash H^\a$ of $A^{2\a}e_k$ with equation~\eqref{EQ2:tilde_v_n}, for any $k\in\N$, and later derive in the distributional sense with respect to time:
    \[
        \Big({\big\lan}A^{2\a}e_k,\tilde v_n\big\ran^{}_{\ss{\!H^{-\a}\!\times\!H^\a}}\Big)'=
        {\big\lan}A^{2\a}e_k,\tilde f_n\big\ran^{}_{\ss{\!H^{-\a}\!\times\!H^\a}}+
        {\big\lan}A^{2\a}e_k,x_n\big\ran^{}_{\ss{\!H^{-\a}\!\times\!H^\a}}\delta_0
        -{\big\lan}A^{2\a}e_k,\tilde v_n(T)\big\ran^{}_{\ss{\!H^{-\a}\!\times\!H^\a}}\delta_T.
    \]
    We apply the distributional Fourier transform in time and resort to the property \eqref{EQ:properties_Fourier_duality}
    \[
        it{\big\lan}A^{2\a}e_k,\mathscr F[\tilde v_n](t)\big\ran^{}_{\ss{\!H^{-\a}\!\times\!H^\a}}=
        {\big\lan}A^{2\a}e_k,\mathscr F[\tilde f_n](t)
        \big\ran^{}_{\ss{\!H^{-\a}\!\times\!H^\a}}       
        +(2\pi)^{-1/2}
        {\big\lan}A^{2\a}e_k,x_n -\tilde v_n(T)e^{-itT}\big\ran^{}_{\ss{\!H^{-\a}\!\times\!H^\a}}.
    \]
    We multiply both sides of the last equality by $\mathscr F[\psi_k](t)$ and sum over $k$. We obtain for all $t\in\R$
    \begin{align}
        it\big\|\mathscr F[\tilde v_n](t)\big\|^2_{H^{2\a}}=& 
        {\big\lan}A^{2\a}\mathscr F[\tilde v_n](t),\mathscr F[\tilde f_n](t)
        \big\ran^{}_{\ss{\!H^{-\a}\!\times\!H^\a}}
        +(2\pi)^{-1/2}
        {\big\lan}A^{2\a}\mathscr F[\tilde v_n](t),
        x_n
        -\tilde v_n(T) e^{-itT}\big\ran^{}_{\ss{\!H^{-\a}\!\times\!H^\a}}.
    \end{align}
    We take the complex absolute value of both members and estimate the right-hand side
    \begin{align}
    \label{EQ2:first_estimate_Fourier}
        |t|\big\|\mathscr F[\tilde v_n](t)\big\|^2_{H^{2\a}}&\leq
        \big\|\mathscr F[\tilde v_n](t)\big\|_{H^{3\a}}\bigg[
        \sup_{t\in\R}\big\|\mathscr F[\tilde f_n](t)\big\|^2_{H^{\a}}
        +(2\pi)^{-1/2}\big(\|x_n\|^{}_{H^\a}+\|\tilde v_n(T)\|^{}_{H^\a}\big)\bigg]\\
        &\leq c_1\big\|\mathscr F[\tilde v_n](t)\big\|_{H^{3\a}},
    \end{align}
    where we introduced a finite constant $c_1>0$ independent of $n$, thanks to \eqref{EQ:bounded_Ffn}, \eqref{EQ:bounded_xn} and to \eqref{EQ:bounded_vn}.
    A simple real-analysis exercise shows that
    \begin{equation}\label{EQ2:real_analysis}
        |t|^{2\g}\leq 2\frac{1+|t|}{1+|t|^{1-2\g}}, \qquad \forall\, t\in\R, \ \forall\, \gamma\in(0,1/2).
    \end{equation}
    Therefore we have 
    \begin{equation}
    \label{EQ:norm_Hgamma_fourier}
    \begin{aligned}
        \int_\R|t|^{2\g}\big\|\mathscr F[\tilde v_n](t)\big\|^2_{H^{2\a}}\d t &\leq 2\int_\R\frac{\big\|\mathscr F[\tilde v_n](t)\big\|^2_{H^{2\a}}}{1+|t|^{1-2\g}}+\frac{|t|\big\|\mathscr F[\tilde v_n](t)\big\|^2_{H^{2\a}}}{1+|t|^{1-2\g}}\d t \\
        &\leq 2\int_\R{\big\|\mathscr F[\tilde v_n](t)\big\|^2_{H^{3\a}}}+c_1\int_\R\frac{\big\|\mathscr F[\tilde v_n](t)\big\|^{}_{H^{3\a}}}{1+|t|^{1-2\g}}\d t,
    \end{aligned}
    \end{equation}
    where we employed equation~\eqref{EQ2:real_analysis} for the first inequality, and  estimate~\eqref{EQ2:first_estimate_Fourier} for the second, together with the embedding $H^{3\a}\hookrightarrow H^{2\a}$ and with $1+|t|^{1-2\g}\geq 1$.
    We control the first integral by the Plancherel Theorem and the uniform bound in \eqref{EQ:bounded_vn}:
    \begin{equation}\label{EQ2:stima_1_fattore}
    \int_\R\big\|\mathscr F[\tilde v_n](t)\big\|^2_{H^{3\a}}\d t =\|\tilde v_n\|^2_{L^2(\R;H^{3\a})}\leq C,
    \end{equation}
    where $C>0$ is a finite constant independent of $n$.
    Similarly, thanks to the H\"older inequality 
    \begin{equation}
    \label{EQ:second_integral}
        \int_\R\frac{\big\|\mathscr F[\tilde v_n](t)\big\|^{}_{H^{3\a}}}{1+|t|^{1-2\g}}\d t \leq \bigg(\int_\R\frac{1}{(1+|t|^{1-2\g})^2}\d t \bigg)^{1/2}\bigg(\int_\R\big\|\mathscr F[\tilde v_n](t)\big\|^2_{H^{3\a}}\d t \bigg)^{1/2}\leq C',
    \end{equation}
    where $C'>0$ is a finite constant independent of $n$ as soon as $\g<1/4$. 
    Indeed, the second factor is uniformly bounded by equation~\eqref{EQ2:stima_1_fattore}, while the first is finite for $\g<1/4$.
    By plugging estimates~\eqref{EQ2:stima_1_fattore} and \eqref{EQ:second_integral} into \eqref{EQ:norm_Hgamma_fourier} we conclude that 
    \begin{equation}
    \label{EQ:bounded_vn_Hgamma}
        \{\tilde v_n\}_{n\in\N} \quad \text{ is bounded in }\quad  H^\g(\R;H^{2\a}).
    \end{equation}
    
\textit{\textbf{Step }$\mathbf{2}$.}  
    We proved in \textit{Step $1$} that, for any $\gamma\in(0,1/4)$, the sequence $\{\tilde v_n\}_{n\in\N}$ is bounded in $H^\g(\R;H^{2\a})\cap L^\infty(\R;H^{2\a})\cap L^2(\R;H^{3\a})$.
    Moreover $H^{2\a}$ and $H^{3\a}$ are continuously embedded in $H_{loc}^{2\a}$, $H_{loc}^{3\a}$, respectively.
    In addition, \cite[Theorem $2.2$, Chapter $3$]{Temam_2001_Navier-Stokes_equations} states that $H^\g(\R;H^{2\a}_{loc})\cap L^2(\R;H^{3\a}_{loc})$ is compactly embedded into $L^2(\R;H^{2\a}_{loc})$. 
    Therefore
    \[
        \{\tilde v_n\}_{n\in\N}\quad  \text{ is bounded in }\quad H^\g(\R;H^{2\a})\cap L^\infty(\R;H^{2\a})\cap L^2(\R;H^{3\a})\doublehookrightarrow L^2(\R;H^{2\a}_{loc}).
    \]
    Consequently, we find $\tilde v\in H^\g(\R;H^{2\a})\cap L^\infty(\R;H^{2\a})\cap L^2(\R;H^{3\a})$ and a subsequence $\{\tilde v_{n_k}\}_{k\in\N}$ such that 
    \begin{gather}
        \tilde v_{n_k}\longrightarrow \tilde v\qquad \begin{cases}
            \textit{weakly$^\ast$  in }L^\infty(\R;H^{2\a})\\
            \textit{weakly in }L^2(\R;H^{3\a})\\
            \textit{strongly in }L^2(\R;H^{2\a}_{loc})
        \end{cases}, \textit{ as }k\to\infty.
    \end{gather}
    If we restrict to the time interval $[0,T]$ we find
    \begin{gather}\label{EQ:vnk_to_tilde_v_deterministic}
        v_{n_k}=\tilde v_{n_k}\big|^{}_{[0,T]}\longrightarrow \tilde v\big|^{}_{[0,T]}\qquad \begin{cases}
            \textit{weakly$^\ast$  in }L^\infty(0,T;H^{2\a})\\
            \textit{weakly in }L^2(0,T;H^{3\a})\\
            \textit{strongly in }L^2(0,T;H^{2\a}_{loc})\\
            \textit{strongly in }C\big([0,T];H^{2\a}_w\big),
        \end{cases}, \textit{ as }k\to\infty,
    \end{gather}
    where the last convergence is justified in the following. 

    By Theorem~\ref{TH:exisuniqv} and Lemma~\ref{LEM:v_mild_to_strong}, for every $k\in\N$ the function $v_{n_k}$ belongs to $H^1(0,T;H^\a)\cap C^1([0,T];H)$ and satisfies
    \[
        v_{n_k}'(t)=-\nu A^\a v_{n_k}(t)-B\big(v_{n_k}(t)+z(t)\big),
    \]
    for a.e.\ $t\in[0,T]$ in $H^\a$.
    We show that the right-hand side is bounded in $L^2(0,T;H^\a)$.
    Indeed, by Lemma~\ref{LEM:stokes_operator_semigroup}, $\{A^\a v_{n_k}\}_{k\in\N}$ is bounded in $L^2(0,T;H^\a)$ by \eqref{EQ:bounded_vn}. 
    On the other hand, by Lemma~\ref{LEM:estimates_B} with $\s=\a+1$ and by the embedding $H^{2\a}\hookrightarrow H^{\a+1}$, for every $t\in[0,T]$,
    \[
        \|B(v_{n_k}(t)+z(t))\|_{H^\a}
        \le
        c\|v_{n_k}(t)+z(t)\|_{H^\a}\|v_{n_k}(t)+z(t)\|_{H^{\a+1}}
        \le
        c\|v_{n_k}(t)+z(t)\|_{H^{2\a}}^2.
    \]
    Since $\{v_{n_k}\}_{k\in\N}$ is bounded in $C([0,T];H^{2\a})$ and $z\in C([0,T];H^{2\a})$, we deduce that $\{B(v_{n_k}+z)\}_{k\in\N}$ is bounded in $L^\infty(0,T;H^\a)$, hence in $L^2(0,T;H^\a)$. 
    Therefore
    \begin{equation}
    \label{EQ:bounded_vnk_prime}
        \{v_{n_k}'\}_{k\in\N}
        \qquad\text{is bounded in}\qquad
        L^2(0,T;H^\a).
    \end{equation}

    Fix now $y\in H^{-\a}$ and define
    \[
        g_k^y(t):={\big\lan}y,v_{n_k}(t){\big\ran}_{\ss{\!H^{-\a}\!\times\!H^\a}},
        \qquad t\in[0,T].
    \]
    Then $g_k^y\in H^1(0,T)$ and, for a.e.\ $t\in[0,T]$,
    \[
        \frac{\d}{\d t}g_k^y(t)
        =
        {\big\lan}y,v_{n_k}'(t){\big\ran}_{\ss{\!H^{-\a}\!\times\!H^\a}}.
    \]
    Hence, by \eqref{EQ:bounded_vn} and \eqref{EQ:bounded_vnk_prime},
    \[
        \|g_k^y\|_{H^1(0,T)}
        \le
        C\|y\|_{H^{-\a}},
    \]
    for a constant $C>0$ independent of $k$. 
    Since $H^1(0,T)\hookrightarrow C([0,T])$ compactly, the sequence $\{g_k^y\}_{k\in\N}$ is relatively compact in $C([0,T])$.
    On the other hand, the weak$^\ast$ convergence in $L^\infty(0,T;H^{2\a})$ from \eqref{EQ:vnk_to_tilde_v_deterministic} implies that, for every $\varphi\in L^1(0,T)$,
    \[
        \int_0^T g_k^y(t)\varphi(t)\d t
        \longrightarrow
        \int_0^T
        {\big\lan}y,\tilde v(t){\big\ran}_{\ss{\!H^{-\a}\!\times\!H^\a}}
        \varphi(t)\d t.
    \]
    Therefore every subsequence of $\{g_k^y\}_{k\in\N}$ which converges in $C([0,T])$ has the same limit, namely the unique continuous representative of $t\mapsto {\big\lan}y,\tilde v(t){\big\ran}_{\ss{\!H^{-\a}\!\times\!H^\a}}$.
    By the usual contradiction argument, we infer that the whole sequence $\{g_k^y\}_{k\in\N}$ converges in $C([0,T])$, i.e.
    \begin{equation}
    \label{EQ:conv_Cw_H-a}
        \sup_{t\in[0,T]}
        \Big|
        {\big\lan}y,v_{n_k}(t)-\tilde v(t){\big\ran}_{\ss{\!H^{-\a}\!\times\!H^\a}}
        \Big|
        \longrightarrow0,
        \qquad \forall\, y\in H^{-\a}.
    \end{equation}

    We now extend \eqref{EQ:conv_Cw_H-a} to every $y\in H^{-2\a}$ with a standard density argument.
    Fix $y\in H^{-2\a}$ and choose a sequence $\{y_m\}_{m\in\N}$ in $H^{-\a}$ such that $y_m\longrightarrow y$ in $H^{-2\a}$.
    Then, for every $m,k\in\N$,
    \[
    \begin{aligned}
        \sup_{t\in[0,T]}
        \Big|
        {\big\lan}y,v_{n_k}(t)-\tilde v(t){\big\ran}_{\ss{\!H^{-2\a}\!\times\!H^{2\a}}}
        \Big|
        &\le
        \sup_{t\in[0,T]}
        \Big|
        {\big\lan}y_m,v_{n_k}(t)-\tilde v(t){\big\ran}_{\ss{\!H^{-\a}\!\times\!H^\a}}
        \Big| \\
        &\quad+
        \sup_{t\in[0,T]}
        \Big|
        {\big\lan}y-y_m,v_{n_k}(t)-\tilde v(t){\big\ran}_{\ss{\!H^{-2\a}\!\times\!H^{2\a}}}
        \Big|.
    \end{aligned}
    \]
    By duality,
    \[
    \begin{aligned}
        \sup_{t\in[0,T]}
        \Big|
        {\big\lan}y-y_m,v_{n_k}(t)-\tilde v(t){\big\ran}_{\ss{\!H^{-2\a}\!\times\!H^{2\a}}}
        \Big|
        &\le
        \|y-y_m\|_{H^{-2\a}}
        \sup_{t\in[0,T]}\|v_{n_k}(t)-\tilde v(t)\|_{H^{2\a}} \\
        &\le
        C\|y-y_m\|_{H^{-2\a}},
    \end{aligned}
    \]
    where $C>0$ is independent of $k$, thanks to \eqref{EQ:bounded_vn} and \eqref{EQ:vnk_to_tilde_v_deterministic}. 
    Hence, for every fixed $m\in\N$,
    \[
        \limsup_{k\to\infty}
        \sup_{t\in[0,T]}
        \Big|
        {\big\lan}y,v_{n_k}(t)-\tilde v(t){\big\ran}_{\ss{\!H^{-2\a}\!\times\!H^{2\a}}}
        \Big|
        \le
        C\|y-y_m\|_{H^{-2\a}},
    \]
    where we used \eqref{EQ:conv_Cw_H-a} for the first term. 
    Letting $m\to\infty$, we obtain the last convergence in \eqref{EQ:vnk_to_tilde_v_deterministic}.

\textit{\textbf{Step }$\mathbf{3}$.} 
    We now prove that $\tilde v\big|^{}_{[0,T]}=\mathcal V(x,z)\big|^{}_{[0,T]}$ and that this is the limit of the whole sequence $\{v_n\}_{n\in\N}$, in the sense specified in \eqref{EQ:vnk_to_tilde_v_deterministic}. 
    
    With a slight abuse of notation we henceforth denote by $\tilde v$ the restriction on the interval $[0,T]$ of the function $\tilde v$ constructed in the previous step. 
    We also denote by $v$ the restriction to $[0,T]$ of $\mathcal V(x,z)$.
    We know from Theorem~\ref{TH:exisuniqv} that, for every $k\in\N$, the function $v_{n_k}$ satisfies the following identity in $H^\a$
    \[
        v_{n_k}(t)+\nu \int_0^tA^{\a}v_{n_k}(s)\d s +\int_0^tB\big(v_{n_k}(s)+z(s)\big)\d s =x_{n_k}, \qquad \forall\, t\in[0,T].
    \]
    
    We fix $t\in[0,T]$ and pass to the limit, for $k\to\infty$, each term in the previous equality.
    Thanks to $v_{n_k}\longrightarrow \tilde v$ in $C\big([0,T];H_w^{2\a}\big)$ from \eqref{EQ:vnk_to_tilde_v_deterministic}, we have $v_{n_k}(t)\longrightarrow \tilde v(t)$ weakly on $H^{2\a}$, and, recalling that $v_{n_k}(0)=x_{n_k}\longrightarrow x$ in $H^{2\a}$, we also deduce that $\tilde v(0)=x$.
    Analogously, $A^{\a}v_{n_k}\longrightarrow A^{\a}\tilde v$ weakly in $L^2(0,T;H^\a)$, thanks to the weak-convergence in $L^2(0,T;H^{3\a})$ of $v_{n_k}$. Hence, for all $y\in H^{-\a}$
    \[
        \nu\int_0^t{\big\lan}y,A^{\a}v_{n_k}(s)\big\ran^{}_{\ss{\!H^{-\a}\!\times\!H^\a}}\d s \longrightarrow \nu\int_0^t{\big\lan}y,A^{\a}\tilde v(s)\big\ran^{}_{\ss{\!H^{-\a}\!\times\!H^\a}}\d s, \qquad \textit{in }C\big([0,T]\big).
    \]
    
    For the integral of the nonlinearity we apply Lemma~\ref{LEM:nonlinear_loc_convergence}~\itref{ITEM:nonlinear_loc_convergence_strong} (with $\b=\a$) to the sequence $\{v_{n_k}+z\}_{n\in\N}$, which satisfies the hypotheses of the lemma because the strong convergence in $L^2(0,T;H^{2\a}_{loc})$ implies strong convergence in $L^2(0,T;H^{\a+1}_{loc})$ and the weak*-convergence in $L^\infty(0,T;H^{2\a})$ implies boundedness in $L^\infty(0,T;H^{2\a})$, which yields boundedness in $L^2(0,T;H^{\a+1})$. 
    
    We thus find, for all $t\in [0,T]$
    \[
        \tilde v(t)+\nu \int_0^tA^{\a}\tilde v(s)\d s +\int_0^tB\big(\tilde v(s)+z(s)\big)\d s =x,
    \]
    the equality holding in $H^\a$.
    
    We invoke the uniqueness result in Theorem~\ref{TH:exisuniqv} to infer that $\tilde v=v\in C\big([0,T];H^{2\a}\big)\cap L^2(0,T;H^{3\a})$.
    
    We conclude this step by observing that the whole sequence $\{v_n\}_{n\in\N}$ converges to $ v$ in the sense specified in \eqref{EQ:vnk_to_tilde_v_deterministic}. 
    Indeed, let us choose arbitrarily a strictly increasing sequence of indices $\{n_i\}_{i\in\N}$. 
    Then, by \textit{Step 3}, we can extract a subsequence $\{n_{i_k}\}_{k\in\N}$ such that $\big\{v_{n_{i_k}}\big\}_{k\in\N}$ converges to some limit in the sense specified in \eqref{EQ:vnk_to_tilde_v_deterministic}.
    We proved in the present step that the limit is the function $v$ from Theorem~\ref{TH:exisuniqv}, in particular, it is independent of the sequence of indices $\{n_i\}_{i\in\N}\subset \N$. 
    Since the weak*-convergence in $L^\infty(0,T;H^{2\a})$, the weak-convergence in $L^2(0,T;H^{3\a})$ and the strong convergence in $L^2(0,T;H^{2\a}_{loc})$ are all metrizable on bounded sets, we obtain by the usual contradiction argument that the whole sequence $\{v_n\}_{n\in\N}$ converges to $v$ in the specified topologies.
    
\textit{\textbf{Step }$\mathbf{4}$.} 
    Let us fix $y\in H^{-2\a}$, then from the previous steps ${\lan}y,v_n-v\ran^{}_{\ss{\!H^{-2\a}\!\times\!H^{2\a}}}\longrightarrow 0$ in $L^\infty(0,T)$ as $n\to\infty$ and also ${\lan}y,v_n-v\ran^{}_{\ss{\!H^{-2\a}\!\times\!H^{2\a}}}\in C\big([0,T]\big)$ for all $n\in\N$. 
    Hence, we reach
    \[
        {\big\lan}y,v_n-v\big\ran^{}_{\ss{\!H^{-2\a}\!\times\!H^{2\a}}}\longrightarrow 0 \qquad \textit{ in }C\big([0,T]\big), \textit{ as }n\to\infty,
    \]
    which is the sought thesis.
\end{proof}

\begin{theorem}
\label{TH:invariant_measure}
    Assume that $\nu>0$ and $\a>1$.
    There exists an invariant measure for the semigroup $P$, see Definition~\ref{DEF:semigroup_NS}.
    Moreover, if $\mu_\nu\in\mathscr P(H^{2\a})$ is an invariant measure for the semigroup $P$, then
    \begin{align}
    \label{EQ:moment_estimate_p_1}
        &\int_{H^{2\a}}\|x\|^p_{H^{1}}\d \mu_\nu(x)<+\infty, \qquad \forall\, p\in(2,+\infty),\\
    \label{EQ:moment_estimate_4_a}
        &\int_{H^{2\a}}\|x\|^4_{H^{\a}}\d \mu_\nu(x)<+\infty,\\
    \label{EQ:moment_estimate_2_2a}
        &\int_{H^{2\a}}\|x\|^2_{H^{2\a}}\d \mu_\nu(x)<+\infty.
    \end{align}
\end{theorem}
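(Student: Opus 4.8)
The plan has two parts: existence via Theorem \ref{TH:krylov_bogoliubov_sequentially_weak_Feller}, and the moment estimates via a tightness-type argument combined with the dissipative estimates \eqref{EQ:estim_E_XT_p_1}, \eqref{EQ:estim_E_XT_4_a}, \eqref{EQ:estim_E_XT_2_2a}. For existence, I would verify the two hypotheses of the weak Krylov--Bogoliubov theorem. For the sequentially weak Feller property, fix $\varphi\in C_b(H^{2\a}_w)$ and a sequence $x_n\to x$ weakly in $H^{2\a}$. By Theorem \ref{TH:exi!_solutions_NS_X} \itref{IT:solutions_NS_random}, on a fixed probability space we may write $X^{x_n}_t = \mathcal V(x_n,Z)(t) + Z_t$ and $X^{x}_t = \mathcal V(x,Z)(t) + Z_t$, so $X^{x_n}_t - X^x_t = \mathcal V(x_n,Z)(t) - \mathcal V(x,Z)(t)$, which by Lemma \ref{LEM:deterministic_bw_Feller} converges to $0$ weakly in $H^{2\a}$, $\P$-almost surely (for $\P$-a.e.\ $\omega$, applying the lemma to the trajectory $z = Z(\omega)$). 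Since $\varphi$ is sequentially continuous on $H^{2\a}_w$ (being in $C_b(H^{2\a}_w)$), we get $\varphi(X^{x_n}_t) \to \varphi(X^x_t)$ $\P$-a.s., and then $P_t\varphi(x_n) = \E[\varphi(X^{x_n}_t)] \to \E[\varphi(X^x_t)] = P_t\varphi(x)$ by dominated convergence, $\varphi$ being bounded. For the second hypothesis, I would take $x\in H^{2\a}$, let $(X^x)$ be the solution started at $x$, and use estimate \eqref{EQ:estim_E_XT_2_2a}: since $X^x_0 = x$ is deterministic, $\E\|X^x_t\|^2_{H^{2\a}} \leq e^{-\nu t}\|x\|^2_{H^{2\a}} + C_\nu$ for a constant $C_\nu$ depending only on $\nu$ (and the fixed final-time-independent data), hence $\limsup_{T\to\infty}\frac1T\int_0^T \E\|X^x_t\|^2_{H^{2\a}}\d t \leq C_\nu$. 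By Chebyshev, $(P^\ast_t\delta_x)(H^{2\a}\setminus \bar B^{H^{2\a}}_R) = \P(\|X^x_t\|_{H^{2\a}} > R) \leq \E\|X^x_t\|^2_{H^{2\a}}/R^2$, so taking $R$ large enough that $C_\nu/R^2 \leq \e$ gives the required bound. Theorem \ref{TH:krylov_bogoliubov_sequentially_weak_Feller} then yields an invariant measure.

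For the moment estimates, let $\mu_\nu$ be any invariant measure, i.e.\ $P^\ast_t\mu_\nu = \mu_\nu$ for all $t$. The idea is to test the invariance identity against suitable truncated moment functionals and pass to the limit. Concretely, for $p\in(2,\infty)$ and $R>0$ consider $\varphi_R(x) := \|x\|^p_{H^1}\wedge R \in \mathcal B_b(H^{2\a})$; then by invariance and \eqref{EQ:int_Pt*},
\[
    \int_{H^{2\a}}\varphi_R\d\mu_\nu = \int_{H^{2\a}}P_t\varphi_R\d\mu_\nu = \int_{H^{2\a}}\E\big[\varphi_R(X^x_t)\big]\d\mu_\nu(x) \leq \int_{H^{2\a}}\E\big[\|X^x_t\|^p_{H^1}\big]\d\mu_\nu(x).
\]
Using estimate \eqref{EQ:estim_E_XT_p_1} with initial condition $X^x_0 = x$, namely $\E\|X^x_t\|^p_{H^1} \leq e^{-(p-1)\nu t}\|x\|^p_{H^1} + C_p$, and taking $t\to\infty$ (so the exponential term vanishes), one obtains $\int_{H^{2\a}}\varphi_R\d\mu_\nu \leq C_p$ uniformly in $R$; monotone convergence as $R\to\infty$ then gives \eqref{EQ:moment_estimate_p_1}. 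Strictly speaking one must first know $\int \|x\|^p_{H^1}\d\mu_\nu(x)<\infty$ at least ``a posteriori''; the truncation $\varphi_R$ and the limit $R\to\infty$ handle this cleanly because the bound $C_p$ does not depend on $R$. For \eqref{EQ:moment_estimate_4_a} I would do the same with $\varphi_R(x) = \|x\|^4_{H^\a}\wedge R$ and estimate \eqref{EQ:estim_E_XT_4_a}, and for \eqref{EQ:moment_estimate_2_2a} with $\varphi_R(x) = \|x\|^2_{H^{2\a}}\wedge R$ and estimate \eqref{EQ:estim_E_XT_2_2a}; in the latter two the right-hand sides of the estimates involve only lower-order moments ($\E\|X_0\|^8_{H^1}$, $\E\|X_0\|^4_{H^\a}$) which, once integrated against $\mu_\nu$, require the previously established bounds, so these three should be proved in the stated order.

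The main obstacle is a subtle measurability/integrability point in the last step: to write $\int_{H^{2\a}}\E[\varphi_R(X^x_t)]\d\mu_\nu(x)$ and apply \eqref{EQ:estim_E_XT_2_2a} inside the integral, one needs $x\mapsto \E\|X^x_t\|^p_{H^1}$ to be $\mu_\nu$-integrable, which follows from the explicit bound $e^{-(p-1)\nu t}\|x\|^p_{H^1} + C_p$ only if $\int\|x\|^p_{H^1}\d\mu_\nu(x)$ is already known finite --- a potential circularity. The truncation argument breaks this: with $\varphi_R$ bounded, $P_t\varphi_R\in\mathcal B_b(H^{2\a})$ is genuinely bounded and integrable, so the chain $\int\varphi_R\d\mu_\nu = \int P_t\varphi_R\d\mu_\nu \leq \int\E[\|X^x_t\|^p_{H^1}\wedge R]\d\mu_\nu(x) \leq \int(e^{-(p-1)\nu t}\|x\|^p_{H^1}\wedge R + C_p)\d\mu_\nu(x)$ is legitimate; but the last integrand is still only bounded by $R + C_p$, so to get a $t$-independent conclusion I must instead keep $\E[\|X^x_t\|^p_{H^1}\wedge R] \leq e^{-(p-1)\nu t}\|x\|^p_{H^1}\wedge R + C_p \leq e^{-(p-1)\nu t}R + C_p$ pointwise and then integrate, giving $\int\varphi_R\d\mu_\nu \leq e^{-(p-1)\nu t}R + C_p$ for every $t$; letting $t\to\infty$ kills the first term and yields $\int\varphi_R\d\mu_\nu\leq C_p$, and only then $R\to\infty$. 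One must double-check that this works for \eqref{EQ:estim_E_XT_4_a} and \eqref{EQ:estim_E_XT_2_2a} where the ``constant'' $C$ on the right involves $\E\|X^x_0\|^8_{H^1}=\|x\|^8_{H^1}$ etc.: there the truncation should be at a high enough moment and the already-established lower-order estimates invoked, so the correct order of operations (truncate, send $t\to\infty$, then $R\to\infty$, and only then use the previous moment bounds to control the resulting right-hand side) is the delicate part.
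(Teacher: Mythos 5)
Your overall architecture matches the paper's: existence via the sequentially weak Krylov--Bogoliubov theorem, with the Feller property obtained exactly as in the paper from Theorem \ref{TH:exi!_solutions_NS_X} \itref{IT:solutions_NS_random} and Lemma \ref{LEM:deterministic_bw_Feller}, and the moments via invariance tested against truncations combined with the one-time estimates of Theorem \ref{TH:exi!_solutions_NS_X} \itref{IT:solutions_NS_estimates}. However, there is a genuine gap in your verification of hypothesis $(ii)$ of Theorem \ref{TH:krylov_bogoliubov_sequentially_weak_Feller}. You read \eqref{EQ:estim_E_XT_2_2a} as giving $\E\|X^x_t\|^2_{H^{2\a}}\leq e^{-\nu t}\|x\|^2_{H^{2\a}}+C_\nu$ with $C_\nu$ independent of $t$, but the right-hand side of \eqref{EQ:estim_E_XT_2_2a} contains the terms $T^5/\nu$, $T^2$ and $(T/\nu^5)\,\E\|X_0\|^{8}_{H^1}$, which grow polynomially in $T$; consequently $\frac1T\int_0^T\E\|X^x_t\|^2_{H^{2\a}}\,\d t$ obtained this way is unbounded as $T\to\infty$ and the Chebyshev step collapses. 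The paper instead uses \eqref{EQ:estim_E_sup_X_2_a}, which bounds $\nu\,\E\int_0^T\|X^x_t\|^2_{H^{2\a}}\,\d t$ by a quantity that is affine in $T$, so that after dividing by $\nu TR^2$ and letting $T\to\infty$ one is left with $\frac{C}{\nu R^2}(\nu^{-1}+\nu)$, which is made $\leq\e$ by choosing $R$ large. Substituting that estimate repairs the step.

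In the moment-estimate part, your concern about circularity is well taken (the paper's own ``rearranging'' at $t=1$ tacitly assumes the moment is finite), but your fix has two flaws. First, the pointwise bound $\E[\|X^x_t\|^p_{H^1}\wedge R]\leq e^{-(p-1)\nu t}R+C_p$ is false when $\|x\|^p_{H^1}>R$ (take $e^{-(p-1)\nu t}=1/2$ and $\|x\|^p_{H^1}=3R/2$: the left side can be as large as $\min(3R/4+C_p,\,R)$, which exceeds $R/2+C_p$ for $R$ large); what is true is $\E[\|X^x_t\|^p_{H^1}\wedge R]\leq \big(e^{-(p-1)\nu t}\|x\|^p_{H^1}\big)\wedge R+C_p$, after which one should let $t\to\infty$ \emph{inside} the $\mu_\nu$-integral by dominated convergence (dominant $R$), rather than via an $x$-uniform bound. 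Second, and more importantly, the ``send $t\to\infty$'' device is only available for \eqref{EQ:estim_E_XT_p_1}, whose additive constant $C_p$ is genuinely $T$-independent; the right-hand sides of \eqref{EQ:estim_E_XT_4_a} and \eqref{EQ:estim_E_XT_2_2a} contain $T^4$, respectively $T^5/\nu+T^2$, so letting $t\to\infty$ there yields $+\infty$. For those two estimates one must, as the paper does, fix $t=1$, integrate against $\mu_\nu$ (using the previously established lower-order moments to control the $\|x\|^8_{H^1}$ and $\|x\|^4_{H^\a}$ terms), and absorb the $e^{-\nu}\int\|x\|^4_{H^\a}\,\d\mu_\nu$ (resp.\ $e^{-\nu}\int\|x\|^2_{H^{2\a}}\,\d\mu_\nu$) term using $e^{-\nu}<1$; your proposal as written does not supply a working substitute for that step.
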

\begin{proof}
 \textit{\textbf{Step }$\mathbf{1}$.} 
    The existence result follows from Theorem~\ref{TH:krylov_bogoliubov_sequentially_weak_Feller}, whose hypotheses we will now verify.
    
    As for the sequentially weak Feller property, let us take $x\in H^{2\a}$ and a sequence $\{x_n\}_{n\in\N}\subset H^{2\a}$ weakly convergent to $x$ in $H^{2\a}$.
    Let $(\Omega,\F, \{\F_t\}_{t\geq 0},\P;W;X^{x_n})$, $n\in\N$, and $(\Omega,\F, \{\F_t\}_{t\geq 0},\P;W;X^{x})$ be solutions to the SHNS$_{\nu,\a}$ Equation~\eqref{EQ:NSnu} such that
    $\P(X^{x_n}_0=x_n)=\P(X^x_0=x)=1$.
    It follows from Theorem~\ref{TH:exi!_solutions_NS_X} \itref{IT:solutions_NS_random} and Lemma~\ref{LEM:deterministic_bw_Feller}  that $\P-a.s.$
    \[
        \lim_{n\to\infty} \varphi(X^{x_n}_t)
        =\lim_{n\to\infty} \varphi\big(\mathcal V(x_n,Z)(t)+Z_t\big)
        =\lim_{n\to\infty} \varphi\big(\mathcal V(x,Z)(t)+Z_t\big)
        =\varphi(X^x_t), \qquad\forall\, t\geq0.
    \]
    Thanks to the boundedness of $\varphi$, the Dominated Convergence Theorem gives
    \[
        \lim_{n\to\infty} (P_t\varphi)(x_n)=\lim_{n\to\infty} \E\big[\varphi(X^{x_n}_t)\big]=\E\big[\varphi(X^{x}_t)\big]=(P_t\varphi)(x), \qquad\forall\, t\geq0.
    \]
    
    As far as the condition \textit{$(ii)$} in Theorem~\ref{TH:krylov_bogoliubov_sequentially_weak_Feller} is concerned, we fix $\e>0$, then for all $T,R>0$, by the Chebyshev inequality and estimate~\eqref{EQ:estim_E_sup_X_2_a}
    \begin{align}
        \frac1T\int_0^T(P_t^\ast\delta_x) \Big(H^{2\a}\setminus \bar B^{H^{2\a}}_R\Big)\d t
        &=\frac1T\int_0^T\P\big(\|X^x_t\|^{}_{H^{2\a}}> R\big)\d t \\
        &\leq \frac{1}{TR^2}\int_0^T\E\|X^x_t\|^{2}_{H^{2\a}}\d t \\
        &= \frac{1}{TR^2}\E\int_0^T\|X^x_t\|^{2}_{H^{2\a}}\d t \\
        &\leq \frac{C}{\nu T R^2}\bigg(\|x\|^2_{H^{\a}}+\frac{1}{\nu^2}\|x\|^2_{H^1}+\frac T\nu+T\nu\bigg),
    \end{align}
    where the constant $C>0$ is independent of $T$ and $\nu$. 
    The hypothesis is verified if we choose $R>0$ such that 
    \[
        \frac{C}{\nu R^2}\bigg(\frac1\nu+\nu\bigg)\leq \e.
    \]

\textit{\textbf{Step }$\mathbf{2}$.} 
    Let us now prove the moment estimates for an invariant measure $\mu_\nu\in\mathscr P(H^{2\a})$ for the semigroup $P$.
    
    Assume that $p>2$ and let, for any $n\in\N$, $\varphi_n:H^{2\a}\ni x\mapsto \|x\|^p_{H^1}\wedge n$. 
    Further assume that $(\Omega,\F, \{\F_t\}_{t\geq 0},\P;W;X^{x})$ is a solution to the SHNS$_{\nu,\a}$ Equation~\eqref{EQ:NSnu} such that $\P(X^x_0=x)=1$.
    For all $n\in\N$, since $\varphi_n\in\mathcal B_b(H^{2\a})$, we have by the invariance property of $\mu_\nu$ and the property \eqref{EQ:int_Pt*} for $P_1^\ast$
    \begin{align}
        \int_{H^{2\a}}\|x\|^p_{H^1}\wedge n\d \mu_\nu(x)
        &=\int_{H^{2\a}}\varphi_n(x)\d (P_1^\ast\mu_\nu)(x) \\
        &=\int_{H^{2\a}}(P_1\varphi_n)(x) \d\mu_\nu(x) \\
        &=\int_{H^{2\a}}\E\big[\|X^x_1\|^p_{H^1}\wedge n\big] \d\mu_\nu(x).
    \end{align}
    In the last equality we used the definition of $P_1$.
    Both the first and last members of this chain of equalities converge by the Monotone Convergence Theorem as $n\to\infty$. By uniqueness of the limit we have
    \begin{align}
        \int_{H^{2\a}}\|x\|^p_{H^1}\d \mu_\nu(x)
        &=\int_{H^{2\a}}\E\big[\|X^x_1\|^p_{H^1}\big] \d\mu_\nu(x)\\
        &\leq \int_{H^{2\a}}C_p+e^{-(p-1)\nu}\|x\|^p_{H^1}\d\mu_\nu(x)\\
        &=C_p+e^{-(p-1)\nu}\int_{H^{2\a}}\|x\|^p_{H^1}\d\mu_\nu(x),
    \end{align}
    where we used the estimate~\eqref{EQ:estim_E_XT_p_1} in Theorem~\ref{TH:exi!_solutions_NS_X} \itref{IT:solutions_NS_estimates}. After rearranging the terms, we reach
    \begin{equation}
    \label{EQ:moment_estimate_p_1_proof}
        \int_{H^{2\a}}\|x\|^p_{H^1}\d \mu_\nu(x)\leq \frac{C_p}{1-e^{-(p-1)\nu}}=:C_{p,\nu}<+\infty.
    \end{equation}

    We argue \textit{mutatis mutandis} to get the other estimates. 
    With a similar argument as before we achieve
    \begin{align}
        \int_{H^{2\a}}\|x\|^4_{H^{\a}}\d \mu_\nu(x)
        &=\int_{H^{2\a}}\E\big[\|X^x_1\|^4_{H^{\a}}\big] \d\mu_\nu(x)\\
        &\leq \int_{H^{2\a}}\bigg(e^{-\nu}\|x\|^4_{H^{\a}}+\frac{C}{\nu^3}\|x\|^8_{H^1}+2C\bigg)\d\mu_\nu(x)\\
        &\leq  e^{-\nu}\int_{H^{2\a}}\|x\|^4_{H^{\a}}\d\mu_\nu(x)+\frac{C}{\nu^3}C_{8,\nu}+2C,
    \end{align}
    where we used the estimate~\eqref{EQ:estim_E_XT_4_a} from Theorem~\ref{TH:exi!_solutions_NS_X} \itref{IT:solutions_NS_estimates}, and the finite constant $C_{p,\nu}$ from equation~\eqref{EQ:moment_estimate_p_1_proof}.
    The thesis follows by rearranging the terms.
    Analogously, making use of the estimate~\eqref{EQ:estim_E_XT_2_2a} from Theorem~\ref{TH:exi!_solutions_NS_X}, we obtain 
    \begin{align}
        \int_{H^{2\a}}\|x\|^2_{H^{2\a}}\d \mu_\nu(x)
        &=\int_{H^{2\a}}\E\big[\|X^x_1\|^2_{H^{2\a}}\big] \d\mu_\nu(x)\\
        &\begin{aligned}
            \leq  e^{-\nu}\int_{H^{2\a}}\|x\|^2_{H^{2\a}}\d\mu_\nu(x)
            &+\frac{C}{\nu^2}\int_{H^{2\a}}\|x\|^4_{H^\a}\d\mu_\nu(x)\\
            &+\frac{C}{\nu^5}\int_{H^{2\a}}\|x\|^8_{H^1}\d\mu_\nu(x)+\frac C\nu+2C,
        \end{aligned}
    \end{align}
    which leads to the thesis by the estimates~\eqref{EQ:moment_estimate_p_1}, and \eqref{EQ:moment_estimate_4_a} just proved and by rearranging the terms.
\end{proof}

\subsection{Construction of marginally stationary solutions}
\label{SEC:stationary_solutions}

Once proved the existence of invariant measures for the stochastic hyperviscous Navier-Stokes Equation, we need to construct a solution whose law is a given invariant measure. 
This result is indeed essential to later study the inviscid limit to the Eulerian case, which will be performed by passing to the limit the marginally stationary solutions in appropriate trajectory spaces.

\begin{proposition}
\label{PROP:construct_stationary_solution}
    Assume that $\a>1$.
    There exists a filtered probability space $(\Omega,\F,\{\F_t\}_{t\geq 0},\P)$ enjoying the usual conditions, with an adapted $H^{2\a}$-valued Wiener process $W$, that satisfies the following property.
    If $\nu>0$ and if $\mu_\nu\in\mathscr P(H^{2\a})$ is an invariant measure for the SHNS$_{\nu,\a}$ Equation~\eqref{EQ:NSnu}, see Definition~\ref{DEF:invariant_measure}, then there exists a process $X^\nu:\R_+\times\Omega\to H^{2\a}$ such that:
    \begin{itemize}
        \item $(\Omega,\F,\{\F_t\}_{t\geq 0},\P;W;X^\nu)$ is a solution to the SHNS$_{\nu,\a}$ Equation~\eqref{EQ:NSnu}, see Definition~\ref{DEF:solution_NSnu},
        \item for all $T>0$
        \begin{equation}
        \label{EQ:regularities_stationary}
            \!\!\!\!\!\!\!\!\!\!\!\!X^\nu\in \ds
            \Bigg(\bigcap_{p\geq 1}L^p\big(\Omega;C\big([0,T];H^1\big)\big)\bigg)
            \cap L^4\big(\Omega;C\big([0,T];H^{\a}\big)\big)
            \cap L^2\big(\Omega;C\big([0,T];H^{2\a}\big)\big)
                \cap L^2\big(\Omega\times[0,T];H^{3\a}\big),
         \end{equation}
        \item $(X^\nu_0)_\ast\P=\mu_\nu\in\mathscr P(H^{2\a})$.
    \end{itemize}
    In particular, $X^\nu$ is marginally stationary, \textit{i.e.}:
    \[
        (X^\nu_t)_\ast\P=\mu_\nu\in\mathscr P(H^{2\a}), \qquad \forall\, t\geq  0.
    \]
\end{proposition}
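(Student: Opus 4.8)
The plan is to construct the stationary solution as the image, under the solution map of Theorem~\ref{TH:exi!_solutions_NS_X}\,\itref{IT:solutions_NS_random}, of a stationary initial condition, relying on the Markov property and the Krylov--Bogoliubov/Chebyshev machinery already established. First I would fix, once and for all, a filtered probability space $(\Omega,\F,\{\F_t\}_{t\geq0},\P)$ satisfying the usual conditions and carrying an adapted $H^{2\a}$-valued Wiener process $W$ (with the covariance used throughout), together with the Ornstein--Uhlenbeck process $Z$ of Theorem~\ref{TH:z} with $\b:=2\a$; this space does not depend on $\nu$. The key point is that the abstract construction in Theorem~\ref{TH:exi!_solutions_NS_X}\,\itref{IT:solutions_NS_random} works on an \emph{arbitrary} such space for an arbitrary $\F_0$-measurable $H^{2\a}$-valued initial datum, so once I can realise a random variable $\xi$ on $\Omega$ with $\xi_\ast\P=\mu_\nu$ and $\xi$ independent of $W$ (equivalently $\F_0$-measurable after enlarging $\F_0$), I set $X^\nu_t:=\mathcal V(\xi,Z)(t)+Z_t$. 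To have such a $\xi$ available one enlarges the probability space at time $0$: replace $\Omega$ by $H^{2\a}\times\Omega$ with the product $\sigma$-algebra completed and $\F_0$ augmented to contain the first coordinate, put the law $\mu_\nu\otimes\P$, and note that $W$ and $Z$ are unchanged while the canonical projection onto $H^{2\a}$ is $\F_0$-measurable with law $\mu_\nu$. Since $\mu_\nu$ depends on $\nu$ but the set of candidate laws is the same Polish space $H^{2\a}$ for every $\nu$, this single enlarged space serves all $\nu>0$ at once, which is exactly the uniformity the statement demands.

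Next I would record that $X^\nu$ is a solution to \eqref{EQ:NSnu} with $\P(X^\nu_0=\xi)=1$, hence $(X^\nu_0)_\ast\P=\mu_\nu$, directly by Theorem~\ref{TH:exi!_solutions_NS_X}\,\itref{IT:solutions_NS_random}. The regularity \eqref{EQ:regularities_stationary} then follows from Theorem~\ref{TH:exi!_solutions_NS_X}\,\itref{IT:solutions_NS_regularities}, once I check the integrability of the initial datum:
\[
    \xi\in\Bigg(\bigcap_{p\geq1}L^p(\Omega;H^1)\Bigg)\cap L^4(\Omega;H^\a)\cap L^2(\Omega;H^{2\a}).
\]
But $\E\|\xi\|^p_{H^1}=\int_{H^{2\a}}\|x\|^p_{H^1}\d\mu_\nu(x)$ and likewise for the other two norms, and these are exactly the moment bounds \eqref{EQ:moment_estimate_p_1}, \eqref{EQ:moment_estimate_4_a}, \eqref{EQ:moment_estimate_2_2a} of Theorem~\ref{TH:invariant_measure}. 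So the required integrability of $\xi$ is precisely what was proved for $\mu_\nu$, and \eqref{EQ:regularities_stationary} is immediate.

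It remains to upgrade $(X^\nu_0)_\ast\P=\mu_\nu$ to $(X^\nu_t)_\ast\P=\mu_\nu$ for all $t\geq0$. Here I would invoke the Markov property established in Theorem~\ref{TH:markov_X^x}: for $\varphi\in\mathcal B_b(H^{2\a})$ and $t\geq0$,
\[
    \E\big[\varphi(X^\nu_t)\big]
    =\E\Big[\E\big[\varphi(X^\nu_t)\,\big|\,\F_0\big]\Big]
    =\E\big[(P_t\varphi)(X^\nu_0)\big]
    =\int_{H^{2\a}}(P_t\varphi)(x)\,\d\mu_\nu(x)
    =\int_{H^{2\a}}\varphi(x)\,\d(P_t^\ast\mu_\nu)(x),
\]
using \eqref{EQ:int_Pt*} in the last equality, and then $P_t^\ast\mu_\nu=\mu_\nu$ by invariance (Definition~\ref{DEF:invariant_measure}). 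Since this holds for all $\varphi\in\mathcal B_b(H^{2\a})$, two probability measures agreeing on all such test functions are equal (Notation~\ref{NOT:Borel_probabilities}), so $(X^\nu_t)_\ast\P=\mu_\nu$. A small technical point to address in passing is that the conditioning above really needs $\E[\varphi(X^\nu_t)\mid\F_0]=(P_t\varphi)(X^\nu_0)$, which is the $h=0$, $t\rightsquigarrow$ initial-time instance of Theorem~\ref{TH:markov_X^x} combined with $X^\nu_0=\xi$ being $\F_0$-measurable; since $X^\nu$ was built on the enlarged space exactly as in~\itref{IT:solutions_NS_random}, the Markov statement applies verbatim (one conditions on $\sigma(X^\nu_s:s\in[0,0])=\sigma(\xi)\subset\F_0$ and uses the tower property). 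The main obstacle, such as it is, is bookkeeping: making the enlargement of the probability space at time $0$ compatible with the ``usual conditions'' and with the already-fixed $W$ and $Z$, and being careful that the \emph{same} space works simultaneously for every $\nu$ — but no new analytic estimate is needed, everything reduces to Theorems~\ref{TH:exi!_solutions_NS_X}, \ref{TH:markov_X^x} and \ref{TH:invariant_measure}.
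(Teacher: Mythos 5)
Your strategy is essentially the paper's: realise a $\mu_\nu$-distributed, $\F_0$-measurable datum $\xi_\nu$ on a fixed filtered space carrying $W$ and $Z$, set $X^\nu:=\mathcal V(\xi_\nu,Z)+Z$ via Theorem \ref{TH:exi!_solutions_NS_X} \itref{IT:solutions_NS_random}, obtain \eqref{EQ:regularities_stationary} from the moment bounds \eqref{EQ:moment_estimate_p_1}--\eqref{EQ:moment_estimate_2_2a} together with \itref{IT:solutions_NS_regularities}, and deduce stationarity from $P_t^\ast\mu_\nu=\mu_\nu$. However, your construction of the probability space does not deliver the uniformity the statement demands: endowing $H^{2\a}\times\Omega$ with $\mu_\nu\otimes\P$ produces a \emph{different} probability measure for each $\nu$, hence a different filtered probability space for each viscosity, whereas the proposition (and its use in Lemma \ref{LEM:nu_to_0}, where a whole sequence $\nu_j\to 0$ must live on one space) requires a single $\P$ serving all $\nu>0$ simultaneously. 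The paper fixes this by taking $\Omega^1:=(H^{2\a})^{(0,+\infty)}$ with a measure under which every coordinate map $\bar\xi_\nu:\omega\mapsto\omega(\nu)$ has law $\mu_\nu$ at once, and then tensoring with a space carrying the Wiener process; your enlargement is repairable along these lines, but as written the claim that ``this single enlarged space serves all $\nu>0$ at once'' is false.

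The stationarity step also leans on a citation that does not apply verbatim: Theorem \ref{TH:markov_X^x} is stated only for solutions with \emph{deterministic} initial condition $\P(X^x_0=x)=1$, so the identity $\E[\varphi(X^\nu_t)\mid\F_0]=(P_t\varphi)(X^\nu_0)$ for the randomly started solution is not an instance of it. What is needed is a freezing argument exploiting that $\xi_\nu$ is $\F_0$-measurable and independent of $W$: one must show $\E\big[\varphi\big(\mathcal V(\xi_\nu,Z)(t)+Z_t\big)\mid\sigma(\xi_\nu)\big]=g(\xi_\nu)$ with $g(x)=\E[\varphi(X^x_t)]=P_t\varphi(x)$. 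This is precisely what the paper's Step 2 supplies, by assembling $\tilde X_s(\omega,\omega'):=X^{\xi_\nu(\omega)}_s(\omega')$ on the product space $\Omega\times\Omega$, checking it solves \eqref{EQ:NSnu} with initial law $\mu_\nu$, and invoking uniqueness in law (Theorem \ref{TH:exi!_solutions_NS_X} \itref{IT:solutions_NS_uniqueness}) to identify $(\tilde X_t)_\ast\tilde\P$ with $(X^\nu_t)_\ast\P$. Your chain of equalities is correct once this step is justified, but the justification you offer is not the one that does the work.
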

\begin{proof}
\textit{\textbf{Step }$\mathbf{1}$.}
    For fixed $\a>1$, we will construct a filtered probability space $(\Omega,\F,\{\F_t\}_{t\geq 0},\P)$ that satisfies the usual conditions and enjoys the following properties.
    \begin{itemize}
    \item 
        There exists an $H^{2\a}$-valued Wiener process $W$ defined and adapted on $(\Omega,\F,\{\F_t\}_{t\geq 0},\P)$.
    \item 
        For any $\nu>0$, if $\mu_\nu\in\mathscr P(H^{2\a})$ is an invariant measure for the SHNS$_{\nu,\a}$ Equation~\eqref{EQ:NSnu} (which exists thanks to Theorem~\ref{TH:invariant_measure}), then there exists a random variable $\xi_\nu:\Omega\to H^{2\a}$ distributed as $\mu_\nu$. 
    \end{itemize}
    
    First, we define $\Omega^1:=(H^{2\a})^{(0,+\infty)}$, we fix, for any $\nu>0$, an invariant measure $\mu_\nu\in\mathscr P(H^{2\a})$ for the SHNS$_{\nu,\a}$ Equation~\eqref{EQ:NSnu}, and we let $\bar \xi_\nu:\Omega^1\ni \omega\mapsto \omega(\nu)\in H^{2\a}$, for all $\nu>0$.
    Then, \cite[Exercise $2$, Section $10.6$]{Cohn_2013_Measure_Theory_2nd_edition} states the existence of a $\sigma$-algebra $\F^1$ on $\Omega^1$ and a probability measure $\P^1$ on $(\Omega^1,\F^1)$ such that $\bar \xi_\nu$ is a random variable on $(\Omega^1,\F^1,\P^1)$ distributed as $\mu_\nu$, for any $\nu>0$.
    Next, we endow $(\Omega^1,\F^1,\P^1)$ with the constant filtration $\F^1_t:=\F^1$ for all $t\geq 0$.
    In addition, we consider a filtered probability space $\big(\Omega^2,\F^2,\{\F^2_t\}_{t\geq 0},\P^2\big)$ with an adapted $H^{2\a}$-valued Wiener process $\bar W$. 

    Finally, we define $(\Omega,\F,\P)$ as the completion of $(\Omega^1\times \Omega^2,\F^1\otimes\F^2,\P^1\times \P^2)$ and, for all $t\geq 0$, we let $\F_t$ be the completion with respect to $\P$ of $\bigcap_{s>t}\F^1_s\otimes \F^2_s$.
    The so constructed filtered probability space $(\Omega,\F,\{\F_t\}_{t\geq 0},\P)$ satisfies the usual conditions.

    Eventually, recalling that $\Omega:=\Omega^1\times\Omega^2$, we let
    \begin{align}
        W_t:\Omega\ni (\omega_1,\omega_2)\mapsto \bar W_t(\omega_2)\in H^{2\a}, \qquad \forall\, t\geq 0,\\
        \xi_\nu:\Omega\ni (\omega_1,\omega_2)\mapsto \bar \xi_\nu(\omega_1)\in H^{2\a}, \qquad \forall\, \nu> 0.
    \end{align}
    Then $W$ is an adapted $H^{2\a}$-valued Wiener process on $(\Omega,\F,\{\F_t\}_{t\geq 0},\P)$, and $\xi_\nu$ is $\F_0$-measurable for every $\nu>0$.
    Moreover,
    \[
        (\xi_\nu)_\ast\P=(\bar\xi_\nu)_\ast\P^1=\mu_\nu,
        \qquad \forall\, \nu>0.
    \]
    
\textit{\textbf{Step }$\mathbf{2}$.}
    Let us now fix $\nu>0$. 
    Since $\xi_\nu:\Omega\to H^{2\a}$ is $\F_0$-measurable, Theorem~\ref{TH:exi!_solutions_NS_X} \itref{IT:solutions_NS_random} states the existence of a process ${X^{\xi_\nu}:\R_+\times\Omega\to H^{2\a}}$, that we hereby denote simply by $X^\nu$, such that $(\Omega,\F,\{\F_t\}_{t\geq 0},\P;W;X^\nu)$ is a solution to the SHNS$_{\nu,\a}$ Equation~\eqref{EQ:NSnu} with $\P\big(X^\nu_0=\xi_\nu\big)=1$.
    In addition, since  $(\xi_\nu)_\ast\P=(X^\nu_0)_\ast\P=\mu_\nu$, the moment estimates~\eqref{EQ:moment_estimate_p_1}, \eqref{EQ:moment_estimate_4_a}, \eqref{EQ:moment_estimate_2_2a} imply $X^\nu_0\in L^2(\Omega;H^{2\a})\cap L^4(\Omega;H^{\a})\cap L^p(\Omega;H^1)$ for all $p\geq 1$.
    Hence, by Theorem~\ref{TH:exi!_solutions_NS_X} \itref{IT:solutions_NS_regularities}, the process $X^\nu$ has the sought regularities.    
    
    It only remains to show that $X^\nu$ is marginally stationary.
    Let us fix $t\geq 0$, then $(X^\nu_t)_\ast\P=\mu_\nu\in \mathscr P(H^{2\a})$ if and only if
    \begin{equation}
    \label{EQ:construct_stationary_solution}
        \int_{H^{2\a}}\varphi \d \mu_\nu=\int_{H^{2\a}}\varphi \d \big((X^\nu_t)_\ast\P\big), \qquad \forall\, \varphi\in \mathcal B_b(H^{2\a}).
    \end{equation}
    Let us fix $\varphi\in \mathcal B_b(H^{2\a})$.
    The following equalities come from the invariance property of $\mu_\nu$, see Definition~\ref{DEF:invariant_measure}, the property \eqref{EQ:int_Pt*} for $P_t$, the Change of Variable Theorem and the definition of $P_t$ in equation~\eqref{EQ:def_markov_semigroup}:
    \begin{align}
        \int_{H^{2\a}}\varphi \d \mu_\nu
        &=\int_{H^{2\a}}\varphi \d \big(P_t^\ast\mu_\nu\big)\\
        &=\int_{H^{2\a}}P_t\varphi \d \mu_\nu\\
        &=\int_{H^{2\a}}P_t\varphi \d \big((\xi_\nu)_\ast\P\big)\\
        &=\int_{\Omega}(P_t\varphi)\big((\xi_\nu)(\omega)\big) \d \P(\omega)\\
        &=\int_{\Omega}\int_{\Omega} \varphi\Big(X^{\xi_\nu(\omega)}_t(\omega')\Big) \d\P(\omega')\d\P(\omega).
    \end{align}
    Let us define 
    \begin{align}
        \tilde X_s&: \Omega\times \Omega \ni(\omega,\omega')\mapsto \tilde X_s(\omega,\omega'):=X_s^{\xi_\nu(\omega)}(\omega')\in H^{2\a}, \qquad \forall\, s\geq  0,\\
        \tilde W_s&: \Omega\times \Omega \ni(\omega,\omega')\mapsto \tilde W_s(\omega,\omega'):=W_s(\omega')\in H^{2\a}, \qquad \forall\, s\geq 0,
    \end{align}
    and consider the augmented filtered probability space $(\Omega\times\Omega, \tilde\F, \{\tilde\F_s\}_{s\geq0},\tilde\P)$ constructed, as already outlined in \textit{Step $1$}, from $(\Omega\times\Omega, \F\otimes\F, \{\F_s\otimes\F_s\}_{s\geq0},\P\otimes\P)$.
    We denote $\tilde \Omega:=\Omega\times \Omega$.
    In particular, $\tilde W$ is an adapted $H^{2\a}$-valued Wiener process.
    Moreover, by direct inspection, $(\tilde\Omega, \tilde\F, \{\tilde\F_s\}_{s\geq0},\tilde\P;\tilde W;\tilde X)$ is a solution to the SHNS$_{\nu,\a}$ Equation~\eqref{EQ:NSnu} such that $\tilde\P(\tilde X_0=\xi_\nu)=1$.
    In particular, $(\tilde X_0)_\ast\tilde\P=(\xi_\nu)_\ast\P$, thus the uniqueness in law for the solution, see Theorem~\ref{TH:exi!_solutions_NS_X} \itref{IT:solutions_NS_uniqueness}, yields 
    \[
        (\tilde X_s)_\ast \tilde\P=(X^\nu_s)_\ast\P\in\mathscr P(H^{2\a}),\qquad \forall\, s\geq 0.
    \]
    Therefore, we can conclude the chain of equalities above with
    \begin{align}
        \int_{H^{2\a}}\varphi \d \mu_\nu
        &=\int_{\tilde\Omega}\varphi\big(\tilde X_t(\omega,\omega')\big)\d \tilde\P(\omega,\omega')\\
        &=\int_{\tilde\Omega}\varphi\d \big((\tilde X_t)_\ast \tilde\P\big)\\
        &=\int_{\Omega}\varphi\d \big((X^\nu_t)_\ast\P\big).
    \end{align}
    We obtained the sought equality \eqref{EQ:construct_stationary_solution} and the claim follows.
\end{proof}

Eventually, the marginally stationary solutions provided in the previous theorem, also allow us to obtain further estimates for the invariant measure, as shown in the following result.

\begin{theorem}
\label{TH:moment_estimates_mu_nu}
    Assume that $\a>1$. 
    There exists a finite constant $C_\a>0$ such that, assuming $\nu>0$, if $\mu_\nu\in\mathscr P(H^{2\a})$ is an invariant measure for the SHNS$_{\nu,\a}$ Equation~\eqref{EQ:NSnu}, see Definition~\ref{DEF:invariant_measure}, then
    \begin{align}
    \label{EQ:moment_mu_nu_2_a+1}
        &\int_{H^{2\a}}\|x\|^2_{H^{\a+1}}\d \mu_\nu(x)=C_\a,\\
    \label{EQ:moment_mu_nu_2n_H1}
        &\int_{H^{2\a}}\|x\|^{2n}_{H^{1}}\d \mu_\nu(x)\leq (2n-1)!!\, C_\a^n, \qquad \forall\, n\in\N.    
    \end{align}
    Moreover, if $\b>0$ is such that $2\b C_\a<1$,  then
    \begin{equation}
    \label{EQ:moment_mu_nu_exp_Ha+1}
        \int_{H^{2\a}} e^{\b\|x\|^2_{H^{\a+1}}}\d \mu_\nu(x)\leq 2e^{\frac{2\b C_\a}{1-2\b C_\a}}.
    \end{equation}
\end{theorem}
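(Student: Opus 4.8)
The plan is to exploit the stationarity of the process $X^\nu$ constructed in Proposition \ref{PROP:construct_stationary_solution}, applying the It\^o formula from Lemma \ref{LEM:Ito_formula_h(|x|^2_b)} with the test functions $h=\id_{\R_+}$, $h(r)=r^n$, and $h(r)=e^{\b r}$, always with $\g=1$, to the stationary solution $X^\nu$ whose law at every time is $\mu_\nu$. The key structural fact that makes everything work is that for $\g=1$ the non-linearity drops out, since $\lan B(x),x\ran^{}_{H^1}=\lan B(x),Ax\ran=0$ for $x\in H^2$; this is exactly the property \eqref{EQ:b(x,x,Ay)=0} proven in Lemma \ref{LEM:b}. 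All the viscous-dependent constants will then combine into $C_\a := \frac{1}{2}\Tr[Q_1]$, which is the crucial point: after cancellation the factor $\nu$ appears symmetrically on both sides and drops out, yielding a bound independent of $\nu$.

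First I would prove \eqref{EQ:moment_mu_nu_2_a+1}. Apply Lemma \ref{LEM:Ito_formula_h(|x|^2_b)} with $\g=1$, $h=\id_{\R_+}$ to $X^\nu$ on $[0,t]$; the non-linear term vanishes, and taking expectations kills the It\^o integral, leaving
\[
    \E\|X^\nu_t\|^2_{H^1} + 2\nu\int_0^t\E\|X^\nu_s\|^2_{H^{\a+1}}\d s = \E\|X^\nu_0\|^2_{H^1} + \nu\Tr[Q_1]\,t.
\]
By stationarity $\E\|X^\nu_t\|^2_{H^1}=\E\|X^\nu_0\|^2_{H^1}=\int_{H^{2\a}}\|x\|^2_{H^1}\d\mu_\nu(x)$, which is finite by \eqref{EQ:moment_estimate_p_1}; so these two terms cancel, and again by stationarity $\int_0^t\E\|X^\nu_s\|^2_{H^{\a+1}}\d s = t\,\E\|X^\nu_0\|^2_{H^{\a+1}}$, whence $\E\|X^\nu_0\|^2_{H^{\a+1}} = \frac{1}{2}\Tr[Q_1] =: C_\a$, independent of $\nu$. (A small preliminary point: one must check $\E\|X^\nu_0\|^2_{H^{\a+1}}<\infty$ so the cancellation is legitimate; this follows from \eqref{EQ:moment_estimate_4_a} and the embedding $H^\a\hookrightarrow H^{\a+1}$ — wait, that embedding goes the wrong way, so instead use \eqref{EQ:moment_estimate_2_2a} together with $H^{2\a}\hookrightarrow H^{\a+1}$ since $\a>1$ gives $2\a>\a+1$.)

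Next, for \eqref{EQ:moment_mu_nu_2n_H1}, I would argue inductively on $n$. Apply Lemma \ref{LEM:Ito_formula_h(|x|^2_b)} with $\g=1$, $h(r)=r^n$, so $h'(r)=nr^{n-1}$, $h''(r)=n(n-1)r^{n-2}$; again the non-linearity vanishes. Taking expectations and using stationarity to cancel $\E h(\|X^\nu_t\|^2_{H^1})=\E h(\|X^\nu_0\|^2_{H^1})$, and $\E\int_0^t(\cdots)\d s = t\,\E(\cdots)$, gives (after dividing by $t$ and by $\nu$, using $\|Q_1^{1/2}x\|^2_{H^1}\le\Tr[Q_1]\|x\|^2_{H^1}$ from \eqref{EQ:control_Tr_Ito} and $\|x\|_{H^{\a+1}}\ge\|x\|_{H^1}$ to bound the dissipation term below by $\|x\|^{2n}_{H^1}$):
\[
    \E\|X^\nu_0\|^{2n}_{H^1} \le n(n-1)\E\big[\|Q_1^{1/2}X^\nu_0\|^2_{H^1}\|X^\nu_0\|^{2n-4}_{H^1}\big]/(2n-1) + \text{(lower order)},
\]
more precisely one gets $2n\,\E\|X^\nu_0\|^{2n}_{H^1} \le 2n(n-1)\Tr[Q_1]\E\|X^\nu_0\|^{2n-2}_{H^1} + n\Tr[Q_1]\E\|X^\nu_0\|^{2n-2}_{H^1} = n(2n-1)\Tr[Q_1]\E\|X^\nu_0\|^{2n-2}_{H^1}$, i.e. $\E\|X^\nu_0\|^{2n}_{H^1}\le (2n-1)C_\a\,\E\|X^\nu_0\|^{2n-2}_{H^1}$; iterating with the base case $\E\|X^\nu_0\|^2_{H^1}\le C_\a$ (which follows from the $n=1$ version, or from comparing with \eqref{EQ:moment_mu_nu_2_a+1} and $\|x\|_{H^1}\le\|x\|_{H^{\a+1}}$) yields $(2n-1)!!\,C_\a^n$. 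One needs finiteness of all these moments to justify the manipulations: $\int\|x\|^{2n}_{H^1}\d\mu_\nu<\infty$ for every $n$ follows from \eqref{EQ:moment_estimate_p_1}.

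Finally, \eqref{EQ:moment_mu_nu_exp_Ha+1} follows from the polynomial bounds: since $e^{\b r}=\sum_{n\ge 0}\b^n r^n/n!$, monotone convergence gives
\[
    \int_{H^{2\a}}e^{\b\|x\|^2_{H^1}}\d\mu_\nu(x) = \sum_{n=0}^\infty \frac{\b^n}{n!}\int_{H^{2\a}}\|x\|^{2n}_{H^1}\d\mu_\nu(x) \le \sum_{n=0}^\infty \frac{\b^n(2n-1)!!}{n!}C_\a^n.
\]
Using $(2n-1)!!/n! = \binom{2n}{n}/2^n \le 2^n$, hmm, that gives $\sum (2\b C_\a)^n$; but one should be more careful — $(2n-1)!! = (2n)!/(2^n n!)$, so $(2n-1)!!/n! = (2n)!/(2^n (n!)^2) = \binom{2n}{n}/2^n$. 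A cleaner route: $(2n-1)!!\,C_\a^n$ is exactly the $2n$-th moment of a centered Gaussian with variance $C_\a$, so $\sum_n \frac{\b^n}{n!}(2n-1)!!C_\a^n = \E[e^{\b G^2}]$ with $G\sim\mathcal N(0,C_\a)$, which equals $(1-2\b C_\a)^{-1/2}$ for $2\b C_\a<1$. Since for $2\b C_\a<1$ we have $(1-2\b C_\a)^{-1/2}\le 2 e^{\frac{2\b C_\a}{1-2\b C_\a}}$ (an elementary inequality — take logs: $-\frac12\log(1-x)\le \log 2 + \frac{x}{1-x}$ for $x\in[0,1)$, which holds since at $x=0$ both sides vanish except the $\log 2$ slack and the left grows slower), and finally $\|x\|_{H^1}\le\|x\|_{H^{\a+1}}$ gives $e^{\b\|x\|^2_{H^1}}\le e^{\b\|x\|^2_{H^{\a+1}}}$ — wait, that is the wrong direction for what we want. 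We need an upper bound on $\int e^{\b\|x\|^2_{H^{\a+1}}}$, not $\int e^{\b\|x\|^2_{H^1}}$. So the argument must instead be run with $\|x\|_{H^{\a+1}}$ throughout: redo the polynomial estimates for $\E\|X^\nu_0\|^{2n}_{H^{\a+1}}$. That is the main obstacle — the higher $H^{\a+1}$-moments are not controlled by the same cancellation trick, since for $\g=\a+1$ the non-linearity does not vanish. The resolution is that \eqref{EQ:moment_mu_nu_exp_Ha+1} as stated must follow from \eqref{EQ:moment_mu_nu_2n_H1} via a different mechanism, or the $H^{\a+1}$-moments must be extracted directly; I would look for an It\^o computation with $h(r)=e^{\b r}$ and $\g=1$ where the dissipation term $-2\nu\b\,e^{\b\|X\|^2_{H^1}}\|X\|^2_{H^{\a+1}}$ can be turned into exactly the quantity $\int e^{\b\|x\|^2_{H^{\a+1}}}\d\mu_\nu$ up to the convexity bound $e^{\b\|x\|^2_{H^1}}\|x\|^2_{H^{\a+1}} \ge \frac{1}{\b}(e^{\b\|x\|^2_{H^{\a+1}}} - e^{\b\|x\|^2_{H^1}})$ (since $e^{\b a}b \ge e^{\b a} \cdot \frac{e^{\b(b-a)}-1}{\b}\cdot$ ... by convexity $e^{\b b}-e^{\b a}\le \b e^{\b b}(b-a)$, rearranged), combined with stationarity and the already-established bound on $\int e^{\b\|x\|^2_{H^1}}\d\mu_\nu$. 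This is the step I expect to require the most care: making the exponential It\^o formula rigorous (finiteness of all terms, justification of taking expectations — a localization/stopping-time argument may be needed) and correctly matching the $H^1$ and $H^{\a+1}$ exponentials through the convexity inequality.
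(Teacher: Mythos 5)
Your proofs of \eqref{EQ:moment_mu_nu_2_a+1} and \eqref{EQ:moment_mu_nu_2n_H1} are correct and essentially identical to the paper's: It\^o's formula from Lemma \ref{LEM:Ito_formula_h(|x|^2_b)} with $\g=1$, cancellation of the non-linearity via $\lan B(x),Ax\ran=0$, stationarity to cancel the endpoint terms and compute the time integrals, division by $\nu t$, and for the $2n$-th moments the recursion $\int\|x\|^{2n}_{H^1}\d\mu_\nu\leq(2n-1)C_\a\int\|x\|^{2n-2}_{H^1}\d\mu_\nu$ obtained by bounding the dissipation $\E\big[\|X^\nu_0\|^2_{H^{\a+1}}\|X^\nu_0\|^{2n-2}_{H^1}\big]$ from below by $\E\|X^\nu_0\|^{2n}_{H^1}$. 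Your preliminary finiteness checks (via \eqref{EQ:moment_estimate_p_1} and \eqref{EQ:moment_estimate_2_2a} with $2\a>\a+1$) are also the right ones.

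The exponential estimate \eqref{EQ:moment_mu_nu_exp_Ha+1} is where your argument has a genuine gap, and you correctly sensed it. Your first route (summing the moment bounds against $\b^n/n!$ and using the Gaussian identity $\sum_n\b^n(2n-1)!!C_\a^n/n!=(1-2\b C_\a)^{-1/2}$) only controls $\int e^{\b\|x\|^2_{H^1}}\d\mu_\nu$, not the $H^{\a+1}$ exponential, as you note; and the moment recursion gives no access to $\int\|x\|^{2n}_{H^{\a+1}}\d\mu_\nu$ for $n\geq2$, so this route cannot be upgraded. Your fallback is the exponential It\^o formula with $\g=1$, which is indeed the correct starting point, but the convexity inequality you propose to close it, namely $e^{\b a}b\geq\frac1\b\big(e^{\b b}-e^{\b a}\big)$ for $a\leq b$, is false: taking $a=0$ it reads $b\geq\frac1\b(e^{\b b}-1)$, which fails for large $b$. (Convexity gives $e^{\b b}-e^{\b a}\leq\b e^{\b b}(b-a)$, with the exponential evaluated at the \emph{larger} point, which is the wrong side for your purposes.)

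The mechanism that actually closes the argument is different. Stationarity turns the exponential It\^o formula into the exact identity
\begin{equation}
    0=\E\Big[e^{\b\|X^\nu_0\|^2_{H^1}}\Big(\Tr[Q_1]+2\b\big\|Q_1^{1/2}X^\nu_0\big\|^2_{H^1}-2\|X^\nu_0\|^2_{H^{\a+1}}\Big)\Big].
\end{equation}
One then bounds $\|Q_1^{1/2}x\|^2_{H^1}\leq\Tr[Q_1]\|x\|^2_{H^1}\leq\Tr[Q_1]\|x\|^2_{H^{\a+1}}$, passes to the $H^{\a+1}$ exponential, and introduces the random variable $J:=\Tr[Q_1]-(1-\b\Tr[Q_1])\|X^\nu_0\|^2_{H^{\a+1}}$. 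The hypothesis $2\b C_\a<1$, i.e. $\b\Tr[Q_1]<1$, is exactly what makes the coefficient of $\|X^\nu_0\|^2_{H^{\a+1}}$ in $J$ negative, so that the event $\{J\geq0\}$ forces $\|X^\nu_0\|^2_{H^{\a+1}}\leq\Tr[Q_1]/(1-\b\Tr[Q_1])$; discarding the contribution of $\{J<0\}$ (where the integrand is non-positive) and bounding $J\leq\Tr[Q_1]$ on $\{J\geq0\}$ yields $\E e^{\b\|X^\nu_0\|^2_{H^{\a+1}}}\leq2\exp\big(\b\Tr[Q_1]/(1-\b\Tr[Q_1])\big)$. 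In short: the dissipation term $-2\|X^\nu_0\|^2_{H^{\a+1}}$ must be made to dominate the It\^o correction $+2\b\Tr[Q_1]\|X^\nu_0\|^2_{H^{\a+1}}$ through a sign/truncation argument, not through a pointwise convexity inequality.
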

\begin{proof}
    Fix $\nu>0$ and $\a>1$.
    If $\mu_\nu\in\mathscr P(H^{2\a})$ is an invariant measure for the SHNS$_{\nu,\a}$ Equation~\eqref{EQ:NSnu}, let $(\Omega,\F,\{\F_t\}_{t\geq 0},\P;W;X^\nu)$ be the solution to the SHNS$_{\nu,\a}$ Equation~\eqref{EQ:NSnu} from Proposition~\ref{PROP:construct_stationary_solution} that satisfies 
    \[
        (X^\nu_t)_\ast\P=(X^\nu_0)_\ast\P=\mu_\nu\in\mathscr P(H^{2\a}), \qquad \forall\, t\geq 0.
    \]
    
\textbf{\textit{Equation} (\ref{EQ:moment_mu_nu_2_a+1}).}
    We apply the It\^o formula in Lemma~\ref{LEM:Ito_formula_h(|x|^2_b)} to the marginally stationary process $X^\nu$ and with the choice $\g=1$ and $h=\id_{\R_+}$.
    The last but one term in equation~\eqref{EQ:ito_X_xi_h} vanishes, while the last is deterministic. 
    Also, the nonlinearity vanishes because of the property $\lan B(x),x\ran^{}_{\! H^1}=\lan B(x), Ax\ran=0$, for all $x\in H^2$.
    Next, we  take the  expectation on both members of the equality and obtain
    \begin{equation}
        \E\|X^\nu_t\|^2_{H^1}
        +2\nu \E\int_0^t\|X^\nu_s\|^2_{H^{\a+1}}\d s=\E\|X^\nu_0\|^2_{H^1}
        + {\nu}\operatorname{Tr}[Q_1]t, \qquad \forall\, t\geq 0,
    \end{equation}
    with $Q_1:=\iota Q\iota^\ast\in\mathcal L_1(H^1)$, where $Q\in\mathcal L_1(H^{2\a})$ is the covariance operator of $W_1$, and $\iota:H^{2\a}\to H^1$ is the Sobolev embedding.
    Since $X^\nu$ is marginally stationary, the first terms on both sides are equal and cancel out. 
    We apply Fubini's Theorem and rewrite
    \begin{equation}
        \int_0^t\nu\Big(2\E\|X^\nu_s\|^2_{H^{\a+1}}-\operatorname{Tr}[Q_1]\Big)\d s=0, \qquad \forall\,  t\geq 0.
    \end{equation}
    Since $\nu>0$ this last equality implies for \textit{a.e.} $s\geq 0$
    \[
        \int_{H^{2\a}}\|x\|^2_{H^{\a+1}}\d \mu_\nu(x)=\E\|X^\nu_s\|^2_{H^{\a+1}}=\frac{\operatorname{Tr}[Q_1]}{2}.
    \]
    The claim is proved with $C_\a:=\Tr[Q_1]/2$.
    
\textbf{\textit{Equation} (\ref{EQ:moment_mu_nu_2n_H1}).} 
    We prove the estimate~\eqref{EQ:moment_mu_nu_2n_H1} by induction on $n\in\N$.
    
    The base case follows from the previous part and the Sobolev embedding $\|x\|^{}_{H^1}\leq \|x\|^{}_{H^{\a+1}}$ for $x\in H^{\a+1}$.
    Let us prove the inductive step. 
    We apply the It\^o formula in Lemma~\ref{LEM:Ito_formula_h(|x|^2_b)} to the marginally stationary process $X^\nu$ and with the choices $\g=1$ and $h:\R_+\ni r\mapsto r^{n+1}\in\R_+$.
    The nonlinearity vanishes because of the well-known property $\lan B(x),x\ran^{}_{\!H^1}=\lan B(x), Ax\ran=0$, for all $x\in H^2$.
    Next, we  take the  expectation on both members of the equation~\eqref{EQ:ito_X_xi_h} and obtain for all $t\geq 0$
    \begin{align}
        &\E\|X^\nu_t\|^{2n+2}_{H^1}
        +2(n+1)\nu \, \E\int_0^t\|X^\nu_s\|^2_{H^{\a+1}}\|X^\nu_s\|^{2n}_{H^{1}}\d s\\
        =&\;\E\|X^\nu_0\|^{2n+2}_{H^1} +2{n(n+1)}\nu\, \E\int_0^t\big\|Q_1^{1/2}X^\nu_s\big\|^2_{H^1}\|X^\nu_s\|^{2(n-1)}_{H^1}\d s +(n+1){\nu}\Tr[Q_1]\E\int_0^t\|X^\nu_s\|^{2n}_{H^1}\d s\\
        \leq &\;\E\|X^\nu_0\|^{2n+2}_{H^1} +{(n+1)(2n+1)}\nu\Tr[Q_1]\E\int_0^t\|X^\nu_s\|^{2n}_{H^1}\d s,
    \end{align}
    where we used $\|Q_1^{1/2}x\|\leq \Tr[Q_1]\|x\|^{}_{H^1}$ for all $x\in H^1$, see equation~\eqref{EQ:control_Tr_Ito}.
    Since $X^\nu$ is marginally stationary, the first terms on both sides of the inequality are equal and cancel out. 
    We apply Fubini's Theorem, divide by $2(n+1)\nu$ and rewrite
    \begin{equation}
        \int_0^t\bigg(\E\big[\|X^\nu_s\|^2_{H^{\a+1}}\|X^\nu_s\|^{2n}_{H^1}\big]-(2n+1)\frac{\operatorname{Tr}[Q_1]}{2}\E\|X^\nu_s\|^{2n}_{H^1}\bigg)\d s\leq 0, \qquad \forall\,  t\geq 0.
    \end{equation}
    This last inequality implies, recalling that $(X^\nu_s)_\ast\P=\mu_\nu\in\mathscr P(H^{2\a})$ for all $s\geq 0$ and the Sobolev embedding $\|x\|^{2}_{H^1}\leq \|x\|^{2}_{H^{\a+1}}$ for $x\in H^{\a+1}$
    \begin{align}
        \int_{H^{2+\de}}\|x\|^{2n+2}_{H^{1}}\d \mu_\nu(x)
        &\leq \int_{H^{2\a}}\|x\|^{2}_{H^{\a+1}}\|x\|^{2n}_{H^{1}}\d \mu_\nu(x)\\
        &=\E\Big[\|X^\nu_s\|^2_{H^{\a+1}}\|X^\nu_s\|^{2n}_{H^{1}}\Big]\\
        &\leq (2n+1)\frac{\operatorname{Tr}[Q_1]}{2}\E\|X^\nu_s\|^{2n}_{H^1}\\
        &= (2n+1)\frac{\operatorname{Tr}[Q_1]}{2}\int_{H^{2+\de}}\|x\|^{2n}_{H^{1}}\d \mu_\nu(x)\\
        &\leq (2n+1)!!C_\a^{n+1},
    \end{align}
    where we used the inductive hypothesis in the last step and recalled, from the first part of the proof, that $C_\a:=\Tr[Q_1]/2$.

\textbf{\textit{Equation} (\ref{EQ:moment_mu_nu_exp_Ha+1}).}   
    Fix now $\b>0$, and apply the It\^o formula in Lemma~\ref{LEM:Ito_formula_h(|x|^2_b)} with the choices $\g=1$, $h:\R_+\ni r\mapsto h(r)=e^{\b r}$ and to the marginally stationary process $X^\nu$. 
    After taking the expectation to both sides of equation~\eqref{EQ:ito_X_xi_h}, we obtain for all $t\geq 0$
    \begin{align}
        \E\,  e^{\b\|X^\nu_t\|^2_{H^1}}=
        \E\,  e^{\b\|X^\nu_0\|^2_{H^1}}
        &-2\b\nu \,\E\int_0^te^{\b\|X^\nu_s\|^2_{H^1}}\|X^\nu_s\|^2_{H^{\a+1}}\d s\\
        &-2\b\, \E\int_0^te^{\b\|X^\nu_s\|^2_{H^1}}\lan B(X^\nu_s), X^\nu_s\ran^{}_{\! H^1}\d s\\
        &+2\b\sqrt \nu\, \E\int_0^t e^{\b\|X^\nu_s\|^2_{H^1}}\lan X^\nu_s, \d W_s\ran^{}_{\! H^1}\\
        &+2\b^2\nu\E\int_0^t\big\|Q_1^{1/2}X^\nu_s\big\|^2_{H^1}e^{\b\|X^\nu_s\|^2_{H^1}}\d s\\
        &+\b\nu \Tr[Q_1]\E\int_0^te^{\b\|X^\nu_s\|^2_{H^1}}\d s.
    \end{align}
    Due to the marginal stationarity of the process, the left-hand side and the first term on the right-hand side are equal. 
    The integral of the nonlinearity vanishes, because of the property ${\lan B(x), x\ran^{}_{\! H^1}=\lan B(x), Ax\ran =0}$ for all $x\in H^2$. 
    The expectation of the It\^o integral is $0$ by the properties of It\^o integration.
    We employ Fubini's Theorem and the marginal stationarity of the process to compute the time integral in all the other terms.
    We have for all $t\geq 0$
    \begin{align}
        0=
        -2\b\nu \,t\E\big[e^{\b\|X^\nu_0\|^2_{H^1}}\|X^\nu_0\|^2_{H^{\a+1}}\big]
        +2\b^2\nu\, t\E\Big[\big\|Q_1^{1/2}X^\nu_0\big\|^2_{H^1}e^{\b\|X^\nu_0\|^2_{H^1}}\Big]
        +\b\nu \Tr[Q_1]\, t\E\big[e^{\b\|X^\nu_0\|^2_{H^1}}\big].
    \end{align}
    Let us divide by $\b\nu t$, for $t>0$, and rewrite as follows
    \begin{equation}
    \label{EQ:stima_exp}
    \begin{aligned}
        0&=\E\Big[e^{\b\|X^\nu_0\|^2_{H^1}}\big(\Tr[Q_1]+2\b\big\|Q_1^{1/2}X^\nu_0\big\|^2-2\|X^\nu_0\|^2_{H^{\a+1}}\big)\Big]\\
        &\leq  \E\Big[e^{\b\|X^\nu_0\|^2_{H^1}}\big(\Tr[Q_1]+2\b\Tr[Q_1]\|X^\nu_0\|^2_{H^{1}}-2\|X^\nu_0\|^2_{H^{\a+1}}\big)\Big]\\
        &\leq \E\Big[e^{\b\|X^\nu_0\|^2_{H^{\a+1}}} \Big(\Tr[Q_1]+2\big(\b\Tr[Q_1]-1\big)\|X^\nu_0\|^2_{H^{\a+1}}\Big)\Big]\\
        &= 2\, \E\Big[e^{\b\|X^\nu_0\|^2_{H^{\a+1}}}\Big(\Tr[Q_1]-\big(1-\b\Tr[Q_1]\big)\|X^\nu_0\|^2_{H^{\a+1}}\Big)\Big]-\Tr[Q_1]\E e^{\b\|X^\nu_0\|^2_{H^{\a+1}}},
    \end{aligned}
    \end{equation}
    where for the first inequality we used $\|Q_1^{1/2}x\|^2_{H^1}\leq \|x\|^2_{H^1}\Tr[\Pi_xQ_1]$ for all $x\in H^{1}$, see equation~\eqref{EQ:control_Tr_Ito}, while for the second we used the Sobolev embedding $H^1\hookrightarrow H^{\a+1}$.
    Let us baptise the real random variable in the big round brackets as 
    \[
        J:=\Tr[Q_1]-\big(1-\b\Tr[Q_1]\big)\|X^\nu_0\|^2_{H^{\a+1}},
    \]
    and assume that $1-\b \Tr[Q_1]>0$ (\textit{i.e.} $2\b C_\a<1$), so that $\P(J<\Tr[Q_1])=1$.
    We reach from \eqref{EQ:stima_exp}
    \begin{align}
        \Tr[Q_1]\E e^{\b\|X^\nu_0\|^2_{H^{\a+1}}}
        &\leq 2\E\Big[e^{\b\|X^\nu_0\|^2_{H^{\a+1}}}J\Big]\\
        &\leq 2\E\Big[e^{\b\|X^\nu_0\|^2_{H^{\a+1}}}J\1_{\{J\geq 0\}}\Big]\\&\leq 2\Tr[Q_1]\E\Big[e^{\b\|X^\nu_0\|^2_{H^{\a+1}}}\1_{\{J\geq 0\}}\Big],
    \end{align}
    where we used $J=J\1_{\{J\geq 0\}}+J\1_{\{J<0\}}\leq J\1_{\{J\geq 0\}}\leq \Tr[Q_1]\1_{\{J\geq 0\}}$, $\P-a.s.$
    Observe that
    \[
        J\geq 0\quad\Longleftrightarrow\quad \|X^\nu_0\|^2_{H^{\a+1}}\leq\frac{\Tr[Q_1]}{1-\b\Tr[Q_1]},
    \]
    hence 
    \[
        \Tr[Q_1]\E e^{\b\|X^\nu_0\|^2_{H^{\a+1}}}\leq 2\Tr[Q_1]\E\Big[e^{\b\|X^\nu_0\|^2_{H^{\a+1}}}\1_{\{J\geq 0\}}\Big]\leq 2\Tr[Q_1]e^{\b\frac{\Tr[Q_1]}{1-\b\Tr[Q_1]}}\P(J\geq 0).
    \]
    We divide by $\Tr[Q_1]$ and use a change of variable, recalling that $X^\nu_0$ is distributed as $\mu_\nu$
    \[
        \int_{H^{2\a}}e^{\b\|x\|^2_{H^{\a+1}}}\d\mu_\nu(x)=\E e^{\b\|X^\nu_0\|^2_{H^{\a+1}}}\leq 2e^{\b\frac{\Tr[Q_1]}{1-\b\Tr[Q_1]}}.
    \]
    The sought estimate~\eqref{EQ:moment_mu_nu_exp_Ha+1} follows.
\end{proof}

\section{Invariant measure for the deterministic Euler Equation}
\label{SEC:stationary_solution_EE}

In this section, we study the inviscid limit of the stochastic hyperviscous Navier-Stokes equations. 
The passage to the limit, as $\nu\to 0^+$, for the marginally stationary solutions to SHNS$_{\nu,\a}$ Equation~\eqref{EQ:NSnu}, see Proposition~\ref{PROP:construct_stationary_solution}, will be conducted in the spirit of the Skorokhod theorem.
Hence, it is necessary to demonstrate some tightness property. 
However, the usual trajectory spaces for the solution have topologies that are too large to prove tightness. 
A suitable trajectory space, which allows for the proof of a tightness result and the application of Jakubowski's generalization of the Skorokhod theorem, is introduced in the next definition.

\begin{definition}
\label{DEF:Z_T_H^a}
    Assume that $\a>1$ and let $W$ be an $H^{2\a}$-valued Wiener process. 
    Assuming $T>0$, we define:
        \[
            \mathcal Z_T:=L^2_{w}(0,T;H^{\a+1})\cap L^2(0,T;H^{\a}_{loc})\cap C\big([0,T];H^{\a}_w\big)\cap C\big([0,T];U'\big),
        \]
        where 
    \begin{itemize}
    \item 
        $U$ is the reproducing kernel of $W$. 
        In particular, $U$ is compactly embedded into $H^{2\a}$ and simultaneously $H^{-2\a}$ is compactly embedded into $U'$.
        \[
        U\doublehookrightarrow 
        H^{2\a}\hookrightarrow 
        H=H'\hookrightarrow 
        H^{-2\a}\doublehookrightarrow U'.
        \]
    \item
        $L^2_w(0,T;H^{\a+1})$ is the linear space $L^2(0,T;H^{\a+1})$ endowed with the weak topology, namely the smallest  topology that makes all the linear maps $L^2_w(0,T;H^{\a+1})\to\R$ continuous.
        In particular, a sequence of functions $\{v_n\}_{n\in\N}\subset L^2(0,T;H^{\a+1})$ converges in $L^2_w(0,T;H^{\a+1})$ if there exists $v\in L^2(0,T;H^{\a+1})$ such that 
        \[
            \lim_{n\to\infty}\int_0^T\!\!\! {\big{\lan}} \phi(t),v_n(t)-v(t) \big{\ran}^{}_{\ss{\!H^{-\a-1}\!\times\!H^{\a+1}}} \d t =0, \qquad \forall\, \phi\in L^2(0,T;H^{-\a-1}).
        \]
    \item
        $C\big([0,T];H^{\a}_w\big)$ is the linear space of continuous functions $v:[0,T]\to H^{\a}_w$, where $H^{\a}_w$ is the space $H^{\a}$ endowed with weak topology, see Notation \ref{NOT:weak_topologies}. 
        Since the domain $[0,T]$ is a sequential space, the notion of continuity is equivalent to that of sequential continuity, see Notation \ref{NOT:sequential_spaces}. 
        Namely, a function $v:[0,T]\to H^\a$ belongs to $C\big([0,T];H^{\a}_w\big)$ if for any sequence of times $\{t_n\}_{n\in\N}\subset [0,T]$ convergent to some $t\in[0,T]$,
        \[
            \lim_{n\to\infty}{\big{\lan}} y,v(t_n)-v(t)\big\ran^{}_{\ss{\!H^{-\a}\!\times\!H^{\a}}}=0, \qquad \forall\, y\in H^{-{\a}}.
        \]
        This space is endowed with the compact-open topology, that is the smallest  topology that contains all the sets $\big\{f\in C\big([0,T];H^{\a}_w\big) \ : \ f(K)\subset V\big\}$, where $K$ varies among compact subsets of $[0,T]$, and $V$ among open subsets of $H^{\a}_w$.
    \end{itemize}
    Fix a sequence $\{\chi_n\}_{n\in\N}$ in $C_c^\infty(\R^2)$ such that
    \[
        0\leq \chi_n\leq 1,
        \qquad
        \chi_n=1 \ \text{on }\bar B_n,
        \qquad
        \supp \chi_n\subset B_{n+1},
        \qquad \forall\, n\in\N,
    \]
    where $\bar B_n:=\{\xi\in\R^2:\ |\xi|\le n\}$.
    \begin{itemize}
        \item 
            $H^\a_{loc}$ is the vector space $H^\a$ endowed with the Fr\'echet topology generated by the family of seminorms
            \begin{equation}
            \label{EQ:def_semin_n}
                H^\a\ni x\mapsto [\![x]\!]_{H^\a_n}
                :=\|\chi_n x\|_{H^\a(\R^2;\R^2)},
                \qquad \forall\, n\in\N.
            \end{equation}
            Since multiplication by $\chi_n$ is a bounded linear map on $H^\a(\R^2;\R^2)$, these seminorms are well-defined.
        \item
            $L^2(0,T;H^\a_{loc})$ is the linear space of all measurable functions $v:[0,T]\to H^\a$ such that, for every $n\in\N$, the function $t\mapsto [\![v(t)]\!]_{H^\a_n}$ belongs to $L^2(0,T)$.
            It is endowed with the Fr\'echet topology generated by the seminorms
            \begin{equation}
            \label{EQ:def_seminorm_pTn}
                L^2(0,T;H^\a_{loc})\ni v\mapsto
                [\![v]\!]_{L^2(0,T;H^\a_n)}
                :=\bigg(\int_0^T \|\chi_n v(t)\|^2_{H^\a(\R^2;\R^2)}\d t\bigg)^{1/2},
                \qquad \forall\, n\in\N.
            \end{equation}
        \end{itemize}
    We endow $\mathcal Z_T$ with the smallest topology that makes the natural embeddings from $\mathcal Z_T$ into each of its four components, continuous (\textit{i.e.} the smallest topology which contains the topologies of all the four components).
\end{definition}

\begin{lemma}
\label{LEM:Borel_sets_in_ZT} 
    Assume that $\a>1$ and let $W$ be an $H^{2\a}$-valued Wiener process.
    If $\b\geq -2\a$ and $T>0$, then $C\big([0,T];H^\b\big)\cap \mathcal Z_T\in \mathscr B_{\mathcal Z_T}$.
\end{lemma}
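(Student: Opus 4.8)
### Proof plan for Lemma \ref{LEM:Borel_sets_in_ZT}

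The plan is to exhibit the set $C\big([0,T];H^\b\big)\cap \mathcal Z_T$ as a countable intersection of $\mathscr B_{\mathcal Z_T}$-measurable sets, using the fact that membership in $C([0,T];H^\b)$ can be tested against a countable family of functionals which are themselves $\mathcal Z_T$-measurable because the topology of $\mathcal Z_T$ is finer than that of $C([0,T];U')$. First I would fix a countable set $\{y_k\}_{k\in\N}\subset H^{-\b}$ which is dense in $H^{-\b}$ (here $-\b\le 2\a$, so $H^{-\b}\hookrightarrow H^{-2\a}\hookrightarrow U'$ and such a sequence exists; one may even take the $y_k$ in $H^{-2\a}$ or in $U'$). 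For a function $v\in \mathcal Z_T$ — which in particular lies in $C([0,T];H^\a_w)$, hence is a genuine map $[0,T]\to H^\b$ with $\sup_{t}\|v(t)\|_{H^\b}$ possibly infinite — the key observation is:
\[
v\in C\big([0,T];H^\b\big)\quad\Longleftrightarrow\quad \sup_{t\in[0,T]}\|v(t)\|_{H^\b}<\infty \ \text{ and }\ t\mapsto \prescript{}{H^{-\b}\!}{\lan}y_k,v(t)\ran^{}_{\!H^\b}\ \text{is continuous for every }k.
\]
The forward implication is immediate. For the converse, boundedness of $\{v(t)\}_t$ in $H^\b$ together with weak continuity of the pairings against a dense set in $H^{-\b}$ gives weak continuity in $H^\b$ of $t\mapsto v(t)$ on a bounded set; since on bounded sets of the Hilbert space $H^\b$ the weak topology is metrizable, and $[0,T]$ is compact, weak continuity plus norm-boundedness upgrade to... no — rather, one argues directly: if $t_n\to t$ then $v(t_n)$ is bounded in $H^\b$, every weak cluster point equals $v(t)$ by testing against the dense family, so $v(t_n)\rightharpoonup v(t)$; norm-continuity then follows from a standard argument only if we also control the norms, so in fact the correct equivalence requires continuity of $t\mapsto\|v(t)\|_{H^\b}$ as well. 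Cleanest is to phrase it as: $v\in C([0,T];H^\b)$ iff $v(t)\in H^\b$ for all $t$, $t\mapsto\|v(t)\|_{H^\b}$ is continuous, and $t\mapsto\lan y_k,v(t)\ran$ is continuous for all $k$; this is a classical characterization of strong continuity in a Hilbert space and I would cite or prove it in one line.

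Next I would check measurability of each ingredient. For fixed $k$ and fixed $t$, the evaluation-and-pairing map $\mathcal Z_T\ni v\mapsto \prescript{}{H^{-\b}\!}{\lan}y_k,v(t)\ran^{}_{\!H^\b}=\prescript{}{U'}{\lan}y_k,v(t)\ran_U$ is continuous on $\mathcal Z_T$, because the evaluation $v\mapsto v(t)$ is continuous from $C([0,T];U')$ (hence from $\mathcal Z_T$, whose topology is finer) to $U'$, and pairing with the fixed element $y_k\in H^{-2\a}\hookrightarrow U'$ is continuous on $U'$. Therefore $v\mapsto \|v(t)\|_{H^\b}=\sup_k |\lan y_k,v(t)\ran|/\|y_k\|_{H^{-\b}}$ (with a suitable normalization of the $y_k$) is a countable supremum of continuous functions, hence lower semicontinuous, hence $\mathscr B_{\mathcal Z_T}$-measurable, taking values in $[0,+\infty]$. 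Continuity of $t\mapsto \|v(t)\|_{H^\b}$ and of $t\mapsto \lan y_k,v(t)\ran$ can each be expressed as a countable condition by restricting $t$ to $\mathbb Q\cap[0,T]$: a function $g:[0,T]\to\R$ that is already known to be, say, Borel (which it is, for $g(t)=\lan y_k,v(t)\ran$, by weak continuity coming from $v\in C([0,T];H^\a_w)$) is continuous iff it satisfies the appropriate $\e$-$\delta$ relations on rationals — more simply, for $g_k(t):=\lan y_k,v(t)\ran$ we already know $g_k$ is continuous for every $v\in\mathcal Z_T$, since $v\in C([0,T];U')$ and $y_k\in U'{}'=U$... wait, $y_k\in H^{-2\a}\hookrightarrow U'$, and $v(t)\in U'$, so to pair them we need $y_k\in U$; instead I pair $v(t)\in H^\a\subset$ as a functional. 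The robust route: for $v\in\mathcal Z_T\subset C([0,T];H^\a_w)$, the map $t\mapsto\lan y,v(t)\ran_{H^\a\text{-duality}}$ is continuous for every $y\in H^{-\a}$; for $y_k\in H^{-\b}$ with $-\b\le\a$ we have $H^{-\b}\hookrightarrow H^{-\a}$, so $g_k$ is automatically continuous on $[0,T]$ for every element of $\mathcal Z_T$. Hence the only genuine condition is boundedness and continuity of the $H^\b$-norm, and the whole set becomes
\[
C\big([0,T];H^\b\big)\cap\mathcal Z_T=\Big\{v\in\mathcal Z_T:\ \sup_{t\in\mathbb Q\cap[0,T]}\|v(t)\|_{H^\b}<\infty,\ t\mapsto\|v(t)\|_{H^\b}\ \text{continuous}\Big\},
\]
and the $H^\b$-norm continuity, for a function whose weak continuity is already guaranteed, is equivalent to $\limsup_{s\to t}\|v(s)\|_{H^\b}\le\|v(t)\|_{H^\b}$ for all $t$, which via the rationals and the lower semicontinuous representation $\|v(s)\|_{H^\b}^2=\sum_j c_j\lan y_j,v(s)\ran^2$ (for an orthonormal-type expansion) is a countable intersection of sets of the form $\{v:\ \text{a continuous functional of }v\ \le\ \text{a continuous functional of }v\}$, each in $\mathscr B_{\mathcal Z_T}$.

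Assembling: $C([0,T];H^\b)\cap\mathcal Z_T$ is a countable intersection over $k,m,n\in\N$ and $q\in\mathbb Q\cap[0,T]$ of sets each of which is either $\{v:\|v(q)\|_{H^\b}\le m\}$ (a sublevel set of a lower semicontinuous function, hence Borel) or an inequality between two $\mathscr B_{\mathcal Z_T}$-measurable real functions of $v$ (hence Borel); therefore it lies in $\mathscr B_{\mathcal Z_T}$. The main obstacle I anticipate is purely a matter of bookkeeping: writing the strong-continuity characterization of a Hilbert-space-valued function in terms of countably many continuous-on-$\mathcal Z_T$ functionals, being careful that the ``norm'' function $v\mapsto\|v(t)\|_{H^\b}$ is only lower semicontinuous (not continuous) on $\mathcal Z_T$ — which is harmless since sublevel sets of l.s.c. functions are closed, hence Borel — and verifying that weak continuity in $H^\a$ already hands us continuity of the scalar pairings, so that the extra content is exactly the norm boundedness/continuity, both of which are countable conditions. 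No deep input is needed beyond $U\doublehookrightarrow H^{2\a}\hookrightarrow H^{-2\a}\doublehookrightarrow U'$, the continuity of point evaluations $C([0,T];U')\to U'$, and separability of $H^{-\b}$.
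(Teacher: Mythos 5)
Your route is genuinely different from the paper's. The paper disposes of this in two lines: $C([0,T];H^\b)$ is Polish and embeds continuously and injectively into the Polish space $C([0,T];U')$, so by Kuratowski's theorem it is a Borel subset of $C([0,T];U')$; since the topology of $\mathcal Z_T$ refines the one induced by $C([0,T];U')$, the trace of $\mathscr B_{C([0,T];U')}$ on $\mathcal Z_T$ is contained in $\mathscr B_{\mathcal Z_T}$, and the claim follows. Your plan instead tries to write the set as a countable combination of explicitly measurable conditions. That is a legitimate, more elementary strategy, and most of its ingredients are sound: the pairings $v\mapsto\lan y_k,v(q)\ran$ are continuous on $\mathcal Z_T$, the pointwise norm $v\mapsto\|v(q)\|_{H^\b}$ is a countable supremum of these and hence lower semicontinuous with values in $[0,+\infty]$, the boundedness condition may be tested on rational times, and the characterisation ``weak continuity plus continuity of the norm implies strong continuity'' is correct in a Hilbert space.

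The genuine gap is the final step, which you dismiss as bookkeeping: encoding ``$t\mapsto\|v(t)\|_{H^\b}$ is continuous on all of $[0,T]$'' as a countable intersection of Borel conditions on $v$. As written, you propose to verify $\limsup_{s\to t}\|v(s)\|_{H^\b}\le\|v(t)\|_{H^\b}$ ``via the rationals'', but this cannot work at face value: the function $t\mapsto\|v(t)\|_{H^\b}$ is only lower semicontinuous, and a lower semicontinuous function is not determined by its values on a dense set (take $N\equiv 1$ versus $N$ equal to $1$ off a single irrational point and $0$ there — both l.s.c., equal on the rationals, one continuous and one not). So the condition must be imposed at uncountably many $t$, and an uncountable intersection of Borel sets need not be Borel. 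To close the gap you need an extra idea, for instance: (i) apply Dini's theorem to the increasing sequence of continuous partial sums $S_n(t)=\sum_{j\le n}c_j\lan y_j,v(t)\ran^2$, turning continuity of the norm into the countable condition ``for every $k$ there is $n$ with $\sup_{q\in\mathbb Q\cap[0,T]}\big(\|v(q)\|^2_{H^\b}-S_n(q)\big)\le 1/k$'' (the supremum over rationals equals the supremum over $[0,T]$ because the tail is l.s.c.); or (ii) recast $v\in C([0,T];H^\b)$ as uniform continuity of $v$ in the $H^\b$-norm over rational times, which is a countable $\forall\e\,\exists\delta\,\forall s,t$ statement in sublevel sets of l.s.c. functionals. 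A second, smaller issue is that for $\b>\a$ the pairing $\prescript{}{H^{-\b}\!}{\lan}y_k,v(t)\ran^{}_{\!H^\b}$ with $v(t)\in H^\a$ is not a priori defined; you should choose the test elements $y_k$ in $H^{-\a}$ (dense in $H^{-\b}$ when $\b\ge\a$) so that all pairings make sense before any integrability of $v(t)$ in $H^\b$ is established. With these repairs your argument goes through, but as it stands the decisive measurability step is not proved.
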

\begin{proof}
    From the embeddings $H^{\b}\hookrightarrow H^{-2\a}\hookrightarrow U'$, we infer that that $C\big([0,T];H^\b\big)$ is continuously embedded into $C\big([0,T];U'\big)$. 
    Therefore, by Kuratowski's theorem \cite[Theorem $1$, Section V, Chapter $39$]{Kuratowski_1966_topology_1}, see also \cite[Theorem $15.1$]{Kechris_1995_classical_descriptive_set_theory}, the space $C\big([0,T];H^\b\big)$ is a Borel subset of $C\big([0,T];U'\big)$. 
    Eventually, the intersection $C\big([0,T];H^\b\big)\cap \mathcal Z_T$ is a Borel subset of $C\big([0,T];U'\big)\cap \mathcal Z_T=\mathcal Z_T$.
\end{proof}

The following tightness criterion and the subsequent lemma are taken from \cite[Section $3.2$]{Brzezniak+Motyl_2013_Existence_Martingale_Solution}, where the results are stated in a general abstract setting.
The abstract Hilbert spaces $U$, $V$ and $H$ in the reference are replaced in our setting by  $U$, $H^{\a+1}$ and $H^{\a}$, respectively.
\begin{theorem}[Tightness criterion]
\label{TH:tight_crit}
    Assume that $\a>1$ and let $(\Omega,\F,\{\F_t\}_{t\geq 0},\P)$ be an augmented filtered probability space with an adapted $H^{2\a}$-valued Wiener process $W$.
    Let $U$ denote the reproducing kernel of $W$.
    Let also $T>0$ and $X^n:[0,T]\times\Omega\to U'$, $n\in\N$, be adapted and pathwise continuous stochastic processes. 
    Assume  that 
    \begin{align}
    \label{EQ:HP_bound_tight_crit}
        \sup_{n\in\N} \,\E\Bigg[\sup_{t\in[0,T]}\|X^n_t\|^{2}_{H^{\a}} +\int_0^T\|X^n_t\|_{H^{\a+1}}^2\d t\Bigg] <+\infty.
    \end{align}
    Assume also that the sequence $\{X^n\}_{n\in\N}$ satisfies the Aldous condition in $U'$, \textit{i.e.}, for every $\e>0$ and $\eta>0$, there exists $\theta\in(0,T]$ such that, for every sequence $\{\tau_n\}_{n\in\N}$ of $[0,T]$-valued stopping times, we have:
    \begin{equation}
        \sup_{n\in\N}\sup_{t\in[0,\theta]}\P\big(\|X^n_{t+\tau_n}-X^n_{\tau_n}\|^{}_{U'}\geq \eta\big)\leq \e.
    \end{equation}
    Then the laws on $(\mathcal Z_T,\mathscr B_{\mathcal Z_T})$ of the stochastic processes form a tight sequence, \textit{i.e.}, for any $\e>0$ there exists a compact set $K_\e\subset \mathcal Z_T$ such that 
    \[
        \sup_{n\in\N}\P (X^n\in K_\e)\geq 1-\e.
    \]
\end{theorem}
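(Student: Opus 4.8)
The plan is to decompose $\mathcal Z_T$ into its four constituent spaces and produce, for each one separately, a set that is compact in the relevant component topology and that traps $X^n$ with probability at least $1-\varepsilon$; the intersection of these four sets is then compact in $\mathcal Z_T$ (its topology being the supremum of the four component topologies) and of probability at least $1-4\varepsilon$ (or $1-\varepsilon$ after rescaling $\varepsilon$). Concretely, first I would use the uniform bound \eqref{EQ:HP_bound_tight_crit} together with Chebyshev's inequality to find a radius $R_\varepsilon>0$ such that, setting
\[
    B_\varepsilon:=\Big\{v \ : \ \sup_{t\in[0,T]}\|v(t)\|^{}_{H^{\a}}\leq R_\varepsilon, \ \int_0^T\|v(t)\|^2_{H^{\a+1}}\d t\leq R_\varepsilon^2\Big\},
\]
one has $\sup_{n}\P(X^n\in B_\varepsilon)\geq 1-\varepsilon$. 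The key point is then that $B_\varepsilon$, intersected with the Aldous-type modulus-of-continuity set in $U'$, is precompact in all four topologies defining $\mathcal Z_T$. This is exactly the content of the abstract compactness lemma from \cite[Section $3.2$]{Brzezniak+Motyl_2013_Existence_Martingale_Solution}, transported to our spaces $U$, $H^{\a+1}$, $H^{\a}$: a bounded ball of $L^\infty(0,T;H^{\a})\cap L^2(0,T;H^{\a+1})$ whose elements have a uniformly controlled $U'$-modulus of continuity is relatively compact in $L^2_w(0,T;H^{\a+1})\cap L^2(0,T;H^{\a}_{loc})\cap C([0,T];H^{\a}_w)\cap C([0,T];U')$. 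The two weak-type components are handled by the Banach–Alaoglu theorem (balls in $H^{\a+1}$, resp.\ $H^{\a}$, are weakly compact), the $L^2(0,T;H^{\a}_{loc})$ component by the compact embedding $H^{\a}\doublehookrightarrow H^{\a}_{loc}$ combined with the Aubin–Lions–Simon–type argument (controlled time-modulus in $U'$ plus boundedness in $H^{\a+1}$), and the $C([0,T];U')$ component by the Arzelà–Ascoli theorem, for which the equicontinuity is supplied precisely by the Aldous condition after passing from stopping-time increments to deterministic-time increments.

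The bridge from the Aldous condition to a genuine modulus-of-continuity estimate is the step I would carry out next. Given $\varepsilon,\eta>0$, pick $\theta$ as in the Aldous hypothesis; by a standard argument (see the references in \cite{Brzezniak+Motyl_2013_Existence_Martingale_Solution}, originating in Aldous's work and Metivier's) the pointwise-in-time stopping-time bound upgrades to
\[
    \sup_{n\in\N}\P\Big(\sup_{\substack{s,t\in[0,T]\\ |s-t|\leq\theta}}\|X^n_t-X^n_s\|^{}_{U'}\geq\eta\Big)\leq \varepsilon',
\]
with $\varepsilon'$ small. Choosing a sequence $\eta_m\downarrow 0$, $\varepsilon'_m:=2^{-m}\varepsilon$ and corresponding $\theta_m$, the set $C_\varepsilon$ of functions whose $\theta_m$-oscillation in $U'$ is at most $\eta_m$ for every $m$ is, by Arzelà–Ascoli (together with the $H^{\a}$-bound which gives pointwise precompactness in $U'$ via $H^{\a}\hookrightarrow H^{-2\a}\doublehookrightarrow U'$), relatively compact in $C([0,T];U')$, and $\sup_n\P(X^n\in C_\varepsilon)\geq 1-\varepsilon$. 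Then $K_\varepsilon:=\overline{B_{\varepsilon}\cap C_\varepsilon}$ (closure taken in $\mathcal Z_T$) is the required compact set. Measurability of the events involved follows from Lemma \ref{LEM:Borel_sets_in_ZT} and Remark \ref{REM:weak_borel}, since the processes are pathwise continuous in $U'$ and the $H^{\a}$- and $H^{\a+1}$-norms are Borel with respect to the weak topologies.

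The main obstacle is the verification that the trapping set is simultaneously compact in all four topologies, in particular the interplay between the weak topology on $L^2(0,T;H^{\a+1})$ and the strong-local topology on $L^2(0,T;H^{\a}_{loc})$: one must check that a single sequence extracted to converge weakly in $L^2(0,T;H^{\a+1})$ and weakly-$\ast$ in $L^\infty(0,T;H^{\a})$ can, after a further diagonal extraction, be made to converge strongly in $L^2(0,T;H^{\a}_{loc})$ and in $C([0,T];U')$, with all limits coinciding. This is where one invokes the compact embedding $H^{\a+1}\cap\{\text{time-regularity in }U'\}\doublehookrightarrow L^2(0,T;H^{\a}_{loc})$ (a localized Aubin–Lions statement, cf.\ \cite[Theorem $2.2$, Chapter $3$]{Temam_2001_Navier-Stokes_equations} used in Lemma \ref{LEM:deterministic_bw_Feller}) and the fact that the $U'$-valued continuous functions with uniformly controlled oscillation form an equicontinuous, pointwise-precompact family. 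Since all of this is already packaged in the abstract results of \cite[Section $3.2$]{Brzezniak+Motyl_2013_Existence_Martingale_Solution}, the proof reduces to checking that our concrete spaces satisfy the abstract hypotheses there, namely the chain of (compact) embeddings $U\doublehookrightarrow H^{2\a}\hookrightarrow H^{\a+1}\hookrightarrow H^{\a}\hookrightarrow H^{-2\a}\doublehookrightarrow U'$ together with $H^{\a+1}\doublehookrightarrow H^{\a}_{loc}$, which hold by construction of $U$ and the definition of $H^{\a}_{loc}$.
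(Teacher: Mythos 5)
The paper does not prove this theorem at all: it imports it verbatim from \cite[Section $3.2$]{Brzezniak+Motyl_2013_Existence_Martingale_Solution}, merely identifying the abstract spaces $U$, $V$, $H$ of that reference with $U$, $H^{\a+1}$, $H^{\a}$. Your proposal instead reconstructs the proof of that abstract result, and the reconstruction is essentially the correct one: Chebyshev on the uniform bound \eqref{EQ:HP_bound_tight_crit} to trap the processes in a ball $B_\e$ of $L^\infty(0,T;H^{\a})\cap L^2(0,T;H^{\a+1})$; the standard Aldous-to-modulus-of-continuity upgrade (going back to Aldous and M\'etivier, and carried out in the cited reference) to trap them in a set $C_\e$ of uniformly $U'$-equicontinuous paths; and the deterministic compactness lemma stating that $B_\e\cap C_\e$ is relatively compact in each of the four component topologies of $\mathcal Z_T$, hence in $\mathcal Z_T$ itself, the components being handled respectively by Banach--Alaoglu (for $L^2_w(0,T;H^{\a+1})$), Arzel\`a--Ascoli with pointwise precompactness from $H^{\a}\hookrightarrow H^{-2\a}\doublehookrightarrow U'$ (for $C([0,T];U')$), a Strauss-type lemma (for $C([0,T];H^{\a}_w)$), and a localized Aubin--Lions argument (for $L^2(0,T;H^{\a}_{loc})$). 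This is exactly the structure of the proof in the reference, so in substance you and the paper rely on the same mathematics; you simply unpack what the paper black-boxes.

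One imprecision worth flagging: the phrase \emph{``the compact embedding $H^{\a}\doublehookrightarrow H^{\a}_{loc}$''} is not correct as stated, since localizing without losing derivatives gains no compactness (the restriction $H^{\a}(\R^2)\to H^{\a}(\bar B_n)$ is bounded but not compact). The compactness used for the $L^2(0,T;H^{\a}_{loc})$ component must come from the gap between $H^{\a+1}$ and $H^{\a}$ via Rellich on bounded balls, i.e.\ $H^{\a+1}\doublehookrightarrow H^{\a}_{loc}$, combined with the $U'$-time-modulus --- which is precisely the localized Aubin--Lions statement you write correctly in your final paragraph. With that correction the outline is sound.
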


The space $\mathcal{Z}_T$ is constructed \textit{ad hoc} to guarantee the thesis of Theorem~\ref{TH:tight_crit}.
This space must be equipped with a topology that is sufficiently strong to facilitate the convergence of sequences utilized in the subsequent analysis.
Simultaneously, the topology must be weak enough to ensure the tightness result, meaning that there exists a compact set within this topology that satisfies the criteria for tightness.

The Aldous property mentioned in Theorem~\ref{TH:tight_crit} is quite intricate to be verified. 
Hence we present, in the following lemma, a sufficient condition to guarantee the Aldous property.
We refer to \cite[Section $3.2$]{Brzezniak+Motyl_2013_Existence_Martingale_Solution} for the proof of this result.

\begin{lemma}
\label{LEM:aldous}
    Assume that $(\Omega,\F,\{\F_t\}_{t\geq 0},\P)$ is an augmented filtered probability space. 
    Let $U'$ be an Hilbert space and $T>0$.
    Let $X^n:[0,T]\times\Omega\to U'$, $n\in\N$, be pathwise continuous stochastic processes.
    Assume that there exist $C>0$ and $\g\in(0,1]$ such that for every $t\in(0,T]$ and any sequence $\{\tau_n\}_{n\in\N}$ of $[0,T]$-valued stopping times:
    \begin{equation}
    \label{EQ:aldous_holder}
        \sup_{n\in\N}\E\|X^n_{\tau_n+t}-X^n_{\tau_n}\|^{}_{U'}\leq Ct^\g,
    \end{equation}
    Then the sequence $\{X^n\}_{n\in\N}$ satisfies the Aldous condition in $U'$, see Theorem~\ref{TH:tight_crit}.
\end{lemma}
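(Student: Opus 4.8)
The plan is to deduce the Aldous condition directly from the moment bound \eqref{EQ:aldous_holder} using the Chebyshev inequality. The statement of Lemma \ref{LEM:aldous} is exactly the classical fact that a uniform H\"older-type control of increments in expectation implies the Aldous property, so the argument is short.

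First I would fix $\e>0$ and $\eta>0$, and set out to produce the required $\theta\in(0,T]$. For an arbitrary sequence $\{\tau_n\}_{n\in\N}$ of $[0,T]$-valued stopping times and any $t\in(0,T]$, the Chebyshev (Markov) inequality applied to the nonnegative random variable $\|X^n_{\tau_n+t}-X^n_{\tau_n}\|^{}_{U'}$ gives
\[
    \P\big(\|X^n_{\tau_n+t}-X^n_{\tau_n}\|^{}_{U'}\geq \eta\big)
    \leq \frac{1}{\eta}\,\E\|X^n_{\tau_n+t}-X^n_{\tau_n}\|^{}_{U'}
    \leq \frac{C t^\g}{\eta},
\]
where the last step is hypothesis \eqref{EQ:aldous_holder}. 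Since this bound is uniform in $n$ and the right-hand side is nondecreasing in $t$, taking the supremum over $t\in[0,\theta]$ yields
\[
    \sup_{n\in\N}\sup_{t\in[0,\theta]}\P\big(\|X^n_{t+\tau_n}-X^n_{\tau_n}\|^{}_{U'}\geq \eta\big)\leq \frac{C\theta^\g}{\eta}.
\]
Then I would simply choose $\theta:=\min\big\{T,\big(\e\eta/C\big)^{1/\g}\big\}>0$, which makes the right-hand side at most $\e$. Since $\e,\eta>0$ were arbitrary, this is precisely the Aldous condition in $U'$ as stated in Theorem \ref{TH:tight_crit}, and the lemma follows.

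There is essentially no obstacle here: the only minor point to be careful about is that the increments $\|X^n_{\tau_n+t}-X^n_{\tau_n}\|^{}_{U'}$ are genuinely random variables (measurability is guaranteed by the pathwise continuity of the processes and the fact that the $\tau_n$ are stopping times, so that $\tau_n+t$ is again a stopping time and $X^n_{\tau_n+t}$, $X^n_{\tau_n}$ are well-defined measurable maps into the separable Hilbert space $U'$), so that Chebyshev's inequality applies. Everything else is a one-line computation, and a full write-up would just be the display above together with the choice of $\theta$.
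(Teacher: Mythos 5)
Your proof is correct; the paper itself does not reproduce a proof of this lemma but refers to \cite[Section $3.2$]{Brzezniak+Motyl_2013_Existence_Martingale_Solution}, where the argument is exactly the Chebyshev--Markov inequality computation you give, followed by the choice $\theta=\min\{T,(\e\eta/C)^{1/\g}\}$. The only cosmetic point is that for $\tau_n+t>T$ one should read $X^n_{(\tau_n+t)\wedge T}$, but this convention is already implicit in the statement of the lemma and of the Aldous condition, so it is not a gap in your argument.
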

We recall the Jakubowski's version \cite[Theorem $2$]{Jakubowski_1997_almost_sure_skorokhod} of the Skorokhod theorem, see also \cite{Brzezniak+Ondrejat_2013_stochastic_geometric_wave}, and deduce a simple corollary, that will later come in handy.
\begin{theorem}[Jakubowski Theorem]
\label{TH:jakubowski}
    Let $\mathcal X$ be a topological space such that there exists a continuous and injective function $F:\mathcal X\to \R^\N$. 
    Let $\{X_n\}_{n\in\N}$ be a sequence of $\mathcal X$-valued Borel random variables defined on some probability space $(\Omega,\F,\P)$. 
    Suppose that the sequence of laws $\{{(X_n)}_\ast\P\}_{n\in\N}$ is tight in $(\mathcal X,\mathscr B_{\mathcal X})$.
    Then there exists a subsequence $\{X_{n_k}\}_{k\in\N}$, a probability space $(\tilde\Omega,\tilde{\mathcal F}, \tilde \P)$, a Borel random variable $Y:(\tilde\Omega,\tilde\F)\to \mathcal X$ and a sequence $\{Y_k\}_{k\in\N}$ of $\mathcal X$-valued Borel random variables on $(\tilde\Omega,\tilde{\mathcal F})$ such that:
    \begin{align}
    \label{EQ:jakubowski_thesis_1}
        &({X_{n_k}})_\ast\P=({Y_k})_\ast \tilde{\P}\in\mathscr P(\mathcal X),\qquad \forall\, k\in\N,\\
    \label{EQ:jakubowski_thesis_2}
        &Y_k\longrightarrow Y, \qquad \tilde{\P}-a.s.\ \textit{in }\mathcal X, \ \textit{as }k\to\infty.
    \end{align}    
\end{theorem}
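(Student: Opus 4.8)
We only sketch the strategy and refer to \cite{Jakubowski_1997_almost_sure_skorokhod} for the complete argument; see also \cite{Brzezniak+Ondrejat_2013_stochastic_geometric_wave}. The plan is to transport the problem to the Polish space $\R^\N$ through the injection $F$ and there apply the classical Skorokhod representation theorem. Set $\mu_n:=(F\circ X_n)_\ast\P\in\mathscr P(\R^\N)$. Since continuous maps carry compact sets to compact sets, the tightness of $\{(X_n)_\ast\P\}_{n\in\N}$ in $\mathcal X$ yields the tightness of $\{\mu_n\}_{n\in\N}$ in $\R^\N$; as $\R^\N$ is a separable complete metric space, Prokhorov's theorem produces a subsequence $\{n_k\}_{k\in\N}$ and $\mu\in\mathscr P(\R^\N)$ with $\mu_{n_k}\to\mu$ weakly. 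The classical Skorokhod theorem on $\R^\N$ then provides a probability space $(\tilde\Omega,\tilde\F,\tilde\P)$ and $\R^\N$-valued Borel random variables $\tilde Y_k,\tilde Y$ such that $(\tilde Y_k)_\ast\tilde\P=\mu_{n_k}$, $(\tilde Y)_\ast\tilde\P=\mu$, and $\tilde Y_k\to\tilde Y$ $\tilde\P$-a.s.\ in $\R^\N$.

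Next we would pull everything back through $F^{-1}$. By tightness, fix an increasing sequence of compacts $K_j\subset\mathcal X$ with $\e_j:=\sup_n\P(X_n\notin K_j)\to0$; since $F$ is injective, $\mu_n\big(F(K_j)\big)=\P(X_n\in K_j)\ge1-\e_j$ for all $n,j$. Each $F(K_j)$ is compact, hence closed, in $\R^\N$, so $F|_{K_j}\colon K_j\to F(K_j)$ is a continuous bijection from a compact space onto a Hausdorff one, hence a homeomorphism; therefore $F$ is injective on $K:=\bigcup_jK_j$ with inverse continuous on each $F(K_j)$, so Borel on $F(K)=\bigcup_jF(K_j)$. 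Applying the portmanteau theorem to the closed sets $F(K_j)$ gives $\mu\big(F(K_j)\big)\ge\limsup_k\mu_{n_k}\big(F(K_j)\big)\ge1-\e_j$, whence $\mu\big(F(K)\big)=1$, while $\mu_{n_k}\big(F(K)\big)=1$ for every $k$. Extending $F^{-1}$ to a Borel map on all of $\R^\N$ and setting $Y_k:=F^{-1}(\tilde Y_k)$, $Y:=F^{-1}(\tilde Y)$, the identities $F^{-1}\circ F=\mathrm{id}$ on $K$ and $(X_{n_k})_\ast\P(K)=1$ give $(Y_k)_\ast\tilde\P=(F^{-1})_\ast\mu_{n_k}=(X_{n_k})_\ast\P$, \textit{i.e.} \eqref{EQ:jakubowski_thesis_1}, and $Y$ is a well-defined $\mathcal X$-valued Borel random variable because $\mu\big(F(K)\big)=1$.

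The genuine difficulty --- the step we expect to be the main obstacle --- is to upgrade the $\tilde\P$-a.s.\ convergence $\tilde Y_k\to\tilde Y$ in $\R^\N$ to the $\tilde\P$-a.s.\ convergence $Y_k\to Y$ in $\mathcal X$, that is \eqref{EQ:jakubowski_thesis_2}. This would be immediate if, almost surely, the set $\{Y_k(\omega)\}_{k\in\N}\cup\{Y(\omega)\}$ were contained in a single compact $K_{j_0(\omega)}$, since $F^{-1}$ is continuous on $F(K_{j_0(\omega)})$; but because $\mathcal X$ need not be metrizable and the topology pulled back along $F$ may be strictly coarser than that of $\mathcal X$, the relative compactness of $\{\tilde Y_k(\omega)\}$ in $\R^\N$ does not by itself give the relative compactness of $\{Y_k(\omega)\}$ in $\mathcal X$. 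Removing this obstruction forces one to construct the representation so as to control, uniformly in $k$, the excursions of $\tilde Y_k$ outside the exhausting compacts $F(K_j)$; this refinement of the bare Skorokhod theorem is the technical core of \cite{Jakubowski_1997_almost_sure_skorokhod}, which we would invoke verbatim. Granting it, \eqref{EQ:jakubowski_thesis_2} follows and the proof is complete.
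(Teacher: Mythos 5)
The paper does not prove this theorem at all: it is quoted verbatim from \cite{Jakubowski_1997_almost_sure_skorokhod} (with a pointer to \cite{Brzezniak+Ondrejat_2013_stochastic_geometric_wave}), so there is no in-paper argument to compare against. Your sketch of the transport-to-$\R^\N$-and-pull-back strategy is sound as far as it goes --- the tightness transfer, the portmanteau estimate on the closed sets $F(K_j)$, the homeomorphism of $F$ on each compact, and the identification of laws via $F^{-1}\circ F=\mathrm{id}$ on the $\sigma$-compact carrier are all correct --- and, crucially, you correctly locate the genuine obstruction: the classical Skorokhod representation in $\R^\N$ does not by itself force the pulled-back sequence to converge in the finer topology of $\mathcal X$, so one must build the representation so that, almost surely, the whole tail of $\{\tilde Y_k\}$ lives in a single $F(K_j)$. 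Since you delegate exactly that step to the same reference the paper cites, your proposal is in effect the same (citation-based) proof, enriched with a correct account of the reduction; nothing in it is wrong, but it is not a self-contained alternative proof.
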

\begin{corollary}
\label{COR:Jakubowski}
    Let $\mathcal X$ be a topological space for which there exists a continuous and injective function $F:\mathcal X\to \R^\N$. 
    Let $\{\mu_n\}_{n\in\N}$ be a tight sequence of probability measures on $(\mathcal X,\mathscr B_\mathcal X)$. 
    Then there exists a subsequence $\{\mu_{n_k}\}_{k\in\N}$ and a probability measure $\mu$ on $(\mathcal X,\mathscr B_{\mathcal X})$ such that 
    \begin{gather}
        \mu_{n_k}\longrightarrow \mu, \qquad  \textit{in }\mathscr P(\mathcal X), \ \textit{as }k\to\infty.
    \end{gather} 
\end{corollary}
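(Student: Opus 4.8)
The idea is to reduce the statement about convergence of measures to the almost sure convergence provided by the Jakubowski Theorem \ref{TH:jakubowski}. The tight sequence $\{\mu_n\}_{n\in\N}$ does not come equipped with random variables, so the first step is to manufacture them on a convenient probability space. The plan is to take $(\Omega,\F,\P)=\big([0,1],\mathscr B_{[0,1]},\mathscr L^1\big)$ (or any standard atomless probability space) and to invoke a standard realization result: since each $\mu_n$ is a Borel probability measure on the topological space $\mathcal X$, which by hypothesis admits a continuous injection $F\colon\mathcal X\to\R^\N$, one can transport $\mu_n$ to a Borel probability measure on $\R^\N$ supported on the Borel set $F(\mathcal X)$, realize it by a random variable on $[0,1]$ via the usual inverse-transform / Borel-isomorphism argument, and pull back through $F^{-1}$ (measurable on $F(\mathcal X)$ by Kuratowski's theorem, exactly as used in Remark \ref{REM:Borel_X_cap_Y}) to obtain $\mathcal X$-valued Borel random variables $X_n$ with $(X_n)_\ast\P=\mu_n$.

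Next I would apply Theorem \ref{TH:jakubowski} directly to the sequence $\{X_n\}_{n\in\N}$: its laws are precisely $\{\mu_n\}_{n\in\N}$, which are tight by assumption, so there exist a subsequence $\{X_{n_k}\}_{k\in\N}$, a new probability space $(\tilde\Omega,\tilde\F,\tilde\P)$, and $\mathcal X$-valued Borel random variables $Y_k$, $Y$ on it with $(Y_k)_\ast\tilde\P=(X_{n_k})_\ast\P=\mu_{n_k}$ for every $k$ and $Y_k\to Y$ $\tilde\P$-a.s.\ in $\mathcal X$. Define $\mu:=Y_\ast\tilde\P\in\mathscr P(\mathcal X)$. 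It then remains to check that $\mu_{n_k}\to\mu$ in $\mathscr P(\mathcal X)$, i.e.\ $\int_\mathcal X f\,\mathrm d\mu_{n_k}\to\int_\mathcal X f\,\mathrm d\mu$ for every $f\in C_b(\mathcal X)$. This is immediate: for fixed $f\in C_b(\mathcal X)$,
\[
    \int_\mathcal X f\,\mathrm d\mu_{n_k}=\int_{\tilde\Omega}f(Y_k)\,\mathrm d\tilde\P\longrightarrow\int_{\tilde\Omega}f(Y)\,\mathrm d\tilde\P=\int_\mathcal X f\,\mathrm d\mu,
\]
where the first and last equalities are the change-of-variables formula and the middle convergence follows from $f(Y_k)\to f(Y)$ $\tilde\P$-a.s.\ (continuity of $f$ composed with the a.s.\ convergence $Y_k\to Y$) together with the Dominated Convergence Theorem, the bound being the constant $\sup_\mathcal X|f|$.

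The only genuinely delicate point is the first step — producing Borel random variables with prescribed laws on the possibly non-metrizable space $\mathcal X$. The safe route is to push everything through $F$: $F(\mathcal X)$ is a Borel subset of the Polish space $\R^\N$ by Kuratowski's theorem applied to the continuous injection $F$ (cf.\ Remark \ref{REM:Borel_X_cap_Y}), hence a standard Borel space, so $F_\ast\mu_n$ can be represented as the law of a Borel map $[0,1]\to F(\mathcal X)$, and composing with the Borel inverse $F^{-1}\colon F(\mathcal X)\to\mathcal X$ gives $X_n$ with $(X_n)_\ast\P=\mu_n$. One must only be careful that $F^{-1}$ is Borel-measurable, which again is Kuratowski/Lusin–Souslin, and that Borel measurability into $\mathcal X$ is the right notion — but this is exactly the measurability framework already in force in Theorem \ref{TH:jakubowski}, so no new hypothesis on $\mathcal X$ is needed. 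Everything else is bookkeeping.
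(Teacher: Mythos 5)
Your overall strategy coincides with the paper's: realize the measures $\mu_n$ as the laws of $\mathcal X$-valued random variables $X_n$ on a single probability space, apply Theorem \ref{TH:jakubowski} to $\{X_n\}_{n\in\N}$, set $\mu:=Y_\ast\tilde\P$, and deduce weak convergence from the $\tilde\P$-a.s.\ convergence $Y_k\to Y$ via dominated convergence. That last part of your argument is correct and is exactly what the paper does.

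The gap is in the realization step, which you yourself flag as the delicate point. Your route pushes $\mu_n$ forward through $F$, realizes $F_\ast\mu_n$ on $[0,1]$, and pulls back through $F^{-1}$; this requires $F(\mathcal X)$ to be a Borel subset of $\R^\N$ and $F^{-1}$ to be Borel on it. You justify this by Kuratowski's theorem, but Kuratowski (Lusin--Souslin) applies to an injective Borel map \emph{defined on a Polish (standard Borel) space}; in Remark \ref{REM:Borel_X_cap_Y} both spaces are Polish, which is why it works there. Here $\mathcal X$ is merely a topological space admitting a continuous injection into $\R^\N$ --- and the spaces to which the corollary is actually applied, $H^{\b}_{bw}$ and in particular $\mathcal Z_T$, are not Polish. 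For a general such $\mathcal X$ (e.g.\ a non-Borel subset of $[0,1]$ with the subspace topology and $F$ the inclusion, carrying the trace of Lebesgue measure), $F(\mathcal X)$ need not be Borel, a realization of $F_\ast\mu_n$ on $[0,1]$ need not take values in $F(\mathcal X)$ almost surely, and the pullback through $F^{-1}$ breaks down. The fix is to abandon the detour through $\R^\N$ entirely: take $\Omega:=\prod_{n\in\N}\mathcal X$ with the product $\sigma$-algebra and the product measure $\bigotimes_{n}\mu_n$, and let $X_n$ be the $n$-th coordinate projection, so that $(X_n)_\ast\P=\mu_n$ tautologically. This is the construction the paper invokes via \cite[Proposition $10.6.1$]{Cohn_2013_Measure_Theory_2nd_edition}, and it uses no structure on $\mathcal X$ beyond its Borel $\sigma$-algebra. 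With that replacement the rest of your proof goes through unchanged.
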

\begin{proof}
    Assume the hypotheses.
    Proposition $10.6.1$ in \cite{Cohn_2013_Measure_Theory_2nd_edition} states the existence of a probability space $(\Omega,\F,\P)$ (which is the countable product of the probability spaces $(\mathcal X,\mathscr B_{\mathcal X},\mu_n)$, $n\in\N$) and of $\mathcal X$-valued random variables $X_n$, $n\in\N$, defined on $(\Omega,\F,\P)$, such that $(X_n)_\ast\P=\mu_n$ on $(\mathcal X,\mathscr B_{\mathcal X})$ for every $n\in\N$.
    
    The sequence $\{X_n\}_{n\in\N}$ satisfies the assumptions of Jakubowski's Theorem~\ref{TH:jakubowski}. 
    Consequently there exists a subsequence $\{X_{n_k}\}_{k\in\N}$, a probability space $(\tilde\Omega,\tilde{\mathcal F}, \tilde \P)$, a Borel random variable $Y:(\tilde\Omega,\tilde\F)\to \mathcal X$ and a sequence $\{Y_k\}_{k\in\N}$ of $\mathcal X$-valued Borel random variables defined on $(\tilde\Omega,\tilde{\mathcal F})$ such that the properties in \eqref{EQ:jakubowski_thesis_1} and \eqref{EQ:jakubowski_thesis_2} hold true. 
    In particular, $\tilde\P-a.s.$ convergence implies convergence in law, \textit{i.e.}
    \[
        \mu_{n_k}=(X_{n_k})_\ast\P=(Y_k)_\ast\tilde\P\longrightarrow Y_\ast\tilde\P=\mu, \qquad \textit{as }k\to\infty,
    \]
    where the convergence is meant in the weak sense in $\mathscr P(\mathcal X)$, see Notation \ref{NOT:Borel_probabilities}.
\end{proof}
\begin{lemma}
\label{LEM:Z_T}
    Assume that $\a>1$ and let $W$ be an $H^{2\a}$-valued Wiener process.
    If $\b\in\R$, both the spaces $H^{\b}_{bw}$, and  {$\mathcal Z_T\times C\big([0,T];H^\b\big)$} with the product topology, satisfy the assumption of Theorem~\ref{TH:jakubowski} and Corollary~\ref{COR:Jakubowski}.
\end{lemma}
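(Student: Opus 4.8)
The statement to prove is Lemma~\ref{LEM:Z_T}: that $H^\b_{bw}$ and the product $\mathcal Z_T\times C([0,T];H^\b)$ both admit a continuous injection into $\R^\N$, which is precisely the hypothesis required by Jakubowski's Theorem~\ref{TH:jakubowski} and Corollary~\ref{COR:Jakubowski}. The plan is to construct such an injection componentwise for each of the five pieces ($H^\b_{bw}$ itself, and the four factors making up $\mathcal Z_T$), and then to assemble them: since a finite (or countable) product of spaces each admitting a continuous injection into $\R^\N$ again admits one (concatenate the coordinate sequences, and continuity of each component map gives continuity of the product map, while injectivity of each gives injectivity of the tuple), it suffices to treat each factor separately.

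First I would handle $H^\b_{bw}$. Since $H^\b$ is a separable Hilbert space, fix a countable dense set $\{y_k\}_{k\in\N}$ in $H^{-\b}$ (or equivalently a countable total set, e.g.\ a countable dense subset of the unit sphere), and define $F_{bw}:H^\b_{bw}\to\R^\N$ by $F_{bw}(x):=\big(\prescript{}{H^{-\b}\!}{\lan}y_k,x\ran^{}_{\!H^\b}\big)_{k\in\N}$. Each coordinate is weakly continuous on $H^\b$, hence continuous on $H^\b_{bw}$ by Remark~\ref{REM:topologies_inclusions} (which gives $\tau^w\subset\tau^{bw}$); injectivity follows because a total family of functionals separates points. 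The same recipe applies to $C([0,T];H^\b)$: here I would use a countable dense set $\{t_j\}_{j\in\N}$ of times in $[0,T]$ and a countable total set $\{y_k\}_{k\in\N}\subset H^{-\b}$, and set the coordinates to be $v\mapsto \prescript{}{H^{-\b}\!}{\lan}y_k,v(t_j)\ran^{}_{\!H^\b}$; continuity with respect to the (norm) topology of $C([0,T];H^\b)$ is clear, and injectivity holds because a continuous function $[0,T]\to H^\b$ is determined by its values on a dense set of times together with a separating family of functionals. An essentially identical argument covers $C([0,T];H^\a_w)$ and $C([0,T];U')$: for the former, evaluate at dense times and pair with elements of $H^{-\a}$, noting that evaluation and pairing are continuous for the compact-open topology on $C([0,T];H^\a_w)$; for the latter, $U'$ is a separable Hilbert space so pair with a countable total subset of $U$.

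For the two $L^2$-type factors the argument is the same in spirit. For $L^2_w(0,T;H^{\a+1})$, take a countable total set $\{\phi_k\}_{k\in\N}$ in $L^2(0,T;H^{-\a-1})$ and define the coordinates $v\mapsto\int_0^T\prescript{}{H^{-\a-1}\!}{\lan}\phi_k(t),v(t)\ran^{}_{\!H^{\a+1}}\d t$; these are by definition continuous for the weak topology on $L^2(0,T;H^{\a+1})$, and a total family separates points of the Hilbert space $L^2(0,T;H^{\a+1})$. For $L^2(0,T;H^\a_{loc})$, recall this is a Fréchet space whose topology is generated by the countable family of seminorms $[\![\cdot]\!]_{L^2(0,T;H^\a_n)}$ from \eqref{EQ:def_seminorm_pTn}; I would combine a localized version of the previous map (pairing against test elements supported in balls $\bar B_n$) or, more cleanly, observe that each of these seminorms controls a countable family of continuous linear functionals, and take the union over $n$ of the corresponding coordinate functionals. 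Injectivity here uses that the seminorm family is separating on $L^2(0,T;H^\a_{loc})$, i.e.\ $v=0$ a.e.\ iff $[\![v]\!]_{L^2(0,T;H^\a_n)}=0$ for all $n$.

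Finally, I would note that the topology on $\mathcal Z_T$ is, by Definition~\ref{DEF:Z_T_H^a}, the supremum of the topologies of its four components, so the natural map $\mathcal Z_T\to\R^\N$ obtained by concatenating the four coordinate sequences is continuous, and it is injective because the $C([0,T];H^\a_w)$-component (indeed even the $C([0,T];U')$-component) already injects $\mathcal Z_T$ as a set. Concatenating once more with the $C([0,T];H^\b)$-map yields the desired continuous injection $\mathcal Z_T\times C([0,T];H^\b)\to\R^\N$. The only mild subtlety --- and the point I would be most careful about --- is verifying that the coordinate maps on $C([0,T];H^\a_w)$ are genuinely continuous for the compact-open topology rather than merely sequentially continuous; but since $[0,T]$ is compact and metrizable and the evaluation-then-pairing map factors through $C([0,T];\R)$ with its sup topology, this follows from the standard description of the compact-open topology, so there is no real obstacle. $\Box$
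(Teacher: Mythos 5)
Your proposal is correct, and it follows the same overall strategy as the paper (build a continuous injection into $\R^\N$ for each factor, then concatenate for products, using that injectivity of the tuple needs only one injective component). The differences are in how the individual factors are treated. For $H^\b_{bw}$ the paper pairs with a complete orthonormal system of $H^\b$ rather than a dense subset of $H^{-\b}$; this is cosmetic. The more substantive divergence is in the treatment of $\mathcal Z_T$: the paper never constructs coordinate functionals on $L^2_w(0,T;H^{\a+1})$, $L^2(0,T;H^{\a}_{loc})$ or $C([0,T];H^\a_w)$ at all. It simply observes that $C([0,T];U')$ is Polish, hence admits a continuous injection into $\R^\N$ by a cited general fact, and that the natural embedding $\mathcal Z_T\to C([0,T];U')$ is continuous and injective, so one composition finishes the job. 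You in effect rediscover this at the end ("the $C([0,T];U')$-component already injects $\mathcal Z_T$ as a set"), which means the explicit constructions on the other three factors are superfluous; this is fortunate, because your treatment of $L^2(0,T;H^\a_{loc})$ ("each of these seminorms controls a countable family of continuous linear functionals") is the one genuinely hand-wavy spot — the seminorms \eqref{EQ:def_semin_n} are defined by an infimum over cut-offs and extracting a countable separating family of \emph{linear} functionals from them would need an argument. Since continuity of each coordinate on $\mathcal Z_T$ follows from the topology of $\mathcal Z_T$ being finer than each component topology, and injectivity is supplied by the $C([0,T];U')$ coordinates alone, your argument closes correctly; but the paper's route through the Polish space $C([0,T];U')$ is shorter and avoids the delicate factor entirely.
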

\begin{proof} 
    Let us momentarily say that a topological space $\mathcal X$ has the $S$ property if there exists an injective and continuous function $F:\mathcal X\to \R^\N$.
    
    As for the topological space $H^{\b}_{bw}$, we consider a complete orthonormal system $\{e_n\}_{n\in\N}$ for the separable Hilbert space $H^\b$ and define the function
    \[
        F:H^\b\ni x\mapsto F(x):=\big\{\lan x, e_n\ran^{}_{\! H^\b}\big\}_{n\in\N}\in \R^\N.
    \]
    Then $F$ is linear, thus $F\in C\big(H^\b_w;\R^\N\big)$ by definition of weak topology, see Notation \ref{NOT:weak_topologies}.
    It follows that ${F\in C\big(H^\b_{bw};\R^\N\big)}$ from Lemma~\ref{LEM:topologies_inclusions}. 
    Also, $F$ is injective. 
    Let indeed $x,x'\in H^\b$ be such that $F(x)=F(x')$, then $F(x-x')=0$ by linearity, hence $x=x'$ thanks to
    \[
        \|x-x'\|^2_{H^\b}=\sum_{n=1}^\infty \lan x-x',e_n\ran^{2}_{\!H^\b}=0.
    \]
    
    The spaces $C\big([0,T];H^\b\big)$ and $C\big([0,T];U'\big)$ have the $S$ property because they are Polish, see \cite[Exposè $8$, page $124$, Remark $(b)$]{Badrikian_1970_Seminaire_fonctions}. 
    
    As for the space $\mathcal Z_T$, we reason as follows. 
    Let $\iota:\mathcal Z_T\to C\big([0,T];U'\big)$ be the natural embedding, in particular, $\iota$ is injective and continuous. 
    Let $F:C\big([0,T];U'\big)\to\R^\N$ be an injective and continuous map, which exists because $C\big([0,T];U'\big)$ has the $S$ property. 
    Then $F\circ \iota:\mathcal Z_T\to\R^\N$ is continuous and injective.
    Hence $\mathcal Z_T$ has the $S$ property.
    
    It only remains to prove that, if two topological spaces $\mathcal X$, $\mathcal Y$ have the $S$ property, so does the space $\mathcal X\times \mathcal Y$ with the product topology. 
    Let $F:\mathcal X\to\R^\N$ and $G:\mathcal Y\to\R^\N$ be two injective and continuous functions, then, for every $n\in\N$, we denote by $f_n$ and $g_n$ the $n$-th component of $F$ and $G$, respectively. 
    We define $H:\mathcal X\times\mathcal Y\ni (x,y)\mapsto H(x,y):=\{h_n(x,y)\}_{n\in\N}\in \R^\N$, where, for all $n\in\N$ and $(x,y)\in \mathcal X\times\mathcal Y$
    \begin{align}
        h_{2n-1}(x,y):=f_n(x),\qquad\qquad
        h_{2n}(x,y):=g_n(y).
    \end{align}
    By direct inspection, $h_n$,  $n\in\N$, are continuous with respect to the product topology on $\mathcal X\times\mathcal Y$, thus so is $H$. 
    Finally, $H$ is injective. 
    Let indeed $(x,y),(x',y')\in \mathcal X\times\mathcal Y$ be such that $H(x,y)=H(x',y')$. 
    Then $f_n(x)=h_{2n-1}(x,y)=h_{2n-1}(x',y')=f_n(x')$ for all $n\in\N$, in particular, $F(x)=F(x')$, thus $x=x'$ by injectivity of $F$. 
    Analogously, $y=y'$ since $G$ is injective and $g_n(y)=h_{2n}(x,y)=h_{2n}(x',y')=g_n(y')$ for all $n\in\N$. Therefore $(x,y)=(x',y')$. 
\end{proof}

\subsection{The inviscid limit}

\begin{lemma}
\label{LEM:nu_to_0}
    Assume that $\a>1$, $T>0$, and let $\iota, \upsilon$ denote the natural embeddings
    \[
    H^{2\a}\xhookrightarrow{\iota}H^{\a+1}
    \xhookrightarrow{\upsilon}U'.
    \]
    Let $\{\nu_n\}_{n\in\N}$ be an infinitesimal strictly positive sequence.
    For every $n\in\N$, let $\mu_n\in\mathscr P(H^{2\a})$ be an invariant measure for the
    SHNS$_{\nu_n,\a}$ Equation~\eqref{EQ:NSnu}, and let $(\Omega,\F,\{\F_t\}_{t\ge0},\P;W;X^n)$ be a solution such that
    \[
        (X_t^n)_\ast\P=\mu_n,
        \qquad \forall\, t\ge0.
    \]
    Assume moreover that there exists
    $\mu\in\mathscr P(H^{\a+1})$ such that
    \begin{equation}
    \label{EQ:ass_conv_mu_n_to_mu_bw}
        \iota_\ast\mu_n\longrightarrow \mu,
        \qquad \textit{in }\mathscr P(H^{\a+1}_{bw}),
        \ \textit{as }n\to\infty.
    \end{equation}
    Then there exist:
    \begin{itemize}
        \item a strictly increasing map $\N\ni j\mapsto n_j\in\N$,
        \item a probability space $(\Omega^{\ss T},\F^{\ss T},\P^{\ss T})$,
        \item random variables $ X^{\ss T}, X^{{\ss T},j}:\Omega^{\ss T}\to\mathcal Z_T$, for $j\in\N$,
    \end{itemize}
    such that:
    \begin{enumerate}[label=$(\roman*)$]
        \item
        \label{IT:law_Xj_equals_law_Xnj_on_ZT}
            $(X^{{\ss T},j})_\ast\P^{\ss T}=(X^{n_j})_\ast\P \qquad\text{in }\mathscr P(\mathcal Z_T)$, for all $j\in\N$; 
        \item
        \label{IT:Xj_to_X_in_ZT}
            $X^{{\ss T},j}\longrightarrow X^{\ss T}
            \qquad\textit{in }\mathcal Z_T$ $\P^{\ss T}\text{-a.s., as }j\to\infty$;
        \item
        \label{IT:Xj_C_H2a}
            $X^{{\ss T},j}\in C([0,T];H^{2\a}),
            \qquad \P^{\ss T}\text{-a.s.,}\ \forall\, j\in\N$;
        \item
        \label{IT:X_reg_limit}
            $\ds X^{\ss T}\in
            \bigcap_{p\ge2}L^p\big(\Omega^{\ss T};L^\infty(0,T;H^1)\big)
            \cap
            L^2\big(\Omega^{\ss T}\times(0,T);H^{\a+1}\big)$;
        \item
        \label{IT:law_limit_time_Uprime}
            $(X^{\ss T}_t)_\ast\P^{\ss T}=\upsilon_\ast\mu
            \qquad\text{in }\mathscr P(U')$, for all $t\in[0,T]$;

        \item
        \label{IT:law_limit_time_Ha+1}
        for every $t\in[0,T]$ there exists an $H^{\a+1}$-valued random variable $\widehat X^{\ss T}_t$ such that $\widehat X^{\ss T}_t=X^{\ss T}_t$ $\P^{\ss T}\text{-a.s. in }U'$, and $(\widehat X^{\ss T}_t)_\ast\P^{\ss T}=\mu$ in $\mathscr P(H^{\a+1})$.
    \end{enumerate}
\end{lemma}
\begin{proof}
    First, recall from Lemma~\ref{LEM:weak_borel} that the $\s$-algebras generated by the strong and bounded weak topologies coincide, hence $\mathscr P(H_{bw}^{\a+1})=\mathscr P(H^{\a+1})$.
    However, since the bounded weak topology is finer by Lemma~\ref{LEM:topologies_inclusions}, the convergence in $\mathscr P(H^{\a+1})$ implies the convergence in $\mathscr P(H^{\a+1}_{bw})$, the converse being generally false.
    
\textbf{\textit{Step $\mathbf{1}$.}}
    Since $\a>1$, each process $X^n$ has continuous trajectories in $H^{2\a}$,
    hence also in $U'$. Therefore Theorem~\ref{TH:tight_crit} applies once its two
    hypotheses are verified.

    We first prove the uniform moment bound required by
    \eqref{EQ:HP_bound_tight_crit}.
    By Theorem~\ref{TH:exi!_solutions_NS_X}
    \itref{IT:solutions_NS_estimates}, with $p=2$, and by stationarity,
    for every $n\in\N$,
    \begin{align*}
        \E\sup_{t\in[0,T]}\|X_t^n\|_{H^1}^2
        &\le C\big(\E\|X_0^n\|_{H^1}^2+T\nu_n\big)= C\bigg(\int_{H^{2\a}}\|x\|_{H^1}^2\d\mu_n(x)+T\nu_n\bigg),\\
        \E\int_0^T\|X_t^n\|_{H^{\a+1}}^2\d t
        &=T\,\E\|X_0^n\|_{H^{\a+1}}^2
        =T\int_{H^{2\a}}\|x\|_{H^{\a+1}}^2\d\mu_n(x).
    \end{align*}
    By Theorem~\ref{TH:moment_estimates_mu_nu}, the sequence
    $\{\mu_n\}_{n\in\N}$ has uniformly bounded second moment in $H^{\a+1}$ and $H^1$; since
    also $\{\nu_n\}_{n\in\N}$ is bounded, these quantities are uniformly bounded in $n$.

    We now verify Aldous' condition through Lemma~\ref{LEM:aldous}.
    For every $n\in\N$, $\P$-a.s., for all $t\in[0,T]$, in $H^\a$,
    \[
        X_t^n
        =
        X_0^n
        -\nu_n\int_0^tA^\a X_s^n\d s
        -\int_0^tB(X_s^n)\d s
        +\sqrt{\nu_n}\,W_t.
    \]
    Let $\{\tau_n\}_{n\in\N}$ be stopping times valued in $[0,T]$, and let
    $t\in[0,T]$.
    The constant term $X_0^n$ trivially satisfies \eqref{EQ:aldous_holder}.
    For the linear term, using the embedding
    $H^{1-\a}\hookrightarrow H^{-2\a}\hookrightarrow U'$,
    Hölder's inequality, and again Theorem~\ref{TH:exi!_solutions_NS_X}
    \itref{IT:solutions_NS_estimates}, we get
    \begin{align*}
        \E\bigg\|
            \nu_n\int_{\tau_n}^{\tau_n+t}A^\a X_s^n\d s
        \bigg\|_{U'}
        &\le
        \nu_n\E\int_{\tau_n}^{\tau_n+t}\|A^\a X_s^n\|_{H^{1-\a}}\d s\\
        &\le
        \sqrt{\nu_nt}\,
        \bigg(
            \nu_n\E\int_0^{2T}\|X_s^n\|_{H^{\a+1}}^2\d s
        \bigg)^{1/2}\\
        &\le
        C\sqrt{\nu_nt}.
    \end{align*}
    For the nonlinear term, by Lemma~\ref{LEM:estimates_B}
    (estimate \eqref{EQ:estim_B_3} with $\e=\a-1>0$),
    \begin{align*}
        \E\bigg\|
            \int_{\tau_n}^{\tau_n+t}B(X_s^n)\d s
        \bigg\|_{U'}
        \le
        \E\int_{\tau_n}^{\tau_n+t}\|B(X_s^n)\|_{H^{1-\a}}\d s
        \le
        Ct\,\E\sup_{s\in[0,2T]}\|X_s^n\|_{H^1}^2
        \le Ct.
    \end{align*}
    Finally, for the noise term,
    \[
        \E\big\|\sqrt{\nu_n}(W_{\tau_n+t}-W_{\tau_n})\big\|_{U'}
        \le
        \sqrt{\nu_nt}\,\big(\Tr Q\big)^{1/2},
    \]
    where $Q\in\mathcal L_1(H^{2\a})$ is the covariance of $W_1$.
    Hence Aldous' condition holds uniformly in $n$.

    By Lemma~\ref{LEM:aldous} and Theorem~\ref{TH:tight_crit}, the laws
    $\{(X^n)_*\P\}_{n\in\N}$ form a tight family in
    $\mathscr P(\mathcal Z_T)$.
    
    By Lemma~\ref{LEM:Z_T}, the topological space $\mathcal Z_T$ satisfies the
    assumptions of Corollary~\ref{COR:Jakubowski}.
    Since the laws of $X^n$ are tight on $\mathcal Z_T$, Corollary~\ref{COR:Jakubowski}
    yields a subsequence $\{n_j\}_{j\in\N}$, a probability space
    $(\Omega^{\ss T},\F^{\ss T},\P^{\ss T})$, and $\mathcal Z_T$-valued random variables
    $ X^{{\ss T},j}, X^{\ss T}$ such that
    \begin{align}
    \label{EQ:law_tildeXj_equals_Xnj}
        ( X^{{\ss T},j})_*\P^{\ss T}
        &=
        (X^{n_j})_*\P
        \qquad\text{in }\mathscr P(\mathcal Z_T),
        \qquad \forall\, j\in\N,\\
    \label{EQ:tildeXj_to_tildeX_ZT_as}
         X^{{\ss T},j}
        &\longrightarrow
         X^{\ss T}
        \qquad\textit{in }\mathcal Z_T,\ \P^{\ss T}\text{-a.s., as }j\to\infty.
    \end{align}
    This proves \itref{IT:law_Xj_equals_law_Xnj_on_ZT} and
    \itref{IT:Xj_to_X_in_ZT}.

\textbf{\textit{Step $\mathbf{2}$.}}
    Let \[
        S:=C([0,T];H^{2\a})\cap \mathcal Z_T.
    \]
    By Lemma~\ref{LEM:Borel_sets_in_ZT}, $S\in\mathscr B_{\mathcal Z_T}$.
    Since $X^{n_j}\in C([0,T];H^{2\a})$ $\P$-a.s. for every $j$, then $\big((X^{n_j})_\ast\P\big)(S)=1$.
    By \eqref{EQ:law_tildeXj_equals_Xnj}, we have $\big(( X^{{\ss T},j})_\ast\P^{\ss T}\big)(S)=1$, hence
    \[
         X^{{\ss T},j}\in C([0,T];H^{2\a}),
        \qquad \P^{\ss T}\text{-a.s.,}\ \forall\, j\in\N.
    \]
    This proves \itref{IT:Xj_C_H2a}.
    We next establish uniform integrability estimates for $ X^{{\ss T},j}$.
    Define
    \[
        G_1(u):=\sup_{q\in\mathbb Q\cap[0,T]}\|u(q)\|_{H^1}^p,
        \qquad u\in\mathcal Z_T,
    \]
    for a fixed $p\ge2$.
    The map $\mathcal Z_T\ni u \mapsto u(q)\in H^\a_w$ is continuous for every
    rational $q$, because $\mathcal Z_T\hookrightarrow C([0,T];H^\a_w)$ continuously.
    Since $H^\a\hookrightarrow H^1$, $H^\a_w\hookrightarrow H^1_w$, and the norm
    $\|\,\cdot\,\|_{H^1}$ is weakly lower semicontinuous on $H^1$, the map
    $u\mapsto \|u(q)\|_{H^1}^p$ is Borel on $\mathcal Z_T$.
    Hence $G_1$ is Borel as a countable supremum of Borel maps.
    Similarly, define
    \[
        G_2(u):=\|u\|_{L^2(0,T;H^{\a+1})}^2,
        \qquad u\in\mathcal Z_T.
    \]
    Since $\mathcal Z_T\hookrightarrow L^2_w(0,T;H^{\a+1})$ and the norm
    on $L^2(0,T;H^{\a+1})$ is weakly lower semicontinuous, $G_2$ is Borel on
    $\mathcal Z_T$.
    If $u\in S$, then $u\in C([0,T];H^{2\a})\subset C([0,T];H^1)$, so by continuity
    of $t\mapsto \|u(t)\|_{H^1}$ one has
    \[
        G_1(u)=\|u\|_{C([0,T];H^1)}^p, \qquad \forall\, u\in S.
    \]
    Therefore, by \eqref{EQ:law_tildeXj_equals_Xnj},
    \begin{equation}
    \label{EQ:uniform_integrability_tildeXj}
        \tilde\E G_1( X^{{\ss T},j})+\tilde\E G_2( X^{{\ss T},j})
        =
        \E G_1(X^{n_j})+\E G_2(X^{n_j})
        =
        \E\sup_{t\in[0,T]}\|X_t^{n_j}\|_{H^1}^p
        +\E\int_0^T\|X_t^{n_j}\|_{H^{\a+1}}^2\d t
        \le C_p,
    \end{equation}
    where $C_p$ is independent of $j$, by the same estimates used in \textit{Step~1}.

    We now pass to the limit.
    Let $\Omega^{\ss T}_T\in\F^{\ss T}$ be such that $\P^{\ss T}(\Omega^{\ss T}_T)=1$ and
    \eqref{EQ:tildeXj_to_tildeX_ZT_as} holds for all $\omega\in\Omega^{\ss T}_T$.
    Fix $\omega\in\Omega^{\ss T}_T$.
    Since $ X^{{\ss T},j}(\omega)\to X^{\ss T}(\omega)$ in $\mathcal Z_T$, we have:
    \begin{align}
        & X^{{\ss T},j}(\omega)\to X^{\ss T}(\omega)\text{ in }C([0,T];H^\a_w),\\
        & X^{{\ss T},j}(\omega)\to X^{\ss T}(\omega)\text{ in }L^2_w(0,T;H^{\a+1}).
    \end{align}
    For every $q\in\mathbb Q\cap[0,T]$, the embedding $H^\a_w\hookrightarrow H^1_w$ and the weak lower semicontinuity of the $H^1$-norm yield
    \[
        \| X^{\ss T}_q(\omega)\|_{H^1}^p
        \le
        \liminf_{j\to\infty}\| X^{{\ss T},j}_q(\omega)\|_{H^1}^p.
    \]
    Taking the supremum over $q\in\mathbb Q\cap[0,T]$,
    \[
        G_1( X^{\ss T}(\omega))
        \le
        \liminf_{j\to\infty}G_1( X^{{\ss T},j}(\omega)).
    \]
    Since $ X^{\ss T}(\omega)\in C([0,T];H^\a_w)$ and $H^\a\hookrightarrow H^1$
    continuously, the path $t\mapsto  X^{\ss T}_t(\omega)$ is continuous in $H^1_w$.
    Hence $t\mapsto \| X^{\ss T}_t(\omega)\|_{H^1}$ is lower semicontinuous on the
    compact interval $[0,T]$, so its supremum over $\mathbb Q\cap[0,T]$ equals its supremum
    over $[0,T]$. Therefore
    \[
        \| X^{\ss T}(\omega)\|_{L^\infty(0,T;H^1)}^p
        =
        G_1( X^{\ss T}(\omega))
        \le
        \liminf_{j\to\infty}\| X^{{\ss T},j}(\omega)\|_{C([0,T];H^1)}^p.
    \]
    Likewise, by the weak lower semicontinuity of the norm on
    $L^2(0,T;H^{\a+1})$,
    \[
        \| X^{\ss T}(\omega)\|_{L^2(0,T;H^{\a+1})}^2
        \le
        \liminf_{j\to\infty}\| X^{{\ss T},j}(\omega)\|_{L^2(0,T;H^{\a+1})}^2.
    \]
    Fatou's lemma together with \eqref{EQ:uniform_integrability_tildeXj} gives
    \[
         X^{\ss T}\in
        \bigcap_{p\ge2}L^p\big(\Omega^{\ss T};L^\infty(0,T;H^1)\big)
        \cap
        L^2\big(\Omega^{\ss T}\times(0,T);H^{\a+1}\big).
    \]
    This proves \itref{IT:X_reg_limit}.

\textbf{\textit{Step $\mathbf{3}$.}}
    Fix $t\in[0,T]$.
    Since \eqref{EQ:ass_conv_mu_n_to_mu_bw} holds along the full sequence, it also
    holds along the extracted subsequence:
    \begin{equation}
    \label{EQ:ass_conv_subsequence}
        \iota_\ast\mu_{n_j}\longrightarrow \mu
        \qquad\textit{in }\mathscr P(H^{\a+1}_{bw}),
        \ \textit{as }j\to\infty.
    \end{equation}
    Recalling that $\upsilon$ is continuous, \eqref{EQ:ass_conv_subsequence} implies
    \begin{equation}
    \label{EQ:iota_bar_muj_to_iota_mu}
        \upsilon_\ast(\iota_\ast\mu_{n_j})\longrightarrow \upsilon_\ast\mu
        \qquad\textit{in }\mathscr P(U').
    \end{equation}
    
    On the other hand, since $\mathcal Z_T\hookrightarrow C([0,T];U')$ and $ X^{{\ss T},j}\to X^{\ss T}$ $\P^{\ss T}$-a.s.\ in $\mathcal Z_T$, we have $ X^{{\ss T},j}\longrightarrow X^{\ss T}$ in $C([0,T];U')$, $\P^{\ss T}\text{-a.s.}$.
    Therefore $  X^{{\ss T},j}_t\longrightarrow X^{\ss T}_t$ in $U'$, $\P^{\ss T}\text{-a.s.}$ and thus
    \begin{equation}
    \label{EQ:law_tildeXtj_to_tildeXt_in_U'}
        (  X^{{\ss T},j}_t)_\ast\P^{\ss T}
        \longrightarrow
        ( X^{\ss T}_t)_\ast\P^{\ss T},
        \qquad\textit{in }\mathscr P(U').
    \end{equation}

    We now identify $(  X^{{\ss T},j}_t)_*\P^{\ss T}$.
    Let $\kappa:\mathcal Z_T\to C([0,T];U')$ denote the natural embedding, and let $\mathrm{ev}_t:C([0,T];U')\ni u \mapsto u(t)\in U'$ be the evaluation map, which is continuous.
    Then $\widetilde{\mathrm{ev}}_t:=\mathrm{ev}_t\circ\kappa:\mathcal Z_T\to U'$ is Borel measurable.
    Hence, by \eqref{EQ:law_tildeXj_equals_Xnj},
    \begin{align}
    \label{EQ:law_tildeXtj_equals_Xnjt_in_U'}
        (  X^{{\ss T},j}_t)_*\P^{\ss T}
        &=
        (\widetilde{\mathrm{ev}}_t\circ X^{{\ss T},j})_*\P^{\ss T}\notag\\
        &=
        (\widetilde{\mathrm{ev}}_t)_*\big(( X^{{\ss T},j})_*\P^{\ss T}\big)\notag\\
        &=
        (\widetilde{\mathrm{ev}}_t)_*\big((X^{n_j})_*\P\big)\notag\\
        &=
        (\widetilde{\mathrm{ev}}_t\circ X^{n_j})_*\P
        =
        (X_t^{n_j})_*\P
        \qquad\text{in }\mathscr P(U').
    \end{align}

    Now $X_t^{n_j}$ is originally an $H^{2\a}$-valued random variable with law
    $\mu_{n_j}$, and the same random variable viewed in $U'$ is
    $(\upsilon\circ\iota)(X_t^{n_j})$.
    Therefore
    \begin{align}
    \label{EQ:law_Xnjt_as_Uprime}
        (X_t^{n_j})_*\P
        =
        (\upsilon\circ\iota)_\ast\big((X_t^{n_j})_\ast\P\big)
        =
        (\upsilon\circ\iota)_\ast\mu_{n_j}
        \qquad\text{in }\mathscr P(U').
    \end{align}
    Combining \eqref{EQ:law_tildeXtj_equals_Xnjt_in_U'} and
    \eqref{EQ:law_Xnjt_as_Uprime}, we obtain
    \begin{equation}
    \label{EQ:law_tildeXtj_equals_iota_bar_muj}
        (  X^{{\ss T},j}_t)_\ast\P^{\ss T}=\upsilon_\ast(\iota_\ast\mu_{n_j})
        \qquad\text{in }\mathscr P(U').
    \end{equation}

    Finally, \eqref{EQ:iota_bar_muj_to_iota_mu},
    \eqref{EQ:law_tildeXtj_to_tildeXt_in_U'}, and
    \eqref{EQ:law_tildeXtj_equals_iota_bar_muj} show that the sequence
    $\{(  X^{{\ss T},j}_t)_*\P^{\ss T}\}_{j\in\N}$ converges in $\mathscr P(U')$ both to
    $( X^{\ss T}_t)_*\P^{\ss T}$ and to $\upsilon_\ast\mu$.
    Since $U'$ is Polish, $\mathscr P(U')$ is Hausdorff, so the limit is unique.
    Hence
    \[
        ( X^{\ss T}_t)_\ast\P^{\ss T}=\upsilon_\ast\mu
        \qquad\text{in }\mathscr P(U').
    \]
    This proves \itref{IT:law_limit_time_Uprime}.

    We now prove the \(H^{\a+1}\)-valued formulation \itref{IT:law_limit_time_Ha+1}.
    We know from Lemma~\ref{LEM:Borel_X_cap_Y} that $H^{\a+1}\in\mathscr B_{U'}$, hence $(\upsilon_\ast\mu)(H^{\a+1})=\mu(H^{\a+1})=1$.
    Then the equality
    $( X^{\ss T}_t)_\ast\P^{\ss T}=\upsilon_\ast\mu$ yields $\P^{\ss T}( X^{\ss T}_t\in H^{\a+1})=1$.
    Hence the random variable
    \[
        \widehat X^{\ss T}_t:= X^{\ss T}_t\1_{\{ X^{\ss T}_t\in H^{\a+1}\}}
    \]
    is well defined as an $H^{\a+1}$-valued random variable, and $\widehat X^{\ss T}_t= X^{\ss T}_t$ $\P^{\ss T}\text{-a.s. in }U'$.
    For every $B\in\mathscr B_{H^{\a+1}}$,
    \[
        \P^{\ss T}\big(\widehat X^{\ss T}_t\in B\big)
        =
        \P^{\ss T}\big( X^{\ss T}_t\in \upsilon(B)\big)
        =
        \big(\upsilon_\ast\mu\big)(\upsilon(B))
        =
        \mu(B).
    \]
    Therefore $(\widehat X^{\ss T}_t)_\ast\P^{\ss T}=\mu$ in $\mathscr P(H^{\a+1})$, which proves \itref{IT:law_limit_time_Ha+1}.
\end{proof}

\begin{proposition}
\label{PROP:approx_inv_measures}
    Assume that $\a>1$, and let $\iota$ denote the natural embedding $H^{2\a}\xhookrightarrow{\iota} H^{\a+1}$.
    Then there exist:
    \begin{itemize}
        \item an infinitesimal strictly positive sequence $\{\nu_n\}_{n\in\N}$,
        \item invariant measures $\mu_n\in\mathscr P(H^{2\a})$ for the SHNS$_{\nu_n,\a}$ Equation~\eqref{EQ:NSnu} with kinematic viscosity $\nu_n$, for all $n\in\N$,
        \item a probability measure $\mu\in\mathscr P(H^{\a+1})$,
    \end{itemize}
    such that
    \[
        \iota_\ast\mu_n\longrightarrow\mu,
        \qquad \textit{in }\mathscr P(H^{\a+1}_{bw}),
        \ \textit{as }n\to\infty.
    \]
\end{proposition}

\begin{proof}
    Let $\{\bar\nu_k\}_{k\in\N}\in (0,+\infty)^\N$ be any infinitesimal sequence.
    For every $k\in\N$, by Theorem~\ref{TH:invariant_measure}, there exists an invariant measure $\bar\mu_k\in\mathscr P(H^{2\a})$ for the SHNS$_{\bar\nu_k,\a}$ Equation~\eqref{EQ:NSnu} with kinematic viscosity $\bar\nu_k$.
    By Theorem~\ref{TH:moment_estimates_mu_nu}, there exists a constant $C>0$, independent of $k$, such that
    \[
        \int_{H^{2\a}}\|x\|_{H^{\a+1}}^2\d\bar\mu_k(x)\le C,
        \qquad \forall\, k\in\N.
    \]

    We claim that $\{\iota_\ast\bar\mu_k\}_{k\in\N}$ is tight in $\mathscr P(H^{\a+1}_{bw})$.
    Indeed, fix $R>0$ and consider the closed ball
    \[
        B_R:=\big\{x\in H^{\a+1}:\ \|x\|_{H^{\a+1}}\le R\big\}.
    \]
    Since $H^{\a+1}$ is reflexive and separable, $B_R$ is weakly compact in $H^{\a+1}$.
    Moreover, by definition of the bounded weak topology, the topology induced by $H^{\a+1}_{bw}$ on
    $B_R$ coincides with the weak topology of $H^{\a+1}$ on $B_R$.
    Therefore $B_R$ is compact in $H^{\a+1}_{bw}$.
    Furthermore, by Chebyshev's inequality,
    \[
        (\iota_\ast\bar\mu_k)\big(H^{\a+1}\setminus B_R\big)
        =
        \bar\mu_k\big(\{x\in H^{2\a}:\ \|x\|_{H^{\a+1}}>R\}\big)
        \le
        \frac1{R^2}\int_{H^{2\a}}\|x\|_{H^{\a+1}}^2\d\bar\mu_k(x)
        \le
        \frac{C}{R^2},
    \]
    for all $k\in\N$.
    Thus, given $\e>0$, choosing $R$ so large that $C/R^2<\e$, we obtain $(\iota_\ast\bar\mu_k)(B_R)\ge 1-\e$ for all $k\in\N$.
    Since $B_R$ is compact in $H^{\a+1}_{bw}$, the claim follows.

    By Corollary~\ref{COR:Jakubowski} and Lemma~\ref{LEM:Z_T}, there exist a strictly increasing map $\N\ni n\mapsto k_n\in\N$ and a probability measure $\mu\in\mathscr P(H^{\a+1}_{bw})$ such that
    \[
        \iota_\ast\bar \mu_{k_n}\longrightarrow\mu,
        \qquad \textit{in }\mathscr P(H^{\a+1}_{bw}),
        \ \textit{as }n\to\infty.
    \]
    Since $H^{\a+1}$ is a separable Hilbert space, Lemma~\ref{LEM:weak_borel} implies equality of the strong and bounded-weak Borel $\S$-algebras; therefore the limiting measure $\mu\in\mathscr P(H^{\a+1}_{bw})$ can be regarded canonically as an element of $\mathscr P(H^{\a+1})$.
    The thesis follows by relabeling $\nu_n:=\bar \nu_{k_n}$ and $\mu_n:=\bar\mu_{k_n}$.
\end{proof}

\begin{theorem}
\label{TH:nu_to_0}
    Assume that $\s>2$.
    Then there exist an augmented filtered probability space $(\tilde\Omega,\tilde\F,\{\tilde\F_t\}_{t\ge0},\tilde\P)$ and an adapted $H^\s$-valued process $\tilde X$ such that:
    \begin{enumerate}[label=$(\roman*)$]
        \item for every $T>0$,
        \[
            \tilde X\in
            \bigcap_{p\ge1}L^p\big(\tilde\Omega;L^\infty(0,T;H^1)\big)
            \cap
            L^2\big(\tilde\Omega\times(0,T);H^\s\big);
        \]

        \item for $\tilde\P$-a.e.\ $\omega\in\tilde\Omega$, the path $\tilde X(\omega):\R_+\to H^\s$ belongs to $C(\R_+;H^\s)$ and is a solution to the Euler Equation~\eqref{EQ:euler};

        \item the process is marginally stationary, namely $(\tilde X_t)_\ast\tilde\P=(\tilde X_0)_\ast\tilde\P$ for all $t\geq 0$.
    \end{enumerate}
    In particular, $(\tilde X_0)_\ast\tilde\P\in\mathscr P(H^\s)$ is an invariant measure for the Euler Equation~\eqref{EQ:euler}, in the sense of Definition~\ref{DEF:inv_meas_euler}.
\end{theorem}
\begin{proof}
    Let us set $\a:=\s-1$, and let $\iota,\upsilon$ denote the natural embeddings $H^{2\a}\xhookrightarrow{\iota}H^\s\xhookrightarrow{\upsilon}U'$.

\textit{\textbf{Step $\mathbf{1}$.}}
    Proposition~\ref{PROP:approx_inv_measures} returns a strictly positive infinitesimal sequence $\{\nu_n\}_{n\in\N}$, invariant measures $\mu_n\in\mathscr P(H^{2\a})$ for the SHNS$_{\nu_n,\a}$ Equation~\eqref{EQ:NSnu}, for all $n\in\N$, and a probability measure $\mu\in\mathscr P(H^\s)$ such that
    \begin{equation}
    \label{EQ:conv_mu_n_to_mu_in_PHs_bw}
        \iota_\ast\mu_n\longrightarrow\mu,
        \qquad \textit{in }\mathscr P(H^\s_{bw}),
        \ \textit{as }n\to\infty.
    \end{equation}
    For every $n\in\N$, by Proposition~\ref{PROP:construct_stationary_solution},
    there exists a marginally stationary solution $(\Omega,\F,\{\F_t\}_{t\ge0},\P;W;X^n)$ to the SHNS$_{\nu_n,\a}$ Equation~\eqref{EQ:NSnu} such that
    \[
        (X_t^n)_\ast\P=\mu_n,
        \qquad \forall\, t\ge0.
    \]
    
\textit{\textbf{Step $\mathbf{2}$.}}
    Fix $T>0$ and apply Lemma~\ref{LEM:nu_to_0} to the sequence $\{(\nu_n,\mu_n,X^n)\}_{n\in\N}$.
    We obtain a strictly increasing map $\N\ni j\mapsto n_j\in\N$, a probability space $(\Omega^{\ss T},\F^{\ss T},\P^{\ss T})$, and random variables $X^{\ss T},X^{{\ss T},j}:\Omega^{\ss T}\to\mathcal Z_T$, for $j\in\N$, such that \itref{IT:law_Xj_equals_law_Xnj_on_ZT}--\itref{IT:law_limit_time_Ha+1}
    hold.

    For simplicity of notation, set
    \[
        \bar\nu_j:=\nu_{n_j},
        \qquad
        \bar X^j:=X^{n_j},
        \qquad
        \bar\mu_j:=\mu_{n_j},
        \qquad j\in\N.
    \]

    We claim that, for $\P^{\ss T}$-a.e.\ $\omega\in\Omega^{\ss T}$, the path $[0,T]\ni t\longmapsto X^{\ss T}_t(\omega)\in U'$ satisfies the Euler Equation~\eqref{EQ:euler} on $[0,T]$ in the sense
    \begin{equation}
    \label{EQ:euler_identity_Uprime}
        X^{\ss T}_t+\int_0^t B(X^{\ss T}_s)\d s=X^{\ss T}_0
        \qquad\text{in }H^{1-\a},
        \qquad \forall\, t\in[0,T].
    \end{equation}

    Fix $t\in[0,T]$ and $y\in H^{\a-1}$. Define $F^j_{t,y},F_{t,y}:\mathcal Z_T\to\R$ by
    \begin{align*}
        F^j_{t,y}(u)
        &:=
        \big\langle y,u(t)-u(0)\big\rangle
        +\bar\nu_j\int_0^t\big\langle A^\a u(s),y\big\rangle_{\ss{\!H^{1-\a}\!\times\!H^{\a-1}}}\d s
        +\int_0^t\big\langle B(u(s)),y\big\rangle_{\ss{\!H^{1-\a}\!\times\!H^{\a-1}}}\d s,\\
        F_{t,y}(u)
        &:=
        \big\langle y,u(t)-u(0)\big\rangle
        +\int_0^t\big\langle B(u(s)),y\big\rangle_{\ss{\!H^{1-\a}\!\times\!H^{\a-1}}}\d s.
    \end{align*}
    Here, in the first term, $y$ is identified with an element of $H^{-\a}$.

    We show that the map $F^j_{t,y}$ is Borel on $\mathcal Z_T$:
    \begin{itemize}
        \item The map $\mathcal Z_T\ni u\mapsto \big\langle u(t)-u(0),y\big\rangle\in\R$ is Borel because $\mathcal Z_T\hookrightarrow C([0,T];H^\a_w)$ continuously,
        fixed-time evaluation is continuous on $C([0,T];H^\a_w)$, and the pairing with
        $y$ is continuous on $H^\a_w$.

        \item The map $\mathcal Z_T\ni u\mapsto \int_0^t\big\langle A^\a u(s),y\big\rangle_{\ss{\!H^{1-\a}\!\times\!H^{\a-1}}}\d s\in\R$ is Borel because $\mathcal Z_T\hookrightarrow L^2_w(0,T;H^{\a+1})$
        continuously, $A^\a:L^2_w(0,T;H^{\a+1})\to L^2_w(0,T;H^{1-\a})$ is continuous,
        and $v\mapsto \int_0^t\langle v(s),y\rangle_{\ss{\!H^{1-\a}\!\times\!H^{\a-1}}}\d s\in\R$ is continuous on $L^2_w(0,T;H^{1-\a})$.

        \item The map $\mathcal Z_T\ni u\mapsto \int_0^t\big\langle B(u(s)),y\big\rangle_{\ss{\!H^{1-\a}\!\times\!H^{\a-1}}}\d s\in\R$ is Borel.
        Indeed, it is continuous on $L^2(0,T;H^\a)$ with the strong topology by Lemma~\ref{LEM:estimates_B}.
        Since $L^2(0,T;H^\a)$ is a separable Hilbert space, the weak and strong Borel $\sigma$-algebras coincide. 
        Therefore the map is Borel on  $L^2_w(0,T;H^\a)$, and thus on $\mathcal Z_T$ because $\mathcal Z_T\hookrightarrow L^2_w(0,T;H^\a)$ continuously.
    \end{itemize}

    Therefore, by Lemma~\ref{LEM:nu_to_0} \itref{IT:law_Xj_equals_law_Xnj_on_ZT},
    \begin{equation}
    \label{EQ:law_FjtY_tilde_equals_original}
        \big(F^j_{t,y}(X^{{\ss T},j})\big)_\ast\P^{\ss T}
        =
        \big(F^j_{t,y}(\bar X^j)\big)_\ast\P
        \qquad\text{in }\mathscr P(\R).
    \end{equation}

    Since $\bar X^j$ solves the SHNS$_{\bar\nu_j,\a}$ equation, we have $F^j_{t,y}(\bar X^j) = \big\langle \sqrt{\bar\nu_j}\,W_t,y\big\rangle$, $\P\text{-a.s.}$
    Hence, by \eqref{EQ:law_FjtY_tilde_equals_original},
    \begin{equation}
    \label{EQ:L2_bound_FjtY}
        \E^{\ss T}\big|F^j_{t,y}(X^{{\ss T},j})\big|^2
        =
        \bar\nu_j\,\E\big|\langle W_t,y\rangle\big|^2
        \longrightarrow0,
        \qquad \text{as }j\to\infty,
    \end{equation}
    where $\E^{\ss T}$ denotes expectation with respect to $\P^{\ss T}$.
    In particular,
    \[
        F^j_{t,y}(X^{{\ss T},j})\longrightarrow0
        \qquad\text{in }\P^{\ss T}\text{-probability.}
    \]

    We now show that
    \begin{equation}
    \label{EQ:one_more}
        F^j_{t,y}(X^{{\ss T},j})\longrightarrow F_{t,y}(X^{\ss T})
        \qquad\text{in }\P^{\ss T}\text{-probability.}
    \end{equation}
    By Lemma~\ref{LEM:nu_to_0} \itref{IT:Xj_to_X_in_ZT}, we have  $X^{{\ss T},j}\longrightarrow X^{\ss T}$ in $\mathcal Z_T$, $\P^{\ss T}\text{-a.s.}$.
    Hence $\big\langle X^{{\ss T},j}_t-X^{{\ss T},j}_0,y\big\rangle
        \longrightarrow
        \big\langle X^{\ss T}_t-X^{\ss T}_0,y\big\rangle$, $\P^{\ss T}\text{-a.s.}$
    Moreover, since $X^{{\ss T},j}\to X^{\ss T}$ in $L^2_w(0,T;H^{\a+1})$, we have
    \[
        \int_0^t\big\langle A^\a X^{{\ss T},j}_s,y\big\rangle_{\ss{\!H^{1-\a}\!\times\!H^{\a-1}}}\d s
        \longrightarrow
        \int_0^t\big\langle A^\a X^{\ss T}_s,y\big\rangle_{\ss{\!H^{1-\a}\!\times\!H^{\a-1}}}\d s
        \qquad \P^{\ss T}\text{-a.s.}
    \]
    Therefore
    \[
        \bar\nu_j\int_0^t\big\langle A^\a X^{{\ss T},j}_s,y\big\rangle_{\ss{\!H^{1-\a}\!\times\!H^{\a-1}}}\d s
        \longrightarrow0
        \qquad\text{in }L^1(\Omega^{\ss T}),
    \]
    because
    \begin{align}
        \E^{\ss T}\left|
        \bar\nu_j\int_0^t\big\langle A^\a X^{{\ss T},j}_s,y\big\rangle_{\ss{\!H^{1-\a}\!\times\!H^{\a-1}}}\d s
        \right|
        &\le
        \bar\nu_j\,\|y\|_{H^{\a-1}}\,t^{1/2}
        \left(
            \E^{\ss T}\int_0^t\|X^{{\ss T},j}_s\|_{H^{\a+1}}^2\d s
        \right)^{1/2}\\
        &=
        \bar\nu_j\,\|y\|_{H^{\a-1}}\,t
        \left(
            \int_{H^{2\a}}\|x\|_{H^{\a+1}}^2\d \bar\mu_j(x)
        \right)^{1/2}.
    \end{align}
    Here, for the equality, we used Lemma~\ref{LEM:nu_to_0} \itref{IT:law_Xj_equals_law_Xnj_on_ZT} together with the fact that $\bar X^j$ is marginally stationary with invariant measure $\bar\nu_j\in\mathscr P(H^{2\a})$.
    The last term is uniformly bounded in $j$ by Theorem~\ref{TH:moment_estimates_mu_nu}.

    Finally, since $X^{{\ss T},j}\to X^{\ss T}$ in $L^2(0,T;H^\a_{loc})$, $\P^{\ss T}$-a.s., and $X^{{\ss T},j}\to X^{\ss T}$ in $L^2_w(0,T;H^{\a+1})$, $\P^{\ss T}$-a.s., the sequence $\{X^{{\ss T},j}\}_{j\in\N}$ is $\P^{\ss T}$-a.s.\ bounded in $L^2(0,T;H^{\a+1})$, hence also in $L^2(0,T;H^\a)$.
    Therefore, by Lemma~\ref{LEM:nonlinear_loc_convergence}~\itref{ITEM:nonlinear_loc_convergence_dual} with $\b=\a$,
    \[
        \int_0^t
        \big\langle
            B(X^{{\ss T},j}_s)-B(X^{\ss T}_s),y
        \big\rangle_{\ss{\!H^{1-\a}\!\times\!H^{\a-1}}}
        \d s
        \longrightarrow 0,
        \qquad \P^{\ss T}\text{-a.s.}
    \]

    Thus we showed \eqref{EQ:one_more}.
    Combined with \eqref{EQ:L2_bound_FjtY}, this yields $F_{t,y}(X^{\ss T})=0$, $\P^{\ss T}\text{-a.s.}$, namely
    \begin{equation}
    \label{EQ:euler_identity_fixed_t_y}
        \big\langle X^{\ss T}_t-X^{\ss T}_0,y\big\rangle
        +
        \int_0^t\big\langle B(X^{\ss T}_s),y\big\rangle_{\ss{\!H^{1-\a}\!\times\!H^{\a-1}}}\d s
        =0,
        \qquad \P^{\ss T}\text{-a.s.}
    \end{equation}

    Let now $D\subset H^{\a-1}$ be a countable dense subset and let $Q_T:=\mathbb Q\cap[0,T]$.
    Intersecting the corresponding full-measure events, we find $\Omega^{{\ss T}\ast}\in\F^{\ss T}$ with $\P^{\ss T}(\Omega^{{\ss T}\ast})=1$ such that \eqref{EQ:euler_identity_fixed_t_y} holds for every $t\in Q_T$ and every $y\in D$.

    Fix $\omega\in\Omega^{{\ss T}\ast}$.
    By Lemma~\ref{LEM:nu_to_0} \itref{IT:X_reg_limit},
    \[
        X^{\ss T}(\omega)\in L^\infty(0,T;H^1)\cap L^2(0,T;H^\s).
    \]
    Hence, by Lemma~\ref{LEM:estimates_B} with $\s=\a+1$,
    \[
        B(X^{\ss T}(\omega))\in L^1(0,T;H^\a).
    \]
    Therefore the map
    \[
        t\mapsto X^{\ss T}_0(\omega)-\int_0^t B(X^{\ss T}_s(\omega))\d s
    \]
    belongs to $C([0,T];H^\a)$.
    Since also $t\mapsto X^{\ss T}_t(\omega)$ is continuous in $H^\a_w$, every pairing with $y\in H^{\a-1}$ is continuous in $t$.
    By continuity in $t$ and density of $D$ in $H^{\a-1}$, \eqref{EQ:euler_identity_fixed_t_y} extends to all $t\in[0,T]$ and all $y\in H^{\a-1}$.
    Hence, for every $\omega\in\Omega^{{\ss T}\ast}$,
    \[
        X^{\ss T}_t(\omega)+\int_0^t B(X^{\ss T}_s(\omega))\d s=X^{\ss T}_0(\omega)
        \qquad\text{in }H^{1-\a},
        \qquad \forall\, t\in[0,T].
    \]
    
\textit{\textbf{Step $\mathbf{3}$.}}
    By Lemma~\ref{LEM:nu_to_0} \itref{IT:law_limit_time_Ha+1} with $t=0$, there exists an $H^\s$-valued random variable $\widehat X^{\ss T}_0$ on $(\Omega^{\ss T},\F^{\ss T},\P^{\ss T})$ such that $\widehat X^{\ss T}_0=X^{\ss T}_0$, $\P^{\ss T}$-a.s. in $U'$ and $(\widehat X^{\ss T}_0)_\ast\P^{\ss T}=\mu$ in $\mathscr P(H^\s)$.
    
    Define
    \[
        Z^{\ss T}_t:=\widehat X^{\ss T}_0-\int_0^t B(X^{\ss T}(s))\d s,
        \qquad t\in[0,T].
    \]
    Since $B(X^{\ss T})\in L^1(0,T;H^\a)$ $\P^{\ss T}$-a.s., we have $Z^{\ss T}\in C([0,T];H^\a)$ $\P^{\ss T}$-a.s.
    Moreover, from \textit{Step~2} and the identity $\widehat X^{\ss T}_0=X^{\ss T}_0$ in $U'$, we obtain
    \[
        Z^{\ss T}_t=X^{\ss T}_t
        \qquad \P^{\ss T}\text{-a.s. in }U',
        \qquad \forall\, t\in[0,T].
    \]

    Let
    \[
        Y^{\ss T}_t:=\Phi(t,\widehat X^{\ss T}_0),
        \qquad t\in[0,T],
    \]
    where $\Phi$ is the Euler flow from Theorem~\ref{TH:solution_euler}.
    Then $Y^{\ss T}\in C([0,T];H^\s)$, $\P^{\ss T}$-a.s.,
    and
    \[
        Y^{\ss T}(t)+\int_0^t B(Y^{\ss T}(s))\d s=\widehat X^{\ss T}_0
        \qquad\text{in }H^\a,
        \qquad \forall\, t\in[0,T].
    \]

    Since $Z^{\ss T}=X^{\ss T}$ in $U'$ and both are $H^\a$-valued, they coincide almost everywhere as $H^\a$-valued functions, hence $B(Z^{\ss T})=B(X^{\ss T})$ almost everywhere on $(0,T)$.
    Therefore
    \[
        Z^{\ss T}_t+\int_0^t B(Z^{\ss T}_s)\d s=\widehat X^{\ss T}_0
        \qquad\text{in }H^\a,
        \qquad \forall\, t\in[0,T].
    \]
    Subtracting the two integral identities and using  \eqref{EQ:estim_B_1} from  Lemma~\ref{LEM:estimates_B} with $\s=\a+1$, we obtain $\P^{\ss T}$-a.s., for all $t\in[0,T]$,
    \[
        \|Z^{\ss T}_t-Y^{\ss T}_t\|_{H^\a}
        \le
        C\int_0^t
        \big(\|Z^{\ss T}_s\|_{H^\s}+\|Y^{\ss T}_s\|_{H^\s}\big)\,
        \|Z^{\ss T}_s-Y^{\ss T}_s\|_{H^\a}\d s.
    \]
    Since $Z^{\ss T},Y^{\ss T}\in L^1(0,T;H^\s)$ $\P^{\ss T}$-a.s., Gr\"onwall's lemma yields $Z^{\ss T}_t=Y^{\ss T}_t$ in $H^\a$, for all $t\in[0,T]$, $\P^{\ss T}$-a.s.
    Hence
    \begin{equation}
    \label{EQ:XT_equals_flow}
        X^{\ss T}_t=\Phi(t,\widehat X^{\ss T}_0)
        \qquad\text{in }U',
        \qquad \forall\, t\in[0,T],
        \qquad \P^{\ss T}\text{-a.s.}
    \end{equation}
    
\textit{\textbf{Step $\mathbf{4}$.}}
    We prove that $\mu$ is invariant for the Euler Equation.

    Fix $t\ge0$ and choose $T>t$.
    Perform the construction of \textit{Steps~2, 3} on the interval $[0,T]$.
    By Lemma~\ref{LEM:nu_to_0} \itref{IT:law_limit_time_Ha+1} at time $t$, there exists an $H^\s$-valued random variable $\widehat X^{\ss T}_t$ such that $\widehat X^{\ss T}_t=X^{\ss T}_t$, $\P^{\ss T}$-a.s. in $U'$, and $(\widehat X^{\ss T}_t)_\ast\P^{\ss T}=\mu$ in $\mathscr P(H^\s)$.
    From \eqref{EQ:XT_equals_flow}, we infer 
    \[
        \widehat X^{\ss T}_t=\Phi(t,\widehat X^{\ss T}_0)
        \qquad \P^{\ss T}\text{-a.s. in }H^\s,
    \]
    because the embedding $H^\s\hookrightarrow U'$ is injective.
    Hence
    \[
        (\Phi_t)_\ast\mu
        =
        (\Phi(t,\widehat X^{\ss T}_0))_\ast\P^{\ss T}
        =
        (\widehat X^{\ss T}_t)_\ast\P^{\ss T}
        =
        \mu.
    \]
    Since $t\ge0$ was arbitrary, $\mu$ is an invariant measure for the Euler
    Equation~\eqref{EQ:euler}.
    
\textit{\textbf{Step $\mathbf{5}$.}}
    We now construct the stationary Euler process.

    Set
    \[
        \tilde\Omega:=H^\s,
        \qquad
        \tilde\F:=\mathscr B_{H^\s},
        \qquad
        \tilde\P:=\mu,
    \]
    and let $\xi:\tilde\Omega\to H^\s$ be the identity map.
    Define
    \[
        \tilde X_t(\omega):=\Phi(t,\xi(\omega)),
        \qquad t\ge0,\ \omega\in\tilde\Omega.
    \]
    Since $\Phi:\R_+\times H^\s\to H^\s$ is continuous by Theorem~\ref{TH:solution_euler}, the process $\tilde X$ is measurable.
    Let $\{\tilde\F_t\}_{t\ge0}$ be the augmentation of its natural filtration.
    Then $\tilde X$ is adapted, every path belongs to $C(\R_+;H^\s)$, and every path solves the Euler Equation~\eqref{EQ:euler}.

    Moreover,
    \[
        (\tilde X_t)_\ast\tilde\P=(\Phi_t)_\ast\mu=\mu,
        \qquad \forall\, t\ge0.
    \]
    Thus the process is marginally stationary.

\medskip
\textit{\textbf{Step $\mathbf{6}$.}}
    It remains to prove the integrability estimates.

    Fix $T>0$ and consider again the auxiliary objects constructed on $[0,T]$.
    Since $Y^{\ss T}_t=X^{\ss T}_t$ in $U'$ for all $t\in[0,T]$ $\P^{\ss T}$-a.s., and both are $H^1$-valued almost everywhere, they coincide almost everywhere as $H^1$-valued functions; similarly, since both are $H^\s$-valued almost everywhere, they coincide almost everywhere as $H^\s$-valued functions.
    Hence, by Lemma~\ref{LEM:nu_to_0} \itref{IT:X_reg_limit},
    \[
        Y^{\ss T}\in
        \bigcap_{p\ge2}L^p\big(\Omega^{\ss T};L^\infty(0,T;H^1)\big)
        \cap
        L^2\big(\Omega^{\ss T}\times(0,T);H^\s\big).
    \]

    On the other hand, $(\widehat X^{\ss T}_0)_\ast\P^{\ss T}=\mu=\xi_\ast\tilde\P$, and the map
    \[
        \Psi^{\ss T}:H^\s\to C([0,T];H^\s),
        \qquad
        \Psi^{\ss T}(x):[0,T]\ni t\mapsto \Phi(t,x)\in H^\s,
    \]
    is continuous.
    Therefore the $C([0,T];H^\s)$-valued random variables $Y^{\ss T}=\Psi^{\ss T}(\widehat X^{\ss T}_0)$ and $\tilde X|_{[0,T]}=\Psi^{\ss T}(\xi)$ have the same law.
    Hence
    \[
        \tilde X\in
        \bigcap_{p\ge2}L^p\big(\tilde\Omega;L^\infty(0,T;H^1)\big)
        \cap
        L^2\big(\tilde\Omega\times(0,T);H^\s\big).
    \]
    The case $1\le p<2$ follows from H\"older's inequality.
\end{proof}

\subsection{Moment estimates}

This last part is devoted to providing some moment estimates to the invariant measure  for the Euler Equation constructed above.

\begin{theorem}
    Assume that $\s>2$.
    There exists a finite constant $C_\s>0$ such that, if $0<\b<(2 C_\s)^{-1}$, and $\mu\in\mathscr P(H^\s)$ is the invariant measure for the Euler Equation~\eqref{EQ:euler} provided by Theorem~\ref{TH:nu_to_0}, then 
    \begin{align}
        &\int_{H^{\s}}\|x\|^2_{H^{\s}}\d \mu(x)\leq C_\s,\\
        &\int_{H^{\s}}\|x\|^{2n}_{H^{1}}\d \mu(x)\leq (2n-1)!!\, C_\s^n, \qquad \forall\, n\in\N,\\
        &\int_{H^{\s}} e^{\b\|x\|^2_{H^{\s}}}\d \mu(x)\leq 2e^{\frac{2\b C_\s}{1-2\b C_\s}}.
    \end{align}
\end{theorem}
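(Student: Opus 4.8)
The plan is to pass to the limit $\nu\to 0^+$ in the moment estimates for the viscous invariant measures from Theorem \ref{TH:moment_estimates_mu_nu}. Fix $\s>2$, set $\a:=\s-1$ (so that $H^{\a+1}=H^\s$ and $H^{2\a}\hookrightarrow H^\s$), and take the objects produced by Lemma \ref{LEM:nu_to_0} with this $\a$: the infinitesimal sequence $\{\nu_j\}_{j\in\N}$, the stationary processes $\tilde X^j$ with $(\tilde X^j_t)_\ast\tilde\P=\mu_j\in\mathscr P(H^{2\a})$, the stationary Euler process $\tilde X$ with $(\tilde X_t)_\ast\tilde\P=\mu\in\mathscr P(H^\s)$ (the invariant measure of \eqref{EQ:euler} from Theorem \ref{TH:nu_to_0}), and the $\tilde\P$-a.s.\ convergence $\tilde X^j\to\tilde X$ in $\mathcal Z_T$. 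Extending each $\mu_j$ trivially to $\mathscr P(H^\s)$, recall from the proof of Lemma \ref{LEM:nu_to_0} that, after relabelling, $\mu_j\longrightarrow\mu$ in $\mathscr P(H^\s_{bw})$. Writing $C_\s$ for the constant of Theorem \ref{TH:moment_estimates_mu_nu} with $\a=\s-1$, identities \eqref{EQ:moment_mu_nu_2_a+1}, \eqref{EQ:moment_mu_nu_2n_H1}, \eqref{EQ:moment_mu_nu_exp_Ha+1} give, uniformly in $j$, $\int_{H^\s}\|x\|^2_{H^\s}\d\mu_j=C_\s$, $\int_{H^\s}\|x\|^{2n}_{H^1}\d\mu_j\leq(2n-1)!!\,C_\s^n$, and $\int_{H^\s}e^{\b\|x\|^2_{H^\s}}\d\mu_j\leq 2e^{2\b C_\s/(1-2\b C_\s)}$ whenever $2\b C_\s<1$.

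First I would derive the upper bounds by sequential weak lower semicontinuity. Fixing an orthonormal basis $\{e_k\}_{k\in\N}$ of $H^\s$, the functions $g_N(x):=\sum_{k=1}^N\lan x,e_k\ran^2_{H^\s}$ are continuous on $H^\s_w$, hence on $H^\s_{bw}$ by Remark \ref{REM:topologies_inclusions}, and $g_N\uparrow\|\cdot\|^2_{H^\s}$ pointwise; the same representation holds for $\|\cdot\|^2_{H^1}$ thanks to the continuous embedding $H^\s\hookrightarrow H^1$, and $e^{\b\|\cdot\|^2_{H^\s}}=\sup_N e^{\b g_N}$. For any such increasing supremum $g=\sup_N h_N$ with $0\leq h_N\in C(H^\s_w)$, the truncations $h_N\wedge M$ belong to $C_b(H^\s_{bw})$, so $\int(h_N\wedge M)\,\d\mu=\lim_j\int(h_N\wedge M)\,\d\mu_j\leq\limsup_j\int g\,\d\mu_j$; letting $M\uparrow\infty$ and then $N\uparrow\infty$ by monotone convergence gives $\int g\,\d\mu\leq\limsup_j\int g\,\d\mu_j$. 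Applied to $g=\|\cdot\|^2_{H^\s}$, $g=\|\cdot\|^{2n}_{H^1}$ and $g=e^{\b\|\cdot\|^2_{H^\s}}$, together with the uniform bounds above, this proves the second and third estimates in full and the inequality $\int_{H^\s}\|x\|^2_{H^\s}\d\mu\leq C_\s$.

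It remains to establish the matching lower bound $\int_{H^\s}\|x\|^2_{H^\s}\d\mu\geq C_\s$, which I expect to be the main obstacle, since weak lower semicontinuity only yields the reverse inequality. By Lemma \ref{LEM:nu_to_0} and \eqref{EQ:moment_mu_nu_2_a+1}, $\tilde\E\|\tilde X^j_t\|^2_{H^\s}=\int_{H^\s}\|x\|^2_{H^\s}\d\mu_j=C_\s$ for all $t$ and $j$, hence $\tilde\E\int_0^T\|\tilde X^j_t\|^2_{H^\s}\d t=TC_\s$; since $\tilde X$ is stationary and $\tilde X\in L^2(\tilde\Omega\times[0,T];H^\s)$ by Lemma \ref{LEM:nu_to_0}, and $\tilde\E\int_0^T\|\tilde X_t\|^2_{H^\s}\d t=T\int_{H^\s}\|x\|^2_{H^\s}\d\mu$, it suffices to show $\tilde\E\int_0^T\|\tilde X_t\|^2_{H^\s}\d t\geq TC_\s$. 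The difficulty is that the $\tilde\P$-a.s.\ convergence $\tilde X^j\to\tilde X$ in $\mathcal Z_T$ furnishes only weak convergence in $L^2(0,T;H^\s)$ together with strong convergence merely in the local space $L^2(0,T;H^{\a}_{loc})$, so one has to exclude a loss of energy at spatial infinity in the inviscid limit. I would control this through a spatial localisation combined with the uniform exponential-moment bound \eqref{EQ:moment_mu_nu_exp_Ha+1}: the latter forces the mass of $\mu_j$ outside large $H^\s$-balls to be uniformly negligible, so the portion of the energy carried near spatial infinity is uniformly small, whereas the local portion is preserved by the strong local convergence; letting the localisation radius tend to infinity then recovers the full value $C_\s$ in the limit. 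Combining the two inequalities gives $\int_{H^\s}\|x\|^2_{H^\s}\d\mu=C_\s$, the exponential estimate being valid precisely when $2\b C_\s<1$, which completes the proof.
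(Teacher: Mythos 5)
Your upper-bound argument is sound and is essentially the paper's own route in a different dress: the paper passes from the $\tilde\P$-a.s.\ convergence $\tilde X^j_0\to\tilde X_0$ in $U'$ through lower semicontinuity of $x\mapsto\varphi_i(\|x\|_{E_i})$ on $U'$ and Fatou's lemma, whereas you pass through the weak convergence $\mu_j\to\mu$ in $\mathscr P(H^\s_{bw})$ and monotone truncation of the weakly lower semicontinuous integrands. Both deliver exactly $\int\varphi_i(\|x\|_{E_i})\d\mu\leq\liminf_j\int\varphi_i(\|x\|_{E_i})\d\mu_j$, and combined with Theorem \ref{TH:moment_estimates_mu_nu} this yields the second and third estimates and the inequality $\int\|x\|^2_{H^\s}\d\mu\leq C_\s$. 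You have correctly identified that this is all that semicontinuity can give, and that the stated \emph{equality} in the first line requires a matching lower bound.

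Your proposed proof of that lower bound does not work, and this is a genuine gap. The uniform exponential bound \eqref{EQ:moment_mu_nu_exp_Ha+1} controls the quantity $\mu_j\big(\{x:\|x\|_{H^\s}>R\}\big)$, i.e.\ the tails of the \emph{norm} as a real random variable; it says nothing about \emph{where in $\R^2$} the $H^\s$-energy of a typical $x$ sits. A vector field of $H^\s$-norm exactly $1$ may carry all of its energy in $\{|\xi|>n\}$ for arbitrarily large $n$, so no bound on moments of $\|\cdot\|_{H^\s}$ can make "the portion of the energy carried near spatial infinity uniformly small." Preventing escape of energy to spatial infinity would require a uniform spatial-tightness estimate (equi-integrability of $x\mapsto\int_{|\xi|>n}$ of the relevant density, uniformly in $j$), which is not available anywhere in the paper; this loss of compactness is precisely the structural difficulty of working on $\R^2$. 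Moreover, even after localising, the strong convergence you have is only in $L^2(0,T;H^{\a}_{loc})=L^2(0,T;H^{\s-1}_{loc})$, one derivative below $H^\s$, so the local portion of the $H^\s$-energy is \emph{not} "preserved by the strong local convergence" either — weak convergence in $L^2(0,T;H^{\s})$ can still lose $H^\s$-mass locally through oscillation at high frequencies. You should also be aware that the paper's own proof suffers from the same defect: its chain of (in)equalities terminates in "$\leq\liminf_j$" and the equality in the first display is never actually established there, so you cannot hope to recover a missing ingredient from the reference. As written, what is actually proved (by you and by the paper) is $\int_{H^\s}\|x\|^2_{H^\s}\d\mu\leq C_\s$ together with the two stated upper bounds.
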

\begin{proof} 
    Let us set $\a:=\s-1$, so that $\a>1$, and let $\iota:H^{2\a}\hookrightarrow H^{\a+1}=H^\s$, $\upsilon:H^\s\hookrightarrow U'$ denote the natural embeddings. 
    We also set $\tilde\iota:=\upsilon\circ\iota:H^{2\a}\hookrightarrow U'$.
    Let $C_\a>0$ be the constant from Theorem~\ref{TH:moment_estimates_mu_nu}, and define $C_\s:=C_\a$.
    By Proposition~\ref{PROP:approx_inv_measures}, there exists an infinitesimal strictly positive sequence $\{\nu_n\}_{n\in\N}$, invariant measures $\mu_n\in\mathscr P(H^{2\a})$ for SHNS$_{\nu_n,\a}$, and an invariant measure $\mu\in\mathscr P(H^\s)$ for the Euler Equation~\eqref{EQ:euler} such that
    \[
        \iota_\ast\mu_n\longrightarrow\mu,
        \qquad \textit{in }\mathscr P(H^\s_{bw}).
    \]
    By Theorem~\ref{TH:nu_to_0}, the measure $\mu$ is an invariant measure for the Euler Equation~\eqref{EQ:euler}.
    For every $n\in\N$,  let $(\Omega,\F,\{\F_t\}_{t\ge0},\P;W;X^n)$ be a marginally stationary solution such that $(X_t^n)_\ast\P=\mu_n$ for all $t\geq 0$.
    Fix $T>0$ and apply Lemma~\ref{LEM:nu_to_0} to the sequence $\{(\nu_n,\mu_n,X^n)\}_{n\in\N}$.
    We obtain a strictly increasing map $\N\ni j\mapsto n_j\in\N$, a probability space $(\Omega^{\ss T},\F^{\ss T},\P^{\ss T})$, random variables $X^{\ss T},X^{{\ss T},j}:\Omega^{\ss T}\to\mathcal Z_T$, for $j\in\N$, and,  for $t=0$, an $H^\s$-valued random variable $\widehat X^{\ss T}_0$ such that $\widehat X^{\ss T}_0=X^{\ss T}_0$, $\P^{\ss T}$-a.s. in $U'$, and  $(\widehat X^{\ss T}_0)_\ast\P^{\ss T}=\mu$ in $\mathscr P(H^\s)$.
    
    Let us now fix $m\in\N$ and $\b>0$.
    We define the Hilbert spaces
    \[
        E_1=E_3:=H^{\s}, \quad E_2:=H^{1},
    \]
    with the respective norms $\|\,\cdot\,\|^{}_{E_i}$, $i=1, 2, 3$, 
    and the functions $\varphi_i:\R_+\to\R_+$, $i=1, 2, 3$, by 
    \[
        \varphi_1(r):=r^2, \quad \varphi_2(r):=r^{2n}, \quad \varphi_3(r):=e^{\b r^2}, \qquad \forall\, r\geq 0.
    \]
    Fix $i\in\{1,2,3\}$.
    Since each $E_i$ is compactly embedded into $U'$, Example~2.12
    in \cite{Brzezniak+Serrano_2013_Optimal_relaxed_control_dissipative_stochastic_partial_differential_equations_Banach_spaces}
    implies that the functions $\eta_i:U'\to[0,+\infty]$ defined by
    \[
        \eta_i(x):=
        \begin{cases}
            \varphi_i\big(\|x\|_{E_i}\big), & x\in E_i,\\
            +\infty, & x\in U'\setminus E_i,
        \end{cases}
    \]
    are lower semicontinuous on $U'$. 
    see \cite[Section $1.3$]{brezis_2010_functional_analysis_Sobolev} and the references therein for the definition and the main properties.
    In particular, if $x\in U'$ and if $\{x_n\}_{n\in\N}$ is a sequence in $U'$ convergent to $x$ in $U'$, then $\eta_i(x)\leq \liminf_{n\to\infty}\eta_i(x_n)$.

    By Lemma~\ref{LEM:nu_to_0} \itref{IT:Xj_to_X_in_ZT}, we have $X^{{\ss T},j}\longrightarrow X^{\ss T}$ in $\mathcal Z_T$, $\P^{\ss T}$-a.s..
    Since $\mathcal Z_T\hookrightarrow C\big([0,T];U'\big)$, it follows that 
    \[
        X^{{\ss T},j}_0\longrightarrow X^{\ss T}_0, \qquad \textit{in }U', \ \P^{\ss T}-a.s.,\  \textit{as }j\to\infty.
    \]
    Therefore, by lower semicontinuity of $\eta_i$,
    \[
        \eta_i(\upsilon X^{\ss T}_0)\leq \liminf_{j\to\infty}\eta_i(\tilde\iota X^{{\ss T},j}_0), \qquad \tilde\P-a.s.
    \]
    Let us take the expectation with respect to $\P^{\ss T}$, that we denote by $\E^{\ss T}$, to both members of this inequality and use Fatou's Lemma. 
    We obtain
    \begin{equation}
    \label{EQ:fatou_eta_euler_mom}
        \E^{\ss T}\big[\eta_i(\upsilon\widehat X^{\ss T}_0)\big]
        \le
        \liminf_{j\to\infty}
        \E^{\ss T}\big[\eta_i(\tilde\iota X_0^{{\ss T},j})\big].
    \end{equation}

    We now identify the laws of the random variables appearing above.
    By Lemma~\ref{LEM:nu_to_0} \itref{IT:law_Xj_equals_law_Xnj_on_ZT} and the continuity of the evaluation map $e_0:\mathcal Z_T\to U'$, $e_0(u):=u(0)$, we have
    \[
        (\tilde\iota X_0^{{\ss T},j})_\ast\P^{\ss T}
        =
        (e_0)_\ast (X^{{\ss T},j})_\ast\P^{\ss T}
        =
        (e_0)_\ast (X^{n_j})_\ast\P
        =
        \tilde\iota_\ast\mu_{n_j},
        \qquad \forall\, j\in\N.
    \]
    Moreover, by Lemma~\ref{LEM:nu_to_0} \itref{IT:law_limit_time_Ha+1}, $(\widehat X^{\ss T}_0)_\ast\P^{\ss T}=\mu$ in $\mathscr P(H^\s)$, hence $(\upsilon\widehat X^{\ss T}_0)_\ast\P^{\ss T}=\upsilon_\ast\mu$ in $\mathscr P(U')$.

    Since $\eta_i\circ\upsilon(x)=\varphi_i(\|x\|_{E_i})$ for every $x\in H^\s$,
    and $\eta_i\circ\tilde\iota(x)=\varphi_i(\|x\|_{E_i})$ for every
    $x\in H^{2\a}$, from \eqref{EQ:fatou_eta_euler_mom} we conclude that
    \begin{equation}
    \label{EQ:master_moment_bound_euler}
        \int_{H^\s}\varphi_i\big(\|x\|_{E_i}\big)\d\mu(x)
        \le
        \liminf_{j\to\infty}
        \int_{H^{2\a}}\varphi_i\big(\|x\|_{E_i}\big)\d\mu_{n_j}(x).
    \end{equation}

    Finally apply Theorem~\ref{TH:moment_estimates_mu_nu} to conclude.
\end{proof}


\begin{appendices}

\section{Useful topological results}
\label{APP:proofs}

\begin{remark}
    If a function $f:\mathcal X\to \mathcal Y$ between topological spaces is continuous, then it is sequentially continuous, while the converse is generally false, \textit{i.e.} $C(\mathcal X;\mathcal Y)\subsetneq SC(\mathcal X;\mathcal Y)$, see \cite[Example $1$, Chapter $3$]{Arkhangelskii+Fedorchuk_1990_General_Topology_I}.

    A topological space $\mathcal X$ is said to be sequential if sequentially closed subsets are topologically closed.
    Therefore, under the additional assumption of $\mathcal X$ being sequential, a function $f:\mathcal X\to Y$ is sequentially continuous if and only if it is continuous, see \cite[Proposition $3$, Chapter $3$]{Arkhangelskii+Fedorchuk_1990_General_Topology_I}.
    As a matter of fact, this universal property characterises sequential spaces: a topological space $\mathcal X$ is sequential if and only if for any topological space $\mathcal Y$, we have $C(\mathcal X;\mathcal Y)=SC(\mathcal X;\mathcal Y)$.
    Let us recall, see, \textit{e.g.}, \cite[{Chapters $1$, $2$}]{Arkhangelskii+Fedorchuk_1990_General_Topology_I}, that metrizable spaces are sequential spaces.
\end{remark}

\begin{remark}
\label{REM:weak_topology_not_sequential}
    If $\mathcal X$ is an infinite-dimensional Banach space, then the topological space $\mathcal X_w$ is not generally sequential, see \cite[Theorem $1.5$]{Gabriyelyan+Kakol+Plebanek_2016_Ascoli_property}, in particular, it is not generally metrizable.
\end{remark}

\begin{lemma}
\label{LEM:topologies_inclusions}
    Assuming $\mathcal X$ to be a Banach space, we have:
    \begin{equation}
    \label{EQ:inclusions_topologies}
        \tau_{\mathcal X}^{w}\subset \tau_{\mathcal X}^{bw}\subset \tau_{\mathcal X}^s.
    \end{equation}
    In particular, for any topological space $\mathcal Y$, we have: $C(\mathcal X_{w};\mathcal Y)\subset C(\mathcal X_{bw};\mathcal Y)\subset C(\mathcal X;\mathcal Y)$. 
\end{lemma}
\begin{proof}
    We show the first inclusion in equation~\eqref{EQ:inclusions_topologies}. 
    Fix $O\in \tau^{w}_{\mathcal X}$, and a closed ball $\bar B^\mathcal X$ in $\mathcal X$.
    $\bar B^\mathcal X$ is closed in $\mathcal X_w$ by \cite[Theorem $3.7$]{brezis_2010_functional_analysis_Sobolev}. 
    Moreover, $O^c$ is closed in $\mathcal X_w$ because $O\in \tau^w_{\mathcal X}$.
    Therefore, $O^c\cap \bar B^\mathcal X$ is the intersection of closed sets in $\mathcal X_w$, hence it is closed in $\mathcal X_w$.
    By arbitrariness of $\bar B^\mathcal X$ and definition of bounded weak topology, this implies that $O^c$ is closed in $\mathcal X_{bw}$, hence $O\in \tau_{\mathcal X}^{bw}$.

    We now prove the second inclusion in \eqref{EQ:inclusions_topologies}.
    Fix $O\in \tau^{bw}_{\mathcal X}$, then we will show that $O^c$ is sequentially closed in $\mathcal X$, which implies that $O\in \tau^s_{\mathcal X}$ because $\mathcal X$ is sequential.
    Fix a sequence $\{x_n\}_{n\in\N}$ of points in $O^c$ convergent in $\mathcal X$ to $x\in\mathcal X$.
    Since the strong topology is larger than the weak topology, the sequence $\{x_n\}_{n\in\N}$ converges to $x$ in $\mathcal X_w$.
    Moreover, assuming $\|\,\cdot\,\|$ to be the norm on $\mathcal X$, and letting $R:=\sup_{n\in\N}\|x_n\|$, we consider the strongly closed ball $\bar B^{\mathcal X}_R:=\{y\in\mathcal X \ : \ \|y\|\leq R\}$.
    By direct inspection, $x_n\in O^c\cap \bar B^{\mathcal X}_R$ for all $n\in\N$ and, by definition of bounded weak topology, $O^c\cap \bar B^{\mathcal X}_R$ is closed in $\mathcal X_w$.
    Therefore, $x\in \big(O^c\cap \bar B^{\mathcal X}_R\big)\subset O^c$.
\end{proof}

\begin{lemma}
\label{LEM:weak_compacts_inclusions}
    If $\mathcal X$ is a Banach space, then compact sets in $\mathcal X_w$ are compact in $\mathcal X_{bw}$.
\end{lemma}
\begin{proof}
    Fix a compact set $K$ in $\mathcal X_w$ and a collection $\mathcal C$ of open sets in $\mathcal X_{bw}$ that cover $K$.
    $K$ is bounded in $\mathcal X$ by \cite[Corollary $2.4$ and Exercise $3.1$]{brezis_2010_functional_analysis_Sobolev}, hence there exists a strongly closed ball $\bar B^\mathcal X\subset \mathcal X$ such that $K\subset \bar B^\mathcal X$.
    For any $O\in\mathcal C$, we know that $O^c\cap \bar B^\mathcal X$ is closed in $\mathcal X_w$ by definition of bounded weak topology, hence $\big(O^c\cap \bar B^\mathcal X\big)^c=O\cup \big(\bar B^\mathcal X\big)^c$ is open in $\mathcal X_w$.
    Moreover, the collection $\big\{O\cup \big(\bar B^\mathcal X\big)^c\big\}_{O\in\mathcal C}$ covers $K$ because $O\subseteq O\cup \big(\bar B^\mathcal X\big)^c$ for any $O\in\mathcal C$ and $K$ is covered by $\mathcal C$.
    Therefore, by definition of compact set in $\mathcal X_w$, we can extract $O_1,\dots,O_N\in\mathcal C$ such that $\big\{O_n\cup \big(\bar B^\mathcal X\big)^c\big\}_{n=1}^N$ still covers $K$.
    However, $K\subset \bar B^\mathcal X$, hence 
    \[
        K=\bar B^\mathcal X\cap K
        \subseteq
        \bar B^\mathcal X\cap \bigcup_{n=1}^N\Big(O_n\cup (\bar B^\mathcal X)^c\Big)
        =
        \bigcup_{n=1}^N\big(\bar B^\mathcal X\cap O_n\big)
        \subseteq
        \bigcup_{n=1}^N O_n,
    \]
    which proves that $K$ is compact in $\mathcal X_{bw}$.
\end{proof}

\begin{lemma}
\label{LEM:SCw=Cbw}
    If $\mathcal H$ is a separable Hilbert space, then $SC(\mathcal H_w;\mathcal Y)=C(\mathcal H_{bw};\mathcal Y)$ for any topological space $\mathcal Y$.
\end{lemma}
\begin{proof}
    Let $f\in C(\mathcal H_{bw};\mathcal Y)$ and fix a convergent sequence $\{x_n\}_{n\in\N}$ in $\mathcal H_w$ to some limit $x\in \mathcal H$.
    Then, denoting $\|\,\cdot\,\|$ the norm in $\mathcal H$, there exists $k\in\N$ such that the strongly closed ball $\bar B:=\{y\in\mathcal H \ : \ \|y\|\leq k\}$ contains both $x$ and $x_n$ for all $n\in\N$.
    If $C$ is a closed set in $\mathcal Y$, then its inverse image $f^{-1}(C)$  is closed in $\mathcal H_{bw}$, hence $f^{-1}(C)\cap \bar B$ is weakly closed by definition of bounded-weak topology. 
    However $f^{-1}(C)\cap \bar B$ is the inverse image of $C$ via the restriction $f\big|^{}_{\bar B}$ of $f$ to $\bar B$.
    Thus we proved that $f\big|^{}_{\bar B}\in C(\bar B_w;\mathcal Y)$.
    In particular, $f(x_n)=f\big|^{}_{\bar B}(x_n)$ tends to $f\big|^{}_{\bar B}(x)=f(x)$ in $\mathcal Y$ as $n\to\infty$. 
    Therefore, $f\in SC(\mathcal H_w;\mathcal Y)$.

    On the other hand, fix $f\in SC(\mathcal H_w;\mathcal Y)$, consider a strongly closed ball $\bar B\subset \mathcal H$ and the restriction $f\big|^{}_{\bar B}$. 
    If $\{x_n\, : \, n\in\N\}\subset \bar B$ is a weakly convergent sequence, then its limit belongs to $\bar B$ by \cite[Theorem $3.7$]{brezis_2010_functional_analysis_Sobolev}, thus $f\big|^{}_{\bar B}\in SC(\bar B_w;\mathcal Y)$.
    However, $\bar B_w$ is metrizable by the Banach-Alaoglu Theorem, hence sequential.
    Therefore, $f\big|^{}_{\bar B}\in C(\bar B_w;\mathcal Y)$.
    This implies that, assuming $C$ to be a closed set in $\mathcal Y$, the inverse image $\big(f\big|^{}_{\bar B}\big)^{-1}(C)=f^{-1}(C)\cap \bar B$ is weakly closed, thus, by arbitrariness of $\bar B$,  $f^{-1}(C)$ is bounded-weakly closed.
    Being $C$ an arbitrary closed set in $\mathcal Y$, we infer that $f\in C(\mathcal H_{bw};\mathcal Y)$.
\end{proof}

\begin{lemma}
\label{LEM:weak_borel}
    If $\mathcal X$ is a separable Banach space, then the Borel $\s$-algebras generated by the strong, bounded weak, and weak topologies coincide.
    \[
        \mathscr B_{\mathcal X_w}=\mathscr B_{\mathcal X_{bw}}=\mathscr B_{\mathcal X}.
    \]    
    In particular, assuming $\mathcal H$ to be a separable Hilbert space, if a function $f:\mathcal H_w\to \R$ is sequentially continuous, then it is $\mathscr B_\mathcal H$-measurable. 
\end{lemma}
\begin{proof}
    For the proof of the first statement, we refer to \cite[Theorem $7.19$]{Zizler_2003_Nonseparable_Banch_spaces} or \cite[Corollary $2.4$]{Edgar_1979_Measurability_Banach_space_II} for the proof of this statement. 
    See also \cite[Introduction]{Maslowski+Seidler_2001_Strong_Feller_solutions}.

    For the second statement, it is sufficient to notice that if $f\in SC(\mathcal H_w)$, then $f\in C(\mathcal H_{bw})$ by Lemma~\ref{LEM:SCw=Cbw}, then  $f$ is $\mathscr B_{\mathcal H_{bw}}$-measurable.
    However $\mathscr B_{\mathcal H_{bw}}=\mathscr B_\mathcal H$.
\end{proof}

\begin{lemma}
\label{LEM:H_w_embedded_U}
    Assume that $\mathcal H$ is a separable Hilbert space and let $\mathcal Y$ be a Banach space.
    If $\mathcal H\doublehookrightarrow \mathcal Y$, then $\mathcal H_{bw}\hookrightarrow \mathcal Y$.    
\end{lemma}
\begin{proof}
    If indeed $\iota:\mathcal H\to \mathcal Y$ denotes the natural embedding, the assertion is equivalent to proving that $\iota\in C(\mathcal H_{bw};\mathcal Y)$, which is again equivalent to showing that $\iota\in SC(\mathcal H_{w};\mathcal Y)$ by Lemma~\ref{LEM:SCw=Cbw}.
    Therefore, let us fix a weakly convergent sequence $\{x_n\}_{n\in\N}$ in $\mathcal H_w$, then the sequence is convergent in $\mathcal Y$ because of the compact embedding $\mathcal H\doublehookrightarrow \mathcal Y$, and the assertion follows.
\end{proof}

\begin{lemma}
\label{LEM:Borel_X_cap_Y}
    If $\mathcal X,\mathcal Y$ are Polish spaces (\textit{i.e.}, separable completely metrizable topological spaces) such that $\mathcal X\hookrightarrow \mathcal Y$, then 
    \begin{equation}
    \label{EQ:Borel:X:cap_Y}
        \mathscr B_{\mathcal X}=\mathscr B_{\mathcal Y}\cap \mathcal X,
    \end{equation}
    where $\mathscr B_{\mathcal Y}\cap \mathcal X:=\{U\cap \mathcal X \ : \ U\in\mathscr B_{\mathcal Y}\}$.
    In particular, $\mathscr B_{\mathcal X}\subset \mathscr B_{\mathcal Y}$.
\end{lemma}
\begin{proof}
    If we denote by $\iota:\mathcal X\to \mathcal Y$ the natural embedding,  Kuratowski's Theorem \cite[Theorem $1$, Section V, Chapter $39$]{Kuratowski_1966_topology_1} implies that $\iota(\mathcal X)\in\mathscr B_{\mathcal Y}$, see also \cite[Theorem $15.1$]{Kechris_1995_classical_descriptive_set_theory}. 
    Then $\mathcal X\in\mathscr B_\mathcal Y$ because $\iota(\mathcal X)=\mathcal X$ by direct inspection.
    Then \cite[Theorem $4.3.3$]{Lojasiewicz_1988_an_introduction_to_the_theory_of_real_functions} yields $\mathscr B_{\mathcal X}=\mathscr B_{\mathcal Y}\cap \mathcal X$.

    Let now $E\in\mathscr B_{\mathcal X}$, then, by \eqref{EQ:Borel:X:cap_Y}, there exists $F\in\mathscr B_{\mathcal Y}$ such that $E=F\cap \mathcal X$. 
    Since $\mathcal X\in\mathscr B_{\mathcal Y}$ as discussed, we infer that $E\in\mathscr B_{\mathcal Y}$.
    Hence $\mathscr B_{\mathcal X}\subset \mathscr B_{\mathcal Y}$.
\end{proof}

\begin{remark}
    The function $\mathscr F[f]:\R^d\ni\xi\mapsto\mathscr F[f](\xi)\in\mathcal H$ is bounded and continuous, see Notation \ref{NOT:Fourier_trasnform}.
    By the Plancherel Theorem, the linear operator $L^1(\R^d;\mathcal H)\cap L^2(\R^d;\mathcal H)\ni f\mapsto \mathscr F[f]\in L^2(\R^d;\mathcal H)$    is a linear isometry with respect to the $L^2(\R^d;\mathcal H)$-norm. 
    By a density argument, this operator admits a unique extension to a unitary operator \[
        \mathscr F:L^2(\R^d;\mathcal H)\to L^2(\R^d;\mathcal H).
    \]
    In particular, for any $f,g\in L^2(\R^d;\mathcal H)$ we have:
    \begin{gather}
        \|f\|_{L^2(\R^d;\mathcal H)}=\big\|\mathscr F[f]\big\|_{L^2(\R^d;\mathcal H)}
    \label{EQ:unitary_fourier_1},\\
        \int_{\R^d}\big\lan f(x),g(x)\big\ran^{}_{\mathcal H}\d x =\int_{\R^d}\big\lan \mathscr F[f](\xi),\mathscr F[g](\xi)\big\ran^{}_{\mathcal H}\d \xi
    \label{EQ:unitary_fourier_2}.
    \end{gather}
    In addition, $\mathscr F$ is a continuous endomorphism when restricted to the Schwartz space $\mathcal S(\R^d;\mathcal H)$, endowed with its canonical LF topology. 
    Therefore, its unique transpose is well-defined and it is a continuous endomorphism, still denoted by $\mathscr F$, acting on the space of tempered distributions $\mathcal S'(\R^d;\mathcal H)$, where $\mathcal H$ is identified with its topological dual by the usual Riesz identification. 
    Moreover, by denoting ${\lan}\,\cdot\,,\,\cdot\,\ran^{}_{\ss{\!\mathcal H'\!\times\!\mathcal H}}$ the duality product, we have: 
    \begin{equation}
    \label{EQ:properties_Fourier_duality}
    \mathscr F\Big[{\big\lan}x,f\big\ran^{}_{\ss{\!\mathcal H'\!\times\!\mathcal H}}\Big]=
        {\big\lan}x,\mathscr F[f]\big\ran^{}_{\ss{\!\mathcal H'\!\times\!\mathcal H}}, \qquad \forall\, f\in L^2(\R^d;\mathcal H), \ \forall\, x\in \mathcal H'.
    \end{equation} 
\end{remark}

\section{Proofs and auxiliary results}
\label{APP:aux}

\begin{proof}[Proof of Lemma~\ref{LEM:stokes_operator_semigroup}]
    Set $m_a(\xi):=(1+|\xi|^2)^a$ for $\xi\in\R^2$ and $a\in\R$.
    We divide the proof into five steps.
    
\textit{\textbf{Step }$\mathbf{1.}$ Well-posedness, boundedness, bijectivity, and identities
    \eqref{EQ:LEM:stokes_operator_semigroup:graph_norm_Aa}.}
    Let $u\in H^{2\a+s}$.
    By definition of $A^\a$ and of the Sobolev norm,
    \[
        \|A^\a u\|_{H^s(\R^2;\R^2)}^2
        =\int_{\R^2}m_s(\xi) \big|m_\a(\xi)\mathscr F[u](\xi)\big|^2\d\xi
        = \int_{\R^2}m_{s+2\a}(\xi)|\mathscr F[u](\xi)|^2\d\xi
        =\|u\|_{H^{2\a+s}}^2.
    \]
    Hence $A^\a u \in H^s(\R^2;\R^2)$.
    Moreover, the divergence-free costraint is preserved because the multiplier $m_\a$ is scalar:
    \[
        \div(A^\a u)
        =i\mathscr F^{-1}\Big[x\cdot \mathscr F\big[A^\a u\big](x)\Big]
        =i\mathscr F^{-1}\Big[m_\a(x)x\cdot\mathscr F[u](x)\Big],\qquad \text{in }\mathcal S'(\R^2;\R^2),
    \]
    where we used Definitions~\ref{DEF:divergencefree_Sobolev_spaces},  \ref{DEF:stokes_operator}.
    The right-hand side vanishes because the condition $u\in H^{2\a+s}$ implies $\div u=0$, which entails $x\cdot\mathscr F[u](x)=0$ in the distributional sense in $\mathcal S'(\R^2;\R^2)$.
    Consequently, $A^\a:H^{2\a+s}\to H^s$ is well-defined and bounded,
    and the first identity in
    \eqref{EQ:LEM:stokes_operator_semigroup:graph_norm_Aa} holds.
    
    Now let $u,v\in H^{2\a+s}$. 
    Then, denoting the $2$-dimensional Lebesgue measure on $\R^2$ by $\mathscr L^2$:
    \[
    \begin{aligned}
        \lan A^\a u,A^\a v\ran_{H^s}
        = \int_{\R^2}m_s m_\a^2\, \mathscr F[u]\cdot \overline{\mathscr F[v]}\d\mathscr L^2
        = \int_{\R^2}m_{s+2\a} \mathscr F[u]\cdot \overline{\mathscr F[v]}\d\mathscr L^2
        = \lan u,v\ran_{H^{2\a+s}},
    \end{aligned}
    \]
    which proves the second identity in \eqref{EQ:LEM:stokes_operator_semigroup:graph_norm_Aa}.
    
    To prove bijectivity, let $y\in H^s$ and define $u:=\mathscr F^{-1}\big[m_\a^{-1}\mathscr F[y]\big]$.
    Since $m_\a^{-1}=m_{-\a}$, we have
    \[
        \|u\|_{H^{2\a+s}}^2
        = \int_{\R^2}m_{s+2\a}(\xi) \big|m_\a^{-1}(\xi)\mathscr F[y](\xi)\big|^2\d\xi
        = \|y\|_{H^s}^2<+\infty.
    \]
    Hence $u\in H^{2\a+s}$ and $A^\a u= \mathscr F^{-1}\big[m_\a\,m_\a^{-1}\mathscr F[y]\big] = y$.
    Therefore $A^\a$ is surjective. 
    Injectivity follows immediately from the first identity
    in \eqref{EQ:LEM:stokes_operator_semigroup:graph_norm_Aa}. Thus $A^\a$ is bijective.
    
    If $\a=1$, then for $u\in H^{2+s}$, $\mathscr F[Au](\xi) = (1+|\xi|^2)\mathscr F[u](\xi) = \mathscr F[u](\xi)-\mathscr F[\Delta u](\xi)$, because $\mathscr F[-\Delta u](\xi)=|\xi|^2\mathscr F[u](\xi)$. 
    Hence \eqref{EQ:LEM:stokes_operator_semigroup:A_Delta}.
    
\textit{\textbf{Step }$\mathbf{2.}$ Negativity and self-adjointness.}
    For $u,v\in H^{2\a+s}$, we have
    \[
    \begin{aligned}
        \lan  A^\a u,v\ran_{H^s}
        = \int_{\R^2}m_s m_\a\, \mathscr F[u]\cdot \overline{\mathscr F[v]}\d\mathscr L^2
        = \int_{\R^2}m_s \mathscr F[u]\cdot \overline{m_\a\mathscr F[v]}\d\mathscr L^2
        = \lan u, A^\a v\ran_{H^s}.
    \end{aligned}
    \]
    Thus $-\nu A^\a$, viewed as an unbounded operator on $H^s$ with domain $D_{H^s}(A^\a)$, is symmetric.
    For every $u\in H^{2\a+s}$,
    \[
        \lan  A^\a u,u\ran_{H^s}
        =  \int_{\R^2}m_s m_\a\big|\mathscr F[u]\big|^2\d\mathscr L^2
        \geq \int_{\R^2}m_s\big|\mathscr F[u]\big|^2\d\mathscr L^2
        = \|u\|_{H^s}^2,
    \]
    because $m_\a\ge1$. 
    Hence $A^\a$ is positive definite and $-\nu A^\a$ is negative definite. 
    
    We now prove self-adjointness. 
    Let $\lambda\in\C\setminus(-\infty,\nu]$. 
    Since $\operatorname{dist}\big(\lambda,(-\infty,\nu]\big)>0$, the functions $(-\nu m_\a-\lambda)^{-1}$ and $m_\a(-\nu m_\a-\lambda)^{-1}$ are bounded on $\R^2$. 
    Therefore, for every $y\in H^s$, the element 
    \[
        R_\lambda y
        :=
        \mathscr F^{-1}\Big[(-\nu m_\a-\lambda)^{-1}\mathscr F[y]\Big]
    \]
    belongs to $H^{2\a+s}$ and satisfies $(-\nu A^\a-\lambda \id_{H^{2s+\a}})R_\lambda y=y$, $R_\lambda (-\nu A^\a-\lambda \id_{H^{2s+\a}})u=u$ for all $u\in H^{2\a+s}$.
    Thus $\lambda$ is in the resolvent set of $-\nu A^\a$. 
    Since $-\nu A^\a$ is symmetric and has nonempty resolvent outside the real axis, it is self-adjoint on $H^s$.
    
\textit{\textbf{Step }$\mathbf{3.}$ Computation of the spectrum.}
    We already know from the previous step that the spectrum of $-\nu A^\a$ is contained in $(-\infty,-\nu]$.
    We now prove that it is exactly $(-\infty,-\nu]$, by exhibiting, for any $\lambda\in(-\infty,-\nu]$, a sequence $\{u_n\}_{n\in\N}$ in $H^{s+2\a}$ such that $\|u_n\|_{H^s}=1$ for all $n\in\N$ and $(-\nu A^\a-\lambda\id_{H^{s+2\a}})u_n\longrightarrow 0$ in $H^s$.
    This shows that $-\nu A^\a-\lambda \id_{H^{s+2\alpha}}$ cannot be invertible, because otherwise $u_n=(-\nu A^\a-\lambda \id_{H^{s+2\alpha}})^{-1}(-\nu A^\a-\lambda \id_{H^{s+2\alpha}})u_n$ would imply $u_n\to 0$ in $H^s$, contradicting $\|u_n\|_{H^s}=1$.
    
    If $\lambda<-\nu$, and $n\in\N$,
    let $\psi_n\in C_c^\infty(\R^2)$ have nonempty support satisfying 
    \[
        \supp\psi_n\subset \Big\{\xi\in\R^2:\ \big|-\nu m_\a(\xi)-\lambda\big|<1/n\Big\}.
    \]
    Define $\Phi_n(\xi):=(-\xi_2,\xi_1)\,\psi_n(\xi)$ so that $\xi\cdot \Phi_n(\xi)=0$ pointwise, and set
    \[
        \widehat{u_n}
        :=
        \frac{m_{-s/2}\Phi_n}
        {\|\Phi_n\|_{L^2(\R^2;\R^2)}}.
    \]
    Then, letting $u_n:=\mathscr F^{-1}[\widehat{u_n}]$, we have $\div u_n=0$, $u_n\in H^{s+2\a}$, and $\|u_n\|_{H^s}=1$.
    Moreover,
    \[
    \begin{aligned}
        \|(-\nu A^\a-\lambda \id_{H^{s+2\a}})u_n\|_{H^s}^2
        = \int_{\R^2}m_s \big|-\nu m_\a-\lambda\big|^2 |\widehat{u_n}|^2\d\mathscr L^2
        \le \frac{1}{n^2}.
    \end{aligned}
    \]
    Hence $(-\nu A^\a-\lambda I)u_n\longrightarrow0$ in $H^s$.
    
    If $\lambda=-\nu$, and $n\in\N$, let $\psi_n\in C_c^\infty(\R^2)$  have nonempty support contained in $\big\{\xi\in\R^2:\ |\xi|<1/n\big\}$ and define $\Phi_n$ and $u_n$ exactly as above. 
    Since $|-\nu m_\a(\xi)+\nu|=\nu\big((1+|\xi|^2)^\a-1\big)\longrightarrow0$ as $\xi\to0$, the same computation gives $\|(-\nu A^\a+\nu \id_{H^{s+2\a}})u_n\|_{H^s}\longrightarrow0$.
    
\textit{\textbf{Step }$\mathbf{4.}$ Generation of the semigroup and analyticity.}
    Since $\mathcal S(\R^2;\R^2)\cap H^s$ is contained in $H^{2\a+s}$ and is dense in $H^s$, the domain $D_{H^s}(A^\a)=H^{2\a+s}$ is dense in $H^s$.
    For $t\ge0$ and $x\in H^s$, define
    \[
        T(t)x
        :=
        \mathscr F^{-1}\big[e^{-\nu t m_\a}\mathscr F[x]\big].
    \]
    Since $0<e^{-\nu t m_\a(\xi)}\le1$, we have
    \[
    \begin{aligned}
        \|T(t)x\|_{H^s}^2
        = \int_{\R^2}m_s e^{-2\nu t m_\a} \big|\mathscr F[x]\big|^2\mathscr L^2 
        \leq  \|x\|_{H^s}^2.
    \end{aligned}
    \]
    Thus each $T(t)$ is a contraction on $H^s$. Since the multiplier is scalar, $T(t)$
    preserves the divergence-free constraint.
    The semigroup property follows from $ e^{-\nu (t+r)m_\a}=e^{-\nu t m_\a}e^{-\nu r m_\a}$, hence $T(t+r)=T(t)T(r)$ and $T(0)=\id_{H^s}$.
    
    We now prove strong continuity at $0$. 
    Let $x\in H^s$, then, by the Dominated Convergence Theorem:
    \[
        \|T(t)x-x\|_{H^s}^2
        = \int_{\R^2}m_s(\xi) \big |e^{-\nu t m_\a(\xi)}-1\big |^2 \big |\mathscr F[x](\xi)\big |^2\d\xi\longrightarrow 0,
    \]
    thanks to $e^{-\nu t m_\a}\longrightarrow1$ pointwise as $t\to 0^+$, and $|e^{-\nu t m_\a(\xi)}-1|^2\leq 4$.
    
    We now identify its generator. 
    Let $x\in H^{2\a+s}$, then
    \[
    \begin{aligned}
        \left\| \frac{T(t)x-x}{t}+\nu A^\a x \right\|_{H^s}^2
        = \int_{\R^2}m_s(\xi) \left| \frac{e^{-\nu t m_\a(\xi)}-1}{t}+\nu m_\a(\xi) \right|^2 |\mathscr F[x](\xi)|^2\d\xi.
    \end{aligned}
    \]
    For every $\xi\in\R^2$, the integrand converges to $0$ as $t\to 0^+$.
    In addition, by the mean value theorem, we have $|e^{-\nu t m_\a(\xi)}-1|\leq t\nu m_\a(\xi)$.
    Thus the integrand is controlled by $4\nu^2 m_sm_\a^2\big|\mathscr F[x]\big|^2$, which is integrable because $x\in H^{s+2\a}$.
    Then the Dominated Convergence Theorem yields
    \[
        \frac{T(t)x-x}{t}\longrightarrow -\nu A^\a x
        \qquad\text{in }H^s.
    \]
    Thus the generator of $\{T(t)\}_{t\ge0}$ is $-\nu A^\a$, and therefore
    \[
        T(t)=e^{-\nu tA^\a},
        \qquad t\ge0.
    \]
    
    Finally, we prove analyticity. 
    Let $\Sigma_{\pi/2}:=\{z\in\C:\ \Re z>0\}$.
    For $z\in\Sigma_{\pi/2}$ and $x\in H^{s}$, define $T(z)x:=\mathscr F^{-1}\big[e^{-\nu z m_\a}\mathscr F[x]\big]$.
    Since $\Re z>0$, we have $|e^{-\nu z m_\a(\xi)}|=e^{-\nu (\Re z)m_\a(\xi)}\le 1$, so $T(z)\in\mathcal L(H^s)$ and $\|T(z)x\|_{H^s}\leq \|x\|_{H^s}$.
    Moreover, for fixed $x,y\in H^s$,
    \[
        \lan T(z)x,y\ran_{H^s}
        =
        \int_{\R^2}m_s(\xi)
        e^{-\nu z m_\a(\xi)}
        \mathscr F[x](\xi)\cdot \overline{\mathscr F[y](\xi)} \d\xi.
    \]
    The integrand is holomorphic in $z$ and dominated by $m_s\big|\mathscr F[x]\big|\big|\mathscr F[y]\big|$, which belongs to $L^1(\R^2)$ by the Cauchy-Schwarz inequality.
    Hence $z\mapsto \lan T(z)x,y\ran_{H^s}$ is holomorphic on $\Sigma_{\pi/2}$.
    Therefore $\{e^{-\nu tA^\a}\}_{t\ge0}$ is an analytic semigroup on $H^s$.
    
\textit{\textbf{Step }$\mathbf{5.}$ Proof of \eqref{EQ:LEM:stokes_operator_semigroup:smoothing_estimate}.}
    Let $\theta\ge0$, $t>0$, and $x\in H^s$. 
    Then
    \[
    \begin{aligned}
        \|A^\theta e^{-\nu tA^\a}x\|_{H^s}^2
        &=
        \int_{\R^2}m_s(\xi)
        m_{\theta}^2(\xi)
        e^{-2\nu t m_\a(\xi)}
        \big|\mathscr F[x](\xi)\big|^2\d\xi.
    \end{aligned}
    \]
    Since $m_\theta =  m_\a^{\theta/\a}$, and
    \[
        \sup_{r\ge0} r^{\theta/\a}e^{-\nu tr}
        =
        t^{-\theta/\a}\sup_{\rho\ge0}\rho^{\theta/\a}e^{-\nu \rho}
        \le C_{\nu,\a,\theta}\, t^{-\theta/\a},
    \]
    we have $m_\theta(\xi) e^{-\nu t m_\a(\xi)} \le C_{\nu,\a,\theta}\, t^{-\theta/\a}$ for all $\xi\in\R^2$.
    Substituting this bound into the previous integral yields  \eqref{EQ:LEM:stokes_operator_semigroup:smoothing_estimate} and completes the proof.
\end{proof}

\begin{proof}[Proof of Lemma~\ref{LEM:b}]
    The well-posedness of the trilinear form comes from \cite[Lemma $2.1$]{Temam_1995_Navier-Stokes_equations_nonlinear_functional_analysis}.
    The two properties in equations \eqref{EQ:b_antysimmetry_1}, \eqref{EQ:b_antysimmetry_2} follow by a density argument from \cite[equations $(2.33)$, $(2.34)$]{Temam_1995_Navier-Stokes_equations_nonlinear_functional_analysis}.
    
    The last property \eqref{EQ:b(x,x,Ay)=0} is proved in \cite[Lemma $3.1$]{Temam_1995_Navier-Stokes_equations_nonlinear_functional_analysis} in the case of periodic boundary conditions, however, its proof can be adapted also to the case of the unbounded domain $\R^2$ {as follows}.
    Let us suppose that $u\in C_c^{\infty}(\R^2;\R^2)$ with $\div u=0$.
    Then, the identity~\eqref{EQ:b_antysimmetry_2} and integration by parts yield:
    \begin{align}
        b(u,u,Au)
        &=b(u,u,u)-\sum_{i,j,k=1}^2\int_{\R^2}\big(\partial_k^2u_j\big)u_i(\partial_iu_j)\d\mathscr L^2\\
        &=\sum_{i,j,k=1}^2\int_{\R^2}(\partial_ku_j)(\partial_ku_i)(\partial_iu_j)\d\mathscr L^2\
        +\sum_{i,j,k=1}^2\int_{\R^2}(\partial_ku_j)u_i\big(\partial_{ik}^2u_j\big)\d\mathscr L^2.
    \end{align}
    The first sum vanishes because a simple calculation shows:
    \[
        \sum_{i,j,k=1}^2(\partial_ku_j)(\partial_ku_i)(\partial_iu_j)
        =(\div u)\Bigg(\sum_{i,j=1}^2(\partial_iu_j)^2\Bigg)=0.
    \]
    The second sum vanishes because $(\partial_ku_j)\big(\partial_{ik}^2u_j)=\partial_i\big((\partial_ku_j)^2\big)/2$. Hence, integrating by parts:
    \begin{align}
        \sum_{i,j,k=1}^2\int_{\R^2}(\partial_ku_j)u_i(\partial^2_{ik}u_j)\d\mathscr L^2
        &=\frac12\sum_{i,j,k=1}^2\int_{\R^2}u_i\partial_i\big((\partial_ku_j)^2\big)\d\mathscr L^2\\
        &=-\frac12\int_{\R^2}(\div u)\Bigg(\sum_{j,k=1}^2(\partial_ku_j)^2\Bigg)\d\mathscr L^2=0.
    \end{align}
    The general case is obtained by recalling that compactly supported smooth vector fields are dense in $H^s(\R^2;\R^2)$, for any $s\in\R$, hence also in its subspace $H^s$.
\end{proof}

\begin{proof}[Proof of Lemma~\ref{LEM:estimates_B}]
    The first estimate is taken from \cite[Lemma $1.3$]{Kato+Ponce_1986_well_posedness_Euler_N-S}.
    Then the continuity of $B:H^\s\to H^{\s-1}$ follows directly from
    \begin{align}
        \|B(u)-B(y)\|^{}_{H^{\s-1}}&=
        \|B(u-v,u)+B(v,u-v)\|^{}_{H^{\s-1}}\\
        &\leq c\|u-v\|^{}_{H^{\s-1}}\|u\|^{}_{H^\s}+c\|v\|^{}_{H^{\s-1}}\|u-v\|^{}_{H^\s}\\
        &\leq c\big(\|u\|^{}_{H^{\s}}+\|v\|^{}_{H^{\s}}\big)\|u-v\|^{}_{H^\s}.
    \end{align}
    In particular, $B:H^\s\to H^{\s-1}$ is Lipschitz continuous on bounded sets of $H^\s$.
    The second estimate comes from the continuity of the bilinear operator $B$, see Definition~\ref{DEF:B} with $p=1$, $q=\e$, $r=0$.
    The third and fourth estimates follow from the continuity of the operators $b$ and $B$, see Definition~\ref{DEF:B} and Lemma~\ref{LEM:b} with $p=1$, $q=0$, $r=\e$.
    The continuity of $B:H^{\s-1}\to H^{2-\s}$ follows from the second estimate with $\varepsilon=\s-2$.
\end{proof}

\begin{proof}[Proof of Theorem~\ref{TH:z}]
    Adapting the notations from \cite[Section $5.1.1$]{DaPrato+Zabczyk_2014_stochastic_equations_infinite}, we see that the abstract Hilbert spaces $U$ and $H$ in the reference are both supposed to be the Sobolev-type space $H^\b$, and that the operator $B$ is, in our setting, simply $\sqrt\nu  \,\id_{H^\b}$.
    In addition, the abstract linear operator $A$ is replaced by the hyperviscous Stokes operator $-\nu A^{\a}$, which generates a contraction analytic semigroup $\big\{e^{- \nu t A^{\a}}\big\}_{t\geq 0}$ of linear bounded operators in $H^\b$. 
    The deterministic function $f$, and the initial random variable $\xi$ are both chosen to be $0$.
    Finally, we work under the assumption of the $H^\b$-valued Wiener process $W$, whose reproducing kernel Hilbert space and covariance operator will be denoted by $U_0$ and $Q_\b$ respectively.
    We recall that $Q_\b\in\mathcal L_1(H^\b)$, and that $U_0=Q_\b^{1/2}(H^\b)$, hence $U_0$ is Hilbert-Schmidt embedded into $H^\b$.
    
    The pathwise uniqueness result for the weak formulation and the validity of the mild formulation \eqref{EQ:mild_Z}, $\P-a.s.$ in $H^\b$, follow from \cite[Theorem $5.4$]{DaPrato+Zabczyk_2014_stochastic_equations_infinite}, while the pathwise continuity in $H^\b$ is stated in \cite[Theorem $5.11$]{DaPrato+Zabczyk_2014_stochastic_equations_infinite}.
    In order to apply these theorems, the following condition has to be verified: there exists $\g\in(0,1/2)$ such that
    \[
        \int_0^1t^{-2\g}\Tr\big[e^{-2\nu tA^{\a}}Q_\b\big]\d t <+\infty.
    \]
    Indeed, it is sufficient to notice that 
    \[
        \Tr\big[e^{-2\nu tA^{\a}}Q_\b\big]
        \leq \big\|e^{-2\nu tA^{\a}}\big\|_{\mathcal L(H^\b)}\Tr[Q_\b]\leq \Tr[Q_\b]<+\infty,
    \]
    because $Q_\b\in\mathcal L_1(H^\b)$ and the semigroup is contractive.

    As far as the strong formulation in equation~\eqref{EQ:strong_Z} is concerned, we refer to \cite[Section $5.6$]{DaPrato+Zabczyk_2014_stochastic_equations_infinite}.
    We make the following remarks to show that our situation can be reduced to that of \cite[Section $5.6$]{DaPrato+Zabczyk_2014_stochastic_equations_infinite}.
    If $\iota:H^\b\to H^{-2\a}$ denotes the natural embedding under the Riesz identification $H=H'$,  then $\iota W$ is an $H^{-2\a}$-valued Wiener process with covariance operator $Q:=\iota Q_\b\iota^\ast\in\mathcal L_1\big(H^{-2\a}\big)$ and with the same reproducing kernel Hilbert space  $Q^{1/2}\big(H^{-2\a}\big)=Q_\b^{1/2}(H^\b)=:U_0$ as $W$. 
    Hence, the abstract Hilbert space $H$ from \cite[Section $5.6$]{DaPrato+Zabczyk_2014_stochastic_equations_infinite} coincides with the Sobolev-type Hilbert space $H^{-2\a}$.
    We also need to set the hyperviscous Stokes operator on $H^{-2\a}$, that is to say
    \[
        A^\a:D_{H^{-2\a}}(A^\a)=H\subset H^{-2\a}\to H^{-2\a}.
    \]
    
    With these conventions, we seek to apply \cite[Theorem $5.38$]{DaPrato+Zabczyk_2014_stochastic_equations_infinite} in space $H^{-2\a}$. 
    Indeed, we show that the hypotheses of the cited theorem are satisfied.
    First, we need to check that $Q^{1/2}(H^{-2\a})=U_0$ is contained into $D_{H^{-2\a}}(A^\a)=H$: this is true because $U_0$ is embedded into $H^\b$, which is again embedded into $H$.
    Moreover, since $U_0$ is Hilbert-Schmidt embedded into $H^\b$, we also have the Hilbert-Schmidt embedding $\tilde\iota: U_0\hookrightarrow H$ and thus the Hilbert-Schmidt property for the operator $A^{\a}\tilde\iota  Q^{1/2}:H^{-2\a}\to H^{-2\a}$.
    Eventually, the validity of equation~\eqref{EQ:strong_Z} in $H^{-2\a}$ follows from \cite[Theorem $5.38$]{DaPrato+Zabczyk_2014_stochastic_equations_infinite}.
    In addition, the $\P-a.s.$ regularity $Z\in C\big([0,T];H^\b\big)$ discussed above, allows us to conclude that the equation~\eqref{EQ:strong_Z} is satisfied in $H^{\b-\a}$.
    
    We will now show the regularity in equation~\eqref{EQ:regularity_Z}.
    It follows from the It\^o formula, see \cite[Theorem $4.32$]{DaPrato+Zabczyk_2014_stochastic_equations_infinite} or \cite[Theorem $1.2$]{Pardoux_1979_stochastic_Partial_Differential_Equations}, applied to the $H^\b$-valued It\^o process $Z$ and to the function $F:H^\b\ni x\mapsto \|x\|^2_{H^\b}\in\R_+$.
    This $F$ is twice Fréchet differentiable, its differential at $x\in H^\b$ is the vector $F'(x)=2x\in H^{\b}$ and its second derivative at $x\in H^\b$ is the operator $F''(x)=2\id_{H^\b}\in\mathcal L(H^\b)$.
    In particular, $F\in C^2(H^\b)$. This allows us to apply the It\^o formula to the $H^\b$-valued It\^o process $Z$ and obtain, $\P-a.s.$
    \begin{equation}
        \|Z_t\|^2_{H^\b}
        + 2\nu\int_0^t\big\lan Z_s, A^{\a}Z_s\big\ran^{}_{\! H^\b}\d s
        = 2\sqrt \nu \int_0^t \big\lan Z_s,\d W_s\big\ran^{}_{\! H^\b}
        + {\nu} \Tr[Q_\b]t, \qquad \forall\, t\geq 0.
    \end{equation}
    Assume that $T>0$ and take the supremum for $t\in[0,T]$ to both members of the last equality, then compute the expectation. 
    We reach
    \begin{equation}
    \label{EQ:Ito_Z}
        \E\sup_{t\in[0,T]}\|Z_t\|^2_{H^\b} + 2\nu\E\int_0^T\big\|Z_s\|^2_{H^{\b+\a}}\d s
        \leq  2\sqrt \nu \, \E\sup_{t\in[0,T]}\bigg|\int_0^t \big\lan Z_s,\d W_s\big\ran^{}_{\! H^\b}\bigg|
        + {\nu} \Tr[Q_\b]T.
    \end{equation}
    We estimate the first term on the right-hand side, thanks to the Burkholder-Davis-Gundy inequality, see \cite[Theorem $48$, Chapter IV]{Protter_2005_Stochastic_integration_differential_equations}, and the Young inequality. 
    We recall indeed that $\int_0^{\,\cdot\,}\lan Z_s, \d W_s\ran ^{}_{H^\b}$ is a real-valued continuous martingale with quadratic variation process $\int_0^{\,\cdot\,}\|Z_s\|^{2}_{H^\b}\d s$.
    For an absolute constant $c>0$:
    \begin{align}
        2\sqrt \nu\, \E\sup_{t\in[0,T]} \bigg|\int_0^t\lan Z_s, \d W_s\ran ^{}_{H^\b} \bigg|
        &\leq  2c\sqrt\nu\,\E\bigg(\int_0^T\|Z_s\|^{2}_{H^\b}\d s\bigg)^{1/2} \\
        &\leq 2c\sqrt{\nu T}\,\E\sup_{t\in[0,T]}\|Z_t\|^{}_{H^\b}\\
        &\leq {2c^2\nu T}+\frac12 \E\sup_{t\in[0,T]}\|Z_t\|^2_{H^\b}.
    \end{align}
    We insert this estimate back into equation~\eqref{EQ:Ito_Z} and obtain
    \begin{align}
        \frac12\E\sup_{t\in[0,T]}\|Z_t\|^2_{H^\b}
        +
        2\nu\E\int_0^T\big\|Z_s\|^2_{H^{\b+\a}}\d s
        \leq  \big({2c^2}
        +  \Tr[Q_\b]\big)\nu T.
    \end{align}
    The sought regularity in equation~\eqref{EQ:regularity_Z} follows from this last estimate and the pathwise continuity, which we already discussed.
\end{proof}

We here present a simple lemma which clarifies the definition of solution for the deterministic problem~\eqref{EQ:v_nu_weak}.

\begin{lemma}
\label{LEM:v_mild_to_strong}
    Assume that $\nu>0$, $\a>1$, $x\in H^{2\a}$, $z\in C(\R_+;H^{2\a})$, and $v\in C(\R_+;H^{2\a})\cap L^2_{loc}(\R_+;H^{3\a})$.
    Then the following are equivalent:
    \begin{enumerate}[label=(\roman*), noitemsep]
        \item 
            $v$ satisfies the mild formulation in $H^{2\a}$, namely
            \begin{equation}
            \label{EQ:vmild}
                v(t)+\int_0^t e^{-\nu (t-s)A^\a}B\big(v(s)+z(s)\big)\d s
                =
                e^{-\nu tA^\a}x,
                \qquad \forall\, t\ge 0;
            \end{equation}
        \item 
            $v$ satisfies the integral strong formulation in $H^\a$, namely
            \begin{equation}
            \label{EQ:vstrong}
                v(t)+\nu\int_0^t A^\a v(s)\d s+\int_0^t B\big(v(s)+z(s)\big)\d s
                =
                x,
                \qquad \forall\, t\ge 0.
            \end{equation}
    \end{enumerate}
    Moreover, whenever one of the two equivalent conditions above holds, one has $v\in H^1_{loc}(\R_+;H^\a)\cap C^1(\R_+;H)$, and
    \begin{equation}
    \label{EQ:v_diff_Ha_ae}
        v'(t)+\nu A^\a v(t)+B\big(v(t)+z(t)\big)=0,
    \end{equation}
    for a.e. $t\geq 0$ in $H^\a$, and all $t\geq 0$ in $H$.
\end{lemma}
\begin{proof}
    Set
    \[
        S(t):=e^{-\nu tA^\a},
        \qquad
        g(t):=B\big(v(t)+z(t)\big),
        \qquad t\ge0.
    \]
    Since $v,z\in C(\R_+;H^{2\a})$, Lemma~\ref{LEM:estimates_B} yields $g\in C(\R_+;H^{2\a-1})$.
    Because $\a>1$, one has $g\in C(\R_+;H^\a)\subset L^2_{loc}(\R_+;H^\a)$.
    Moreover, since $v\in L^2_{loc}(\R_+;H^{3\a})$ and
    $A^\a:H^{3\a}\to H^\a$ is bounded by
    Lemma~\ref{LEM:stokes_operator_semigroup}, we also have $A^\a v\in L^2_{loc}(\R_+;H^\a)$.
    Hence \eqref{EQ:vstrong} implies $v\in H^1_{loc}(\R_+;H^\a)$ and \eqref{EQ:v_diff_Ha_ae} holds a.e. in $H^\a$.
    The $C^1(\R_+;H)$ regularity and the pointwise identity in $H$ will be proved later in the second step.
    
\textit{\textbf{Step }$\mathbf{1.}$ (i) implies (ii).}
    Assume that \eqref{EQ:vmild} holds. Then
    \[
        v(t)=S(t)x-w(t),
        \qquad
        w(t):=\int_0^t S(t-s)g(s)\d s,
        \qquad t\ge0.
    \]
    We first prove that
    \begin{equation}
    \label{EQ:LEM:v_mild_to_strong:semigroup_x}
        S(t)x+\nu\int_0^t A^\a S(r)x\d r=x
        \qquad\text{in }H^\a,\quad \forall\, t\ge0.
    \end{equation}
    Fix $t>0$. Since $x\in H^{2\a}$, Lemma~\ref{LEM:stokes_operator_semigroup}, applied with $s=2\a$ and $\theta=\a/2$, gives
    \[
        \|A^\a S(r)x\|_{H^\a}
        =
        \|S(r)x\|_{H^{3\a}}
        =
        \|A^{\a/2}S(r)x\|_{H^{2\a}}
        \le C r^{-1/2}\|x\|_{H^{2\a}},
        \qquad r>0.
    \]
    Hence $r\mapsto A^\a S(r)x$ belongs to $L^1(0,t;H^\a)$.
    Moreover, for every $\e\in(0,t)$, the map
    $r\mapsto S(r)x$ belongs to $C^1([\e,t];H^\a)$ and satisfies
    \[
        \frac{\d}{\d r}S(r)x=-\nu A^\a S(r)x
        \qquad\text{in }H^\a,\quad r\in[\e,t].
    \]
    Therefore
    \[
        S(t)x-S(\e)x=-\nu\int_\e^t A^\a S(r)x\d r
        \qquad\text{in }H^\a.
    \]
    Letting $\e\to 0^+$ and using the strong continuity of $S(\cdot)$ on
    $H^\a$, we obtain \eqref{EQ:LEM:v_mild_to_strong:semigroup_x}.
    
    We now prove that
    \begin{equation}
    \label{EQ:LEM:v_mild_to_strong:w_identity}
        w(t)+\nu\int_0^t A^\a w(r)\d r=\int_0^t g(s)\d s
        \qquad\text{in }H^\a,\quad \forall\, t\ge0.
    \end{equation}
    Fix $t>0$. 
    By Lemma~\ref{LEM:stokes_operator_semigroup}, applied with $s=2\a-1$ and $\theta=(\a+1)/2$, we have
    \[
        \|A^\a S(r-s)g(s)\|_{H^\a}
        =
        \|S(r-s)g(s)\|_{H^{3\a}}
        =
        \|A^{(\a+1)/2}S(r-s)g(s)\|_{H^{2\a-1}}
        \le
        C(r-s)^{-(\a+1)/(2\a)}\|g(s)\|_{H^{2\a-1}},
    \]
    for every $0\le s<r\le t$. 
    Since $(\a+1)/(2\a)<1$ and $g\in C([0,t];H^{2\a-1})$, the right-hand side is integrable on the triangle $\{(s,r):0\le s\le r\le t\}$. 
    Hence Fubini's theorem yields
    \begin{equation}
    \label{EQ:LEM:v_mild_to_strong:Fubini}
        \int_0^t A^\a w(r)\d r
        =
        \int_0^t\int_0^r A^\a S(r-s)g(s)\d s\d r
        =
        \int_0^t\int_s^t A^\a S(r-s)g(s)\d r\d s
    \end{equation}
    in $H^\a$. 
    Fix now $s\in[0,t)$. 
    Since $g(s)\in H^{2\a-1}$, the same estimate shows that $\rho\mapsto A^\a S(\rho)g(s)$ belongs to $L^1(0,t-s;H^\a)$. 
    Moreover, for every $\e\in(0,t-s)$, the map $\rho\mapsto S(\rho)g(s)$ belongs to $C^1([\e,t-s];H^\a)$ and satisfies
    \[
        \frac{\d}{\d \rho}S(\rho)g(s)=-\nu A^\a S(\rho)g(s)
        \qquad\text{in }H^\a.
    \]
    Hence
    \[
        S(t-s)g(s)-S(\e)g(s)
        =
        -\nu\int_\e^{t-s} A^\a S(\rho)g(s)\d \rho
        \qquad\text{in }H^\a.
    \]
    Letting $\e\to 0^+$ and using again the strong continuity of $S(\cdot)$ on $H^\a$, we obtain
    \[
        S(t-s)g(s)+\nu\int_0^{t-s} A^\a S(\rho)g(s)\d \rho=g(s)
        \qquad\text{in }H^\a.
    \]
    Integrating this identity with respect to $s\in[0,t]$ and using \eqref{EQ:LEM:v_mild_to_strong:Fubini}, we get
    \[
        w(t)+\nu\int_0^t A^\a w(r)\d r=\int_0^t g(s)\d s
        \qquad\text{in }H^\a,
    \]
    which is \eqref{EQ:LEM:v_mild_to_strong:w_identity}.
    
    Combining \eqref{EQ:LEM:v_mild_to_strong:semigroup_x},
    \eqref{EQ:LEM:v_mild_to_strong:w_identity}, and the identity
    $v=S(\cdot)x-w$, we conclude that, for every $t\ge0$,
    \[
    \begin{aligned}
        &v(t)+\nu\int_0^t A^\a v(r)\d r+\int_0^t g(r)\d r\\
        =&
        \big(S(t)x-w(t)\big)
        +\nu\int_0^t A^\a\big(S(r)x-w(r)\big)\d r
        +\int_0^t g(r)\d r\\
        =&
        \bigg(S(t)x+\nu\int_0^t A^\a S(r)x\d r\bigg)
        -
        \bigg(w(t)+\nu\int_0^t A^\a w(r)\d r\bigg)
        +\int_0^t g(r)\d r\\
        =&\,x
    \end{aligned}
    \]
    in $H^\a$. This proves \eqref{EQ:vstrong}.

\textit{\textbf{Step }$\mathbf{2.}$ (ii) implies (i).}    
    Assume now that \eqref{EQ:vstrong} holds. 
    Evaluating it at $t=0$, we get $v(0)=x$.
    Since $A^\a:H^{2\a}\to H$ is bounded by Lemma~\ref{LEM:stokes_operator_semigroup}, and $v\in C(\R_+;H^{2\a})$, one has $A^\a v\in C(\R_+;H)$.
    Moreover, $g=B(v+z)\in C(\R_+;H^{2\a-1})\subset C(\R_+;H)$.
    Therefore \eqref{EQ:vstrong}, viewed in $H$, reads
    \[
        v(t)=x-\int_0^t\big(\nu A^\a v(s)+g(s)\big)\d s,
        \qquad t\ge0,
    \]
    and the right-hand side is a $C^1(\R_+;H)$ function. 
    Hence $v\in C^1(\R_+;H)$ and 
    \begin{equation}
    \label{EQ:LEM:v_mild_to_strong:classical_H}
        v'(t)+\nu A^\a v(t)+g(t)=0,
        \qquad \text{ in }H, \quad \forall\, t\ge0.
    \end{equation}
    
    Fix $t>0$ and define
    \[
        \Phi(s):=S(t-s)v(s),
        \qquad s\in[0,t].
    \]
    Since $v\in C^1(\R_+;H)$ and $v(s)\in H^{2\a}=D_H(A^\a)$ for every $s\ge0$, the map $\Phi$ belongs to $C^1([0,t];H)$ and
    \[
        \Phi'(s)=\nu A^\a S(t-s)v(s)+S(t-s)v'(s),
        \qquad s\in[0,t].
    \]
    Because $v(s)\in D_H(A^\a)$, the semigroup commutes with $A^\a$ on $v(s)$, so, using \eqref{EQ:LEM:v_mild_to_strong:classical_H}, we obtain
    \[
    \begin{aligned}
        \Phi'(s)
        = S(t-s)\bigl(\nu A^\a v(s)+v'(s)\bigr)
        = -S(t-s)g(s),\qquad \text{ in }H, \quad \forall\, s\in[0,t].
    \end{aligned}
    \]
    Apply the Fundamental Theorem of Calculus in $H$ on the interval $[0,t]$, and recall the definition of $\Phi$ to obtain
    \begin{equation}
    \label{EQ:LEM:v_mild_to_strong:mild_H}
        v(t)-S(t)x=-\int_0^t S(t-s)g(s)\d s
        \qquad\text{in }H.
    \end{equation}
    
    It remains to upgrade \eqref{EQ:LEM:v_mild_to_strong:mild_H} to $H^{2\a}$.
    Fix $t>0$. 
    By Lemma~\ref{LEM:stokes_operator_semigroup}, applied with $s=2\a-1$ and $\theta=1/2$, we have
    \[
        \|S(t-s)g(s)\|_{H^{2\a}}
        =
        \|A^{1/2}S(t-s)g(s)\|_{H^{2\a-1}}
        \le
        C (t-s)^{-1/(2\a)}\|g(s)\|_{H^{2\a-1}},
        \qquad 0\le s<t.
    \]
    Since $1/(2\a)<1$ and $g\in C([0,t];H^{2\a-1})$, the right-hand side is integrable on $(0,t)$.
    Hence the right-hand side of \eqref{EQ:LEM:v_mild_to_strong:mild_H} is well-defined in $H^{2\a}$. 
    As also $v(t)\in H^{2\a}$ and $S(t)x\in H^{2\a}$,
    identity \eqref{EQ:LEM:v_mild_to_strong:mild_H} actually holds in $H^{2\a}$.
    This is exactly \eqref{EQ:vmild}.
\end{proof}

 We here present a version of It\^o's Lemma tailored to the SHNS$_{\nu,\a}$ Equation~\eqref{EQ:NSnu}.
 
\begin{lemma}
\label{LEM:Ito_formula_h(|x|^2_b)}
    Assume that $\b\geq 0$ and let $(\Omega, \F, \{\F_t\}_{t\geq 0}, \P)$ be an augmented filtered probability space with an adapted $H^\b$-valued Wiener process $W$.
    Let $Q_\b\in\mathcal L_1(H^\b)$ denote the covariance operator of $W_1$.
    Assume that $\g\in[0,\b]$, $\nu>0$, $\a>1$ and further assume that there exists an $H^\g$-valued process $X$ satisfying $\P-a.s.$ in $H^\g$:
    \begin{equation}
            X_t+ \nu \int_0^t A^{\a}X_s\d s +\int_0^tB(X_s)\d s=X_0+\sqrt \nu W_t,\qquad \forall\,t\geq0.
    \end{equation}
    Let $\iota:H^\b\to H^\g$ be the natural embedding and let $Q_\g:=\iota Q_\b\iota^\ast\in\mathcal L_1(H^\g)$.
    Assume that $h\in C^2(\R_+)$, then  $\P-a.s.$, for all $t\geq 0$
    \begin{equation}
    \label{EQ:ito_X_xi_h}
    \begin{aligned}
        h\big(\|X_t\|^2_{H^\g}\big)
        = h\big(\|X_0\|^2_{H^\g}\big)
        &-2\nu\int_0^t\big\|X_s\|^2_{H^{\g+\a}}h'\big(\|X_s\|^2_{H^\g}\big)\d s \\
        &-2\int_0^t\big\lan B(X_s), X_s\big\ran^{}_{\! H^\g}h'\big(\|X_s\|^2_{H^\g}\big)\d s \\
        &+ 2\sqrt \nu \int_0^th'\big(\|X_s\|^2_{H^\g}\big)\lan X_s, \d W_s\ran^{}_{\! H^\g} \\
        &+ {2\nu} \int_0^t\big\|Q_\g^{1/2}X_s\big\|^{2}_{H^\g}h''\big(\|X_s\|^2_{H^\g}\big)\d s\\
        &+ \nu \Tr[Q_\g]\int_0^th'\big(\|X_s\|^2_{H^\g}\big)\d s.
    \end{aligned}
    \end{equation}
\end{lemma}
\begin{proof}
    The thesis follows from the It\^o formula, see \cite[Theorem $4.32$]{DaPrato+Zabczyk_2014_stochastic_equations_infinite} or \cite[Theorem $1.2$]{Pardoux_1979_stochastic_Partial_Differential_Equations}, applied  to the It\^o process $X$ and to the function 
    \[
        F:H^\g\in x\mapsto F(x):=h\big(\|x\|^2_{H^\g}\big)\in\R_+.
    \]
    
    $F$ is Fréchet differentiable and its differential at $x\in H^\g$ is the vector $F'(x)\in H^{\g}$ such that
    \[
        F'(x)=2h'\big(\|x\|^2_{H^\g}\big)x.
    \]
    The function $F':H^\g\ni x\mapsto F'(x)\in H^\g$ is again Fréchet differentiable and its derivative at $x\in H^\g$ is the operator $F''(x)\in\mathcal L(H^\g)$ such that for all $y\in H^\g$
    \begin{align}
        F''(x)y
        &=4h''\big(\|x\|^2_{H^\g}\big)\lan x,y\ran^{}_{\! H^\g}x+2h'\big(\|x\|^2_{H^\g}\big)y\\
         &=
         \Big[4h''\big(\|x\|^2_{H^\g}\big)\|x\|^2_{H^\g}\Pi_x
         +2h'\big(\|x\|^2_{H^\g}\big)\id_{H^\g}\Big] y,
    \end{align}
    where $\id_{H^\g}$ is the identity operator in $H^\g$ and $\Pi_x$ denotes the projection onto the subspace generated by $x$, \textit{i.e.}:
    \[
        \Pi_x:H^\g\to H^\g, \qquad \Pi_xy:=
        \begin{cases}
            \ds \frac{\lan x,y\ran^{}_{\! H^\g}}{\|x\|^2_{H^\g}}x, \qquad &\textit{if }x\in H^\g\setminus\{0\}\\
            0&\textit{if }x=0
        \end{cases}.
    \] 
    Hence, the function $F'':H^\g\ni x\mapsto F''(x)\in \mathcal L(H^\g)$ is continuous, in particular, $F\in C^2(H^\g)$. This allows us to apply the It\^o formula and obtain, $\P-a.s.$ for all $t\geq 0$ in $\R$:
    \begin{equation}
    \begin{aligned}
        F(X_t)
        = F(X_0)
        &-\int_0^t\big\lan F'(X_s), \nu A^{\a}X_s+B(X_s)\big\ran^{}_{\! H^\g}\d s\\
        &+ \sqrt \nu \int_0^t \big\lan F'(X_s),\d W_s\big\ran^{}_{\! H^\g}\\
        &+ \frac{\nu}{2} \int_0^t\Tr\big[F''(X_s)Q_\g\big]\d s,
    \end{aligned}
    \end{equation}
    where $Q_\g:=\iota Q_\b\iota^\ast\in\mathcal L_1(H^\g)$ and $\iota:H^\b\to H^\g$ is the Sobolev embedding.
    The thesis follows after expanding $F, F'$ and $F''$ and observing that $\|X_s\|^2_{H^\g}\!\Tr[\Pi_{X_s}Q_\g]=\big\|Q_\g^{1/2}X_s\big\|^2_{H^\g}$. 
    Indeed, if $\{e_n\}_{n\in\N}$ is a complete orthonormal system for the separable Hilbert space $H^\g$, then by the self-adjointness of $\Pi_x$, $Q_\g$, and by the usual properties of Hilbert spaces, we have for all $x\in H^\g$:
    \begin{equation}
    \begin{aligned}
    \label{EQ:rewrite_Tr}
        \|x\|^2_{H^\g}\Tr[\Pi_xQ_\g]
        &=\|x\|^2_{H^\g}\sum_{n\in\N}\lan \Pi_xQ_\g e_n,e_n\ran^{}_{\! H^\g}\\
        &=\|x\|^2_{H^\g}\sum_{n\in\N}\lan Q_\g e_n,\Pi_xe_n\ran^{}_{\! H^\g}\\
        &=\sum_{n\in\N}\lan x,e_n\ran^{}_{\! H^\g}\lan e_n,Q_\g x\ran^{}_{\! H^\g}\\
        &=\lan x,Q_\g x\ran^{}_{\! H^\g}\\
        &=\big\|Q_\g^{1/2}x\big\|^2_{H^\g}.
    \end{aligned}
    \end{equation}
\end{proof}

\begin{lemma}
\label{LEM:nonlinear_loc_convergence}
    Assume that $\b>1$.
    \begin{enumerate}[label=\textup{$(\roman*)$}]
        \item
        \label{ITEM:nonlinear_loc_convergence_strong}
        Let $u\in L^2(0,T;H^{\b+1})$ and let $\{u_n\}_{n\in\N}$ be a sequence
        bounded in $L^2(0,T;H^{\b+1})$ and convergent to $u$ in
        $L^2(0,T;H^{\b+1}_{loc})$.
        Then, for every $y\in H^{-\b}$,
        \begin{equation}
        \label{EQ:nonlinear_loc_convergence_strong}
            \lim_{n\to\infty}
            \sup_{t\in[0,T]}
            \bigg|
                \int_0^t
                \big\langle
                    y,
                    B(u_n(s))-B(u(s))
                \big\rangle_{\ss{\!H^{-\b}\!\times\!H^\b}}
                \d s
            \bigg|
            =0.
        \end{equation}

        \item
        \label{ITEM:nonlinear_loc_convergence_dual}
        Let $u\in L^2(0,T;H^\b)$ and let $\{u_n\}_{n\in\N}$ be a sequence
        bounded in $L^2(0,T;H^\b)$ and convergent to $u$ in
        $L^2(0,T;H^\b_{loc})$.
        Then, for every $y\in H^{\b-1}$,
        \begin{equation}
        \label{EQ:nonlinear_loc_convergence_dual}
            \lim_{n\to\infty}
            \sup_{t\in[0,T]}
            \bigg|
                \int_0^t
                \big\langle
                    B(u_n(s))-B(u(s)),
                    y
                \big\rangle_{\ss{\!H^{1-\b}\!\times\!H^{\b-1}}}
                \d s
            \bigg|
            =0.
        \end{equation}
    \end{enumerate}
\end{lemma}

\begin{proof}[Proof of \itref{ITEM:nonlinear_loc_convergence_strong}]
\textit{\textbf{Step $\mathbf{1}$.}}
    Assume first that $y\in C_c^\infty(\R^2;\R^2)\cap H^{-\b}$.
    Choose $m\in\N$ such that $\supp y\subset \bar B_m$.
    Let $x,z\in H^{\b+1}$.
    Since $y$ is compactly supported, divergence free, and $\chi_m=1$ on $\bar B_m$,
    \[
        {\big\lan} y, B(x,z)\big\ran_{\ss{\!H^{-\b}\!\times\!H^\b}}
        =
        \int_{\R^2}[(x\cdot\nabla)z]\cdot y\d\mathscr L^2
        =
        \int_{\R^2}[(\chi_m x\cdot\nabla)(\chi_m z)]\cdot y\d\mathscr L^2.
    \]
    Hence
    \[
        \Big|{\big\lan} y, B(x,z)\big\ran_{\ss{\!H^{-\b}\!\times\!H^\b}}\Big|
        \leq
        \|(\chi_m x\cdot\nabla)(\chi_m z)\|_{H^\b(\R^2;\R^2)}
        \|y\|_{H^{-\b}}.
    \]
    Since $\b>1$, the Sobolev space $H^\b(\R^2)$ is an algebra, and multiplication by $\chi_m$ is bounded on $H^{\b+1}(\R^2;\R^2)$.
    Therefore there exists a constant $C_m>0$, depending only on $\b$ and on $\chi_m$, such that
    \[
        \|(\chi_m x\cdot\nabla)(\chi_m z)\|_{H^\b(\R^2;\R^2)}
        \leq
        C_m
        \|\chi_m x\|_{H^{\b+1}(\R^2;\R^2)}
        \|\chi_m z\|_{H^{\b+1}(\R^2;\R^2)}.
    \]
    Recalling the definition~\eqref{EQ:def_semin_n}, we conclude that
    \begin{equation}
    \label{EQ:local_B_estimate_compact_test_unified_strong}
        \Big|{\big\lan} y, B(x,z)\big\ran_{\ss{\!H^{-\b}\!\times\!H^\b}}\Big|
        \leq
        C_m
        [\![x]\!]_{H^{\b+1}_m}
        [\![z]\!]_{H^{\b+1}_m}
        \|y\|_{H^{-\b}}.
    \end{equation}

    \textit{\textbf{Step $\mathbf{2}$.}}
    Let now $y\in C_c^\infty(\R^2;\R^2)\cap H^{-\b}$ and let $m$ be chosen as above.
    For a.e.\ $s\in[0,T]$, by bilinearity of $B$ and by~\eqref{EQ:local_B_estimate_compact_test_unified_strong},
    \begin{align*}
        \Big|{\big\lan} y, B(u_n(s))-B(u(s))\big\ran_{\ss{\!H^{-\b}\!\times\!H^\b}}\Big|
        &=
        \Big|{\big\lan}
            y,
            B\big(u_n(s)-u(s),u_n(s)\big)
            +
            B\big(u(s),u_n(s)-u(s)\big)
        \big\ran_{\ss{\!H^{-\b}\!\times\!H^\b}}\Big|\\
        &\leq
        C_m
        [\![u_n(s)-u(s)]\!]_{H^{\b+1}_m}
        \Big(
            [\![u_n(s)]\!]_{H^{\b+1}_m}
            +
            [\![u(s)]\!]_{H^{\b+1}_m}
        \Big)
        \|y\|_{H^{-\b}}\\
        &\leq
        C_m
        [\![u_n(s)-u(s)]\!]_{H^{\b+1}_m}
        \big(
            \|u_n(s)\|_{H^{\b+1}}
            +
            \|u(s)\|_{H^{\b+1}}
        \big)
        \|y\|_{H^{-\b}}.
    \end{align*}
    Integrating over $[0,t]$, taking the supremum in $t\in[0,T]$, and using H\"older's inequality, we get
    \begin{gather}
        \sup_{t\in[0,T]}
        \bigg|
            \int_0^t
            {\big\lan} y, B(u_n(s))-B(u(s))\big\ran_{\ss{\!H^{-\b}\!\times\!H^\b}}
            \d s
        \bigg|\\
        \qquad\leq
        C_m
        [\![u_n-u]\!]_{L^2(0,T;H^{\b+1}_m)}
        \Big(
            \|u_n\|_{L^2(0,T;H^{\b+1})}
            +
            \|u\|_{L^2(0,T;H^{\b+1})}
        \Big)
        \|y\|_{H^{-\b}}.
    \end{gather}
    By the hypotheses of \itref{ITEM:nonlinear_loc_convergence_strong}, the right-hand side converges to $0$ as $n\to\infty$.
    Hence~\eqref{EQ:nonlinear_loc_convergence_strong} holds for every
    $y\in C_c^\infty(\R^2;\R^2)\cap H^{-\b}$.

    \textit{\textbf{Step $\mathbf{3}$.}}
    Let now $y\in H^{-\b}$ be arbitrary and fix $\e>0$.
    Since $C_c^\infty(\R^2;\R^2)\cap H^{-\b}$ is dense in $H^{-\b}$, there exists
    $y_\e\in C_c^\infty(\R^2;\R^2)\cap H^{-\b}$ such that
    $\|y-y_\e\|_{H^{-\b}}\leq \e$.
    For every $n\in\N$ and a.e.\ $s\in[0,T]$, using \eqref{EQ:estim_B_1} with $\s=\b+1$,
    \begin{align*}
        &\Big|
            {\big\lan}
                y-y_\e,
                B(u_n(s))-B(u(s))
            \big\ran_{\ss{\!H^{-\b}\!\times\!H^\b}}
        \Big|\\
        &\qquad\leq
        \|y-y_\e\|_{H^{-\b}}
        \Big(
            \|B(u_n(s)-u(s),u_n(s))\|_{H^\b}
            +
            \|B(u(s),u_n(s)-u(s))\|_{H^\b}
        \Big)\\
        &\qquad\leq
        c\,
        \|y-y_\e\|_{H^{-\b}}
        \big(
            \|u_n(s)\|_{H^{\b+1}}
            +
            \|u(s)\|_{H^{\b+1}}
        \big)^2.
    \end{align*}
    Therefore
    \[
        \sup_{t\in[0,T]}
        \bigg|
            \int_0^t
            {\big\lan}
                y-y_\e,
                B(u_n(s))-B(u(s))
            \big\ran_{\ss{\!H^{-\b}\!\times\!H^\b}}
            \d s
        \bigg|
        \leq
        C\,\e
        \Big(
            \|u_n\|_{L^2(0,T;H^{\b+1})}
            +
            \|u\|_{L^2(0,T;H^{\b+1})}
        \Big)^2,
    \]
    for a constant $C>0$ independent of $n$ and $\e$.
    Since $\{u_n\}_{n\in\N}$ is bounded in $L^2(0,T;H^{\b+1})$, we infer that
    \[
        \limsup_{n\to\infty}
        \sup_{t\in[0,T]}
        \bigg|
            \int_0^t
            {\big\lan}
                y-y_\e,
                B(u_n(s))-B(u(s))
            \big\ran_{\ss{\!H^{-\b}\!\times\!H^\b}}
            \d s
        \bigg|
        \leq C'\e
    \]
    for some constant $C'>0$ independent of $\e$.
    On the other hand, by Step~$2$,
    \[
        \lim_{n\to\infty}
        \sup_{t\in[0,T]}
        \bigg|
            \int_0^t
            {\big\lan}
                y_\e,
                B(u_n(s))-B(u(s))
            \big\ran_{\ss{\!H^{-\b}\!\times\!H^\b}}
            \d s
        \bigg|
        =0.
    \]
    Combining the last two estimates and letting $\e\to0^+$, we obtain
    \eqref{EQ:nonlinear_loc_convergence_strong}.
\end{proof}

\begin{proof}[Proof of \itref{ITEM:nonlinear_loc_convergence_dual}.]
    \textit{\textbf{Step $\mathbf{1}$.}}
    Assume first that $y\in C_c^\infty(\R^2;\R^2)\cap H^{\b-1}$.
    Choose $m\in\N$ such that $\supp y\subset \bar B_m$.
    Let $x,z\in H^\b$.
    Since $\chi_m=1$ on $\bar B_m$, we have
    \[
        \big\langle
            B(x,z),y
        \big\rangle_{\ss{\!H^{1-\b}\!\times\!H^{\b-1}}}
        =
        \big\langle
            B(\chi_m x,\chi_m z),y
        \big\rangle_{\ss{\!H^{1-\b}\!\times\!H^{\b-1}}}.
    \]
    By Definition~\ref{DEF:B} with $p=1$, $q=\b-1$, $r=\b-1$, by the embedding
    $H^\b\hookrightarrow H^1$, and by the boundedness of multiplication by
    $\chi_m$ on $H^\b$, there exists a constant $C_m>0$ such that
    \[
        \|B(\chi_m x,\chi_m z)\|_{H^{1-\b}}
        \leq
        C_m \|\chi_m x\|_{H^\b}\|\chi_m z\|_{H^\b}.
    \]
    Therefore
    \begin{equation}
    \label{EQ:local_B_estimate_unified_dual}
        \Big|
            \big\langle
                B(x,z),y
            \big\rangle_{\ss{\!H^{1-\b}\!\times\!H^{\b-1}}}
        \Big|
        \leq
        C_m
        [\![x]\!]_{H^\b_m}
        [\![z]\!]_{H^\b_m}
        \|y\|_{H^{\b-1}}.
    \end{equation}

    \textit{\textbf{Step $\mathbf{2}$.}}
    Let now $y\in C_c^\infty(\R^2;\R^2)\cap H^{\b-1}$.
    For a.e.\ $s\in[0,T]$, by bilinearity of $B$ and by
    \eqref{EQ:local_B_estimate_unified_dual},
    \begin{align*}
        &\Big|
            \big\langle
                B(u_n(s))-B(u(s)),y
            \big\rangle_{\ss{\!H^{1-\b}\!\times\!H^{\b-1}}}
        \Big|\\
        &\qquad=
        \Big|
            \big\langle
                B(u_n(s)-u(s),u_n(s))
                +B(u(s),u_n(s)-u(s)),
                y
            \big\rangle_{\ss{\!H^{1-\b}\!\times\!H^{\b-1}}}
        \Big|\\
        &\qquad\leq
        C_m
        [\![u_n(s)-u(s)]\!]_{H^\b_m}
        \Big(
            [\![u_n(s)]\!]_{H^\b_m}
            +
            [\![u(s)]\!]_{H^\b_m}
        \Big)
        \|y\|_{H^{\b-1}}.
    \end{align*}
    Integrating over $[0,t]$, taking the supremum in $t\in[0,T]$, and using
    H\"older's inequality, we obtain
    \begin{align*}
        &\sup_{t\in[0,T]}
        \bigg|
            \int_0^t
            \big\langle
                B(u_n(s))-B(u(s)),y
            \big\rangle_{\ss{\!H^{1-\b}\!\times\!H^{\b-1}}}
            \d s
        \bigg|\\
        &\qquad\leq
        C_m
        [\![u_n-u]\!]_{L^2(0,T;H^\b_m)}
        \Big(
            \|u_n\|_{L^2(0,T;H^\b)}
            +
            \|u\|_{L^2(0,T;H^\b)}
        \Big)
        \|y\|_{H^{\b-1}}.
    \end{align*}
    By the assumptions of \itref{ITEM:nonlinear_loc_convergence_dual}, the right-hand side converges to $0$ as $n\to\infty$.
    Hence \eqref{EQ:nonlinear_loc_convergence_dual} holds for every
    $y\in C_c^\infty(\R^2;\R^2)\cap H^{\b-1}$.

    \textit{\textbf{Step $\mathbf{3}$.}}
    Let now $y\in H^{\b-1}$ be arbitrary and fix $\e>0$.
    By density, choose $y_\e\in C_c^\infty(\R^2;\R^2)\cap H^{\b-1}$ such that
    $\|y-y_\e\|_{H^{\b-1}}\leq \e$.
    Then, for every $n\in\N$,
    \begin{align*}
        &\sup_{t\in[0,T]}
        \bigg|
            \int_0^t
            \big\langle
                B(u_n(s))-B(u(s)),y-y_\e
            \big\rangle_{\ss{\!H^{1-\b}\!\times\!H^{\b-1}}}
            \d s
        \bigg|\\
        &\qquad\leq
        \|y-y_\e\|_{H^{\b-1}}
        \int_0^T
        \Big(
            \|B(u_n(s))\|_{H^{1-\b}}
            +
            \|B(u(s))\|_{H^{1-\b}}
        \Big)\d s.
    \end{align*}
    By Definition~\ref{DEF:B} with $p=1$, $q=\b-1$, $r=\b-1$ and the embedding
    $H^\b\hookrightarrow H^1$,
    \[
        \|B(v)\|_{H^{1-\b}}
        \leq
        C\|v\|_{H^\b}^2,
        \qquad \forall\, v\in H^\b.
    \]
    Therefore
    \[
        \sup_{t\in[0,T]}
        \bigg|
            \int_0^t
            \big\langle
                B(u_n(s))-B(u(s)),y-y_\e
            \big\rangle_{\ss{\!H^{1-\b}\!\times\!H^{\b-1}}}
            \d s
        \bigg|
        \leq
        C\e
        \Big(
            \|u_n\|_{L^2(0,T;H^\b)}^2
            +
            \|u\|_{L^2(0,T;H^\b)}^2
        \Big).
    \]
    Since $\{u_n\}_{n\in\N}$ is bounded in $L^2(0,T;H^\b)$, we infer that
    \[
        \limsup_{n\to\infty}
        \sup_{t\in[0,T]}
        \bigg|
            \int_0^t
            \big\langle
                B(u_n(s))-B(u(s)),y-y_\e
            \big\rangle_{\ss{\!H^{1-\b}\!\times\!H^{\b-1}}}
            \d s
        \bigg|
        \leq
        C'\e
    \]
    for some constant $C'>0$ independent of $\e$.
    The term with $y_\e$ tends to $0$ by Step~$2$.
    Letting $\e\to0^+$ yields \eqref{EQ:nonlinear_loc_convergence_dual}.
\end{proof}

\section{Gr\"onwall Lemmas}

We recall a weak version of the differential form of the classical Gr\"onwall Lemma, which does not require the differentiability of functions.

\begin{lemma}
\label{LEM:Gronwall_lemma_diff}
    Assume that $f:\R_+\to\R$ is a continuous function and that there exist $a,b\in L^1_{loc}(\R_+)$ such that, in the distributional sense in $\R_+^\ast:=(0,+\infty)$,
    \begin{equation}
    \label{EQ:gronwall_diff}
        f'\leq af+b.
    \end{equation}
    Then
    \begin{equation}
    \label{EQ:gronwall_diff_thesis}
        f(t)\leq e^{A(t)}\bigg(f(0)+\int_0^tb(s) e^{-A(s)}\d s \bigg), \qquad \forall\, t\geq 0,
    \end{equation}
    where
    \[
        A(t):=\int_0^ta(s)\d s, \qquad \forall\, t\geq 0.
    \]
\end{lemma}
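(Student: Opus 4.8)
The plan is to run the classical integrating-factor (variation of constants) argument, together with a mollification step to accommodate the low regularity of $f$, $a$, $b$. Write $A(t):=\int_0^t a(s)\d s$, which is well defined and locally absolutely continuous since $a\in L^1_{loc}(\R_+)$; the point is that $e^{A}$ is an integrating factor, so that, at least formally, $\big(e^{-A}f\big)'=e^{-A}\big(f'-af\big)\le e^{-A}b$ by \eqref{EQ:gronwall_diff} and the positivity of $e^{-A}$, and integrating from $0$ to $t$ and multiplying by $e^{A(t)}>0$ yields exactly \eqref{EQ:gronwall_diff_thesis}. All the work is in making this rigorous.

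First I would fix a mollifier $\rho\in C_c^\infty(\R)$ with $\rho\ge 0$, $\supp\rho\subset(-1,1)$ and $\int_\R\rho=1$, set $\rho_\e(s):=\e^{-1}\rho(s/\e)$, and let $f_\e:=f\ast\rho_\e$, which is smooth on $(\e,+\infty)$. The crucial observation is that, since \eqref{EQ:gronwall_diff} asserts that $af+b-f'\ge 0$ as a distribution on $(0,+\infty)$ and $\rho_\e(t-\,\cdot\,)\ge 0$ is an admissible test function when $t>\e$, convolution preserves this sign: $f_\e'(t)=\big(f'\ast\rho_\e\big)(t)\le\big((af+b)\ast\rho_\e\big)(t)$ for every $t>\e$. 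Writing $\big((af+b)\ast\rho_\e\big)=af_\e+b+r_\e$ with $r_\e:=(af)\ast\rho_\e-af_\e+b\ast\rho_\e-b$, the standard properties of mollification, together with the local boundedness of the continuous function $f$ and with $a,b\in L^1_{loc}(\R_+)$, give $r_\e\to 0$ in $L^1_{loc}(\R_+)$ and $f_\e\to f$ locally uniformly on $\R_+$ as $\e\to 0$.

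Next, since $e^{-A}f_\e$ is locally absolutely continuous with $\big(e^{-A}f_\e\big)'=e^{-A}\big(f_\e'-af_\e\big)\le e^{-A}(b+r_\e)$ a.e., the fundamental theorem of calculus for absolutely continuous functions gives, for $0<t_0<t$,
\[
    e^{-A(t)}f_\e(t)-e^{-A(t_0)}f_\e(t_0)\le\int_{t_0}^t e^{-A(s)}\big(b(s)+r_\e(s)\big)\d s.
\]
Letting $\e\to 0$ (using $f_\e\to f$ uniformly on $[t_0,t]$, $e^{-A}$ bounded there, and $r_\e\to 0$ in $L^1(t_0,t)$), and then $t_0\to 0^+$ (using continuity of $f$ at $0$, $A(0)=0$, and $e^{-A}b\in L^1_{loc}$), yields $e^{-A(t)}f(t)\le f(0)+\int_0^t e^{-A(s)}b(s)\d s$; multiplying by $e^{A(t)}$ gives \eqref{EQ:gronwall_diff_thesis} for $t>0$, while the case $t=0$ is trivial.

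There is no genuine obstacle here, only bookkeeping. The single substantive point is the sign-preservation step: convolving the distributional inequality $af+b-f'\ge 0$ with the non-negative mollifier $\rho_\e$ turns it into a pointwise inequality between a smooth function and an $L^1_{loc}$ function, which is precisely what reduces the statement to the elementary absolutely-continuous calculus used above. The two limit passages $\e\to0$ and $t_0\to0^+$ are routine consequences of $a,b\in L^1_{loc}(\R_+)$ and the continuity of $f$; one should just record that $af\in L^1_{loc}$ because $f$ is continuous, hence locally bounded, so that all the integrals above converge absolutely.
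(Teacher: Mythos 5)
Your proof is correct, and it reaches the conclusion by a technically different route from the paper's. The paper first forms the auxiliary function $g(t):=f(t)e^{-A(t)}-\int_0^t b(s)e^{-A(s)}\,\mathrm ds$, observes that the hypothesis gives $g'\le 0$ in $\mathcal D'(0,+\infty)$, and then proves $g(t)\le g(0)$ by testing $g'$ against primitives $\Psi_n$ of the zero-mean functions $\psi_n=-\varphi+\rho_n\int\varphi$, letting the mollifier concentrate at the origin to recover $g(0)$. You instead regularize at the level of $f$ itself: you pair the nonnegative distribution $af+b-f'$ with the nonnegative kernel $\rho_\e(t-\cdot)$ to get the pointwise inequality $f_\e'\le af_\e+b+r_\e$ on $(\e,+\infty)$, run the classical integrating-factor computation for the locally absolutely continuous function $e^{-A}f_\e$, and then pass to the limit $\e\to0$, $t_0\to0^+$. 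Both arguments hinge on the same two ingredients (the integrating factor $e^{-A}$ and positivity-preservation under convolution with a nonnegative mollifier), but they deploy the mollifier for different purposes: the paper uses it only to evaluate a distributionally nonincreasing continuous function at $0$, while you use it to reduce to the smooth case outright. Your version is arguably the more standard one and makes the error term $r_\e$ and its $L^1_{loc}$ convergence explicit, at the price of slightly more bookkeeping in the double limit; the paper's version avoids differentiating the product $e^{-A}f_\e$ but requires the small trick of building compactly supported primitives. One remark: you silently set $A(t)=\int_0^t a(s)\,\mathrm ds$, whereas the statement literally defines $A(t)=\exp\int_0^t a(s)\,\mathrm ds$; the latter is evidently a typo in the paper (its own proof also uses $A'=a$), so your reading is the intended one, but it is worth flagging rather than overwriting without comment.
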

\begin{proof}
    Let us define $g:\R_+\to\R$ by
    \[
        g(t):=f(t)e^{-A(t)}-\int_0^tb(s) e^{-A(s)}\d s, \qquad\forall\, t\geq 0.
    \]
    Then the distributional derivative of $g$ on the interval $\R^\ast_+:=(0,+\infty)$, satisfies
    \begin{equation}
    \label{EQ:proof_gronwall_diff}
        g'=f'e^{-A}-afe^{-A}-be^{-A}\leq 0,
    \end{equation}
    where we used the hypothesis in equation~\eqref{EQ:gronwall_diff} for the inequality. 
    Let $\rho\in C_c^\infty(\R^\ast_+)$ be such that $\rho\geq 0$ and $\int_0^{+\infty}\rho(s)\d s=1$. For every $n\in\N$ define $\rho_n(s):=n\rho(ns)$, $s>0$, then  
    \begin{align}
        &\rho_n\in C_c^\infty(\R^\ast_+), \qquad \int_0^{+\infty}\rho_n(s)\d s =1, \qquad \forall\, n\in\N.
    \end{align}
    Let $\varphi$ be a generic non-negative function in $C_c^\infty(\R^\ast_+)$ and fix $n\in\N$. Let us define 
    \[
        \psi_n:=-\varphi+\rho_n\int_0^{+\infty}\varphi(s)\d s.
    \]
    By direct inspection we have $\psi_n\in C_c^\infty(\R^\ast_+)$ and $\int_0^{+\infty}\psi_n(s)\d s=0$, in particular, its primitive function $\Psi_n(t):=\int_0^t\psi_n(s)\d s$, $t>0$, satisfies again $\Psi_n\in C_c^\infty(\R^\ast_+)$.
    Therefore, if we test the inequality~\eqref{EQ:proof_gronwall_diff} againts $\Psi_n$ we reach
    \begin{align}
        0&\geq {\big\lan }g',\Psi_n\big\ran^{}_{{\!\mathcal D'(\R^\ast_+) \times C_c^\infty(\R^\ast_+)}}\\
        &=-{\big\lan }g,\psi_n\big\ran^{}_{{\!\mathcal D'(\R^\ast_+) \times C_c^\infty(\R^\ast_+)}}\\
        &=\int_0^{+\infty}g(t)\varphi(t)\d t -\int_0^{+\infty}g(s)\rho_n(s)\d s\int_0^{+\infty}\varphi(t)\d t\\
        &=\int_0^{+\infty}\varphi(t)\bigg(g(t)-\int_0^{+\infty}g(s)\rho_n(s)\d s\bigg)\d t.
    \end{align}
    This implies, by arbitrariness of $\varphi\in C_c^\infty(\R^\ast_+;\R_+)$ and continuity of $g$, that for all $t\geq 0$
    \[
        g(t)\leq \int_0^{+\infty}g(s)\rho_n(s)\d s=\int_0^{+\infty}g(r/n)\rho(r)\d r\longrightarrow g(0), \qquad \textit{as }{n\to\infty},
    \]
    where we resorted to the Dominated Convergence Theorem to pass to the limit.
    The sought estimate~\eqref{EQ:gronwall_diff_thesis} is derived from $g(t)\leq g(0)$, $t\geq 0$, by recalling the definition of $g$,  and by multiplying by $e^A$.
\end{proof}

We also recall a simple version of the integral Gr\"onwall's Lemma that suits our necessities.
A proof can be found in \cite[proof of Theorem $5$, step $2$]{Flandoli+Romito_2001_Statistically_stationary_solutions_3D_Navier-Stokes_equation_do_not_show_singularities}.

\begin{lemma}
\label{LEM:Gronwall_lemma_negative}
    Assume that $f:\R_+\to\R$ is a non-negative continuous function and that there exist $C_1, C_2>0$ such that
    \begin{equation}
    \label{EQ:gronwall}
        f(t)\leq f(s)-C_1\int_s^tf(r)\d r+ C_2(t-s), \qquad \forall\, t\geq s\geq 0.
    \end{equation}
    Then
    \[
        f(t)\leq \frac{C_2}{C_1}+e^{-C_1t}f(0), \qquad \forall\, t\geq 0.
    \]
\end{lemma}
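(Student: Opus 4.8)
The plan is to reduce the integral inequality \eqref{EQ:gronwall} to the distributional differential inequality
\[
    f'\leq -C_1 f+C_2 \qquad \textit{in }\mathcal D'\big((0,+\infty)\big),
\]
and then to invoke the differential Gr\"onwall Lemma \ref{LEM:Gronwall_lemma_diff} with the constant (hence $L^1_{loc}(\R_+)$) functions $a\equiv -C_1$ and $b\equiv C_2$, for which $\int_0^t a(s)\d s=-C_1 t$. Granting the differential inequality, Lemma \ref{LEM:Gronwall_lemma_diff} gives for all $t\geq 0$
\[
    f(t)\leq e^{-C_1 t}\bigg(f(0)+C_2\int_0^t e^{C_1 s}\d s\bigg)
    = e^{-C_1 t}f(0)+\frac{C_2}{C_1}\big(1-e^{-C_1 t}\big)
    \leq \frac{C_2}{C_1}+e^{-C_1 t}f(0),
\]
which is exactly the claimed estimate; here $C_1>0$ is used to integrate the exponential and to divide by $C_1$, while the non-negativity of $f$ is not even needed.

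It remains to establish the distributional inequality. First I would fix $h>0$ and apply \eqref{EQ:gronwall} with $s$ and $t:=s+h$; dividing by $h$ this reads, for every $s\geq 0$,
\[
    \frac{f(s+h)-f(s)}{h}\leq C_2-\frac{C_1}{h}\int_s^{s+h}f(r)\d r.
\]
Next I would fix a non-negative test function $\varphi\in C_c^\infty\big((0,+\infty)\big)$, multiply the last inequality by $\varphi(s)$, and integrate over $(0,+\infty)$. On the left-hand side the change of variable $u=s+h$ gives
\[
    \int_0^{+\infty}\frac{f(s+h)-f(s)}{h}\varphi(s)\d s
    =\int_0^{+\infty}f(u)\,\frac{\varphi(u-h)-\varphi(u)}{h}\d u\longrightarrow -\int_0^{+\infty}f(u)\varphi'(u)\d u=\lan f',\varphi\ran,
\]
as $h\to 0^+$, the limit being justified by the uniform convergence of the difference quotients of $\varphi$ on a fixed compact neighbourhood of $\supp\varphi$, together with the local boundedness of the continuous function $f$. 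On the right-hand side, the continuity of $f$ yields $\frac1h\int_s^{s+h}f(r)\d r\to f(s)$ uniformly for $s$ in the compact set $\supp\varphi$, so by the Dominated Convergence Theorem
\[
    \int_0^{+\infty}\bigg(C_2-\frac{C_1}{h}\int_s^{s+h}f(r)\d r\bigg)\varphi(s)\d s\longrightarrow \int_0^{+\infty}\big(C_2-C_1 f(s)\big)\varphi(s)\d s.
\]
Passing to the limit in the tested inequality gives $\lan f',\varphi\ran\leq \int_0^{+\infty}(C_2-C_1 f)\varphi$ for every non-negative $\varphi\in C_c^\infty\big((0,+\infty)\big)$, that is $f'\leq -C_1 f+C_2$ in $\mathcal D'\big((0,+\infty)\big)$, completing the reduction.

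The only delicate point is this last passage, from the family of incremental inequalities parametrised by $h>0$ to the distributional derivative: it hinges on the two limits above and, in particular, on the uniform-on-compacts convergence $\frac1h\int_s^{s+h}f\to f(s)$, which follows from the uniform continuity of $f$ on compact intervals. Everything else is a routine computation, and the conclusion is then read off from Lemma \ref{LEM:Gronwall_lemma_diff}.
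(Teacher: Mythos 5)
Your proposal is correct, and it reaches the same endpoint as the paper (the distributional inequality $f'\leq -C_1f+C_2$ followed by an application of Lemma \ref{LEM:Gronwall_lemma_diff} with $a\equiv -C_1$, $b\equiv C_2$), but it gets there by a genuinely different and, in fact, more careful route. The paper asserts that \eqref{EQ:gronwall} ``directly implies'' that $f$ is Lipschitz, hence absolutely continuous, and then differentiates almost everywhere. That regularity claim is delicate: since $\int_s^t f\geq 0$, the hypothesis only yields the one-sided increment bound $f(t)-f(s)\leq C_2(t-s)$ and does not obviously control how fast $f$ can \emph{decrease} (a function such as $f(t)=(1-\sqrt{t})^+$ satisfies \eqref{EQ:gronwall} when $C_2\geq C_1$ yet is not Lipschitz near $0$). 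Your argument sidesteps this entirely: by dividing the incremental inequality by $h$, testing against a non-negative $\varphi\in C_c^\infty\big((0,+\infty)\big)$, and passing to the limit $h\to 0^+$ — with the two limits correctly justified by uniform convergence of the difference quotients of $\varphi$ on a fixed compact set and by the uniform-on-compacts convergence of the averages $\frac1h\int_s^{s+h}f$ — you obtain the distributional inequality using nothing beyond the continuity of $f$. This is exactly the form of hypothesis that Lemma \ref{LEM:Gronwall_lemma_diff} requires, and your final computation coincides with the paper's. Your observation that the non-negativity of $f$ is not needed is also accurate. The only cosmetic point is that the statement of Lemma \ref{LEM:Gronwall_lemma_diff} defines $A(t)$ with a spurious exponential; its proof makes clear that $A(t)=\int_0^t a(s)\,\d s$ is intended, and that is the reading you (correctly) use.
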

\end{appendices}


\noindent \textbf{Acknowledgements:} 
Both authors gratefully acknowledge the hospitality and financial support of the Bernoulli Center at EPFL, Lausanne, Switzerland. 
Matteo Ferrari also extends his thanks to the Mathematics Department of the University of York, UK, for their warm hospitality, and to the Erasmus Programme for the funding that supported his stay in York, during which the research for this paper was conducted. 
Moreover, Matteo Ferrari is a member of the Italian "Gruppo Nazionale per l'Analisi Matematica, la Probabilità e le loro Applicazioni (GNAMPA)," which is part of the "Istituto Nazionale di Alta Matematica (INdAM)."\\[3mm]

\printbibliography[heading=bibintoc]

\end{document}